\def\@settitle{%
  \baselineskip14\p@\relax
    {\Large\bfseries
  \@title}}
\def\@setauthors{%
  \begingroup
  \def\thanks{\protect\thanks@warning}%
  \trivlist
  \footnotesize \@topsep45\p@\relax
  \advance\@topsep by -\baselineskip
  \item\relax
  \author@andify\authors
  \def\\{\protect\linebreak}%
  {\sc\fontsize{12}{10}\selectfont\authors}%
  \ifx\@empty\contribs
  \else
    ,\penalty-3 \space \@setcontribs
    \@closetoccontribs
  \fi
  \endtrivlist
  \endgroup
}
\def\@secnumfont{\bfseries}%
\def\section{\@startsection{section}{1}%
  \z@{.7\linespacing\@plus\linespacing}{.5\linespacing}%
  {\normalfont\bf}}
\renewcommand{\BibLabel}{%
    \Hy@raisedlink{\hyper@anchorstart{cite.\CurrentBib}\hyper@anchorend}%
    [\thebib]\hfill%
}
\newcommand{\arxiv}[1]{\href{http://arxiv.org/abs/#1}{{\tt arxiv:\hspace{0pt}#1}}}
\renewcommand{\tocsection}[3]{%
  \indentlabel{\@ifnotempty{#2}{\ignorespaces#1 \hbox to 15.01021pt{\hfil#2.}\quad}}#3}
\numberwithin{equation}{section}\swapnumbers
\newcommand{\cxymatrix}[1]{\vcenter{\xymatrix@=15pt{#1}}}
\newcommand{\xysubseteq}{\ar@{}[r]|{\displaystyle\subseteq}}
\newcommand{\xysubseteqdown}{\ar@{}[d]|{\rotatebox{90}{$\supseteq$}}}
\newtheorem{theorem}{Theorem}[section]
\newaliascnt{lemma}{theorem}
\newtheorem{lemma}[lemma]{Lemma}
\newaliascnt{corollary}{theorem}
\newtheorem{corollary}[corollary]{Corollary}
\newaliascnt{proposition}{theorem}
\newtheorem{proposition}[proposition]{Proposition}
\newtheorem{stheorem}{Theorem}[subsection]
\newaliascnt{slemma}{stheorem}
\newtheorem{slemma}[slemma]{Lemma}
\newaliascnt{scorollary}{stheorem}
\newtheorem{scorollary}[scorollary]{Corollary}
\newaliascnt{sproposition}{stheorem}
\newtheorem{sproposition}[sproposition]{Proposition}
\theoremstyle{definition}
\newaliascnt{sremark}{stheorem}
\newtheorem{sremark}[sremark]{Remark}
\newaliascnt{definition}{theorem}
\newtheorem{definition}[definition]{Definition}
\newaliascnt{remark}{theorem}
\newtheorem{remark}[remark]{Remark}
\newtheorem{remarks}[remark]{Remarks}
\newaliascnt{example}{theorem}
\newtheorem{example}[example]{Example}
\newtheorem{examples}[example]{Examples}
\setlist[enumerate,2]{label=\textit{\alph*)},ref=\textit{\alph*})}
\setlist[enumerate,1]{label=\textit{\roman*)},ref=\textit{\roman*)}}
\newcommand{\fk}{\mathfrak{k}}
\newcommand{\Zq}{{\overline Z}}
\newcommand{\cA}{\mathcal{A}}
\newcommand{\cG}{\mathcal{G}}
\newcommand{\cH}{\mathcal{H}}
\newcommand{\cF}{\mathcal{F}}
\newcommand{\cO}{\mathcal{O}}
\newcommand{\cC}{{\mathcal C}}
\newcommand{\cM}{{\mathcal M}}
\newcommand{\cR}{{\mathcal R}}
\newcommand{\cS}{{\mathcal S}}
\newcommand{\cZ}{{\mathcal Z}}
\newcommand{\cN}{{\mathcal N}}
\newcommand{\RS}{{\widetilde R}}
\newcommand{\CS}{{\widetilde C}}
\newcommand{\fS}{{\widetilde f}}
\newcommand{\GS}{{\widetilde G}}
\newcommand{\LS}{{\widetilde L}}
\newcommand{\QS}{{\widetilde Q}}
\newcommand{\uS}{{\widetilde u}}
\newcommand{\US}{{\widetilde U}}
\newcommand{\fuS}{{\widetilde{\mathfrak u}}}
\newcommand{\vS}{{\widetilde v}}
\newcommand{\XS}{{\widetilde X}}
\newcommand{\YS}{{\widetilde Y}}
\newcommand{\sigmaS}{{\widetilde\sigma}}
\newcommand{\fa}{\mathfrak{a}}
\newcommand{\fA}{\mathfrak{A}}
\newcommand{\fg}{\mathfrak{g}}
\newcommand{\fh}{\mathfrak{h}}
\newcommand{\fl}{\mathfrak{l}}
\newcommand{\fm}{\mathfrak{m}}
\newcommand{\Fq}{\mathfrak{q}}
\newcommand{\fu}{\mathfrak{u}}
\newcommand{\fX}{\mathfrak{X}}
\newcommand{\fY}{\mathfrak{Y}}
\newcommand{\CC}{\mathbb{C}}
\newcommand{\QQ}{\mathbb{Q}}
\newcommand{\RR}{\mathbb{R}}
\newcommand{\HH}{\mathbb{H}}
\newcommand{\ZZ}{\mathbb{Z}}
\newcommand{\sA}{\mathsf{A}}
\newcommand{\sB}{\mathsf{B}}
\newcommand{\sC}{\mathsf{C}}
\newcommand{\sE}{\mathsf{E}}
\newcommand{\sF}{\mathsf{F}}
\newcommand{\sG}{\mathsf{G}}
\newcommand{\sX}{\mathsf{X}}
\newcommand{\sXq}{\overline{\mathsf{X}}}
\newcommand{\sYq}{\overline{\mathsf{Y}}}
\newcommand{\sXs}{\tilde{\mathsf{X}}}
\newcommand{\sY}{\mathsf{Y}}
\newcommand{\sH}{\mathsf{H}}
\newcommand{\sK}{\mathsf{K}}
\newcommand{\leer}{\varnothing}
\renewcommand{\rho}{\varrho}
\renewcommand{\mod}{/\!\!/}
\renewcommand{\phi}{\varphi}
\renewcommand{\epsilon}{\varepsilon}
\renewcommand{\hat}{\widehat}
\newcommand{\<}{\langle} 
\renewcommand{\>}{\rangle}
\newcommand{\auf}{\twoheadrightarrow}
\newcommand{\into}{\hookrightarrow}
\newcommand\semidir{\ltimes}
\renewcommand{\[}{\begin{equation}}
\renewcommand{\]}{\end{equation}}
\newcommand{\prim}{^{\rm pr}}
\newcommand{\p}{^{(p)}}
\newcommand{\el}{_{\rm el}}
\newcommand{\an}{_{\rm an}}
\newcommand{\st}{_{\rm st}}
\DeclareMathOperator{\supp}{supp}
\DeclareMathOperator{\rk}{rk}
\DeclareMathOperator{\Hom}{Hom}
\DeclareMathOperator{\Ad}{Ad}
\DeclareMathOperator{\Spec}{Spec}
\DeclareMathOperator{\Aut}{Aut}
\DeclareMathOperator{\res}{res}
\DeclareMathOperator{\Lie}{Lie}
\def\|#1|{\operatorname{#1}}
\newcommand{\osigma}{{\overline\sigma}}
\newcommand{\oalpha}{{\overline\alpha}}
\newcommand{\otau}{{\overline\tau}}
\newcommand{\aq}{{\overline a}}
\newcommand{\pq}{{\overline p}}
\newcommand{\vq}{{\overline v}}
\newcommand{\wq}{{\overline w}}
\newcommand{\xq}{{\overline x}}
\newcommand{\fq}{{\overline f}}
\newcommand{\Aq}{{\overline A}}
\newcommand{\Sq}{{\overline S}}
\newcommand{\Xq}{{\overline X}}
\newcommand{\Yq}{{\overline Y}}
\newcommand{\Gq}{{\overline G}}
\newcommand{\FQ}{{\overline F}}
\newcommand{\uq}{{\overline u}}
\newcommand{\Uq}{{\overline U}}
\newcommand{\Lq}{{\overline L}}
\newcommand{\Pq}{{\overline P}}
\newcommand{\Qq}{{\overline Q}}
\newcommand{\G}{{\mathbf{G}}}
\renewcommand{\P}{{\mathbf{P}}}
\newcommand{\A}{{\mathbf{A}}}
\newcommand{\rat}{\!\xymatrix@=10pt{\ar@{.>}[r]&}\!}
\newcommand{\aut}{^{\rm aut}}
\newcommand{\half}{{\textstyle\frac12}}
\title[]{Reductive group actions}
\author[]{Friedrich Knop$^1$}
\address[]{$^1$Dept. Mathematik\\
  FAU Erlangen-Nürnberg\\
Cauerstraße 11\\
D-91058 Erlangen}
\author[]{Bernhard Krötz$^2$}
\address[]{$^2$Institut für Mathematik\\
Uni.~Paderborn\\
Warburger Str.~100\\
D-33098 Paderborn}
\subjclass[2020]{14L30, 14M27, 20G25, 22F30}
\keywords{Compactifications, reductive groups, spherical varieties,
  root systems}
\newcommand\red{^\natural}
\newcommand\nor{^{\rm n}}
\renewcommand\2{^\sharp}
\renewcommand\b{^\flat}
\begin{document}

\begin{abstract}

  In this paper, we study rationality properties of reductive group
  actions over fields $k$ of characteristic zero. Thereby, we unify Luna's
  theory of spherical systems and Borel-Tits' theory of reductive
  groups. In particular, we define for any reductive group action a
  generalized Tits index whose main constituents are a root system and
  a generalization of the anisotropic kernel. The index controls to a
  large extent the behavior at infinity (i.e., embeddings). For
  $k$-spherical varieties (i.e., a minimal parabolic has an open
  orbit) we obtain explicit (wonderful) completions of the set of
  rational points. For local fields this means honest
  compactifications generalizing the maximal Satake compactification
  of a symmetric space. Our main tool is a $k$-version of the local
  structure theorem.

\end{abstract}

\maketitle

{\setlength{\parskip}{0pt}\tableofcontents}

\section{Introduction}

Let $G$ be a connected reductive group. The main focus of this paper
is to study the fine large scale geometry of homogeneous $G$-varieties
when the ground field $k$ is an arbitrary field of characteristic zero.
When $k$ is algebraically closed the last decades have seen tremendous
progress in this direction. Important milestones were the theory of
toroidal embeddings in \cite{KempfTE} (still serving as the blueprint
for all later developments), the wonderful embeddings of symmetric
varieties in \cite{DeConciniProcesi}, and the groundbreaking work
\cite{LunaVust} of Luna and Vust describing all spherical embeddings
and much more. Later Brion \cite{BrionSymetrique} discovered that the
asymptotic behavior of spherical varieties is determined by a system
of spherical roots. This was subsequently generalized to arbitrary
$G$-varieties in \cite{KnopAB}.

Over non-closed fields much less is known even though the work of
Satake \cite{SatakeComp} on compactifications of symmetric spaces (where
$k=\RR$) considerably predates the Luna-Vust theory. Here, mainly
rationality questions are in the foreground, like which embeddings are
defined over $k$, or which orbits carry $k$-rational points. One of
the main obstacles is the fact that the theory over closed fields
hinges crucially on the existence of a Borel subgroup. For that
reason, the Luna-Vust theory extends quite easily to the case when $G$
is split over $k$. This has enabled Sakellaridis and Venkatesh
\cite{SakellaridisVenkatesh} to prove a Plancherel theorem for
spherical varieties of split groups over $p$-adic fields. But even for
quasi-split groups things become more difficult and for all other
groups the theory has to be built up from scratch. The idea is of
course to replace the Borel subgroup by a minimal parabolic subgroup
$P$.

A first breakthrough in this direction was achieved over $k=\RR$ with
the paper \cite{KKS} where an $\RR$-version of the local structure
theorem of \cite{BrionLunaVust} was proved and used to study the
geometry of $\RR$-spherical varieties. Later, it was successfully used
in \cites{KKSS,KKSSvolume,KKStemp,DKKS,GS} for purposes of harmonic
analysis.

In the present paper we go far beyond \cite{KKS}. First, we replace
$\RR$ by an arbitrary ground field of characteristic $0$. Secondly, we
have much more precise results on the structure of spherical
embeddings and thirdly, most of our results are also valid in the
non-spherical setting. Thereby we end up with a theory which unifies
Luna's theory of spherical varieties with the Borel-Satake-Tits theory
\cites{Tits0,Satake,BorelTits,Tits} of reductive groups over
non-closed fields.

Since our approach deals mostly with rationality properties we will
always assume that the $G$-variety $X$ is \emph{$k$-dense}, i.e., that
$X(k)$ is Zariski dense in $X$. Let $P=MAN$ be a minimal parabolic
$k$-subgroup of $G$. Here $N$ is the unipotent radical, $A$ is a
maximal split torus and $M$ is anisotropic. Then we construct a quite
specific $P$-stable open subset $X'$ of $X$ such that the quotients
$X\el:=X'/N$ and $X\an:=X'/AN$ exist, called the \emph{elementary
  kernel} (an $MA$-variety) and the \emph{anisotropic kernel of $X$}
(an $M$-variety), respectively. The latter plays, like for reductive
groups, the r\^ole of a black box about which we have little to say.

The canonical morphism $X\el\to X\an$ is a principal bundle for a
quotient $A_k(X)$ of $A$. This torus $A_k(X)$ is the analogue of
$A$ in $G$. Its character group $\Xi_k(X):=\Xi(A_k(X))$
replaces the weight lattice of a group. Accordingly, the
\emph{$k$-rank of $X$} is defined as
\[
  \rk_kX:=\rk\Xi_k(X)=\dim A_k(X)=\dim X\el-\dim X\an.
\]

The root system is much more intricate to construct. For this, we
start by considering a special kind of $G$-invariant valuations on the
function field $k(X)$ which we call \emph{$k$-central}. By definition
these are invariant valuations which are trivial on the subfield of
$AN$-invariants. Each such valuation is related to a $G$-stable
irreducible divisor $D$ in some birational model of $X$. An important
point is that for $k$-central valuations $D$ is $k$-dense. This
provides us with a tool to study rational points ``at infinity''.

It is shown that $k$-central valuations are classified by elements of
a certain subset $\cZ_k(X)$ of the vector space
$\cN_k(X):=\Hom(\Xi_k(X),\QQ)$. A central fact is
that $\cZ_k(X)$ is a Weyl chamber for a finite reflection group
$W_k(X)$ acting on $\Xi_k(X)$. Suitably normalized, this defines a set
$\Sigma_k(X)$ of simple roots for a root system
$\Phi_k(X)\subset\Xi_k(X)$ with Weyl group $W_k(X)$. The elements of
$\Sigma_k(X)$ are called the \emph{$k$-spherical roots of $X$}.

Now consider the algebraic closure $K$ of $k$. Since everything can be
done over $K$, as well there is also a root system $\Phi_K(X)$ of
$X$. Of course, this is just the one constructed previously by
Brion. Another main result of this paper is that $\Phi_k(X)$ is more
or less the restricted root system of $\Phi_K(X)$ to $A$. This was
kind of unexpected since $\Phi_K(X)$ is constructed using the action of
minimal $K$-parabolic, i.e., a Borel subgroup on $X$. So there is no
direct link between both root systems.

At this point, we have now four root systems to deal with, namely the
two root systems of $G$ (over $K$ and $k$) and the two root systems of
$X$. They are subject to very strong compatibility constraints of
which we prove a few. An exhaustive list will be subject of future
research. It should be mentioned that the pair $(\Phi_K(X),\Phi_k(X))$
looks very much like a Tits index of a reductive group but is in fact
slightly more general.

We now come to the main goal of this paper namely analyzing the
behavior of $X$ at infinity. For this we construct for every fan $\cF$
which is supported in $\cZ_k(X)$ a normal partial compactification
$X(\cF)$ of $X$. This embedding comes with a stratification with one
stratum $X(\cC)$ for every cone $\cC\in\cF$. The main point is that
every stratum is $k$-dense. Now assume that the support of $\cF$ is
all of the valuation cone $\cZ_k(X)$. Then it is \emph{not true} that
$X(\cF)$ is a complete variety. Instead it is kind of complete with
respect to rational points. This means that every normal completion
of $X(\cF)$ adds only very few rational points.

It is easiest to make this precise when $X$ is
\emph{$k$-spherical}. This means by definition that $P$ has an open
orbit in $X$. Equivalently, the action of $M$ on the anisotropic kernel
of $X$ is transitive. So, when $X$ is $k$-spherical and
$\supp\cF=\cZ_k(X)$ then we show that $\Xq(k)=X(\cF)(k)$ for any
normal (equivariant) completion $\Xq$ of $X(\cF)$. Another consequence
of $k$-sphericity is that the strata are precisely the
$G$-orbits. In particular there are only finitely many of them. In
contrast, it may happen that a completion has infinitely many orbits
but the point is that almost none of them carries a $k$-rational
point. Therefore we regard it as one of the main insights of this
paper (and its precursor \cite{KKS}) that in order to study rational
points it is not necessary to construct true completions.

Now the main consequence of $\cZ_k(X)$ being a Weyl chamber is that it
is cosimplicial, i.e., defined by a set of linearly independent
inequalities. A particularly favorable situation arises if $\cZ_k(X)$
is strictly convex. In this case $X$ will be called
\emph{$k$-convex}. This condition is rather mild, since every
$G$-variety can be made $k$-convex by dividing by a suitable split
torus. In this case $\cZ_k(X)$ is also simplicial and one can take as
a fan the set of all faces of $\cZ_k(X)$. We call this the
\emph{standard fan $\cF\st$ of $X$} and $X\st:=X(\cF\st)$ the
\emph{standard embedding}. Now for a $k$-convex, $k$-spherical
homogeneous variety $X$ its standard embedding has the following nice
properties:

$\bullet$ The boundary $X\st\setminus X$ consists of $r=\rk_kX$
irreducible components $X_{\{1\}},\ldots,X_{\{r\}}$.\newline
$\bullet$ The orbit closures are precisely the set-theoretic
intersections $X_I:=\bigcap_{i\in I}X_{\{i\}}$ where $I$ runs through all
subsets of $\{1,\ldots,r\}$.\newline
$\bullet$ Each $X_I$ is $k$-dense and of codimension $|I|$ in
$X\st$.\newline
$\bullet$ Let $X\st\subseteq\Xq$ be an equivariant embedding. Then
$\Xq(k)=X\st(k)$.

Often the standard embedding is even smooth. In this case, we show
that all orbit closures $X_I$ are smooth and that the intersections
above are even scheme-theoretic. Following DeConcini and Procesi,
\cite{DeConciniProcesi}, the varieties $X$ and $X\st$ will then be
called {$k$-wonderful}. Even that condition is quite mild: we show
that for every $k$-convex variety $X$ there is a finite abelian group
$E$ of automorphisms such that $X/E$ is wonderful.

The paper contains a couple more topics which have not yet been
mentioned. In \cref{sec:horospherical} we classify two types of
varieties. The first are spaces whose $k$-rank is zero. We show that
they are all parabolically induced from anisotropic actions. Their
importance lies in the fact that the closed stratum of a standard
embedding is of rank zero. The second, more general type are the
so-called \emph{$k$-horospherical} varieties. They are characterized
by $\cZ_k(X)$ being the entire space $\cN_k(X)$ or, equivalently, that
the root system is empty. We show that all $k$-horospherical varieties
are parabolic inductions of elementary actions. Geometrically they
appear as maximal degenerations of spherical varieties. In
\cref{sec:BoundaryDegenerations} we consider more general
degenerations. More precisely, we construct a smooth morphism
$\fX\to\fY$ with projective base whose fibers are called the
\emph{boundary degenerations of $X$}.

In \cref{sec:polar} we generalize the main result of \cite{KKSS} from
$\RR$ to local fields of characteristic zero. More precisely, we prove
the a weak version of the polar decomposition. Finally, we show how to
obtain the classical maximal Satake compactification of a Riemannian
symmetric space with our approach. Moreover, we compare the standard
embedding of a variety $X$ considered as a variety over $k$ with the
standard embedding over the algebraic closure. They are related by a
birational morphism from the former to the later which is in general
not an open embedding. We also give an examples in which the standard
embedding over $K$ has bad rationality properties.

Some words to the proofs: As already mentioned everything hinges on
the $k$-Local Structure Theorem. Here we follow the proof of
\cite{KKS} which is itself an adaption of the proof in
\cite{KnopAB}. The main difficulty is to show the existence of
sufficiently many $P$-eigenfunctions on $X$. For this we replace the
integration and positivity argument in \cite{KKS} by an argument using
Kempf's theory of optimal $1$-parameter subgroups \cite{Kempf}.

Secondly, even though the main applications are for $k$-spherical
varieties the theory does not simplify very much by sticking to the
spherical case. The reason for this is that $k$-spherical varieties do
not stay spherical when passing to the algebraic closure. So we do
need to a considerable extent the embedding theory of non-spherical
varieties.

Finally, the order in which the theory evolves is different than the
one in the exposition above where we first described the root system
and then derived properties of embeddings. In practice we prove a fair
amount of properties of the compactification first and then
reinterpret them in terms of the root system.

\section{Notation}\label{sec:notation}

In the entire paper, $k$ is a field of characteristic zero. Its
algebraic closure is denoted by $K$ and its Galois group by
$\cG:=\|Gal|(K|k)$.

Our point of view will be that a $k$-variety is variety over $K$
equipped with a compatible $\cG$-action. Accordingly, for us the terms
``connected'' and ``irreducible'' mean ``absolutely connected'' and
``absolutely irreducible''. All $k$-varieties are, by definition,
non-empty and irreducible.

Let $H$ be a linear algebraic group defined over $K$. Then
$\Xi(H):=\Hom(H,\G_m)$ denotes its character group. If $H$ is defined
over $k$, then $\Xi(H)$ carries a $\cG$-action. The space
$\cN(H):=\Hom(\Xi(H),\QQ)$ denotes the space of rational
cocharacters of $H$.

Let $V$ be a representation of $H$ and $\chi\in\Xi(H)$. Then
$V^{(H)}_\chi$ denotes the set of $\chi$-eigenvectors of $V$. Observe
that $V^{(H)}_\chi$ never contains the vector $0$. Thus, for the
trivial character $\chi=1$ we have $V^{(H)}_1=V^H\setminus\{0\}$. We
denote by $V^{(H)}$ the union of all $V^{(H)}_\chi$ with
$\chi\in\Xi(H)$. For $v\in V^{(H)}$ let $\chi_v$ be the unique
character with $v\in V^{(H)}_{\chi_v}$.

In the whole paper, $G$ is a connected reductive $k$-group. We fix a
minimal parabolic $k$-subgroup $P\subseteq G$ with decomposition
$P=MAN$ where $N=R_uP$ is the unipotent radical, $A$ is a maximal
split subtorus and $M$ is anisotropic. Hence $MA$ is a Levi
subgroup of $P$. Let $T\subseteq MA$ be a maximal $k$-subtorus. Then
$A\subseteq T$ and restriction to $A$ defines a surjective
homomorphism
\[
  \res_A:\Xi(T)\auf\Xi(A).
\]
If $\Omega\subseteq\Xi(T)$ is any subset we adopt the notation
\[\label{eq:resstrich}
  \res_A'\Omega:=(\res_A\Omega)\setminus\{0\}.
\]
Let $\Phi=\Phi(G,T)$ be the associated root system and
\[
  \Phi_k:=\res_A'\Phi
\]
the restricted root system (in general not reduced).  The Weyl group
of $\Phi$ is denoted by $W=W(G)$. Choosing a Borel subgroup $B$ with
$T\subseteq B\subseteq P$ yields a set of simple roots
$S\subseteq \Phi$ and a set of simple restricted roots
\[
  S_k:=\res_A'S\subseteq\Phi_k.
\]
On the other side, the elements of
\[
  S^0:=\{\alpha\in S\mid\res_A\alpha=0\}
\]
are the compact simple roots which are the simple roots of
$M$. Reflections about elements of $S^0$ generate the Weyl group
$W^0\subseteq W$ of $M$.

The Galois group $\cG$ acts on $\Xi(T)$ leaving the root system $\Phi$
invariant, hence normalizes $W$. The restriction map $\res_A$ is
$\cG$-invariant. There is also the so-called $*$-action of $\cG$ on
$\Xi(T)$ which is constructed as follows: for each $\gamma\in\cG$
there is a unique element $w_\gamma\in W$ with $w_\gamma\gamma
S=S$. Then one defines
\[\label{eq:staraction}
  \gamma*\chi:=w_\gamma\gamma(\chi).
\]
In the following, we refer to $\cG$ acting by the $*$-action as
$\cG^*$.

It is known that $w_\gamma\in W^0$ for all $\gamma\in\cG$
(\cite{BorelTits}*{Proof of Prop.\ 6.7} or \cite{Springer}*{Proof of
  Prop.\ 15.5.3}). This means that if
\[
  \Delta^*:=\{\phi\in\Aut\Xi(T)\mid\phi(S)=S,\ \phi(S^0)=S^0\}
\]
is the group of ``graph automorphisms'' then the $\cG^*$-action is a
homomorphism $\cG\to\Delta^*$ while the $\cG$-action is a lift to
$\Delta^*\semidir W^0$ . As a consequence the ordinary and the
$*$-action coincide on $\Xi(T)^{W^0}=\Xi(T/(T\cap[M,M])^0)$.

Throughout this paper we denote algebraic groups by upper case Latin
letters and their corresponding Lie algebras by lower case German
letters, e.g. $\fh$ is the Lie algebra of $H$. The unipotent radical
of an algebraic group $H$ is denoted by $R_uH$ or $H_u$.

\section{Invariants of quasi-elementary
  groups}\label{sec:quasielementary}

Following Borel and Tits, \cite{BorelTits}*{Def.\ 4.23}, we define

\begin{definition}

  A linear algebraic $k$-group $G$ is \emph{anisotropic} if it is
  connected, reductive, and its $k$-rank is $0$, i.e., it does not
  admit a non-trivial split subtorus.

\end{definition}

According to \cite{BorelTits}*{Cor.\ 8.5} an equivalent condition for
a group $G$ to be anisotropic is that all elements in $G(k)$ are
semisimple and that $\Hom_k(G,\G_m)=1$. Some authors, like, e.g.,
Springer in \cite{Springer}*{p.\ 271}, drop the second condition. The
ensuing notion is different but is also very important for our purposes:

\begin{definition}

  A linear algebraic $k$-group $G$ is called \emph{elementary} if it
  is connected and all of its $k$-rational elements are semisimple.

\end{definition}

Clearly, elementary groups are reductive.  The difference between an
anisotropic and an elementary group is that the latter may have a
non-trivial central split subtorus. Because of this reason, elementary
groups are often called ``anisotropic modulo center'' (with
reductivity tacitly being assumed). As an example, the group
$G=GL(1,\HH)$ is an elementary but non anisotropic $\RR$-group. Over
$K$, the algebraic closure of $k$, only the
trivial group is anisotropic while exactly the tori are elementary.

These definitions extend to non-reductive groups as follows:

\begin{definition}

  A $k$-group $H$ with unipotent radical $H_u$ is
  \emph{quasi-anisotropic} or \emph{quasi-elementary} if $H/H_u$ is
  anisotropic or elementary, respectively.

\end{definition}

Clearly, these two concepts imply connectivity. Since any parabolic
subgroup of $H$ contains $H_u$, it follows from
\cite{BorelTits}*{Cor.\ 4.17} that $H$ is quasi-elementary if and only
if it does not contain a proper parabolic $k$-subgroup. In particular,
a minimal parabolic $k$-subgroup of any group is quasi-elementary and
its Levi component is elementary.

The most general of these four concepts is that of a quasi-elementary
group. They have a factorization
\[\label{quasi-factor}
H=MAN
\]
where $M$ is anisotropic, $A$ is a split torus and $N$ is the
unipotent radical. This factorization is unique up to conjugation by
$N(k)$. The group $H$ is quasi-anisotropic, elementary, or anisotropic
if and only if $A$ is trivial, $N$ is trivial, or both $A$ and $N$ are
trivial, respectively.

It follows from the definition that $k$-subgroups of elementary groups
are elementary. In particular, they are reductive, as well.

For any connected group $H$ let $H\el\subseteq H$ be the Zariski
closure of the subgroup generated by all $k$-rational unipotent
elements. Then $H\el$ is the smallest normal subgroup of $H$ such that
$H/H\el$ is elementary. We call it the \emph{elementary radical of
  $H$}. Concretely, the elementary radical can also be described as
follows: if $H=LH_u$ is the Levi decomposition of $H$ then
$H_{\rm el}=L_nH_u$ where $L_n\subseteq L$ is the product of all
non-anisotropic simple factors of $L$.

This has the following consequence: Let $P=MAN\subseteq H$ be a
minimal parabolic $k$-subgroup. Since $MA$ contains all anisotropic
simple factors and also the center of $L$ we get
\[\label{eq:H=PHel}
H=MA\,H\el.
\]

There is an analogous notion for anisotropic groups: every connected
group $H$ has a smallest normal subgroup $H\an$ such that $H/H\an$ is
anisotropic. We call $H\an$ the \emph{anisotropic radical of $H$}. As
a group it is generated by $H_u$ and all split
subtori. If $P=MAN$ is a minimal parabolic subgroup then
\[\label{eq:H=PHan}
H\an=AH\el\text{ and }H=MH\an.
\]

The characterization of quasi-elementary groups in terms of parabolic
subgroups can be extended as follows:

\begin{proposition}\label{prop:affineorbit}

  For a connected $k$-group $H$, the following are equivalent:

  \begin{enumerate}

  \item\label{it:affineorbit1} $H$ is quasi-elementary.

  \item\label{it:affineorbit2} Let $F\subseteq H$ be any
    $k$-subgroup. Then $H/F$ is an affine variety.

  \item\label{it:affineorbit3} $H$ does not contain any proper
    parabolic $k$-subgroup.

  \end{enumerate}

\end{proposition}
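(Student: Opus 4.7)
The equivalence (i)$\Leftrightarrow$(iii) is already essentially recorded in the text preceding the statement: by \cite{BorelTits}*{Cor.\ 4.17} every parabolic $k$-subgroup of $H$ contains the unipotent radical $N=H_u$, so parabolic $k$-subgroups of $H$ are in bijection with those of $L:=H/N$, and the standard characterization of anisotropy (modulo center) in terms of proper parabolics gives that $L$ is elementary iff $L$ (equivalently $H$) has no proper parabolic $k$-subgroup. The implication (ii)$\Rightarrow$(iii) is immediate: a proper parabolic $P\subsetneq H$ gives a projective, positive-dimensional homogeneous space $H/P$, which cannot be affine.

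The substantive content is (i)$\Rightarrow$(ii). Let $F\subseteq H$ be a $k$-subgroup. Since $H/F$ is the quotient of $H/F^0$ by the finite group $F/F^0$, and finite-group quotients of affine schemes are affine, I may assume $F$ is connected. In characteristic zero, Mostow's theorem provides a Levi decomposition $F=F_{\mathrm{red}}\ltimes F_u$, and every reductive $k$-subgroup of $H$ is contained in some Levi of $H$, with all Levis $N(k)$-conjugate; replacing $F$ by an $N(k)$-conjugate I may assume $F_{\mathrm{red}}\subseteq L=MA$. The key point -- and the only place where the quasi-elementary hypothesis is used -- is that $F_u\subseteq N$. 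Indeed, $F_u(k)$ is Zariski dense in the unipotent $k$-group $F_u$, and each $u\in F_u(k)$ is a $k$-rational unipotent element of $H$ whose image $\bar u\in L(k)$ is at once unipotent and, because $L$ is elementary so $L(k)$ consists of semisimple elements, also semisimple; hence $\bar u=1$ and $u\in N$.

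With $F_{\mathrm{red}}\subseteq L$ and $F_u\subseteq N$ arranged, set $F':=FN=F_{\mathrm{red}}\cdot N$. The projection $H\twoheadrightarrow L$ identifies $H/F'$ with $L/F_{\mathrm{red}}$, which is affine by Matsushima's theorem applied over $K$: as a $k$-subgroup of the elementary group $L$, the subgroup $F_{\mathrm{red}}$ is itself elementary, in particular reductive. The fibre $F'/F\cong N/F_u$ is a quotient of a unipotent group by a closed subgroup, so in characteristic zero it is isomorphic to an affine space and in particular affine. Finally, $H\to H/F'$ is a principal $F'$-bundle and $H/F=H\times^{F'}(F'/F)$ is the associated bundle with affine fibre; by fppf descent of affineness the morphism $H/F\to H/F'$ is affine, so $H/F$ is affine over the affine base $H/F'$ and hence affine. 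The main obstacle is isolating (i) at the single step $F_u\subseteq N$: once this is done the problem decouples cleanly into a reductive piece handled by Matsushima and a unipotent piece that is automatically affine, and the remaining descent argument for associated bundles is routine.
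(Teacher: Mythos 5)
Your proof is correct, and the load-bearing idea is the same as in the paper: factor $H/F\to H/FN$, see that the base is affine because $FN/N$ is a reductive subgroup of $L=MA$, note that the fibre $N/(F\cap N)$ is affine because $N$ is unipotent, and conclude by affineness of the morphism. Where you diverge is in how you arrive at the statement that $FN/N$ is reductive. You reduce to connected $F$, invoke a Levi decomposition $F=F_{\mathrm{red}}\ltimes F_u$, conjugate $F_{\mathrm{red}}$ into a Levi of $H$, and then show $F_u\subseteq N$ by the semisimple-unipotent dichotomy in $L(k)$. All of this is sound in characteristic zero, and it does isolate the hypothesis cleanly, but it is more machinery than the situation demands: the paper simply notes that $FN/N$ is a $k$-subgroup of the elementary group $H/N=MA$, and subgroups of elementary groups are elementary, hence reductive (this was recorded just before the proposition). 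That one-line observation makes the reduction to connected $F$, the Levi decomposition, and the conjugation argument all unnecessary. So: same factorization, same use of Matsushima and unipotent-orbit affineness, but the paper packages the reductivity step in a way that avoids decomposing $F$ at all. Your closing remark that the quasi-elementary hypothesis is ``used only at $F_u\subseteq N$'' is true for your argument but is not the sharpest viewpoint — in the paper's version the hypothesis enters more directly, as the fact that \emph{every} subgroup of $H/N$ is reductive.
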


\begin{proof}

  Assume that $H=MAN$ is quasi-elementary and let $F\subseteq H$ be a
  $k$-subgroup. Consider the morphism
  \[
  \pi:H/F\to H/FN\cong(H/N)/(FN/N).
  \]
  The group $FN/N$ is a subgroup of the elementary group $H/N=MA$ and
  therefore reductive. This implies that $H/FN$ is an affine
  variety. The fiber of $\pi$ is $FN/F\cong N/F\cap N$. It is the
  orbit of a unipotent group, hence affine, as well. This implies that
  $\pi$ is an affine morphism on an affine variety. Hence $H/F$ is
  affine.

  Now assume \ref{it:affineorbit2}. Let $P\subseteq H$ be a minimal
  parabolic $k$-subgroup. Then \ref{it:affineorbit2} applied to $F=P$
  implies that $H/P$ is both affine and complete and therefore a
  point. Thus $P=H$.

  Finally, the equivalence of \ref{it:affineorbit1} and
  \ref{it:affineorbit3} follows from \cite{BorelTits}*{Cor.\ 4.17}, as
  already mentioned above.
\end{proof}

\begin{definition}

  A $k$-variety $X$ is called \emph{$k$-dense} if the set $X(k)$ of
  $k$-rational points is Zariski dense in $X$.

\end{definition}

Clearly, the affine space and the projective space is $k$-dense. If
$X\to Y$ is a dominant $k$-morphism and $X$ is $k$-dense then $Y$ is
$k$-dense, as well. The product $X\times Y$ of two $k$-dense varieties
is $k$-dense. More generally, if $X\to Y$ is a Zariski locally trivial fiber
bundle with fiber $Z$ then $X$ is $k$-dense if and only if both $Y$
and $Z$ are $k$-dense. This holds in particular for the total space of
a line bundle over a $k$-dense variety.

In the remainder of this section we will study actions of
quasi-elementary groups on (mostly) affine $k$-dense varieties. The
point of departure will be the following fundamental theorem of Kempf:

\begin{theorem}[\cite{Kempf}*{Cor.\ 4.3}]\label{thm:Kempf}

  Let $G$ be a connected reductive $k$-group acting on an affine
  $k$-variety $X$. Let $x\in X(k)$ be a $k$-rational point and let
  $Y\subseteq X$ be a closed $G$-invariant $k$-subset with
  $\overline{Gx}\cap Y\ne\leer$. Then there exists a $k$-homomorphism
  $\lambda:\G_m\to G$ such that $\lim_{t\to0}\lambda(t)x$ exists and
  lies in $Y(k)$.

\end{theorem}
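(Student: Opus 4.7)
The plan is to combine the Hilbert--Mumford existence of a destabilizing cocharacter over the algebraic closure $K$ with Kempf's theorem on optimal $1$-parameter subgroups, and then to exploit the uniqueness of the resulting parabolic in order to descend the cocharacter to $k$. As a preliminary reduction, since $X$ is affine with reductive $k$-group action, the coordinate ring $k[X]$ is a locally finite $G$-module; I would pick a finite-dimensional $G$-stable $k$-subspace generating $k[X]$ as an algebra and thereby obtain a closed $G$-equivariant $k$-embedding $X\hookrightarrow V$ into a $G$-module $V$ defined over $k$. One may then replace $Y$ by its closed image in $V$ and reason throughout inside $V$.

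Over $K$, choose $y\in\overline{Gx}(K)\cap Y(K)$. The classical Hilbert--Mumford argument, applied via the valuative criterion of properness and the Cartan decomposition of $G(K(\!(t)\!))$, produces a cocharacter $\lambda_0:\G_m\to G_K$ such that $\lim_{t\to0}\lambda_0(t)x$ exists and lies in $Y(K)$. To upgrade $\lambda_0$ to something canonical, I would invoke Kempf's optimality: fix a $\cG$- and $W$-invariant positive-definite integral quadratic form $q$ on the cocharacter lattice of a maximal $k$-torus of $G$, and on the set of cocharacters taking $x$ into $Y$ consider the length-normalized instability functional $\mu(\lambda):=a(\lambda,x)/q(\lambda)^{1/2}$, where $a(\lambda,x)$ is the smallest $\lambda$-weight in $V$ occurring in $x$ modulo the ideal of $Y$. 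Kempf shows that $\mu$ attains a positive maximum and that for any optimal $\lambda$ the associated parabolic
\[
P(\lambda)=\{g\in G\mid\lim_{t\to 0}\lambda(t)g\lambda(t)^{-1}\ \text{exists}\}
\]
depends only on $x$ and $Y$, while $R_uP(\lambda)$ acts transitively on the set of optimal $\lambda$ of fixed length.

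Because $q$, $x$, and $Y$ are all defined over $k$, this uniqueness forces $P(\lambda)$ to be $\cG$-stable, hence a parabolic $k$-subgroup of $G$. I would then fix a Levi $k$-decomposition $P(\lambda)=L\cdot R_uP(\lambda)$ (Borel--Tits), conjugate an optimal $\lambda$ by a suitable element of $R_uP(\lambda)(K)$ into $L$ (which preserves the limit condition since the conjugating element lies in $P(\lambda)$), and use $H^1$-vanishing for the unipotent $k$-group $R_uP(\lambda)$ together with the $L(k)$-conjugacy of maximal $k$-split tori in $L$ to descend $\lambda$ to a cocharacter $\lambda':\G_m\to L$ defined over $k$ inside the same $R_uP(\lambda)(K)$-orbit. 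Since $x\in X(k)$ and $\lambda'$ is a $k$-morphism, the limit $\lim_{t\to0}\lambda'(t)x$ is a $k$-rational point of $X$; lying in $Y(K)\cap X(k)=Y(k)$, it is the required point. The hard part is precisely this last descent: Kempf's uniqueness pins down the parabolic but not the cocharacter itself, so one must simultaneously invoke the Borel--Tits theory of parabolic $k$-subgroups and cohomological vanishing in order to realize $\lambda$ over $k$ without sacrificing the limit condition.
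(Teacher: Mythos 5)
The paper does not prove this statement; it is quoted verbatim from Kempf's ``Instability in invariant theory,'' Cor.\ 4.3, and used as an imported black box. So there is no ``paper's own proof'' to compare against. What you have written is essentially a faithful sketch of Kempf's own argument from the cited reference, and as such it is correct in its overall architecture: reduce to a linear representation $V$ over $k$; invoke the Hilbert--Mumford criterion over $K$ to get some destabilizing cocharacter; normalize by a $\cG$-invariant, $W$-invariant length to obtain the optimal (Kempf--Rousseau) class; use Kempf's uniqueness to see that the instability parabolic $P(\lambda)$ is Galois-stable and hence a parabolic $k$-subgroup; and then descend the cocharacter to $k$.

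One remark on the last step, which is slightly overcomplicated as written. You do not need to invoke both $H^1$-vanishing \emph{and} the $L(k)$-conjugacy of maximal $k$-split tori. Once you have a Levi $k$-subgroup $L\subseteq P(\lambda)$ (Borel--Tits), Kempf's theorem gives you a \emph{unique} optimal cocharacter $\lambda_L$ with centralizer equal to $L$, obtained by conjugating any optimal $\lambda$ by the unique $u\in R_uP(\lambda)(K)$ taking $L(\lambda)$ to $L$. For $\gamma\in\cG$, the cocharacter ${}^\gamma\lambda_L$ is again optimal (all the defining data $x$, $Y$, $q$ are $\cG$-invariant) and has centralizer ${}^\gamma L=L$; by uniqueness ${}^\gamma\lambda_L=\lambda_L$, so $\lambda_L$ is already defined over $k$. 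In other words, the Galois descent is automatic from the uniqueness and the existence of a $k$-rational Levi; the cohomological vanishing you mention is implicit in Borel--Tits but does not need to be invoked separately for $\lambda$ itself, and no discussion of split tori in $L$ is required. Apart from this, the definition of $a(\lambda,x)$ as the ``smallest $\lambda$-weight in $x$ modulo the ideal of $Y$'' would need to be made precise (Kempf measures the order of vanishing of the generators of $I(Y)$ along $t\mapsto\lambda(t)x$), but that is a matter of exposition and not a gap in the argument.
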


This implies:

\begin{corollary}[\cite{Kempf}*{Remark after Cor.\
    4.4}]\label{cor:Kempf}

  Let $G$ be an anisotropic group acting on a quasi-affine
  $k$-variety $X$. Then the $G$-orbit of any $k$-rational point $x\in
  X(k)$ is closed in $X$.

\end{corollary}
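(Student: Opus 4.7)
My plan is to reduce to the affine setting so that \cref{thm:Kempf} becomes applicable, and then use anisotropy to force the one-parameter subgroup produced by Kempf to be trivial. For the reduction, I would invoke the classical fact that a quasi-affine $k$-variety on which a linear algebraic $k$-group $G$ acts admits a $G$-equivariant locally closed $k$-embedding $X \hookrightarrow V$ into a finite-dimensional $G$-module $V$ over $k$. Since $X$ then carries the subspace topology from $V$, closedness of $Gx$ in $V$ is equivalent to closedness of $Gx$ in $X$.

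Suppose for contradiction that $Gx$ is not closed in $X$, and hence not closed in $V$. Being an orbit under an algebraic group, $Gx$ is open in its closure $Z:=\overline{Gx}^{V}$, so $Y:=Z\setminus Gx$ is a non-empty closed $G$-invariant $k$-subset of $V$ satisfying $Z\cap Y=Y\ne\leer$. Applying \cref{thm:Kempf} to the $G$-action on the affine $k$-variety $V$ with the $k$-subset $Y$ produces a $k$-homomorphism $\lambda\colon \G_m\to G$ such that $\lim_{t\to 0}\lambda(t)x$ exists and lies in $Y(k)$.

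To conclude, the image of such a $\lambda$ is a split $k$-subtorus of $G$, so anisotropy of $G$ forces $\lambda$ to be the trivial morphism. Then $\lim_{t\to 0}\lambda(t)x=x\in Gx$, contradicting $Gx\cap Y=\leer$. Therefore $Gx$ is closed in $X$. The only substantial step is the initial equivariant embedding into an affine ambient variety (this is where \emph{quasi-affine} is used, since \cref{thm:Kempf} is stated only for affine $X$); once that is in place, the argument is a short combination of Kempf's theorem with the definition of anisotropic.
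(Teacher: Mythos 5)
Your proof is correct and follows essentially the same route as the paper: embed $X$ as a locally closed subvariety of an affine $G$-variety (the paper uses an arbitrary affine $G$-variety rather than a $G$-module, but that is immaterial), apply Kempf's theorem to the boundary $\overline{Gx}\setminus Gx$, and use anisotropy to force the resulting one-parameter subgroup to be trivial. One small imprecision: closedness of $Gx$ in $V$ implies, but is not equivalent to, closedness of $Gx$ in the locally closed subset $X$; your argument only uses the correct implication (via its contrapositive), so the proof is unaffected.
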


\begin{proof}

  Since $X$ is quasi-affine, it is isomorphic to a locally closed
  subvariety of an affine $G$-variety $\Xq$. Replacing $X$ by $\Xq$,
  we may assume that $X$ is affine. Now apply Kempf's theorem to
  $Y:=\overline{Gx}\setminus Gx$ and observe that any homomorphism
  $\lambda:\G_m\to G$ is trivial since $G$ is anisotropic. We conclude
  that $Y$ is empty, i.e., $Gx$ is closed.
\end{proof}

Rosenlicht has proved in \cite{Ros2} that also orbits of unipotent
groups in quasi-affine varieties are closed. The following can be
regarded as a combination of both cases:

\begin{theorem}\label{MNquotient}

  Let $H=MAN$ be a quasi-elementary group acting on a $k$-dense
  quasi-affine variety $X$. Then there is an affine $H$-variety $Y$
  with $MN$ acting trivially, an $H$-equivariant morphism $\pi:X\to
  Y$, and an $H$-stable open dense subset $Y_0\subseteq Y$ such that
  for all $y\in Y_0$ the fiber $X_y$ is an $MN$-orbit.

\end{theorem}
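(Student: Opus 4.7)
The plan is to construct $Y$ as $\Spec R$ for a finitely generated $H$-stable subalgebra $R\subseteq k[X]^{MN}$, and to identify $Y_0$ via Rosenlicht's generic quotient theorem. The main obstacle will be producing enough $MN$-invariant regular functions on $X$, for which I expect \cref{thm:Kempf} to be decisive.

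First I would reduce to the case $X$ affine. Since $X$ is quasi-affine and $H$-acts, $k[X]$ is a locally finite rational $H$-module whose elements separate points of $X$; a suitable finite-dimensional $H$-stable subspace $V\subseteq k[X]$ yields an $H$-equivariant locally closed immersion $X\hookrightarrow V^*$, and replacing $X$ by its Zariski closure in $V^*$ reduces to the affine case (any conclusion for the closure restricts to $X$ after intersecting $Y_0$ with $\pi(X)$). For affine $X$, note that $MN$ is normal in $H$: indeed $N$ is normal in $H$, while $A$ centralizes $M$ and normalizes $N$, hence normalizes $MN$. Therefore $A$ acts on the $H$-stable subring $k[X]^{MN}$, and decomposes it into $A$-eigenspaces. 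I then seek a finitely generated $A$-stable subalgebra $R\subseteq k[X]^{MN}$ whose fraction field equals $k(X)^{MN}$. Once such $R$ is in hand, set $Y:=\Spec R$ and let $\pi\colon X\to Y$ be the morphism dual to the inclusion $R\hookrightarrow k[X]$; this gives an affine $H$-variety on which $MN$ acts trivially and an $H$-equivariant morphism. A transcendence-degree count shows that the generic fiber of $\pi$ has dimension equal to that of a generic $MN$-orbit, and Rosenlicht's generic quotient theorem then produces a dense open $U\subseteq X$ on which $\pi|_U$ is a geometric $MN$-quotient; taking $Y_0$ to be the $H$-saturation of $\pi(U)$ completes the construction.

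The central difficulty is producing $R$, equivalently, realising every element of $k(X)^{MN}$ as a ratio of $MN$-invariant regular functions. Because $MN$ is not reductive (its unipotent radical $N$ is nontrivial) this is not formal. My strategy combines three ingredients: the Reynolds operator for the reductive group $M$, which extracts $M$-invariants from arbitrary regular functions; Rosenlicht's closedness of $N$-orbits on quasi-affine varieties, which furnishes $N$-semi-invariants separating generic $N$-orbits; and \cref{thm:Kempf} applied to the reductive group $MA$ acting on $X$. The third ingredient is decisive because $M$ is anisotropic and hence admits no nontrivial $k$-cocharacter, so any destabilising one-parameter subgroup Kempf produces must lie in $A$. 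Together with the $k$-density of $X$, this lets me clear denominators in $k(X)^{MN}$ by multiplying by appropriate $A$-weight vectors, thereby producing the required $MN$-invariants. I expect this last step, upgrading $MN$-semi-invariants to $MN$-invariants using the $A$-rationality forced by Kempf, to be the technical heart and main obstacle of the proof.
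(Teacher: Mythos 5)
Your proposal correctly isolates the three ingredients — Rosenlicht for $N$, reductivity of $M$ (Reynolds/Hilbert), and Kempf — but the specific plan for assembling them has a gap, and the paper's own organization avoids exactly the difficulty you flag as the ``technical heart.''

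The paper does \emph{not} attempt to exhibit $k(X)^{MN}$ as a fraction field of $MN$-invariant regular functions in one shot. It quotients in two stages. First, because $N$ is unipotent, the denominator ideal of any finite set of generators of $k(X)^N$ contains a nonzero $N$-invariant, so one obtains a finitely generated $H$-stable subalgebra $\cA\subseteq k[X]^N$ with $\mathrm{Frac}(\cA)=k(X)^N$ and an $H$-equivariant dominant $\pi_N\colon X\to Z:=\Spec\cA$ whose generic fibers are closed $N$-orbits (Rosenlicht). Second, it takes the \emph{categorical} quotient $Z\to Y:=Z\mod M=\Spec\cA^M$, which is automatically affine by Hilbert's finiteness theorem, and uses \cref{cor:Kempf} (Kempf applied to the anisotropic group $M$, not to $MA$) together with $k$-density of $Z$ to conclude that generic $M$-orbits in $Z$ are closed, hence are exactly the generic fibers of $\pi_M$. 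No ``clearing of denominators'' into $k[X]^{MN}$ is ever needed.

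Two concrete problems with your version. First, ``multiplying by appropriate $A$-weight vectors'' does not produce $MN$-invariant regular functions: an $A$-weight vector is in general not fixed by $M$ or $N$, so the resulting product need not lie in $k[X]^{MN}$. The valid mechanism would be to pass to $N$-invariants of the denominator ideal (nonzero because $N$ is unipotent) and then show that this $M$-stable ideal inside a finitely generated $M$-stable subalgebra of $k[X]^N$ has a nonzero $M$-invariant. That last step is genuinely nontrivial for reductive $M$ (the Reynolds operator applied to a nonzero $M$-stable subspace can well be zero) and is exactly where Kempf must enter, via separation of the disjoint closed $M$-stable sets ``vanishing locus'' and ``closed orbit $Mz$ of a $k$-rational point.'' Note also that a statement of the form ``every $H$-semi-invariant rational function is a quotient of $H$-semi-invariant regular functions'' appears in the paper as \cref{rationalPinvariant}, but it is deduced \emph{from} \cref{MNquotient}; your plan therefore risks circularity unless you supply the separation argument independently. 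Second, your application of Kempf to $MA$ acting on $X$ gives the weaker conclusion that destabilizing cocharacters lie in $A$, which does not directly yield closedness of anything relevant; the effective form is Kempf applied to $M$ alone, where anisotropy forces the cocharacter to be trivial and hence $Mz$ to already be closed for $z$ a $k$-point — this is \cref{cor:Kempf}, and it is applied to $M$ acting on $Z$, not to $MA$ acting on $X$.

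So: same deep inputs, but the single-step construction of $R\subseteq k[X]^{MN}$ that you propose needs a substantially more careful argument than the one you sketch, while the paper's two-step quotient (explicit by $N$, categorical by $M$) dissolves the difficulty.
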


\begin{proof}

  Choose an open equivariant embedding of $X$ into an affine
  $H$-variety $\Xq$. Since it suffices to prove the assertion for
  $\Xq$ we may assume from the outset that $X$ is affine.

  Now choose generators $r_1,\ldots,r_n$ of $k(X)^N$, the field of
  $N$-invariant rational functions. Since $N$ is unipotent and since
  the ideal
  \[
  \{f\in k[X]\mid fr_1,\ldots,fr_n\in k[X]\}
  \]
  is non-zero, it contains a non-zero $N$-invariant function $f_0$. By
  replacing $r_1,\ldots,r_n$ with $f_0,f_0r_1,\ldots,f_0r_n$ we may
  assume that all the functions $r_i$ are in $k[X]^N$.

  Let $V\subseteq k[X]^N$ be the (finite dimensional) $H$-submodule of
  $k[X]^N$ which is generated by $\{r_1,\ldots,r_n\}$, let
  $\cA\subseteq k[X]^N$ be the (finitely generated) subalgebra which
  is generated by $V$, and let $Z:=\Spec A$ be the variety associated
  to $\cA$. Then $Z$ is an affine $H$-variety with $N$ acting
  trivially. Moreover, the inclusion $A\into k[X]$ induces a dominant
  $H$-morphism $\pi_N:X\to Z$. By construction, this morphism induces
  an isomorphism of fields $\pi_N^*:k(Z)\overset\sim\to k(X)^N$.
  Rosenlicht's theorem \cite{Rosenlicht}*{Thm.~2} implies that the
  generic $N$-orbits are separated by rational $N$-invariants. This
  means that the generic fibers of $\pi_N$ contain open dense
  $N$-orbits. Since orbits of a unipotent group on an affine variety
  are closed (by another theorem of Rosenlicht \cite{Ros2}, see also
  \cite{Borel}*{Prop.\ 4.10}), we see that the generic fibers of
  $\pi_N$ are actually $N$-orbits.

  Now consider the $M$-action on $Z$ and let $\pi_M:Z\to Y:=Z\mod M$
  be the categorical quotient. Observe that $Y$ carries an action of
  $H$ with $MN$ acting trivially. All fibers of $\pi_M$ contain a
  unique closed $M$-orbit. It follows from \cref{cor:Kempf} that the
  orbits $Mz$ with $z\in Z(k)$ are closed. The $k$-density of $X$
  implies that $Z$ is $k$-dense, as well. This implies that the
  generic $M$-orbits in $Z$ are closed. Thus the generic fibers of
  $\pi_M$ are precisely the generic $M$-orbits. Now we take for $\pi$
  the composition $\pi_M\circ\pi_N:X\to Y$.
\end{proof}

From this, we obtain the following generalization of \cref{cor:Kempf}:

\begin{corollary}

  Let $H$ be a quasi-anisotropic group acting on a quasi-affine
  $k$-variety $X$. Then:

  \begin{enumerate}

  \item\label{it:quasi1} The orbit $Hx$ of any $x\in X(k)$ is closed
    in $X$.

  \item\label{it:quasi2} Assume, moreover, that $X$ is $k$-dense. Then
    the generic $H$-orbits are closed in $X$, i.e., there is an
    $H$-stable open dense subset $X_0\subseteq X$ such that all
    $H$-orbits of $X_0$ are closed in $X$.
  \end{enumerate}

\end{corollary}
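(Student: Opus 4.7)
Both parts of the corollary will fall out of \cref{MNquotient}. Since $H$ is quasi-anisotropic, the decomposition from \cref{quasi-factor} reduces to $H=MN$, so whenever \cref{MNquotient} is applied to an $H$-variety the group ``$MN$'' acting trivially on the base is in fact all of $H$ and the generic fibers are single $H$-orbits. Part \ref{it:quasi2} is then immediate: apply \cref{MNquotient} to $X$; the preimage $X_0:=\pi^{-1}(Y_0)$ is open and dense in $X$, and its $H$-orbits are precisely the fibers $X_y$ for $y\in Y_0$, each closed in $X$.

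For \ref{it:quasi1} the plan is to dispense with the $k$-density hypothesis by replacing $X$ with $Z:=\overline{Hx}$ and applying the same theorem to $Z$. The preparatory step is to verify the hypotheses on $Z$: it is quasi-affine as a closed subvariety of the quasi-affine $X$, and irreducible as the closure of the irreducible set $Hx$. To see that $Z$ is $k$-dense I would use that $H$, as an extension of a connected reductive group by a unipotent group over a field of characteristic $0$, is unirational, so that $H(k)$ is Zariski dense in $H$; pushing this through the orbit map $h\mapsto hx$ shows that $H(k)\cdot x\subseteq Z(k)$ is dense in $Hx$, and hence in $Z$.

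Now \cref{MNquotient} applied to $Z$ supplies a dominant $H$-equivariant morphism $\pi:Z\to Y$ with $H$ acting trivially on the irreducible base $Y$. By equivariance $\pi$ is constant on $Hx$, hence on its closure $Z$, and combined with dominance this forces $Y$ to be a single point. Then $Y_0=Y$, and the conclusion of \cref{MNquotient} reads that the unique fiber $Z=\pi^{-1}(Y)$ is a single $H$-orbit, giving $Z=Hx$. The one non-formal step is the $k$-density of $Z$, which is what secures the applicability of \cref{MNquotient}; everything else is a bookkeeping consequence of triviality of the $H$-action on the base and irreducibility of $Z$.
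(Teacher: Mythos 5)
Your proof is correct and follows essentially the same route as the paper: both apply \cref{MNquotient} to $\overline{Hx}$, use density of $H(k)$ in $H$ to secure $k$-density of the orbit closure, and observe that $H$ acting trivially on the dominant image $Y$ forces $Y$ to be a point. The only cosmetic differences are that you derive density of $H(k)$ from unirationality rather than citing \cite{Borel}*{Thm.~18.3}, and you spell out the equivariance step that the paper leaves implicit.
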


\begin{proof}

  \ref{it:quasi1} It is well known that $H(k)$ is dense in $H$
  (\cite{Borel}*{Thm.~18.3}). Therefore, the orbit closure
  $\overline{Hx}$ is $k$-dense. When one applies \cref{MNquotient} to
  $\overline{Hx}$, the variety $Y$ is necessarily a point. Thus
  $\overline{Hx}$ is an $H$-orbit, i.e., $Hx$ is closed.

  \ref{it:quasi2} Since fibers are closed, this follows directly from
  the theorem.
\end{proof}

Next we establish a criterion for the existence of semiinvariants.

\begin{proposition}\label{SemiinvIdeal}

  Let $H$ be a quasi-elementary group acting on a $k$-dense affine
  variety $X$. Let $I\subseteq K[X]$ be a non-zero $H$-stable ideal
  (not necessarily defined over $k$). Then $I$ contains an
  $H$-semiinvariant function (defined over $k$).

\end{proposition}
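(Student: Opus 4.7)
The plan is to prove the proposition in three phases: first reduce to the case where $I$ is defined over $k$; then construct a non-zero $MN$-invariant element of $I\cap k[X]$; and finally extract an $A$-eigenvector from it.

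\emph{Reduction to $I$ defined over $k$.} Because $K[X]$ is Noetherian, $I$ is finitely generated and hence defined over some finite Galois extension $L/k$, so the Galois orbit $\{\gamma(I)\mid\gamma\in\cG\}$ is finite, say $\{I_1,\ldots,I_m\}$. Each $I_i$ is $H$-stable since $H$ is defined over $k$, so $J:=\bigcap_i I_i$ is an $H$-stable and $\cG$-stable ideal of $K[X]$; and it is non-zero because $K[X]$ is a domain, so $\prod_i f_i$ is a non-zero element of $J$ for any choice of non-zero $f_i\in I_i$. By Galois descent $J=J_k\otimes_k K$ for a non-zero $H$-stable ideal $J_k\subseteq k[X]$ contained in $I$. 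So I replace $I$ by $J_k$ and assume $I\subseteq k[X]$.

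\emph{Producing an $MN$-invariant.} Local finiteness of the $N$-action on $k[X]$ combined with the classical fact that a non-zero finite-dimensional module over a unipotent group has non-zero invariants shows that $I^N$ contains a non-zero element $f_0$. Since $M$ normalizes $N$, the finite-dimensional $M$-submodule $V_0\subseteq k[X]$ generated by $f_0$, with basis $f_1,\ldots,f_d$, still lies in $I^N$. Consider the $M$-equivariant evaluation morphism $\phi:X\to V_0^*$ defined by $\phi(x)(f)=f(x)$. Because $X$ is $k$-dense and $V_0\ne0$, some $x\in X(k)$ has $v:=\phi(x)\ne0$ in $V_0^*(k)$. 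Now \cref{cor:Kempf}, applied to $M$ acting on the affine space $V_0^*$, shows that the orbit $Mv$ is closed and therefore disjoint from the closed orbit $\{0\}$. Since the categorical quotient $\Spec k[V_0^*]^M$ separates these two $k$-rational closed orbits, there exists a homogeneous $p\in(\|Sym|^n V_0)^M$ of some positive degree $n$ with $p(v)\ne0$. The element $F:=p(f_1,\ldots,f_d)\in k[X]$ is non-zero (as $F(x)=p(v)\ne0$), it is $M$-invariant by construction, it is $N$-invariant as a polynomial in $N$-invariants, and it lies in $I$ because $p$ has no constant term and each $f_i\in V_0\subseteq I$.

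\emph{Extracting an $A$-eigenvector.} The non-zero ideal $I^{MN}\cap k[X]$ is $A$-stable, and since $A$ is $k$-split it decomposes as a direct sum of $k$-rational $A$-weight spaces. Any non-zero $k$-rational eigenvector is simultaneously $MN$-invariant and $A$-semi-invariant, hence an $H$-semi-invariant in $I$ defined over $k$, as required.

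\emph{Main obstacle.} The crux is the middle phase. Since $M$ is anisotropic, every $k$-rational character of $H$ is trivial on $M$, so an $H$-semi-invariant defined over $k$ is automatically $M$-invariant; yet a finite-dimensional $M$-module typically has no non-zero $M$-invariant, so representation theory alone cannot produce $F$ from $V_0$. What rescues the argument is that $I$ is an \emph{ideal}: this lets us pass to the subalgebra of $k[X]$ generated by $V_0$ while remaining in $I$, and Kempf's closed-orbit theorem for anisotropic groups, combined with the separation of closed $k$-rational orbits by invariants, then supplies the required $M$-invariant polynomial in the $f_i$.
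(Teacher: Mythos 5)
Your proof is correct, and it takes a genuinely different route from the paper. The paper's proof is a quick consequence of \cref{MNquotient}: it looks at the zero locus $Z$ of $I$, projects along the quotient $\pi:X\to Y$ by $MN$, notes that $\pi(Z)$ is not dense, picks an $A$-semiinvariant (automatically $H$-semiinvariant, as $MN$ acts trivially on $Y$) vanishing on $\pi(Z)$, makes it Galois-invariant by multiplying conjugates, pulls back, and uses the Nullstellensatz to land a power of the pull-back inside $I$. You instead bypass \cref{MNquotient} entirely: you first Galois-descend $I$ itself, then climb the chain $N\subseteq MN\subseteq H$ at the level of representations, using the unipotent fixed-vector theorem to get into $I^N$, then Kempf's closed-orbit theorem (\cref{cor:Kempf}) plus separation of closed orbits by invariants to build an $M$-invariant \emph{inside} $I$ (this is where the ideal property is used, to keep the homogeneous polynomial in $I$), and finally the $A$-weight decomposition. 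What the paper's route buys is brevity by packaging the hard work into \cref{MNquotient}; what yours buys is a self-contained proof that makes the mechanism visible (and makes explicit that it is Kempf's theorem, not \cref{MNquotient}, that is really carrying the anisotropic part). Two small cosmetic points: in your final phase $I^{MN}\cap k[X]$ is an $A$-stable subspace (and a module over $k[X]^{MN}$) rather than an ideal, but that is all you use; and it is worth saying explicitly that $A$ normalizes $MN$ with $H/MN$ a split torus, which is why an $MN$-invariant $A$-eigenvector is an $H$-semiinvariant.
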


\begin{proof}

  Let $H=MAN$ be the factorization from \eqref{quasi-factor} and
  $\pi:X\to Y$ be the morphism as in \cref{MNquotient}.  The zero
  locus $Z$ of $I$ in $X$ is an $H$-stable proper closed subset. Its
  image $\phi(Z)$ in $Y$ cannot be dense since otherwise $Z$ would
  contain the generic $MN$-orbits and would therefore coincide with
  $X$. The ideal of all functions on $Y$ which vanish in $\pi(Z)$ is
  non-zero and therefore contains an $A$-semiinvariant function
  $f_0$. Then $f_0$ is also $H$-semiinvariant since $MN$ is acting
  trivially on $Y$. Now replace $f_0$ by the (finite) product of all
  functions of the form ${}^\gamma f_0$ where $\gamma$ runs through
  the Galois group $\|Gal|(K|k)$. This makes $f_0$ being defined over
  $k$, as well.  The pull-back $f_1$ of $f_0$ to $X$ vanishes in
  $Z$. Hilbert's Nullstellensatz asserts that then a power $f=f_1^N$
  is in $I$.
\end{proof}

\begin{corollary}\label{ExistPsemi}

  Let $H=MAN$ be a quasi-elementary group acting on a $k$-dense affine
  variety $X$. Then either $MN$ acts transitively on $X$ or $X$
  carries a non-constant $H$-semiinvariant function defined over $k$.
\end{corollary}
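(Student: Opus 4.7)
The plan is to exploit the affine quotient produced by \cref{MNquotient}: on its target the group $H$ acts essentially through the split torus $A$, where semiinvariants are easy to produce, and then pull back to $X$.

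Assume that $MN$ does not act transitively on $X$. Applying \cref{MNquotient} yields an affine $H$-variety $Y$ on which $MN$ acts trivially, together with an $H$-equivariant morphism $\pi\colon X\to Y$ whose generic fiber is a single $MN$-orbit. The non-transitivity of $MN$ then forces $\dim Y\geq 1$, and $Y$ is $k$-dense because $\pi$ is dominant and $X$ is $k$-dense. Since $N=R_uH$ is normal in $H$ and, inside the elementary Levi $MA$, the central split torus $A$ makes $M$ normal in $MA$, the subgroup $MN$ is normal in $H$ and the $H$-action on $Y$ factors through $H/MN\cong A$.

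Since $A$ is $k$-split, the coordinate ring decomposes over $k$ as $k[Y]=\bigoplus_{\chi\in\Xi(A)}k[Y]_\chi$. I would then argue by a dichotomy. Either some weight space $k[Y]_\chi$ with $\chi\neq 0$ is non-zero, in which case any non-zero $g$ in it is automatically non-constant and is an $H$-semiinvariant with character obtained by lifting $\chi$ through $H\auf A$. Or $k[Y]=k[Y]_0$, meaning that $A$, and hence $H$, acts trivially on $Y$; because $\dim Y\geq 1$ and $Y$ is $k$-dense one has $k[Y]\supsetneq k$, so any non-constant element serves as an $H$-invariant. In either case, $\pi^*g\in k[X]$ is the required non-constant $H$-semiinvariant defined over $k$.

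The one piece that requires small care is the normality of $MN$ in $H$, which is what makes $A$ a genuine quotient group of $H$ and permits the lift of $A$-characters to $H$-characters; once that is in place, \cref{MNquotient} reduces the statement to the elementary fact that a positive-dimensional affine $k$-variety acted on by a $k$-split torus always admits a non-constant $k$-rational eigenfunction.
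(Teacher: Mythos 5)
Your proof is correct, and it is a genuinely different (and somewhat more explicit) route than the one in the paper. The paper's proof is very short: it takes a proper closed orbit $Y\subsetneq X$, observes that its vanishing ideal $I(Y)$ is a nonzero $H$-stable ideal, and invokes \cref{SemiinvIdeal} to extract a $k$-rational $H$-semiinvariant $f\in I(Y)$, which is then non-constant because it vanishes on $Y$. You instead go directly back to \cref{MNquotient} (the very lemma that also powers the proof of \cref{SemiinvIdeal}), pass to the quotient $\pi\colon X\to Y$ on which $MN$ acts trivially and $H$ acts through a split torus, and read off an eigenfunction from the weight decomposition of $k[Y]$. This is a perfectly valid reorganization, and it has one concrete advantage: the paper's argument tacitly needs a \emph{proper closed $H$-stable} subset, i.e.\ it needs $H$ itself not to act transitively on $X$, which is strictly stronger than the hypothesis that $MN$ is not transitive (take $H=A=\G_m$ acting on $X=\G_m$: here $H$ is transitive, $MN=\{1\}$ is not, and $t\in k[X]$ is the desired semiinvariant). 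Your torus-eigenspace dichotomy handles this borderline case directly, so your argument is in fact the more watertight of the two. One small imprecision: $H/MN$ is not $A$ but $A/(A\cap M)$ (a quotient torus isogenous to $A$); this does not affect the argument, since characters of $A/(A\cap M)$ still lift to characters of $H$ that are trivial on $MN$. Also, the appeal to $k$-density of $Y$ is unnecessary in the case $\dim Y\ge 1$: a geometrically irreducible affine $k$-variety of positive dimension always has $k[Y]\supsetneq k$.
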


\begin{proof}

  If $MN$ acts transitively, then clearly all $H$-semiinvariants,
  being $MN$-invariant, are constant. Assume conversely that $MN$ does
  not act transitively and let $Y\subsetneq X$ be a closed orbit. Now
  apply \cref{SemiinvIdeal} to the vanishing ideal of $Y$.
\end{proof}

\begin{corollary}\label{rationalPinvariant}

  Let $H$ be a quasi-elementary group acting on a $k$-dense affine
  variety $X$. Then every $H$-semiinvariant rational function over $k$
  is the quotient of two $H$-semiinvariant regular functions over $k$.

\end{corollary}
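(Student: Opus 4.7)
The plan is to extract a semiinvariant denominator for $f$ by applying \cref{SemiinvIdeal} to the ideal of denominators.

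Let $f\in k(X)$ be an $H$-semiinvariant rational function of weight $\chi$, so that $h\cdot f=\chi(h)f$ for all $h\in H$. I would consider the denominator ideal
\[
  I:=\{a\in K[X]\mid af\in K[X]\}\subseteq K[X].
\]
This ideal is non-zero because $f$ is rational and $X$ is affine, and it is Galois-stable because $f$ is defined over $k$. The key verification is that $I$ is $H$-stable: if $a\in I$, then for any $h\in H$,
\[
  h(a)\cdot f=\chi(h)^{-1}h(a)\cdot h(f)=\chi(h)^{-1}h(af)\in K[X],
\]
using that $K[X]$ is $H$-stable. Hence $h(a)\in I$.

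Having established that $I$ satisfies the hypotheses of \cref{SemiinvIdeal}, I would apply that proposition to obtain a non-zero $H$-semiinvariant $b\in I$ defined over $k$, with some weight $\chi_b$. Set $a:=bf$. By the definition of $I$ we have $a\in K[X]$, and since both $b$ and $f$ lie in $k(X)$, in fact $a\in K[X]\cap k(X)=k[X]$. A direct computation
\[
  h\cdot a=h(b)\,h(f)=\chi_b(h)\chi(h)\cdot bf=(\chi_b\chi)(h)\cdot a
\]
shows that $a$ is an $H$-semiinvariant regular function over $k$ of weight $\chi_b+\chi$. Then $f=a/b$ is the desired representation as a quotient of two $H$-semiinvariant regular functions defined over $k$.

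There is no real obstacle here: the argument is a routine reduction to \cref{SemiinvIdeal}. The only point requiring a small amount of care is the choice of setting for the denominator ideal (working in $K[X]$ rather than $k[X]$), so that \cref{SemiinvIdeal} applies directly and produces a semiinvariant already defined over $k$; the regularity of the resulting numerator $a=bf$ then follows automatically from the equality $K[X]\cap k(X)=k[X]$.
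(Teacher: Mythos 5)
Your proof is correct and takes essentially the same approach as the paper, which applies \cref{SemiinvIdeal} to the denominator ideal $\{h\in k[X]\mid hf\in k[X]\}$; you simply work with the corresponding ideal of $K[X]$ (as \cref{SemiinvIdeal} is stated for $K[X]$-ideals) and spell out the $H$-stability and the regularity of the numerator, which the paper leaves implicit. Note that the Galois-stability you observe is not needed, since \cref{SemiinvIdeal} does not require the ideal to be defined over $k$.
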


\begin{proof}

  Let $f\in k(X)^{(H)}$. Then apply \cref{SemiinvIdeal} to the ideal
  $I=\{h\in k[X]\mid hf\in k[X]\}$.
\end{proof}

For reductive groups one can prove the following refinement of
\cref{SemiinvIdeal}.

\begin{proposition}\label{SemiinvIdealElementary}

  Let $G$ be a connected reductive group acting on an affine variety
  $X$. Let $I\subseteq K[X]$ be a non-zero $G$-stable ideal and let
  $x\in X(k)$ point which is fixed for $G\el$ and which is not
  contained in the zero locus of $I$. Then $I$ contains a
  $G$-semiinvariant function $f\in k[X]$ with $f(x)\ne0$.

\end{proposition}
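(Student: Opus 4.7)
The strategy is to reduce to an action of the elementary quotient $H:=G/G\el$ on the categorical quotient $Y_0:=X/\!\!/G\el$, and there to exploit \cref{cor:Kempf} in order to separate $\bar x=p(x)$ from the zero locus of the induced ideal.

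First, one may assume $I$ is $k$-defined by replacing it with $\prod_{\gamma\in\mathrm{Gal}(L/k)}\gamma I$ for some finite Galois extension $L/k$ over which $I$ is defined: the product is non-zero (as $K[X]$ is a domain), $G$- and $\cG$-stable, contained in the original $I$, and still does not vanish at the Galois-fixed point $x$. Next, since $G\el$ is reductive and normal in $G$, the quotient $p\colon X\to Y_0$ exists over $k$ with $K[Y_0]=K[X]^{G\el}$ and carries an induced $H$-action; set $J:=I\cap K[Y_0]$. Because $x$ is $G\el$-fixed, $\{x\}$ is the unique closed $G\el$-orbit in $p^{-1}(\bar x)$; if the $G\el$-stable closed set $V(I)$ met that fiber, it would contain $\{x\}$, forcing $x\in V(I)$, a contradiction. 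Hence $\bar x\notin p(V(I))=V(J)$, so $J$ is a non-zero $k$-defined $H$-stable ideal of $K[Y_0]$ not vanishing at $\bar x$.

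Decompose the elementary group $H=\bar M\bar A$ with $\bar M$ anisotropic and $\bar A$ a central $k$-split torus. By \cref{cor:Kempf} the orbit $\bar M\bar x$ is closed in $Y_0$, and $\bar M$-stability of $V(J)$ combined with $\bar x\notin V(J)$ gives $\bar M\bar x\cap V(J)=\emptyset$. Both $\bar M\bar x$ and $V(J)$ are $k$-defined closed $\bar M$-stable subsets of the affine $Y_0$, so the standard GIT separation over $k$ (using $k[Y_0]^{\bar M}=k[Y_0/\!\!/\bar M]$) yields an $\bar M$-invariant $g\in k[Y_0]^{\bar M}$ with $g(\bar x)=1$ and $g|_{V(J)}=0$. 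Decompose $g=\sum_\chi g_\chi$ into $\bar A$-weight spaces: since $\bar A$ is $k$-split, every character $\chi$ is $k$-rational and each $g_\chi\in k[Y_0]$; since $\bar A$ is central in $H$, each $g_\chi$ is still $\bar M$-invariant, hence $H$-semi-invariant of weight $\chi$. Linear independence of characters on the $\bar A$-stable set $V(J)$ gives $g_\chi|_{V(J)}=0$ for every $\chi$, and $\sum_\chi g_\chi(\bar x)=1$ gives some $\chi$ with $g_\chi(\bar x)\neq 0$.

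Hilbert's Nullstellensatz then yields $g_\chi^N\in J$ for some $N\ge 1$. Via $K[Y_0]=K[X]^{G\el}\subseteq K[X]$, the function $f:=g_\chi^N\in k[X]^{G\el}\cap I$ is $G$-semi-invariant of weight $N\chi$ (inflated along $G\twoheadrightarrow H$) and satisfies $f(x)=g_\chi^N(\bar x)\neq 0$, completing the proof. The main obstacle is establishing the two disjointness statements $\bar x\notin V(J)$ and $\bar M\bar x\cap V(J)=\emptyset$: they rest respectively on the GIT fiber structure for the reductive normal subgroup $G\el$ and on the closed-orbit property for anisotropic actions provided by \cref{cor:Kempf}. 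The remaining steps—separation of disjoint closed invariant subsets, torus weight decomposition, the Nullstellensatz, and Galois descent on $I$—are standard.
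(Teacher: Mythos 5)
Your proof is correct, and it takes a genuinely different (though closely parallel) route from the paper's. The paper works directly on $X$: it forms $H:=MG_{\mathrm{el}}$ from the factorization $G=MAG_{\mathrm{el}}$, observes $Hx=Mx$ is closed by \cref{cor:Kempf}, separates $Hx$ from $Z=V(I)$ by a function $F$, Galois-averages $F$, raises it to a power to land in $I$, decomposes into $A$-eigenvectors, and picks one nonvanishing at $x$. You instead pass to the GIT quotient $Y_0:=X/\!\!/G_{\mathrm{el}}$, work with the elementary quotient $\bar M\bar A$ acting on $Y_0$, separate $\bar M\bar x$ from $V(J)$ by an $\bar M$-invariant, and then pull back. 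Both arguments rest on the same four pillars (Kempf's closed-orbit corollary, GIT separation of disjoint closed invariant subsets, Galois descent, and the split-torus weight decomposition using centrality of $A$); the essential difference is bookkeeping. Your route buys you explicitness: passing to $Y_0$ first makes the $G_{\mathrm{el}}$-invariance of the final function automatic and makes it transparent that the $\bar A$-weight components remain $\bar M$-invariant, whereas the paper's phrasing (``there is a function $F\in K[X]$ which vanishes on $Z$'') leaves the $H$-invariance of the separating function implicit — it is needed, since otherwise the $A$-eigencomponents of $F$ would not be $G$-semiinvariant. The paper's route is shorter because it avoids the extra quotient and the intermediate ideal $J$. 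Your preliminary Galois reduction of $I$ plays the same role as the paper's later Galois averaging of $F$; both are valid placements for that step.
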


\begin{proof}

  The group $G$ factorizes uniquely as $G=MAG\el$ where $M$ is
  anisotropic and $A$ is a split torus. Put $H:=MG\el$. Then
  $Hx=Mx$ is closed because of \cref{cor:Kempf}. Moreover, it is
  disjoint from $Z$, the zero locus of $I$, since $x\not\in Z$. Thus
  there is a function $F\in K[X]$ which vanishes on $Z$ and which is
  non-zero on $x$. Replacing $F$ by the product of its Galois
  translates, we may assume that $F$ is defined over $k$. Replacing
  $F$ by a suitable power, we may further assume that $F\in I$. Since
  $A$ is a split torus and $I$ is $A$-stable, $F$ decomposes as a sum
  of $A$-semiinvariants $f_{\chi_i}\in I$. At least one of the $f_i$
  will not vanish in $x$. This $f_i$ has all the asserted properties.
\end{proof}

If $P$ is a minimal parabolic subgroup of $G$ then $P$ is
quasi-elementary and all the results above apply to actions of $P$. In
case, the $P$-action actually comes from a $G$-action one can be more
specific about the character of a semiinvariant.

\begin{lemma}\label{lemma:MAsemi}

  Let $G$ be a connected reductive group acting on an affine $k$-dense
  variety $X$ and let $P=MAN$ be a minimal parabolic $k$-subgroup of
  $G$. Let $\chi\in\Xi(A)$ be the character of a $AN$-semiinvariant
  function $f\in k[X]$. Then $\chi^n$ is, for some $n\in\ZZ_{>0}$, the
  character of a $P$-semiinvariant function $\fq\in k[X]$.

\end{lemma}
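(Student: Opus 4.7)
The plan is to package the $M$-translates of $f$ into a finite-dimensional $M$-module inside $k[X]^N$, to find an $M$-invariant in some symmetric power of it, and to push it back to $k[X]$ via the multiplication map. By local finiteness of the $M$-action on $k[X]$, the $M$-submodule $V\subseteq k[X]$ generated by $f$ is finite-dimensional over $k$; since $A$ is central in the Levi $MA=Z_G(A)$ and $M$ normalizes $N$, every $M$-translate of $f$ remains $N$-invariant and $A$-eigen of weight $\chi$, so $V\subseteq(k[X]^N)_\chi$. The multiplication map $\mu\colon S^dV\to k[X]$ is $M$-equivariant, and hence sends $(S^dV)^M$ into the space of functions in $k[X]$ that are $MN$-invariant and $A$-eigen of weight $\chi^d$; any nonzero such function is automatically a $P$-semiinvariant of $A$-character $\chi^d$. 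It therefore suffices to exhibit some $d>0$ and some $p_d\in(S^dV)^M$ with $\mu(p_d)\ne0$.

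To construct $p_d$, I identify $S^dV$ with the space of degree-$d$ polynomial functions on $V^*$ and introduce the $M$-equivariant $k$-morphism $\psi\colon X\to V^*$ defined by $\psi(x)(h):=h(x)$. Using the $k$-density of $X$ together with $f\ne0$, I choose $x_0\in X(k)$ with $f(x_0)\ne0$; then $\xi_0:=\psi(x_0)\in V^*(k)$ is nonzero. Since $M$ acts linearly on the affine variety $V^*$, \cref{cor:Kempf} applied to the anisotropic group $M$ yields that the orbit $M\xi_0$ is closed in $V^*$. It is clearly disjoint from the closed $M$-orbit $\{0\}$, so reductivity of $M$ and the standard GIT separation of disjoint closed orbits over $k$ produce some $p\in k[V^*]^M$ with $p(\xi_0)\ne p(0)$. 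Subtracting the constant term and decomposing by $\G_m$-weight (which commutes with the $M$-action and is $k$-rational) extracts a homogeneous $p_d\in(S^dV)^M$ of some degree $d>0$ with $p_d(\xi_0)\ne0$.

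A direct check on decomposable tensors gives $\mu(\omega)(x_0)=\omega(\xi_0)$ for every $\omega\in S^dV$, so $\fq:=\mu(p_d)$ satisfies $\fq(x_0)=p_d(\xi_0)\ne0$; thus $\fq$ is a nonzero $P$-semiinvariant in $k[X]$ of $A$-character $\chi^d$, and the lemma holds with $n:=d$. The only substantive step is the existence of a positive-degree $M$-invariant $p_d$ on $V^*$ nonvanishing at $\xi_0$, and that is precisely where the anisotropy of $M$ enters, through \cref{cor:Kempf}; the rest is routine linear algebra and GIT.
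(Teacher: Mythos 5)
Your argument is correct, and it takes a genuinely different route from the paper's. Both proofs rest on Kempf's theory of optimal one-parameter subgroups (through \cref{cor:Kempf}, which gives closedness of $M$-orbits of $k$-rational points), but the way it is exploited differs. The paper first reduces to the case that $X$ is a closed cone in an irreducible $G$-module $V^*$ with $f$ a linear form, locates an $AN^-$-fixed line $k\cdot w$ in $X$ with $\ell(w)\ne 0$ via an irreducibility argument, and then applies \cref{ExistPsemi} to the orbit closure $\overline{Gw}$ (which contains $0$, so $MN$ cannot be transitive) to produce a nonconstant $P$-semiinvariant whose restriction to $kw$ exhibits the character $\chi^n$. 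You instead stay inside $k[X]$: you package the problem into the finite-dimensional $M$-module $V=\langle M\cdot f\rangle\subseteq(k[X]^N)_\chi$, note that $A$ acts by the scalar $\chi$ and $N$ trivially so that any element of $(S^dV)^M$ with nonzero image under multiplication gives exactly what is needed, and produce such an invariant by GIT separation of the disjoint closed $M$-orbits $M\xi_0$ and $\{0\}$ in $V^*$. Your version avoids both the cone trick and any appeal to highest-weight theory and is arguably more self-contained; the paper's version connects more tightly with the machinery (\cref{SemiinvIdeal}, \cref{ExistPsemi}) that is reused throughout Section~\ref{sec:quasielementary}. One small thing worth saying explicitly: $V$ (a priori the $M(K)$-span of $M(K)f$) is defined over $k$ by Galois descent since $f$ and $M$ are; this is needed both for $\xi_0=\psi(x_0)\in V^*(k)$ and for the separation to yield $p\in k[V^*]^M$ over $k$. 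Likewise, the reason a nonzero constant-free $M$-invariant on $V^*$ with $p(\xi_0)\ne 0$ exists over $k$ is that $k[V^*]^M\otimes_k K=K[V^*]^M$ and the two closed orbits give distinct $k$-points of the categorical quotient; you say this is standard, which is fair, but it is the one place where the base field matters.
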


\begin{proof}

  We start with a couple of reduction steps. Let $V_0\subseteq k[X]$
  be the $AN$-eigenspace for the character $\chi$. Because $V_0\ne0$,
  by assumption, it contains an irreducible $MA$-submodule $V_1$. Then
  the $G$-module $V:=\<G\cdot V_1\>\subseteq k[X]$ generated by $V_1$
  is irreducible. The inclusion $V\into k[X]$ yields a $G$-morphism
  $X\to V^*$. Let $X_1$ be the closure of its image and let
  $\ell:V^*\to k$ be the evaluation in any fixed $0\ne f\in V_1$. Then
  $\ell$ is a linear $AN$-semiinvariant on $V^*$ with character $\chi$
  which is non-zero on $X_1$. Clearly, it suffices to prove the
  assertion for $X_1$.

  Now let $\Xq:=\overline{\G_mX_1}\subseteq V^*$ be the closed cone
  generated by $X_1$. It carries an action of $\Gq=\G_m\times G$. Then
  $\Pq=M\Aq N$ is a minimal parabolic subgroup of $\Gq$ where
  $\Aq=\G_m\times A$. Moreover, $\ell$ is an $\Aq N$-semiinvariant for
  the character $\overline\chi=(1,\chi)$. Now suppose the assertion is
  true for $\Xq$. Then $\Xq$ carries a $\Pq$-semiinvariant $\fq$ with
  character $(1,\chi)^n=(n,\chi^n)$. Then the restriction $f_1$ of
  $\fq$ to $X_1$ is non-zero because $\fq\ne0$ is homogeneous and
  $\G_mX_1$ is dense in $\Xq$. Then $f_1$ is the desired
  semiinvariant.

  Replacing $X, G$ by $\Xq,\Gq$ we are reduced to the situation that
  $X$ is a closed cone in an irreducible $G$-module $V^*$, that $G$
  contains all scalars and that $f$ is the restriction of a linear
  function $\ell$ on $V^*$.

  Let $N^-\subseteq G$ be the unipotent subgroup which is opposite to
  $N$. Then $PN^-=MANN^-=NMAN^-$ is a dense open subset of $G$.  Let
  $\P(X)\subseteq\P(V^*)$ be the projective variety corresponding to
  $X$. Since $AN^-$ is a split solvable group and $X$ is $k$-dense it
  has a fixed point in $\P(X)$ which means that $X$ contains an
  $AN^-$-fixed line $k\cdot w\ne0$. Suppose that $\ell(Mw)=0$. Then
  $\ell(NMAN^-w)=0$ and therefore $\ell(Gw)=0$ which is impossible
  since $V^*$ is irreducible. Thus there is $g\in M(k)$ with
  $\ell(gw)\ne0$. Replacing $w$ by $gw$ we may assume that
  $\ell(w)\ne0$. But then $A$ acts on $kw$ via the character
  $\chi^{-1}$.

  Let $X_2:=\overline{Gw}\subseteq X$ which is a cone since $G$
  contains the scalars. Because of $0\in X_2$, the group $MN$ can't
  act transitively on $X_2$. Thus \cref{ExistPsemi} implies that $X_2$
  carries a non-constant $P$-semiinvariant $\fS$. Suppose
  $\fS(w)=0$. Then, from $\fS(PN^-w)=0$ we get $\fS(Gw)=0$ and
  therefore $\fS=0$ which is not true. Thus the restriction of $\fS$
  to $kw$ is an $A$-semiinvariant which is homogeneous of $n>0$. Its
  character therefore equals $\chi^n$. Finally, since the restriction
  map $k[X]\to k[X_2]$ splits as a $G$-homomorphism the function $\fS$
  extends to a $P$-semiinvariant $\fq$ on $X$ with the same character.
\end{proof}

\begin{remark}

  This lemma replaces the integration argument in \cite{KKS}*{Thm.\
    2.2} (Local Structure Theorem) in the case $k=\RR$.

\end{remark}

As mentioned in the introduction, the main philosophy of this paper is
to extend the Borel-Tits theory of reductive groups to actions of
reductive groups. We start by defining the analogues of elementary and
anisotropic groups.

\begin{definition}

  Let $H$ be a connected $k$-group acting on a $k$-dense variety
  $X$. Then the action (or simply $X$) is called \emph{elementary} or
  \emph{anisotropic} if the elementary radical $H\el$ or the
  anisotropic radical $H\an$, respectively, acts trivially on $X$.

\end{definition}

Clearly, the action is elementary if and only if all $k$-rational
unipotent elements of $H$ act trivially on $X$. Likewise, the action
is anisotropic if $H_u$ and all split subtori of $H$ act
trivially. Let $P=MAN\subseteq H$ be a minimal parabolic
subgroup. Then it follows from \eqref{eq:H=PHel} or \eqref{eq:H=PHan}
that an elementary $H$-action is the same as an $MA$-action which is
trivial on $MA\cap H\el$. Similarly, \eqref{eq:H=PHan} implies that an
anisotropic $H$-action is the same as an $M$-action which is trivial
on $M\cap H\an$. Observe that every anisotropic action is elementary.

Next we give a representation theoretic characterization of elementary
actions. It will be the key for the proof of the Local Structure
Theorem below.

\begin{corollary}\label{cor:CharacterizationElementaryAction}

  Assume that the action of the connected reductive group $G$ on the
  $k$-dense quasi-affine variety $X$ is \emph{not} elementary. Then
  there is a $P$-semiinvariant $f\in k[X]$ whose character $\chi_f$
  does not extend to a character of $G$.

\end{corollary}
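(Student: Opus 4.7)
I will prove the contrapositive: assuming every $P$-semiinvariant $f\in k[X]$ has character $\chi_f$ that extends to a character of $G$, I deduce the action is elementary. A first reduction replaces $X$ by its affine envelope $\Spec k[X]$ (which carries a $G$-action and has the same coordinate ring as the quasi-affine $X$), so I may assume $X$ is affine.

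The heart is a lemma using only completeness of $G/P$: any $P$-semiinvariant $f\in k[X]^{(P)}_\chi$ with $\chi$ extending to a character $\tilde\chi$ of $G$ is automatically a $G$-semiinvariant of weight $\tilde\chi$. To see this, let $V\subseteq k[X]$ be the finite-dimensional $G$-submodule generated by $f$ (finite-dimensional by local finiteness of algebraic $G$-actions). The morphism $\phi\colon G\to V$, $\phi(g):=\tilde\chi(g)^{-1}(g\cdot f)$, satisfies $\phi(gp)=\phi(g)$ for all $p\in P$ (since $\chi(p)=\tilde\chi(p)$), hence descends to a morphism $G/P\to V$; as $G/P$ is complete and $V$ is affine, this morphism is constant, equal to $\phi(e)=f$. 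Thus $g\cdot f=\tilde\chi(g)f$ for all $g\in G$. Since characters of a reductive group are trivial on $G_{\rm der}\supseteq G\el$, under the standing hypothesis every $P$-semiinvariant in $k[X]$ is $G\el$-invariant.

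To conclude I must show $G\el$ acts trivially on $X$. Suppose not; then $G\el$ acts non-trivially on $k[X]$, and since $k[X]$ is semisimple as a $G$-module (reductive $G$ in characteristic zero), there is a simple finite-dimensional $G$-submodule $V\subseteq k[X]$ over $k$ with $V|_{G\el}$ non-trivial. I will then produce a $k$-rational $P$-semiinvariant $v\in V$, apply the key lemma, and derive a contradiction: if $\chi_v$ extended to $G$, the lemma would make $v$ a $G$-semiinvariant, so $kv$ would be a one-dimensional $G$-submodule of $V$; simplicity of $V$ then forces $V=kv$, so $\dim V=1$ and $V|_{G\el}$ is trivial, contradicting the choice of $V$. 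Thus $\chi_v$ does not extend — contradicting the standing hypothesis, so $G\el$ must act trivially.

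The main obstacle is the production of the $k$-rational $P$-semiinvariant $v$ inside the simple $G$-submodule $V$ with controllable character. Over the algebraic closure $K$ this is immediate via the highest weight vector of $V\otimes K$; the descent to $k$ is the delicate step. The natural tool is \cref{SemiinvIdeal} applied to the non-zero $G$-stable ideal $V\cdot k[X]\subseteq k[X]$, which yields a $k$-rational $P$-semiinvariant $f$ in this ideal; one then argues — by choosing $V$ to be the smallest $G$-submodule containing a $K$-highest weight vector whose weight class is Galois-stable, and by a Galois-averaging argument — that the character of $f$ can be arranged to inherit the non-extension property from the non-triviality of $V|_{G\el}$.
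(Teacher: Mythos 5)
Your key lemma — that a $P$-semiinvariant whose character extends to $G$ is automatically a $G$-semiinvariant, by the completeness of $G/P$ — is correct, is a nice structural observation, and gives the useful reduction that under your standing hypothesis every $P$-semiinvariant is $G\el$-invariant. But you have not closed the crucial gap, and you in fact flag it yourself. Your contradiction scheme needs a $k$-rational $P$-semiinvariant \emph{inside} the simple $G$-module $V$ (so that simplicity of $V$ forces $\dim V=1$). Applying \cref{SemiinvIdeal} to the ideal $V\cdot k[X]$ yields a $P$-semiinvariant $f$ in that ideal, but not in $V$; and if $f\notin V$, your simplicity argument does not apply, and the hypothesis simply tells you $f$ is a $G$-semiinvariant somewhere in $k[X]$, which is no contradiction at all. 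The Galois-averaging suggestion does not repair this: averaging highest weight vectors over the Galois group does not produce eigenvectors for the non-abelian group $P=MAN$ (this is exactly the obstruction created by $M$ being anisotropic, hence having no $k$-rational Borel).

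What is actually needed — and what the paper supplies via \cref{lemma:MAsemi} — is a mechanism to promote an $AN$-semiinvariant (e.g.\ a highest weight vector of $V$ over $K$, whose character $\chi$ satisfies $\langle\chi,\alpha^\vee\rangle>0$ for some root $\alpha$ in $\Lie N$) to a $P$-semiinvariant in $k[X]$ whose character is a positive power $\chi^n$ of $\chi$. That control on the character is essential: it is what propagates the strict positivity $\langle\chi^n,\alpha^\vee\rangle>0$ and therefore rules out extension to $G$. Your proposal never obtains this control, because the $P$-semiinvariant you produce via \cref{SemiinvIdeal} can have an arbitrary character appearing in $V\cdot k[X]$. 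So while the overall contrapositive strategy and the $G/P$-completeness lemma are sound, the proof has a genuine hole precisely where the paper invokes \cref{lemma:MAsemi}; without that ingredient or a substitute for it, the argument does not go through.
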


\begin{proof}

  According to \cite{BorelTits}*{Prop.\ 8.4} every $k$-rational
  unipotent element is conjugate to an element of $N$. This implies
  that the action of $N$ on $X$ is non-trivial. Therefore $k[X]$
  contains a simple $G$-submodule $V$ with a non-trivial
  $N$-action. Choose an $AN$-eigenvector $f\in V$ and let $\chi$ be
  its character. Since $N$ acts non-trivially, highest weight theory
  shows that there is a root $\alpha$ of $A$ in $\Lie N$ such that
  $\<\chi,\alpha^\vee\>>0$. According to \cref{lemma:MAsemi} there is
  a $P$-semiinvariant $\fq\in k[X]$ whose character is
  $\chi_\fq=\chi^n$ for some $n\in\ZZ_{>0}$. Then also
  $\<\chi_\fq,\alpha^\vee\>>0$ which implies that $\chi_\fq$ cannot be
  the restriction of a character of $G$ to $A$.
\end{proof}

\begin{remark}

  Using \eqref{eq:H=PHel} it is easy to see that the converse of the
  corollary holds, as well.

\end{remark}

\section{The local structure theorem}\label{sec:LST}

Before we embark in stating and proving the Local Structure Theorem,
we need to discuss a regularity property for algebraic group actions.

\begin{definition}

  Let $H$ be a linear algebraic group and $X$ an $H$-variety. Then $X$
  is called \emph{locally $k$-linear} if for every $x\in X(k)$ there
  is an $H$-stable open neighborhood $X_0$, a finite dimensional
  representation $V$ of $H$, and an $H$-equivariant embedding
  $X_0\into\P(V)$, where everything is defined over $k$.

\end{definition}

If $H$ is connected and $X$ is normal in every $k$-rational point then
Sumihiro's theorem \cite{Sumihiro} implies that $X$ is locally
$k$-linear. So the condition is rather mild since at least the
normalization of any $H$-variety is locally $k$-linear. On the other
side, local linearity has better functorial properties than normality
since $H$-stable subvarieties of locally linear varieties are locally
linear. In particular, all $H$-stable subvarieties of $\P(V)$ are
locally linear.

We proceed with the main tool of this paper. Its purpose is to reduce
arbitrary $G$-actions to elementary ones.

\begin{theorem}[The $k$-Local Structure Theorem]\label{kLST}

  Let $G$ be a connected reductive $k$-group, let $X$ be a locally
  $k$-linear $G$-variety, and let $Y\subseteq X$ be a $G$-stable,
  $k$-dense subvariety. Then there is a parabolic $k$-subgroup
  $Q\subseteq G$ with Levi decomposition $Q=LU$ and an $L$-stable
  affine $k$-subvariety $R\subseteq X$ such that:

  \begin{enumerate}

  \item\label{it:slice1} The intersection $Y\el:=R\cap Y$ is a
    $k$-dense affine $L$-variety (thus, in particular, non-empty), all
    $L$-orbits in $Y\el$ are closed, and the action of $L$ on $Y\el$
    is elementary.

  \item\label{it:slice2} The morphism
    \[
      U\times R=Q\times^LR\to X:[g,x]\mapsto gx
    \]
    is an open embedding.

  \end{enumerate}

\end{theorem}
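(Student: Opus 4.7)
The plan is to adapt the Local Structure Theorem of \cite{KnopAB} and \cite{KKS} to the present arithmetic setting, with \cref{cor:CharacterizationElementaryAction} playing the role of the integration/positivity argument of \cite{KKS} that is specific to $k=\RR$. First I would fix a $k$-rational point $x_0 \in Y(k)$, which exists because $Y$ is $k$-dense. Local $k$-linearity produces a $G$-stable open neighborhood of $x_0$ equivariantly embedded in some $\P(V)$; passing to the affine cone over its closure inside $V$ reduces to the case that $X$ is an affine $k$-dense $G$-variety, the extra scalar $\G_m$ acquired contributing only $G$-extending characters to the semiinvariant lattice and hence not affecting what follows.

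The second step is to inductively produce a finite family $f_1, \ldots, f_r \in k[X]^{(P)}$ with characters $\chi_1, \ldots, \chi_r \in \Xi(A)$ satisfying: \emph{(a)} the $\chi_i$ are linearly independent modulo characters extending from $G$; \emph{(b)} on the principal open $X_* := \{f_1 \cdots f_r \ne 0\}$, every $P$-semiinvariant in $k[X_*]$ has its character in the $\ZZ$-span of the $\chi_i$ together with characters extending from $G$; and \emph{(c)} no $f_i$ vanishes at $x_0$. As long as the current $G$-action on the localization is not elementary, \cref{cor:CharacterizationElementaryAction} supplies a $P$-semiinvariant with non-extending character, and \cref{SemiinvIdealElementary} lets me arrange simultaneously that it is nonzero at $x_0$. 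Localizing and iterating strictly decreases the rank of the lattice of residual non-extending characters; since this lattice sits inside the finite-rank quotient of $\Xi(A)$ by the image of $\Xi(G)$, the process terminates.

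Let $Q \subseteq G$ be the stabilizer of the collection of lines $\{kf_1, \ldots, kf_r\}$. Since each $kf_i$ is $P$-stable, $Q$ contains $P$ and so is parabolic, and being the stabilizer of $k$-rational data, $Q$ is defined over $k$. Choose a $k$-defined Levi decomposition $Q = LU$. Each $\chi_i$ extends to a character of $Q$ trivial on $U$, so each $f_i$ is $U$-invariant. Set $R := X_*^U$, the scheme-theoretic $U$-fixed locus of $X_*$, and $Y_{\text{el}} := R \cap Y$. The isomorphism $U \times R \overset{\sim}{\to} X_*$ of part (ii) would then follow exactly as in \cite{KnopAB}*{\S 2}: property (b) forces $U$ to act freely on $X_*$ with $R$ as a transversal slice, and inverting the $f_i$ identifies $k[X_*]$ with $k[R] \otimes k[U]$.

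For part (i), the point $x_0$ lies in $Y_{\text{el}}(k)$ by construction, and $k$-density of $Y_{\text{el}}$ follows from $k$-density of $Y$ combined with the trivialization of $X_*$ as a $U$-torsor over $R$. Closedness of $L$-orbits in $Y_{\text{el}}$ follows from \cref{MNquotient} applied to the minimal parabolic $P \cap L$ of $L$ acting on $Y_{\text{el}}$; and elementarity of the $L$-action follows from the maximality (b) together with the converse to \cref{cor:CharacterizationElementaryAction} noted in the remark thereafter: any non-elementary $L$-action on $Y_{\text{el}}$ would yield a $(P\cap L)$-semiinvariant whose character escapes the lattice described in (b). The hardest part will be the inductive step: maintaining (a), (b), and (c) simultaneously through successive localizations requires careful tracking of how $P$-semiinvariants of the original ring restrict to the localizations, and it is precisely the $k$-rationality constraint at $x_0$ — secured by \cref{SemiinvIdealElementary} — that makes the argument more delicate than over an algebraically closed field.
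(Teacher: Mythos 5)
Your construction of the slice is incorrect in a way that is essential, not cosmetic: you set $R := X_*^U$, the $U$-fixed locus, and simultaneously claim that $U$ acts freely on $X_*$ with $R$ as a transversal. These two claims are contradictory — if $U$ acts freely on $X_*$ then $X_*^U = \leer$ — and in fact it is the fixed locus that is generically empty. Take $G = SL_2$, $X = SL_2/T$, realized as the regular semisimple orbit in $\mathfrak{sl}_2$, with its affine cone inside $V = \mathfrak{sl}_2$; here $Q = B$, $U = B_u$, and $f$ is the linear form picking out the lower-left entry $c$. Then $\bigl(\tilde X_f\bigr)^U$ requires $[\fu, M] = 0$ and $c(M) \ne 0$, but $[e, M]$ has lower-left entry $-c(M) \ne 0$, so the fixed locus is empty. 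The correct slice in this example is $\{M : a = 0,\ bc \ne 0\}$, which is a $TU$-transversal but contains no $U$-fixed points. The slice of \cite{KnopAB}*{Thm.\ 2.3} (which you cite) is not the fixed locus either: it is the fiber $m^{-1}(a)$ of the ``moment map'' $m: X_f \to \fg^*$, $m_x(\xi) = \xi f(x)/f(x)$, over a base point $a = m(y)$ stabilized by the Levi $L$; the isomorphism $U \times m^{-1}(a) \to X_f$ comes from $m$ being a $Q$-equivariant fibration onto the single $Q$-orbit $a + \overline{\fu} \cong Q/L$. Your representation-theoretic condition (b) does not by itself produce this geometric transversal.

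The surrounding strategy is otherwise sensible and close in spirit to the paper's: both rely on \cref{cor:CharacterizationElementaryAction} to supply a $P$-semiinvariant with non-extending character and on \cref{SemiinvIdealElementary} to retain a chosen $k$-rational point, and your observation that the ambient lattice of non-extending $A$-characters has finite rank does give termination. The paper organizes the iteration as an induction on $\dim X$, inverting a single $f$ per step and taking the moment-map fiber $X' = m^{-1}(a)$ before applying the hypothesis to the smaller Levi; your ``accumulate all $f_i$ at once'' variant could in principle be made to work, but only if you replace $X_*^U$ with the fiber of a moment map built from the $f_i$'s (equivalently, the common zero locus of $\xi f_i - a(\xi) f_i$ for $\xi \in \fu$), and verify the $Q$-orbit structure of the image as in \cite{KnopAB}. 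Two smaller points: closedness of \emph{all} $L$-orbits in $Y\el$ does not follow immediately from \cref{MNquotient} (which only controls generic orbits); the paper shrinks using a semiinvariant cutting out the non-maximal-dimension locus. And the $k$-density of $Y\el$ rests on the slice isomorphism you have not yet established, so it is likewise conditional.
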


For the proof, we'll need the following lemma.

\begin{lemma}

  Let $X_0\subseteq X$ be a $G$-stable open $k$-subset with
  $Y_0:=Y\cap X_0\ne\leer$. Then the $k$-Local Structure Theorem holds
  for $(X,Y)$ if and only if it holds for $(X_0,Y_0)$.

\end{lemma}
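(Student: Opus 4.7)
The lemma has two directions. The ``if'' direction should be essentially immediate: given $Q = LU$ and $R \subseteq X_0$ satisfying the theorem for $(X_0, Y_0)$, note that $R \subseteq X_0$ forces $R \cap Y = R \cap Y_0$, so condition \ref{it:slice1} transfers verbatim, and the given open embedding $U \times R \into X_0$ composes with the inclusion $X_0 \into X$ to give an open embedding into $X$, yielding \ref{it:slice2} for $(X, Y)$.

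For the ``only if'' direction my plan is to start from data $(Q, L, U, R)$ satisfying the theorem for $(X, Y)$ and to shrink $R$ to a principal affine open $L$-stable subset $R' := R_f \subseteq R \cap X_0$, where $f \in k[R]$ is an $L$-semiinvariant that vanishes on the closed $L$-stable set $R \setminus X_0$ but is non-zero at some chosen point of $Y\el \cap X_0$. To produce such an $f$ I would apply \cref{SemiinvIdealElementary} to the reductive group $L$ acting on the affine variety $R$, with $I \subseteq K[R]$ the $L$-stable vanishing ideal of $R \setminus X_0$ and with $x$ any $k$-rational point of $Y\el \cap X_0$. The hypotheses fit neatly: the $L$-action on $Y\el$ is elementary by \ref{it:slice1}, so $L\el$ acts trivially on $Y\el$ and hence fixes $x$; and since $x \in X_0$, it lies off the zero locus of $I$.

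The one step that requires a genuine argument, and which I expect to be the main obstacle, is producing the point $x$ at all, i.e., showing that $Y\el \cap X_0$ is non-empty and $k$-dense. For non-emptiness I would invoke irreducibility of $Y$: the two subsets $Y \cap X_0 = Y_0$ and $Y \cap UR = UY\el$ are open in $Y$, and both are non-empty (the first by hypothesis, the second because $Y\el$ is $k$-dense and in particular non-empty), so they intersect in the irreducible variety $Y$. Any common point has the form $uy$ with $u \in U$ and $y \in Y\el$, and $G$-stability of $X_0$ then forces $y = u^{-1}(uy) \in Y\el \cap X_0$. Since $Y\el \cap X_0$ is then a non-empty open subset of the $k$-dense variety $Y\el$, it is itself $k$-dense and contains the required $k$-rational point.

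Once $f$ is in hand, the remaining verifications should be routine. The subvariety $R' = R_f$ is affine and $L$-stable because $f$ is an $L$-semiinvariant on the affine $L$-variety $R$; the open embedding $U \times R' \into X_0$ is the restriction of the given $U \times R \into X$ and lands in $X_0$ since $R' \subseteq X_0$ and $X_0$ is $U$-stable; and the three properties of $R' \cap Y_0 = (Y\el)_f$ required by \ref{it:slice1} ($k$-density, closedness of $L$-orbits, elementarity of the $L$-action) all descend from the corresponding properties of $Y\el$, because $R' \cap Y_0$ is a non-empty open $L$-stable subset of $Y\el$.
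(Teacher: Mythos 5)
Your proof is correct and follows essentially the same route as the paper's: shrink the slice by applying \cref{SemiinvIdealElementary} at a $k$-rational point of $Y\el\cap X_0$, which exists because $Y_0$ meets the dense open set $U\cdot Y\el$ in the irreducible variety $Y$. The paper differs only cosmetically, first setting $R'=R\cap X_0$ and noting all slice properties except affineness pass automatically, then cutting down to $R_f$.
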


\begin{proof}

  One direction is trivial: if the $k$-LST holds for $(X_0,Y_0)$ then
  it holds for $(X,Y)$. Assume conversely, that it holds for $(X,Y)$
  and let $R\subseteq X$ be the $L$-stable slice. Put $R':=R\cap X_0$.
  Because $U(R\cap Y)$ is dense and open in $Y$ we see that $R$ meets
  every $U$-stable open subset of $Y$. In particular,
  $R'\cap Y_0\ne\leer$. One readily sees that $R'\subseteq X_0$
  inherits all requirements for a slice from $R\subseteq X$ except
  possibly $R'$ being affine. To force this last property it suffices
  to construct an affine $L$-stable open subset $R_0\subseteq R'$ with
  $R_0\cap Y_0\ne\leer$.

  To do this, we choose a $k$-rational point $y\in R\cap Y_0$. This is
  possible since $R\cap Y$ is $k$-dense. Then
  $y\not\in Z:=R\setminus R'$. Because the $L$-action on $Y$ is
  elementary, the point $y$ is fixed by $L\el$. Then
  \cref{SemiinvIdealElementary} yields an $L$-semiinvariant function
  $f$ on $R$ vanishing on $Z$ and being non-zero in $y$. Now
  $R_0:=\{x\in R\mid f(x)\ne0\}\subseteq R'$ is affine open with
  $R_0\cap Y_0\ne\leer$.
\end{proof}

\begin{proof}[Proof of the $k$-LST]
  
  We start with a number of reductions. First, by the lemma and the
  assumption of local $k$-linearity we may assume that
  $X\subseteq\P(V)$ where $V$ is a finite dimensional $G$-module. Next
  we apply the lemma to $X$ and its closure in $\P(V)$. Then we may
  assume that $X$ is closed in $\P(V)$.

  Now let $\XS\subseteq V$ and $\YS\subseteq V$ be the affine cones of
  $X$ and $Y$, respectively. Put moreover $\XS^*:=\XS\setminus\{0\}$
  and $\YS^*:=\YS\setminus\{0\}$. All these varieties carry an action
  of $\GS:=G\times\G_m$. Assume that the $k$-LST is true for
  $(\GS,\XS,\YS)$. Then it holds for $(\GS,\XS^*,\YS^*)$ by the
  lemma. Moreover, $\XS^*\to X$ and $\YS^*\to Y$ are principal
  $\G_m$-bundles. It follows easily, that all assertions of the
  theorem descend to $(G,X,Y)$. Thus, by replacing $(G,X,Y)$ with
  $(\GS,\XS,\YS)$ we may assume that $X$ is an affine $G$-variety.

  We proceed by arguing by induction on $\dim X$. Assume first that
  the $G$-action on $Y$ is already elementary. Then $Q=L=G$ and $R=X$
  have all required properties except possibly the closedness of the
  $G$-orbits in $Y$. To force this, let $Z\subseteq Y$ be the set of
  all $y\in Y$ whose orbit $Gy$ is not of maximal dimension. This is a
  proper closed $G$-stable subset of $Y$. Moreover, all $G$-orbits in
  $Y\setminus Z$ have the same dimension, thus they are closed. By
  \cref{SemiinvIdealElementary} we can choose a $G$-semiinvariant
  function $\fq$ on $Y$ vanishing on $Z$. Since $k[Y]$ is, as a
  $G$-module, a direct summand of $k[X]$, the function $\fq$ extends
  to a $G$-semiinvariant function $f$ on $X$. Let $X_f$ be the
  non-vanishing set of $f$. Then $Y_f=Y\cap X_f\ne\leer$ and all of
  its $G$-orbits are closed. Thus, we can take $Q=G$ and $R=X_f$ and
  we are done.

  Finally, assume that the action of $G$ on $Y$ is not
  elementary. Then \cref{cor:CharacterizationElementaryAction} implies
  the existence of a $P$-semiinvariant function $\fq\in k[Y]$ whose
  character does not extend to $G$. As above, $\fq$ extends to a
  $P$-semiinvariant function $f\in k[X]$. Let $X_f\subseteq X$ be its
  non-vanishing set. Then $Y_f=X_f\cap Y=Y_\fq\ne\leer$. Let
  $\Qq:=\{g\in G\mid gf\in k^*f\}$ be the normalizer of $f$ in
  $G$. Since $f$ is a highest weight vector, $\Qq$ is a parabolic
  subgroup which acts on the line $kf$ by a dominant character
  $\chi_f$. Moreover, the parabolic $\Qq$ is a proper subgroup of $G$
  because $\chi_f$ does not extend to a character of $G$. Let
  $\Uq=\Qq_u$ be its unipotent radical. From highest weight theory we
  get
  \[\label{eq:chialpha}
    \<\chi_f,\alpha^\vee\>>0\text{ for all roots $\alpha$ in
      $\overline\fu:=\Lie\Uq$}
  \]

  The rest of the argument is almost identical to the proof of
  \cite{KnopAB}*{Thm.\ 2.3}. On the $\Qq$-stable open set $X_f$ one
  can define the $\Qq$-equivariant morphism
  \[\label{eq:moment}
    m:X_f\to \fg^*:x\mapsto m_x
  \]
  where
  \[
    m_x:\fg\to K:\xi\mapsto\frac{\xi f(x)}{f(x)}\,.
  \]
  Choose a $k$-rational point $y\in Y_f$ (possible because of
  $k$-density) and put $a:=m(y)\in\fg^*$. Then the $\Qq$-invariance of
  $f$ implies that the image of $m$ is contained in the affine
  subspace
  \[
    a+\overline\Fq^\perp=a+\overline\fu
  \]
  where in the last equality we used an isomorphism $\fg^*\cong\fg$ to
  identify $\overline\Fq^\perp$ with $\overline\fu$.

  Next, it follows from \eqref{eq:chialpha} that the stabilizer of $a$
  in $G$ is a Levi subgroup $\Lq$ of $\Qq$. It follows moreover, that
  $a+\overline\fu=\Qq a=\Qq/\Lq$ is a single $\Qq$-orbit. In
  conclusion, we got a $\Qq$-equivariant morphism
  $m:X_f\to\Qq/\Lq$. This means that if we define $X':=m^{-1}(a)$ then
  \[\label{eq:QXX}
    \Uq\times X'=\Qq\times^\Lq X'\overset\sim\to X_f:[q,x]\mapsto qx.
  \]
  is an isomorphism.

  Because $\Uq\ne1$ we have $\dim X'<\dim X$. Therefore, we can apply
  the induction hypothesis to $\Lq$ acting on $X'$ and $Y':=X'\cap Y$.
  This yields a parabolic $k$-subgroup $Q'\subseteq\Lq$, a Levi
  decomposition $Q'=L'U'$, and an $L'$-stable affine slice
  $R\subseteq X'$ such that
  \[\label{eq:LSX}
    U'\times R=Q'\times^{L'}R\to X'
  \]
  is an open immersion. Now put
  $Q:=Q'\,\Uq\subseteq\Lq\,\Uq\subseteq G$, a parabolic subgroup of
  $G$. Then combining \eqref{eq:QXX} and \eqref{eq:LSX} we get that
  \[
    Q\times^LR=Q_u\times R=\Uq\times U'\times R\into\Uq\times X'\into
    X
  \]
  is an open immersion.
\end{proof}

Since $Q\times^LR=U\times R$ is affine we get as an immediate
consequence:

\begin{corollary}\label{cor:P-stable-affine}

  Let $Y$ be a $G$-stable $k$-dense subvariety of a locally $k$-linear
  $G$-variety $X$. Then there is a $P$-stable affine open $k$-subset
  $X_0\subseteq X$ with $X_0\cap Y\ne\leer$.

\end{corollary}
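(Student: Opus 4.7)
The plan is to read off the corollary directly from the $k$-Local Structure Theorem, using the fact that the open subset $U \times R$ constructed there is already affine. Concretely, I would apply \cref{kLST} to the pair $(X,Y)$ to obtain a parabolic $k$-subgroup $Q \subseteq G$ with Levi decomposition $Q = LU$ and an $L$-stable affine $k$-subvariety $R \subseteq X$ with $R \cap Y \neq \leer$ such that $Q \times^L R \cong U \times R \to X$ is an open embedding. I would then take $X_0$ to be the image of this embedding.

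The verifications are short. Openness of $X_0$ in $X$ is the content of \ref{it:slice2} of the theorem. Affineness is because $X_0 \cong U \times R$ is a product of the unipotent radical $U$ (an affine space) with the affine variety $R$. The intersection condition $X_0 \cap Y \neq \leer$ follows because $R \subseteq X_0$ and $R \cap Y \neq \leer$ by \ref{it:slice1}. Finally, $X_0$ is $Q$-stable by construction.

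The only real point to check is that $X_0$ is stable under the distinguished minimal parabolic $P$, not just under some conjugate. Since the parabolic $Q$ contains a minimal parabolic $k$-subgroup of $G$ and any two minimal parabolic $k$-subgroups are $G(k)$-conjugate (Borel--Tits), I can replace the triple $(Q,L,R)$ by a $G(k)$-translate to arrange $P \subseteq Q$; the translated $R$ still satisfies all properties of \cref{kLST}, since everything is $G(k)$-equivariant and $Y$ is $G$-stable. Once $P \subseteq Q$, $Q$-stability of $X_0$ implies $P$-stability.

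There is essentially no obstacle here: the corollary is a direct repackaging of the theorem. The only subtlety worth flagging is the conjugation step that aligns $Q$ with the fixed minimal parabolic $P$, which is harmless because $G(k)$ acts on everything and preserves $Y$.
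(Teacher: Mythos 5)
Your proof is correct and follows essentially the same route as the paper, which derives this corollary in one sentence from the observation that $Q\times^LR=U\times R$ is affine. The one point you flag — arranging $P\subseteq Q$ by $G(k)$-conjugation using the Borel--Tits conjugacy of minimal parabolic $k$-subgroups — is exactly the normalization the paper implicitly invokes (and addresses explicitly in \cref{prop:uniqueQ} with the phrase ``assume without loss of generality additionally that $P\subseteq Q$''), so your extra care there is well placed rather than a deviation.
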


The slice $R$ is not unique, but we still have some uniqueness
properties. First, we define for any $k$-dense $G$-variety $X$:
\[
  Q_k(X):=\{g\in G\mid gPx=Px\text{ for $x$ in a dense open subset of
    $X$}\}.
\]
Because of $P\subseteq Q_k(X)$, this is a parabolic $k$-subgroup.

\begin{proposition}\label{prop:uniqueQ}

  In the setting of \cref{kLST} assume (without loss of generality)
  additionally that $P\subseteq Q$. Then $Q=Q_k(Y)$. Moreover, as an
  $L$-variety, the slice $R$ is unique up to a unique $L$-equivariant
  birational isomorphism which is regular in a generic point of
  $Y\el$.

\end{proposition}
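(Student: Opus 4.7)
The plan is to establish the equality $Q = Q_k(Y)$ via two inclusions, and then deduce the uniqueness of $R$ from the principal $U$-bundle structure of $QR \hookrightarrow X$.

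For $Q \subseteq Q_k(Y)$, I aim to show $Pr = Qr$ for every $r \in Y\el$. The assumption $P \subseteq Q$ gives $U = R_uQ \subseteq R_uP = N$, so $P = (P \cap L)U$, and $P \cap L$ is a minimal parabolic $k$-subgroup of $L$ (a smaller one would combine with $U$ to produce a parabolic strictly inside $P$, contradicting minimality). Since the $L$-action on $Y\el$ is elementary, $L\el$ acts trivially and the effective action factors through the elementary quotient $L/L\el$; as elementary groups admit no proper parabolic, $P \cap L$ surjects onto $L/L\el$, whence $(P \cap L) r = L r$ for all $r \in Y\el$. With $U$ normal in $Q$, this yields $Pr = U(P\cap L)r = ULr = Qr$. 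For any $q \in Q$, we then obtain $qPr = qQr = Qr = Pr$, so $Q \subseteq Q_k(Y)$.

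For the reverse inclusion $Q_k(Y) \subseteq Q$, I would exploit the eigenfunction-based construction of $Q$ in the proof of \cref{kLST}: $Q$ appears as the common stabilizer $\bigcap_i \operatorname{Stab}(k f_i)$ of finitely many $P$-semiinvariant lines $kf_i \subseteq k[X]$ produced by the iterative argument. It therefore suffices to show that any $g \in Q_k(Y)$ lies in $\operatorname{Stab}(k f)$ for each such $P$-semiinvariant $f$. The divisor $\operatorname{div}(f)$ is a union of $P$-stable prime divisors---closures of codimension-one $P$-orbits---and any $g \in Q_k(Y)$ preserves these setwise because it preserves generic $P$-orbits. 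Hence $g\cdot f$ and $f$ share the same divisor on $X$, so their ratio is an invertible regular function; restricting to a suitable $P$-stable affine open from \cref{cor:P-stable-affine} (chosen so that regular units are scalars), this forces $g \cdot f \in k^* \cdot f$. The main obstacle I expect here is controlling the unit group of the ambient ring; this is the step that requires the careful choice of $P$-stable affine chart and a clean separation between $P$-semiinvariants and rational $P$-invariants, analogous to the analysis in \cref{lemma:MAsemi} and \cref{rationalPinvariant}.

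For the uniqueness of $R$: given two valid slices $R_1, R_2$ for the same $(Q,L)$, the isomorphisms $U \times R_i \cong QR_i \hookrightarrow X$ make each $QR_i$ a principal $U$-bundle over $R_i$, and since $L$ normalizes $U$ this quotient map is $L$-equivariant. The intersection $QR_1 \cap QR_2$ is $U$-stable and dense open in $X$, so its $U$-quotient embeds as an $L$-stable dense open subset of each $R_i$, producing an $L$-equivariant birational isomorphism $R_1 \dashrightarrow R_2$. Uniqueness is immediate (two $L$-equivariant birational maps agreeing on a dense open set coincide), and the map is regular at a generic point of $Y\el$ because $(QR_1 \cap QR_2) \cap Y$ is dense in $Y$ and its $U$-quotient contains a dense open subset of $Y\el \subseteq R_i$.
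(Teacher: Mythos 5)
Your argument for $Q \subseteq Q_k(Y)$ and your uniqueness argument for $R$ are both sound and essentially match the paper's (you spell out the $Q\subseteq Q_k(Y)$ step in more detail than the one-line appeal to \eqref{eq:H=PHel} and the $k$-LST, but the idea is the same, and the $R$-uniqueness via the $U$-quotient of $QR_1\cap QR_2$ is exactly the paper's approach).

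The reverse inclusion $Q_k(Y)\subseteq Q$ is where your argument has a genuine gap, and you flag part of it yourself. Two problems. First, the proof of \cref{kLST} does \emph{not} exhibit $Q$ as a common stabilizer of $P$-semiinvariant lines in $k[X]$: the construction is inductive, and after the first step the semiinvariant $f$ lives on the smaller slice $X'=m^{-1}(a)$, not on $X$. So the description of $Q$ you want to use is not available from the construction. Second, even granting such a description, the claim that any $g\in Q_k(Y)$ permutes the $P$-stable components of $\operatorname{div}(f)$ does not follow: membership in $Q_k(Y)$ only controls $P$-orbits on $Y$, not on all of $X$, so there is no a priori reason for $g$ to preserve a $P$-stable prime divisor of $X$. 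On top of this the unit-group issue you mention is real and unresolved. The paper's proof avoids all of this by a short group-theoretic argument: writing $\Qq=Q_k(Y)$ and taking a generic $y\in Y\el$, one has $\Qq y=Py=Qy$, hence $\Qq/\Qq_y\cong Q/Q_y$, hence $\Qq_y/Q_y\cong\Qq/Q$. Now $Q_y=L_y$ is reductive (the $L$-orbits on $Y\el$ are closed, so Matsushima applies), so $\Qq_y/Q_y$ is affine; but $\Qq/Q$ is projective and connected, hence a point, giving $\Qq=Q$. This is the step you should replace your sketch with.
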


\begin{proof}

  Put, for the moment, $\Qq=Q_k(Y)$. From \eqref{eq:H=PHel} and the
  $k$-LST follows that the generic $P$-orbits and $Q$-orbits on $Y$
  coincide. This shows $Q\subseteq\Qq$.

  For the converse, let $y\in Y\el$ be generic. Then $\Qq y=Py$ by
  assumption and $Py=Qy$ by the $k$-LST. Thus $\Qq/\Qq_y=Q/Q_y$ which
  implies $Q_y=Q\cap\Qq_y$ and $\Qq=Q\Qq_y$. This in turn implies
  $\Qq_y/Q_y=\Qq/Q$. Now observe that $Q_y=L_y$ is reductive since the
  $L$-action on $Y$ is elementary. Therefore $\Qq_y/Q_y$ is an affine
  variety. On the other hand, $\Qq/Q$ is projective which implies
  $\Qq=Q$.

  Finally, let $X_0=QR\subseteq X$. This is an open, dense, $Q$-stable
  subset with $X_0\cap Y\ne\leer$ such that the orbit space $X_0/Q_u$
  exists. Moreover, $R$ is isomorphic to this quotient. If
  $X_1\subseteq X$ is any other such set then $R$ is birationally
  equivalent to $(X_0\cap X_1)/Q_u$ and therefore to $X_1/Q_u$. This
  shows the second assertion.
\end{proof}

One of the most important special cases is that of $Y=X$. We formulate
it explicitly:

\begin{corollary}[Generic Structure Theorem]\label{cor:LSTgeneric}

  Let $X$ be a $k$-dense $G$-variety and let $Q_k(X)=LU$ be a Levi
  decomposition. Then there exists a smooth affine $L$-subvariety
  $X\el\subseteq X$ such that

  \begin{enumerate}

  \item the action of $L$ on $X\el$ is elementary, all orbits are
    closed, and the categorical quotient $X\el\to X\el\mod L$ is a
    locally trivial fiber bundle in the étale topology.

  \item the natural morphism $U\times X\el=Q_k(X)\times^LX\el\to X$ is
    an open embedding.

  \end{enumerate}

  The slice $X\el$ is unique up to a unique $L$-equivariant birational
  isomorphism. More precisely, its field of rational functions can be
  computed as
  \[\label{eq:kXN}
    k(X\el)=k(X)^U=k(X)^N.
  \]

\end{corollary}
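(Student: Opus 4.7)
The plan is to apply the $k$-Local Structure Theorem (\cref{kLST}) with $Y:=X$ and then to shrink the resulting slice to secure the extra smoothness and principal-bundle properties. Since \cref{kLST} requires local $k$-linearity, I first replace $X$ by its smooth locus $X^{\rm sm}$, which is an open $G$-stable $k$-dense subvariety (the Zariski dense set $X(k)$ cannot lie in the proper closed singular locus) and is locally $k$-linear by Sumihiro's theorem. Applying \cref{kLST} produces a parabolic $k$-subgroup $Q\subseteq G$ (which I may arrange to contain $P$), a Levi decomposition $Q=LU$, and an affine $L$-stable subvariety $R\subseteq X^{\rm sm}$ with $U\times R\hookrightarrow X^{\rm sm}$ an open immersion, the $L$-action on $R$ elementary, and all $L$-orbits in $R$ closed.

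Set $X\el:=R$. Smoothness is automatic since $U\times X\el$ is open in the smooth variety $X^{\rm sm}$. To obtain étale local triviality of $\pi\colon X\el\to X\el\mod L$, observe that each fiber of $\pi$ is a single closed $L$-orbit, so by Luna's étale slice theorem it suffices to arrange that all stabilizers are $L$-conjugate. Pick $x_0\in X\el(k)$ by $k$-density; the stabilizer $L_{x_0}$ is reductive (closed orbit) and contains $L\el$ (elementary action). The locus of points whose stabilizer is not $L$-conjugate to $L_{x_0}$ is a proper $L$-stable closed subset of $X\el$, and \cref{SemiinvIdealElementary} supplies an $L$-semiinvariant $f\in k[X\el]$ that vanishes on it with $f(x_0)\ne 0$. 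Replacing $X\el$ by its principal open $L$-stable subset $\{f\ne 0\}$ preserves all previously secured properties, and Luna's slice theorem now yields the desired étale local triviality of $\pi$.

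The identification $Q=Q_k(X)$ and the birational uniqueness of $X\el$ as an $L$-variety are immediate from \cref{prop:uniqueQ} applied with $Y=X$. For the function field formula, freeness of $U$ on the first factor of $U\times X\el$ gives
\[
  k(X)^U=k(U\times X\el)^U=k(X\el).
\]
It suffices to prove $k(X)^N=k(X)^U$. Since $P\subseteq Q$, we have $U=R_uQ\subseteq R_uP=N$, yielding $k(X)^N\subseteq k(X)^U$. Conversely, $N\cap L$ is a closed unipotent $k$-subgroup of $L$; in characteristic zero it is connected with Zariski dense $k$-rational points, all of them unipotent elements of $L(k)$, so $N\cap L\subseteq L\el$. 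Since $L\el$ acts trivially on $X\el$, so does $N\cap L$. Combined with the trivial action of $U$ on $X\el$ and the identity $N=(N\cap L)\cdot U$ obtained from the Levi decomposition $Q=L\ltimes U$ together with $U\subseteq N$, we conclude that $N$ acts trivially on $X\el$, and hence $k(X\el)\subseteq k(X)^N$.

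The main technical obstacle is the étale local triviality of $\pi$; it rests on Luna's étale slice theorem, and the crucial preparation is the shrinking step via \cref{SemiinvIdealElementary}, which forces the stabilizer type to be constant on $X\el$ and thereby brings us into the range of applicability of Luna's theorem.
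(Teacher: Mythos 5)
Your proposal follows essentially the same route as the paper: apply \cref{kLST} to the smooth locus (locally $k$-linear by Sumihiro), cite Luna's étale slice theorem for local triviality of the quotient, deduce $Q=Q_k(X)$ and the birational uniqueness of the slice from \cref{prop:uniqueQ}, and obtain $k(X)^N=k(X)^U$ from the factorization $N=(N\cap L)\cdot U$ together with $N\cap L\subseteq L\el$ acting trivially on $X\el$. Your account is more detailed than the paper's (which merely cites \cite{LunaSlice}*{III.2, Cor.~5}), and that extra detail is where the one imprecision lies: you choose an arbitrary $k$-rational point $x_0\in X\el(k)$ and assert that the set of points whose stabilizer is not $L$-conjugate to $L_{x_0}$ is a \emph{proper closed} subset. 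This is only true when $x_0$ lies on the principal orbit stratum; for $x_0$ on a smaller stratum that set is open dense, not closed, and the shrinking step would collapse $X\el$ to a proper closed subvariety. The fix is immediate --- the principal orbit type locus is open and dense in $X\el$, and $X\el(k)$ is Zariski dense, so one can and should choose $x_0$ there before invoking \cref{SemiinvIdealElementary} --- but as written the argument has a gap. Everything else (the dominance $X^{\rm sm}$ is $k$-dense, $U\subseteq N$ from $P\subseteq Q$, $N\cap L\subseteq L\el$ via density of $k$-rational unipotents in characteristic zero) checks out and matches the paper's intent.
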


\begin{proof}

  Apply \cref{kLST} to the smooth part of $X$ which is locally linear
  by Sumihiro's theorem. This proves everything except for the local
  triviality statement and the second equality in \eqref{eq:kXN}. The
  former is well known consequence of Luna's slice theorem (see
  \cite{LunaSlice}*{III.2, Cor.~5}. The latter is a consequence of the
  fact that $N=(L\cap N)U$ and that $L\cap N$ acts trivially on
  $X\el$. Hence the generic $N$- and $U$-orbits coincide on $X$.
\end{proof}

Because of the factorization \eqref{eq:H=PHel} we don't loose any
information by considering $X\el$ as an $MA$-variety. Since all
$A$-orbits in $X\el$ are closed we may form the orbit space.

\begin{definition}

  The quotient $X\an:=X\el\mod A$ is called the \emph{anisotropic
    kernel of $X$}.

\end{definition}

The anisotropic kernel is, by construction a $k$-dense affine
$M$-variety all of whose orbits are closed of the same dimension. In
particular, all isotropy groups are reductive and, over $K$, conjugate
to each other. In our theory $X\an$ will play a role similar to the
anisotropic kernel of a reductive group in Borel-Tits theory
\cite{BorelTits}. This means that the anisotropic kernel will mostly
serve as a black box.

From \eqref{eq:kXN} we see that
\[
  k(X\an)=k(X)^{AN}\text{ and therefore }k(X\an)^M=k(X)^P.
\]
This provides another way to see that the anisotropic kernel is a
birational invariant for $X$.

Next we introduce an important numerical invariant for the action of
$G$ on $X$. Recall that the \emph{cohomogeneity} of an action is the
codimension of a generic orbit.

\begin{definition}

  The \emph{$k$-complexity of $X$}, denoted $c_k(X)$, is the
  cohomogeneity for the $P$-action on $X$. It also equals the
  transcendence degree of $k(X)^P$ (by Rosenlicht), the transcendence
  degree of $k(X\an)^M$, and the cohomogeneity of the $M$-action on
  $X\an$.

\end{definition}

The most important special case is:

\begin{definition}

  A locally linear, $k$-dense $G$-variety $X$ with $c_k(X)=0$ is
  called \emph{$k$-spherical}. Equivalently, $X$ is $k$-spherical if
  it is locally linear, $k$-dense and $P$ has an open orbit.

\end{definition}

Another characterizing property for $X$ being $k$-spherical is that
$M$ acts transitively on the anisotropic kernel. Observe that in this
case $X\an$ is even unique up to a biregular (as opposed to a
birational) isomorphism.

\begin{remarks}

  \begin{enumerate}

  \item Let $B\subseteq P$ be a Borel subgroup. Then $X$ is
    $K$-spherical if $B$ has a dense orbit in $X$, i.e., if $X$ is
    spherical in the usual sense. In this case, we call $X$
    \emph{absolutely spherical}. Clearly all absolutely spherical
    varieties are $k$-spherical. In particular, a symmetric variety
    $G/H$ with $H$ the fixed point group of an involution is
    $k$-spherical. The arguably most important example is that of
    $G/H$ where $k=\RR$ and $H(\RR)$ is a maximal compact subgroup of
    $G(\RR)$.

  \item A $k$-variety is absolutely spherical if and only if its
    anisotropic kernel is absolutely spherical. Over $k=\RR$, the
    latter have been classified in works of Krämer \cite{Kraemer},
    Mikityuk \cite{Mikityuk}, and Brion \cite{BrionClassification}.

  \item If $G$ is quasi-split then $P=B$. Thus, every $k$-spherical
    variety is absolutely spherical. Otherwise, $X=G/AN$ is an example
    of a $k$-spherical variety which is not absolutely spherical. Its
    anisotropic kernel is $MA/A=M/M\cap A$.

  \end{enumerate}

\end{remarks}

For quasi-affine varieties there is a representation theoretic
criterion which was first proved over algebraically closed fields by
Vinberg-Kimel'feld \cite{VinbergKimelfeld}.

\begin{proposition}

  For a quasi-affine, $k$-dense $G$-variety $X$ the following are
  equivalent:

  \begin{enumerate}

  \item $X$ is $k$-spherical.

  \item For every dominant character $\chi$ of $P$, the simple
    $G$-module with highest weight $\chi$ appears with multiplicity at
    most $1$ in $k[X]$.

  \end{enumerate}

\end{proposition}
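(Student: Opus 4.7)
The plan is to express both conditions in terms of $P$-semi\-invariants on $X$ and link them through the field of $P$-invariant rational functions. First reformulate (i): since $c_k(X)$ equals the transcendence degree of $k(X)^P$ over $k$ (from the very definition of $c_k$), and $k$ is algebraically closed in $k(X)$ by absolute irreducibility, $X$ is $k$-spherical if and only if $k(X)^P=k$. Next reformulate (ii): each nonzero $f\in k[X]^{(P)}_\chi$ generates a finite-dimensional $G$-submodule of $k[X]$ (the $G$-action on $k[X]$ is locally finite since $X$ is quasi-affine) which is simple of highest weight $\chi$, and two semi-invariants on the same line generate the same submodule; this identifies the dimension of $k[X]^{(P)}_\chi$ with the multiplicity of the simple module of highest weight $\chi$ in $k[X]$. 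Hence (ii) is equivalent to $\dim_k k[X]^{(P)}_\chi\le 1$ for every dominant character $\chi$ of $P$.

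With both conditions in this form the equivalence reduces to a short manipulation with quotients. For (i)$\Rightarrow$(ii), given two nonzero $f_1,f_2\in k[X]^{(P)}_\chi$, the ratio $f_1/f_2$ lies in $k(X)^P=k$, so $f_1\in kf_2$ and the semi-invariant space is at most one-dimensional. For (ii)$\Rightarrow$(i), pick $h\in k(X)^P$; by \cref{rationalPinvariant} write $h=f_1/f_2$ with $f_1,f_2\in k[X]^{(P)}$ defined over $k$, and note that $P$-invariance of $h$ forces $f_1$ and $f_2$ to carry the same (necessarily dominant) character $\chi$. Hypothesis (ii) then yields $f_1\in kf_2$ and hence $h\in k$, so $k(X)^P=k$ and $X$ is $k$-spherical.

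The main obstacle is the reformulation of (ii). One has to verify that a $P$-eigenvector of $k[X]$ with dominant character generates an irreducible $G$-submodule whose isomorphism class is pinned down by $\chi$, and that this correspondence is clean enough over the non-closed field $k$ to identify $k$-dimensions with multiplicities. Once this is set up (using Galois descent and the fact that $\chi\in\Xi(P)$ is $k$-rational), the remaining passages are elementary, with \cref{rationalPinvariant} providing the crucial step of writing a rational $P$-invariant as a ratio of two regular $P$-semi-invariants of the same character.
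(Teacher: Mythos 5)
Your proposal is correct and follows essentially the same route as the paper: both reduce $k$-sphericity to $k(X)^P=k$ via Rosenlicht, use \cref{rationalPinvariant} to write a rational $P$-invariant as a ratio of two regular $P$-semiinvariants of the same character, and then exploit the Borel--Tits correspondence between $P$-eigenvectors and simple highest-weight modules (including the one-dimensionality of the $P$-eigenspace of a strongly rational module). The only cosmetic difference is that you first recast (ii) as a bound on $\dim_k k[X]^{(P)}_\chi$, whereas the paper argues directly with pairs of distinct simple submodules and their non-proportional highest-weight vectors, thereby not needing to spell out the dimension/multiplicity identification you flag as the main point to verify.
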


\begin{proof}

  Let $M_1, M_2\subseteq k[X]$ two distinct simple $G$-submodules with
  highest weight $\chi$. Let $f_i\in M_i$ be a highest weight
  vector. Then $f=\frac{f_1}{f_2}$ is a non-constant $P$-invariant
  rational function on $X$. Thus $X$ cannot have an open $P$-orbit.

  Conversely, assume that $X$ does not have an open $P$-orbit. Then by
  Rosenlicht there is a non-constant $P$-invariant rational function
  on $X$. By \cref{rationalPinvariant}, there are $P$-semiinvariants
  $f_1,f_2\in k[X]$ with $f=\frac{f_1}{f_2}$. The character of $f_1$
  and $f_2$ are the same, say $\chi$. Hence the $G$-modules $M_i$
  generated by $f_i$ are distinct and irreducible with highest weight
  $\chi$.
\end{proof}

For $k$-spherical varieties \cref{cor:LSTgeneric} reads as follows:

\begin{corollary}

  Let $X$ be a $k$-spherical variety $G$-variety and $Q:=Q_k(X)=LU$ be
  a Levi decomposition. Then there is a point $x\in X(k)$ such that
  $Qx$ is open and affine in $X$ and the isotropy subgroup satisfies
  $L\el\subseteq Q_x\subseteq L$.

\end{corollary}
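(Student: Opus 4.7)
The plan is to apply the Generic Structure Theorem (\cref{cor:LSTgeneric}) to $X$, which supplies an $L$-stable, $k$-dense affine slice $X\el\subseteq X$ on which $L$ acts elementarily with all orbits closed, together with an open embedding $Q\times^L X\el\cong U\times X\el\hookrightarrow X$. Since $X\el$ is $k$-dense we can pick a $k$-rational point $x\in X\el(k)$, and the entire task reduces to checking the three claims for this one choice of $x$.

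The crucial structural input is that $L$ acts transitively on $X\el$. By definition $c_k(X)=0$ is equivalent to $M$ acting transitively on the anisotropic kernel $X\an=X\el\mod A$, and since the fibres of the quotient $X\el\to X\an$ are precisely the (closed) $A$-orbits, this lifts to $MA$ being transitive on $X\el$: given $y_1,y_2\in X\el$ with projections $\pi(y_1),\pi(y_2)$, first pick $m\in M$ with $m\pi(y_1)=\pi(y_2)$ and then $a\in A$ with $amy_1=y_2$. Because $P\subseteq Q$ and $MA$ is a Levi of $P$, after conjugating the Levi decomposition of $Q$ by an element of $U$ if necessary we may arrange $MA\subseteq L$; consequently $L$ acts transitively on $X\el$ as well.

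With this transitivity in hand, the three claims fall out quickly. For any $x\in X\el(k)$ one has $Qx=ULx=U\cdot X\el$, which is the open image of the embedding and is affine because $U\times X\el$ is. Via the identification $Q\times^L X\el\cong U\times X\el$ the point $x$ corresponds to the class of $(1,x)$, whose $Q$-stabilizer is easily seen to coincide with $L_x$; thus $Q_x=L_x\subseteq L$. Finally, the $L$-action on $X\el$ being elementary means that $L\el$ acts trivially on $X\el$, so $L\el\subseteq L_x=Q_x$, as desired.

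The only substantive step is the transfer from $M$-transitivity on $X\an$ to $L$-transitivity on $X\el$; once this is secured, the remaining verifications are routine bookkeeping with the open embedding furnished by the Generic Structure Theorem.
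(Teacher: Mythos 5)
Your proof is correct and follows the same route the paper intends: the corollary is a direct specialization of the Generic Structure Theorem \cref{cor:LSTgeneric} to the $k$-spherical case, where the key observation is that $L$ (equivalently $MA$) acts transitively on $X\el$ once $M$ is transitive on $X\an$ — a characterization the paper itself records. The remaining verifications ($k$-density of $X\el$ supplying a rational point, $Q_x = L_x$ from the open embedding $U\times X\el\hookrightarrow X$, and $L\el\subseteq L_x$ from elementarity) are exactly the bookkeeping one should do, and you do it correctly.
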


Next we compare the complexity of a variety with that of its
subvarieties.

\begin{proposition}\label{prop:cYcX}

  Let $X$ be a locally linear $k$-dense $G$-variety and let
  $Y\subseteq X$ be a $G$-stable $k$-dense subvariety. Then
  $c_k(Y)\le c_k(X)$ with equality if and only if
  $K(X)^P\subseteq\cO_{X,Y}$ (the local ring of $X$ in $Y$).

\end{proposition}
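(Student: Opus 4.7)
The strategy is to apply the $k$-Local Structure Theorem (\cref{kLST}) to the pair $(X,Y)$ and reduce the comparison of complexities to reductive invariant theory on an elementary slice. First, replace $Y$ by its closure in $X$ (this preserves both $c_k(Y)$ and $\cO_{X,Y}$, which depend only on the generic point of $Y$), and via \cref{cor:P-stable-affine} shrink $X$ to a $P$-stable affine open subset meeting $Y$; both reductions preserve all relevant quantities.

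Apply the $k$-LST to $(X,Y)$ to obtain a parabolic $k$-subgroup $Q=LU\supseteq P$, an affine $L$-stable slice $R$ with $U\times R\hookrightarrow X$ open, and $Y_R:=Y\el=R\cap Y$ on which $L$ acts elementarily with closed orbits. Since $Y$ is $Q$-stable, one has $Y\cap UR=U\times Y_R$. Using $P=(P\cap L)U$ with $U$ acting by translation on the first factor, $k(X)^U=k(R)$ and hence $k(X)^P=k(R)^{P\cap L}$; similarly $k(Y)^P=k(Y_R)^{MA}$, the last equality because $N\cap L\subseteq L\el$ acts trivially on $Y_R$ by elementarity. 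A direct check of ideals in $k[U\times R]=k[U]\otimes k[R]$ shows that for $f\in k(R)$, $f\in\cO_{X,Y}$ iff $f\in\cO_{R,Y_R}$; consequently $K(X)^P\subseteq\cO_{X,Y}$ is equivalent to $K(R)^{P\cap L}\subseteq\cO_{R,Y_R}$. The problem thereby reduces to the analogous statement for $(R,Y_R)$ as $L$-varieties.

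By induction on $\dim X$, interleaving the $k$-LST with the Generic Structure Theorem (\cref{cor:LSTgeneric}) on $(R,Y_R)$ as $L$-varieties — first applying the GST to $R$ to force closed orbits of the Levi on the slice itself, then rerunning the $k$-LST on the pair — one reaches a base case where the acting reductive group, say $H$, is elementary and acts with closed orbits on both $R$ and $Y_R$. Reductivity of $H$ gives a surjection $k[R]^{H}\twoheadrightarrow k[Y_R]^{H}$, producing a closed embedding of irreducible affine varieties $Y_R\mod H\hookrightarrow R\mod H$; closed orbits on both sides yield $c_k(X)=\dim R\mod H$ and $c_k(Y)=\dim Y_R\mod H$. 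The inequality $c_k(Y)\le c_k(X)$ is then immediate, with equality iff the closed embedding is an equality, iff no nonzero $H$-invariant regular function on $R$ vanishes on $Y_R$, iff (using that closed orbits give $k(R)^{H}=\operatorname{Frac}(k[R]^{H})$) $k(R)^{H}\subseteq\cO_{R,Y_R}$, which by the translation above is exactly the condition $K(X)^P\subseteq\cO_{X,Y}$.

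The main technical obstacle is arranging this simultaneous descent: the $k$-LST alone guarantees closed $L$-orbits only on $Y_R$, so one must interleave it carefully with GST reductions on $R$ (and with \cref{cor:P-stable-affine} to keep $Y_R$ inside each successive slice) until both the slice and its subvariety admit an elementary action by a reductive Levi with closed orbits.
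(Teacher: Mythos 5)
Your first step — applying the $k$-LST to $(X,Y)$ and translating the complexity comparison and the local-ring condition to the slice $(R,Y\el)$ for the Levi $L$ — is the same as the paper's opening move, and that translation is correct. But the proposed interleaved induction to reach a base case where $L$ (or some smaller Levi) acts \emph{with closed orbits on both $R$ and $Y\el$} does not work, and this is where the proof breaks.

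The $k$-LST guarantees closed elementary $L$-orbits only on $Y\el=R\cap Y$, not on $R$. To force closed orbits on $R$ you would apply the GST to $R$ as an $L$-variety, replacing $R$ by a smaller open set of the form $U'\cdot R'$. But precisely when $c_k(Y)<c_k(X)$ — the case you need to detect — the generic $L$-orbit of $R$ is strictly larger than the $L$-orbits in $Y\el$, so $Y\el$ sits entirely in the locus of smaller orbits and is \emph{disjoint} from the open set produced by the GST. Concretely, take $G=SL_2$, $X=V$ the standard $2$-dimensional representation, $Y=\{0\}$. The $k$-LST gives $Q=L=G$ and forces $R=V$ (the only $G$-stable affine open containing $0$). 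The $G$-action on $R=V$ is neither elementary nor has closed orbits (the orbit $V\setminus\{0\}$ is not closed), and any $G$-stable shrinking that makes orbits closed throws away $Y\el=\{0\}$. Invoking \cref{cor:P-stable-affine} does not help: that result produces a $P$-stable affine open \emph{meeting} $Y$, but says nothing about closed orbits of the Levi on the slice. Rerunning the $k$-LST on $(R,Y\el)$ also makes no progress: since $L$ already acts elementarily with closed orbits on $Y\el$, one has $Q_k(Y\el)=L$ and the slice stays the same. So the induction never reaches your base case.

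The paper's proof avoids the problem altogether by applying the $k$-LST only once and using the chain $c_k(Y)=\dim(Y\el\mod L)\le\dim(R\mod L)\le c_k(R)=c_k(X)$. The crucial observation is that the last inequality $\dim(R\mod L)\le c_k(R)$ holds \emph{unconditionally} — it is just $\operatorname{trdeg} k[R]^L\le\operatorname{trdeg} k(R)^{P\cap L}$ — so closed orbits on $R$ are never required. Closed elementary orbits are needed only to identify $c_k(Y\el)$ with $\dim(Y\el\mod L)$, and those are exactly what the $k$-LST already provides on $Y\el$. Your base-case analysis (the surjection $k[R]^H\twoheadrightarrow k[Y\el]^H$ and the criterion via vanishing of invariants on $Y\el$) is the same kind of argument as the paper's treatment of the equality case, but it is applied in a situation you cannot reach.
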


\begin{proof}

  We apply \cref{kLST} to $Y\subseteq X$. Since all $L$-orbits in
  $Y\el$ are closed, the morphism
  \[\label{eq:YelRel}
    Y\el\mod L\to R\mod L
  \]
  is a closed embedding. Thus
  \[
    c_k(Y)=c_k(Y\el)=\dim Y\el\mod L\le\dim R\mod L\le c_k(R)=c_k(X).
  \]
  In case of equality, \eqref{eq:YelRel} is an isomorphism. From
  \[
    K(X)^P\subseteq K(X)^{LU}=K(R)^L=K(Y\el)^L=K(Y)^{LU}\subseteq
    K(Y).
  \]
  we see that all $P$-invariant rational functions are regular in
  $Y$. Conversely, $K(X)^P\subseteq K(Y)$ clearly implies
  $c_k(Y)\ge\|trdeg|K(X)^P=c_k(X)$.
\end{proof}

\begin{corollary}\label{cor:sphericalfinite}

  Let $X$ be a $k$-spherical $G$-variety. Then every $k$-dense
  $G$-stable subvariety is $k$-spherical and $X$ contains only
  finitely many of them. In particular, the number of $G$-orbits $Y$
  with $Y(k)\ne\leer$ is finite.

\end{corollary}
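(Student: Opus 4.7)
Part (i) is immediate from \cref{prop:cYcX}: being a $G$-stable subvariety of the locally $k$-linear variety $X$, the variety $Y$ is itself locally $k$-linear (as recorded in the discussion preceding \cref{kLST}), and the proposition yields $c_k(Y)\le c_k(X)=0$, whence $c_k(Y)=0$ and $Y$ is $k$-spherical.

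For the finiteness assertion the plan is noetherian induction on closed $G$-stable subsets $Z\subseteq X$ applied to the auxiliary claim that
\[
\cS(Z):=\{Y\subseteq Z\mid Y\text{ is a $k$-dense $G$-stable subvariety of $X$}\}
\]
is finite. The induction splits into three cases for $Z$. First, if $Z$ is reducible over $K$, its irreducible components are proper closed $G$-stable subsets (each is $G$-stable since the connected group $G$ permutes, hence preserves, the components), and every $Y\in\cS(Z)$, being absolutely irreducible, lies in some component. Second, if $Z$ is irreducible but not $k$-dense, set $Z^\circ:=\overline{Z(k)}$, a proper closed subset of $Z$; since $G(k)\cdot Z(k)\subseteq Z(k)$ and $G(k)$ is Zariski dense in $G$, one checks that $Z^\circ$ is $G$-stable, and every $Y\in\cS(Z)$, being $k$-dense, satisfies $Y\subseteq Z^\circ$. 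Third, if $Z$ is both irreducible and $k$-dense, then $Z(k)$ is Galois-stable and Zariski dense in $Z$, so $Z$ is a $k$-subvariety and lies in $\cS(Z)$; by part (i), $Z$ is $k$-spherical, so $P$—and hence $G$—has an open orbit $Z_0\subseteq Z$, and every other $Y\in\cS(Z)$ lies in the proper closed $G$-stable subset $Z\setminus Z_0$. In each case $\cS(Z)$ is expressed in terms of finitely many $\cS(Z')$ with $Z'\subsetneq Z$ proper closed $G$-stable, so noetherian induction gives finiteness, and specialization to $Z=X$ proves the claim.

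The ``in particular'' statement follows because the assignment $Y\mapsto\overline Y$ from $G$-orbits with $Y(k)\ne\leer$ into $\cS(X)$ is injective: for $y\in Y(k)$ the set $G(k)\cdot y\subseteq Y(k)$ is Zariski dense in $Y$, hence in $\overline Y$, making $\overline Y$ a $k$-dense $G$-stable subvariety; and $Y$ is recovered as the unique open $G$-orbit inside $\overline Y$. The only delicate points in the argument above are verifying that $\overline{Z(k)}$ and the irreducible components of $Z$ are genuinely $G$-stable—both of which follow cleanly from the connectedness of $G$ combined with Zariski density of $G(k)$ in $G$.
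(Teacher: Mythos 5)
Your proof is correct and follows the same strategy as the paper's: reduce to the irreducible components of the complement of the open $G$-orbit. You merely spell out as explicit Noetherian-induction cases (reducible $Z$, non-$k$-dense $Z$) what the paper leaves implicit in its minimal-counterexample phrasing, namely that the minimal bad closed $G$-stable subset is automatically irreducible and $k$-dense.
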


\begin{proof}
  
  The first assertion follows directly from \cref{prop:cYcX}. Suppose
  $X$ contains infinitely many $k$-dense subvarieties. Then there is a
  minimal closed $G$-stable subvariety $Y$ with the same
  property. Since $Y$ is spherical, $P$ and therefore $G$ has a dense
  open orbit $Y^0$. But then one of the irreducible components of
  $Y\setminus Y^0$ would be an even smaller counterexample.
\end{proof}

\begin{corollary}\label{cor:localfinite}

  Let $k$ be a local field and $X$ a $k$-spherical variety. Then the
  number of $G(k)$-orbits in $X(k)$ is finite.

\end{corollary}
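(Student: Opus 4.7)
The plan is to combine the finiteness of $G$-orbits with $k$-rational points (already established in Corollary \ref{cor:sphericalfinite}) with the finiteness of Galois cohomology over local fields, in order to bound each $G$-orbit individually.

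First I would invoke Corollary \ref{cor:sphericalfinite} to reduce to a finite problem: every $G(k)$-orbit in $X(k)$ lies in some $G$-orbit $Y \subseteq X$ with $Y(k) \ne \varnothing$, and there are only finitely many such $Y$. It therefore suffices to show that for each individual $G$-orbit $Y$ with $Y(k) \ne \varnothing$, the quotient $G(k) \backslash Y(k)$ is finite.

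Next, fix such a $Y$ and choose $y_0 \in Y(k)$. Let $H \subseteq G$ be the stabilizer of $y_0$, which is an algebraic $k$-subgroup of $G$. The standard Galois cohomology argument identifies the $G(k)$-orbits in $Y(k) = (G/H)(k)$ with the fibers of the connecting map from $(G/H)(k)$ into $H^1(k, H)$; more precisely, $G(k) \backslash Y(k)$ injects into the pointed set $\ker\bigl(H^1(k, H) \to H^1(k, G)\bigr)$. Hence it is enough to show that $H^1(k, H)$ is finite.

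Finally, I would appeal to the fundamental finiteness theorem for Galois cohomology over local fields of characteristic zero (due to Borel--Serre in the non-archimedean case, and classical in the archimedean case): for any linear algebraic $k$-group $H$ with $k$ local of characteristic zero, the set $H^1(k, H)$ is finite. This finishes the proof. The main conceptual input is really the cohomological finiteness result; everything on the variety side has already been reduced by Corollary \ref{cor:sphericalfinite}, so the only thing to verify carefully is the passage from $G(k)$-orbits in $Y(k)$ to $H^1(k, H)$, which is routine provided $Y = G/H$ is formed correctly (using that $H$ may be disconnected or non-reductive, so one needs the version of the finiteness theorem valid for arbitrary linear $k$-groups).
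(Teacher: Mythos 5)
Your proof is correct and follows essentially the same route as the paper: the paper reduces to finitely many $G$-orbits with rational points via \cref{cor:sphericalfinite} and then cites Borel--Serre (Cor.\ 6.4) for the finiteness of $Y(k)/G(k)$ on each homogeneous piece, which is precisely the Galois-cohomology fact you are re-deriving. (The paper's proof has a harmless typo, citing \cref{cor:localfinite} where it means \cref{cor:sphericalfinite}.)
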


\begin{proof}

  This follows from \cref {cor:localfinite} and
  \cite{BorelSerre}*{Cor.\ 6.4}: if $k$ is local (of characteristic
  $0$) then for any homogeneous $G$-variety $Y$ the orbit set
  $Y(k)/G(k)$ is finite.
\end{proof}

Next we discuss the relationship between $X\el$ and $X\an$. Let
$A_0\subseteq A$ be the kernel of the action of $A$ on $X\el$. Then
$A_k=A_k(X):=A/A_0$ is acting freely on $X\el$ and the quotient map
\[\label{eq:Abundle}
  \pi:X\el\to X\an
\]
is an $M$-equivariant principal bundle for $A_k$. We make this more
precise. The character group of $A_k$ consists of the characters
$\chi_f\in\Xi(A)$ where $f$ is an $A$-semiinvariant rational function
on $X\el$. The Local Structure Theorem implies that these $f$
correspond to $AN$-semiinvariant rational functions on $X$. From this,
we derive another fundamental invariant of $X$:
\[
  \Xi(A_k(X))=\Xi_k(X):=\{\chi_f\in\Xi(A)\mid f\in K(X)^{(AN)}\}.
\]
Since $\chi_f=1$ if $f$ is $AN$-invariant, the group $\Xi_k(X)$ fits
into the short exact sequence
\[\label{eq:short-exact}
  1\to K(X\an)^*\to K(X)^{(AN)}\to\Xi_k(X)\to0.
\]

\begin{definition}

  Let $X$ be a $k$-dense $G$-variety. The \emph{$k$-rank of $X$} is
  \[
    \rk_kX:=\rk\Xi_k(X)=\dim A_k(X)=\dim X\el-\dim X\an.
  \]

\end{definition}

Coming back to \eqref{eq:Abundle}, the algebra $k[X\el]$ decomposes as
an $A_k$-module into isotypic components:
\[
  k[X\el]=\bigoplus_{\chi\in\Xi_k(X)}\cS_\chi.
\]
Moreover, each $\cS_\chi$ is an $M$-line bundle over
$\cR:=\cS_0=k[X\an]$. Since
$\cS_\chi\otimes_\cR\cS_\eta=\cS_{\chi+\eta}$ this induces a
homomorphism
\[
  \Xi_k(X)\to\|Pic|^M(X\an):\chi\mapsto [\cS_\chi]
\]
which encodes the non-triviality of the bundle \eqref{eq:Abundle}.

If $X$ is $k$-spherical the situation simplifies. After fixing a point
$y\in X\an(k)$ we get $X\an\cong M/M_y$. Then for any $m\in M_y$ there
is a unique $a\in A_k$ with $mx=ax$ for all $x\in\pi^{-1}(y)$. This
way, we get a homomorphism
\[
  \phi:M_y\to A_k:m\mapsto a
\]
which is defined over $k$. Using $\phi$, one can recover $X\el$ as
\[
  X\el\cong M\times^{M_y}A_k
\]

\begin{remarks}

  \emph{i)} Since the connected component $M_y^\circ$ is anisotropic,
  $\phi$ is trivial on it. Thus $\phi$ factors through the finite
  group $\pi_0(M_y)$. In particular, $\phi(M_y)$ is finite.

  \emph{ii)} Besides $\Xi_k(X)$ it seems natural to consider the group
  \[
    \Xi_k'(X):=\{\chi_f\in\Xi(A)\mid f\in K(X)^{(P)}\}.
  \]
  It is easy to see that $\Xi_k'(X)$ is the character group of the
  torus $A_k'(X):=A_k(X)/\phi(M_y)$. We won't have use for this group,
  though.

\end{remarks}

We finish this section by stating a comparison result between the
theories over $k$ and $K$. For this, choose a maximal torus
$T\subseteq MA$ which is defined over $k$. Let, moreover $B$ be a
Borel subgroup of $P$ (and therefore of $G$) containing $T$. Unless
$G$ is quasi-split, $B$ won't be defined over $k$. The torus $A$ is
the maximal split subtorus of $T$. Recall the restriction map
$\res_A:\Xi(T)\to\Xi(A)$.

\begin{lemma}

  Let $X$ be a $k$-dense $G$-variety. Then
  \[\label{eq:resXiK}
    \Xi_k(X)=\res_A\Xi_K(X).
  \]

\end{lemma}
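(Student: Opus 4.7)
The inclusion $\res_A\Xi_K(X)\subseteq\Xi_k(X)$ will be straightforward: choose the Borel $B$ so that $T\subseteq B\subseteq P$; then $A\subseteq T\subseteq B$ and $N=R_uP\subseteq R_uB$, so any $\tilde f\in K(X)^{(B)}$ of $T$-character $\tilde\chi$ is automatically $N$-invariant (unipotent semi-invariants are invariants) and $A$-semi-invariant of character $\res_A\tilde\chi$, placing $\tilde f$ in $K(X)^{(AN)}$ and giving $\res_A\tilde\chi\in\Xi_k(X)$.

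For the reverse inclusion $\Xi_k(X)\subseteq\res_A\Xi_K(X)$, I would start from $\chi\in\Xi_k(X)$ and an $f\in K(X)^{(AN)}$ of $A$-character $\chi$. The identity $K(X\el)=K(X)^N$ from \cref{cor:LSTgeneric} lets me view $f$ as an $A$-semi-invariant rational function on the affine $L$-variety $X\el$. Since $A$ is a split torus and $X\el$ is affine, $K[X\el]$ is $\Xi(A)$-graded, and I may write $f=f_1/f_2$ with $f_i\in K[X\el]^{(A)}_{\chi_i}$ and $\chi_1-\chi_2=\chi$ (this is \cref{rationalPinvariant} applied to $H=A$). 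Because $M$ centralizes $A$, the $M$-submodule $V_i\subseteq K[X\el]$ generated by $f_i$, finite-dimensional by local finiteness of the $M$-action on an affine coordinate ring, lies entirely in $K[X\el]^{(A)}_{\chi_i}$. I would then fix a Borel $B_M$ of $M$ over $K$ containing the maximal torus $T\cap M$ and choose a $B_M$-highest weight vector $f_i'\in V_i$, say of $T\cap M$-character $\eta_i$. The function $f_i'$ is thereby $B_MA$-semi-invariant on $X\el$, and since $T=A\cdot(T\cap M)$ its $T$-character $\tilde\chi_i\in\Xi(T)$ satisfies $\res_A\tilde\chi_i=\chi_i$.

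Finally, I set $B:=B_MAN$, a Borel subgroup of $G$ over $K$ with $T\subseteq B\subseteq P$. The remaining step is to transfer $B_MA$-semi-invariance on $X\el$ to $B$-semi-invariance on $X$: via the identification $K(X\el)=K(X)^N$ and the compatibility of the $L$-actions across the open immersion $N\times X\el\hookrightarrow X$ (both follow from the Generic Structure Theorem), the function $f_i'\in K(X\el)$ becomes an $N$-invariant rational function on $X$ that is still $B_MA$-semi-invariant of $T$-character $\tilde\chi_i$; using $B=(B_MA)\cdot N$, this makes $f_i'\in K(X)^{(B)}$. Therefore $\tilde f:=f_1'/f_2'\in K(X)^{(B)}$ has $T$-character $\tilde\chi_1-\tilde\chi_2\in\Xi_K(X)$ restricting to $\chi_1-\chi_2=\chi$, completing the inclusion. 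The only mildly delicate point is this last transfer of semi-invariance from $X\el$ to $X$, but it amounts to unwinding the definitions of the $L$-action on both sides of $K(X\el)=K(X)^N$.
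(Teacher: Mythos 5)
Your proof is correct and follows essentially the same strategy as the paper's: reduce via the Generic Structure Theorem to the $MA$-action on the affine slice $X\el$, write the rational $A$-semi-invariant as a quotient of regular $A$-semi-invariants, and take a $B\cap MA$-highest weight vector in the relevant $A$-eigenspace of $K[X\el]$ (you make the finite-dimensional $M$-submodule and the transfer back to $X$ explicit, but these are precisely the steps the paper's proof is compressing).
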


\begin{proof}

  The inclusion ``$\supseteq$'' follows from the fact that every
  $B$-semiinvariant is also an $AN$-semiinvariant for the restricted
  character. We show the opposite inclusion. Since
  $\Xi_k(X)=\Xi_k(X\el)$ it suffices to prove the statement for $MA$
  acting on $X\el$. Since all $A$-orbits in $X\el$ are closed, every
  rational $A$-semiinvariant on $X\el$ is the quotient of two regular
  $A$-semiinvariants. Thus it suffices to show that for every
  $f\in K[X\el]^{(A)}$ there is $\fq\in K[X\el]^{(B_0)}$ with
  $\chi_f=\res_A\chi_\fq$ where $B_0:=B\cap MA$. For this consider the
  $A$-eigenspace $\cS_\chi:=K[X\el]_\chi$. This space is $MA$-stable
  and non-zero by assumption. Then any highest weight vector
  $\fq\in\cS_\chi$ will be a $B_0$-semiinvariant with the required
  property.
\end{proof}

This result can be rephrased. By construction, the tori $A_k(X)$ and
$A_K(X)$ are quotients of $A$ and $T$, respectively. Then the lemma
means:

\begin{corollary}

  The torus $A_k(X)$ is the image of $A\subseteq T$ in $A_K(X)$.

\end{corollary}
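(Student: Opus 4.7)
The plan is essentially to translate the statement into character lattices and invoke the lemma just proved. Both objects to be compared are naturally quotients of the torus $A$: on the one hand, $A_k(X) = A/A_0$ by its very definition; on the other hand, the image of $A$ under the composition $A \hookrightarrow T \twoheadrightarrow A_K(X)$ is a subtorus of $A_K(X)$ which is also, tautologically, a quotient of $A$. Since the category of (split) tori is anti-equivalent to that of finitely generated free abelian groups via $\Xi$, proving the two quotients coincide amounts to checking that the corresponding sublattices of $\Xi(A)$ are the same.

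First I would identify the character lattice of the image of $A$ in $A_K(X)$. The map $A \to A_K(X)$ is the composition $A \hookrightarrow T \twoheadrightarrow A_K(X)$; dualizing, the induced homomorphism of character groups is
\[
  \Xi(A_K(X)) = \Xi_K(X) \hookrightarrow \Xi(T) \xrightarrow{\res_A} \Xi(A).
\]
Since the character group of the image of a homomorphism of tori is the image of the dual map on characters, the image of $A$ in $A_K(X)$ is the quotient torus of $A$ whose character lattice is $\res_A \Xi_K(X) \subseteq \Xi(A)$.

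Next I would apply the lemma, which gives precisely $\res_A \Xi_K(X) = \Xi_k(X)$. By the very definition of $A_k(X)$, we have $\Xi(A_k(X)) = \Xi_k(X) \subseteq \Xi(A)$. Hence both quotients of $A$ correspond to the same sublattice of $\Xi(A)$, so they are canonically identified as quotient tori of $A$. This gives the claimed identification of $A_k(X)$ with the image of $A$ in $A_K(X)$.

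There is essentially no obstacle here beyond bookkeeping: all the substance lies in the preceding lemma. The only subtlety to get right is the direction of the arrows (image of a torus homomorphism corresponds to image on characters, not kernel), but for the subtorus $A \subseteq T$ this is just the standard dictionary. Hence the proof reduces to two or three lines citing the lemma once the diagram $A \hookrightarrow T \twoheadrightarrow A_K(X)$ is written down.
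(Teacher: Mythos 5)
Your proof is correct and follows exactly the route the paper intends: the corollary is stated as a mere rephrasing of the lemma $\Xi_k(X)=\res_A\Xi_K(X)$ via the antiequivalence between split tori and their character lattices, and that is precisely the bookkeeping you carry out.
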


A slightly weaker result goes as follows. Recall the notation
$\cN(A)=\Hom(\Xi(A),\QQ)$ and put
\[
  \cN_k(X)=\cN(A_k(X))=\Hom(\Xi_k(X),\QQ).
\]
This is a $\QQ$-vector space of dimension $\rk_kX$.

\begin{corollary}\label{cor:NkNK}

  The space $\cN_k(X)$ is the image of $\cN(A)\subseteq\cN(T)$ in
  $\cN_K(X)$.

\end{corollary}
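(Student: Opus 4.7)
The plan is to deduce this directly from the preceding lemma by dualizing. The lemma asserts $\Xi_k(X)=\res_A\Xi_K(X)$, which says that the restriction map $\res_A\colon\Xi(T)\to\Xi(A)$ restricts to a surjection $\Xi_K(X)\auf\Xi_k(X)$. Applying $\Hom(-,\QQ)$, which is exact because $\QQ$ is divisible (hence injective as a $\ZZ$-module), converts this into an injection $\cN_k(X)\into\cN_K(X)$. Exactly in the same way, the surjection $\res_A\colon\Xi(T)\auf\Xi(A)$ yields an injection $\cN(A)\into\cN(T)$, and the inclusion $\Xi_k(X)\subseteq\Xi(A)$ yields a surjection $\cN(A)\auf\cN_k(X)$ (again using that $\QQ$ is injective, so $\QQ$-valued homomorphisms extend from subgroups).

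Second, I would observe that these four maps sit in a commutative square obtained from
\[
\begin{array}{ccc}
\Xi_K(X) & \auf & \Xi_k(X) \\
\cap & & \cap \\
\Xi(T) & \auf & \Xi(A)
\end{array}
\]
by applying $\Hom(-,\QQ)$. Commutativity of this dual diagram says that the composition $\cN(A)\to\cN(T)\to\cN_K(X)$ coincides with $\cN(A)\auf\cN_k(X)\into\cN_K(X)$. Since the first arrow in the latter composition is surjective and the second is injective, the image of $\cN(A)$ inside $\cN_K(X)$ is exactly the subspace $\cN_k(X)$.

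There is essentially no difficulty here; the only point worth double-checking is the exactness of $\Hom(-,\QQ)$ on the relevant short exact sequences of abelian groups, which is immediate from divisibility of $\QQ$. Thus, the corollary reduces to a purely formal rewriting of the lemma on the level of rational cocharacter spaces.
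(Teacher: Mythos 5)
Your argument is correct and is essentially what the paper does: the paper presents this corollary as an immediate consequence of the preceding lemma $\Xi_k(X)=\res_A\Xi_K(X)$ (via the intermediate corollary about tori, which is the same dualization at the level of $A$-points). Your explicit unwinding via $\Hom(-,\QQ)$, noting that surjections dualize to injections by left-exactness and injections dualize to surjections by injectivity of $\QQ$, is exactly the formal content being used.
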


\begin{remarks}

  1. Observe that $T$ is not unique, not up to conjugacy nor even up
  to isomorphy. An exception is the field $k=\RR$ of real numbers
  where $T$ is unique up to conjugation by $M(\RR)$.

  2. The character group $\Xi_k(X)$ can also be defined as
  \[
    \Xi_k(X):=\{\chi_f\in\Xi(A)\mid f\in k(X)^{(AN)}\},
  \]
  i.e., with semiinvariants defined over $k$ instead of $K$. To see
  this, one can again reduce to the case that $G$ is elementary, $X$
  is affine and $f$ is regular. The point is now that since $A$ is
  split, the character $\chi_f$ is defined over $k$ even if $f$
  isn't. Then $k[X]_{\chi_f}\otimes_kK\cong K[X]_{\chi_f}\ne0$ implies
  $k[X]_{\chi_f}\ne0$.

\end{remarks}

\newcommand{\ue}{{\underline e}} \newcommand{\uu}{{\underline u}}
\newcommand{\uv}{{\underline v}}

\begin{example}

  We illustrate the theory with a simple but instructive example. Let
  $D$ be a finite dimensional division algebra over $k$ whose center
  is denoted by $E$. For $n\ge2$ let
  \[
    H_1:=1\times GL(n-1,D)\subset GL(1,D)\times GL(n-1,D)\subseteq
    G:=GL(n,D).
  \]
  Then $X:=G/H_1$ can be realized as follows: let $V=D^n$ be the space
  of column vectors with $G$-action on the left, $g\cdot\uv:=g\uv$ and
  $V^\top$ be the space of row vectors with action
  $g\cdot\uu^\top:=\uu^\top g^{-1}$. Then the map
  $b:V^\top\times V\to D$ with $b(\uu^\top,\uv)=\sum_iu_iv_i$ is
  $G$-invariant. Since $H_1$ is the isotropy group of
  $(\ue_1^\top,\ue_1)$, we get
  \[
    X=\{(\uu^\top,\uv)\in V^\top\times V\mid b(\uu^\top,\uv)=1_D\}.
  \]
  Let $Q\subseteq G$ be the parabolic subgroup fixing the subspaces
  $D\ue_1\subseteq V$ and $\ue_n^\top D\subseteq V^\top$. Then $Q=LU$
  with
  \[
    L=GL(1,D)\times GL(n-2,D)\times GL(1,D)
  \]
  and $U\cong D^{2n-3}$ (as a space). Let $X_0\subseteq X$ be the open
  set of $(\uu^\top,\uv)$ with $\uu_1,\uv_n\in D^\times=GL(1,D)$. Then
  it is easy to see that $U$ acts freely on $X_0$ and that the map
  \[
    \pi:X_0\to D^\times\times D^\times:(\uu^\top,\uv)\mapsto (u_1,v_n)
  \]
  is a quotient by $U$. This shows that $Q_k(X)=Q$ and
  $X\el=D^\times\times D^\times$ with $L$-action
  \[
    (d_1,d_2,d_3)(u_1,v_n)=(u_1d_1^{-1},d_3v_n).
  \]
  Since that action is transitive and elementary we see that $X$ is
  $k$-spherical with $\rk_kX=2$. The anisotropic kernel is
  $X\an=PGL(1,D)\times PGL(1,D)$. On the other side, $X\an$ is
  $K$-spherical if and only if $D=E$ is a field.

\end{example}

\section{Invariant valuations}

Let $X$ initially be any $K$-variety. In this section we study a
particular type of valuations of the field $K(X)$ of $K$-valued
rational functions on $X$. More precisely, for us, a valuation is a
map $v:K(X)\to\QQ\cup\{\infty\}$ satisfying
\begin{enumerate}
\item $v(f_1f_2)=v(f_1)+v(f_2)$,
\item $v(f_1+f_2)\ge\min\{v(f_1),v(f_2)\}$,
\item $v(f)=\infty\Leftrightarrow f=0$,
\item $v(K^*)=0$,
\end{enumerate}
where $f,f_1,f_2$ run through all elements of $K(X)$.  Note, that we
also allow the trivial valuation $v=0$ with $v(f)=0$ for all
$f\ne0$. Two valuations $v_1,v_2$ will be called \emph{equivalent} (
$v_1\sim v_2$) if there is $c\in\QQ_{>0}$ with $v_1=c\,v_2$.

If $X$ is normal then any irreducible divisor $D\subset X$ induces a
valuation $v_D$ by order of vanishing along $D$. More generally, let
$\Xq$ be a normal variety which is birational to $X$. Since
$K(\Xq)=K(X)$, any irreducible divisor $D\subset\Xq$ induces a
valuation $v_D$ of $K(X)$. Valuations which are equivalent to some
$v_D$ are called \emph{geometric}. It is useful to allow $D=X$ with
$v_X:=0$ and call it geometric, as well. Geometric valuations are
characterized by the property that the transcendence degree of their
residue field is $\ge\dim X-1$.

A pair $(\Xq,D)$ with $v\sim v_D$ is called a \emph{model for
  $v$}. The model is unique in the following sense: if $X',D'$ is
another model for $v$ then the composition of birational maps
$\Xq\rat X\rat X'$ is regular in a generic point of $D$, its inverse
is regular in a generic point of $D'$ and it sends $D$ to $D'$.

If $X$ is defined over $k$ then it is easy to see that $\Xq$ can be
chosen to be defined over $k$, as well. If additionally $v$ is
invariant for the action of the Galois group then one can achieve that
$D$ is defined over $k$, as well.

Now assume that a connected linear algebraic group $H$ acts on
$X$. Then $H$ (or rather its group $H(K)$ of rational points) acts on
the set of valuations of $X$. In particular, $v$ is $H$-invariant if
\[
  v(hf)=v(f)\text{ for all $h\in H$ and $f\in K(X)$}.
\]
An infinitesimal version goes as follows: the Lie algebra $\fh$ of $H$
acts on $K(X)$ by derivations. Then a valuation $v$ is $H$-invariant
if and only if
\[\label{eq:invariant}
  v(\xi f)\ge v(f)\text{ for all $\xi\in\fh$ and $f\in K(X)$}
\]
(see \cite{LunaVust}*{3.2.\ Cor.\ 3}).

The correspondence between divisors and valuations works also in the
equivariant setting: if $v$ is an $H$-invariant geometric valuation of
$K(X)$ then there is a normal model $\Xq$ of $X$ equipped with an
$H$-action and an $H$-stable irreducible divisor $D\subset \Xq$ such
that $v\sim v_D$ (\cite{KnopIB}*{Kor.\ 7.2}). Of particular importance
is the case when $X$ is a homogeneous variety. Then an equivariant
model of $X$ is the same as an open embedding $X\into\Xq$, a so-called
\emph{equivariant embedding of $X$}. Moreover, any invariant
irreducible divisor must be an irreducible component of the boundary
$\Xq\setminus X$.

In this paper, only a very special kind of valuations is of interest
for us.

\begin{definition}\label{def:central}

  An invariant valuation of a $k$-dense $G$-variety $X$ is called
  \emph{$k$-central} if it is trivial on the subfield
  $K(X\an)=K(X)^{AN}$. The set of all $k$-central valuations is
  denoted by $\cZ_k(X)$.

\end{definition}

It follows directly from the definition and the short exact sequence
\eqref{eq:short-exact} that a $k$-central valuation induces a
homomorphism
\[\label{eq:lambdav}
  \lambda_v:\Xi_k(X)\to\QQ:\chi_f\mapsto v(f),
\]
from which we get a map
\[\label{eq:ZtoN}
  \iota_k:\cZ_k(X)\to\cN_k(X).
\]
Over $K$, central valuations have been extensively studied in
\cite{KnopIB}. For transferring properties from $K$ to $k$ we choose
$T\subseteq B\subseteq P$ as in the end of
section~\ref{sec:LST}. Recall (see \cref{cor:NkNK}) that $\cN_k(X)$
can be considered as a subspace of $\cN_K(X)$.

\begin{proposition}

  Let $X$ be a $k$-dense $G$-variety. Then:

  \begin{enumerate}

  \item The map $\iota_k$ in \eqref{eq:ZtoN} is injective.

  \item Considering $\cZ_k(X)$ and $\cZ_K(X)$ as subsets of $\cN_k(X)$
    and $\cN_K(X)$, respectively, then
    \[\label{eq:diagXi22}
      \cZ_k(X)=\cZ_K(X)\cap\cN_k(X).
    \]

  \end{enumerate}

\end{proposition}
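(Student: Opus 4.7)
The plan is to deduce both statements from the analogous injectivity $\iota_K\colon\cZ_K(X)\hookrightarrow\cN_K(X)$ established in \cite{KnopIB}, by comparison with the $K$-theory. First note the basic inclusion $\cZ_k(X)\subseteq\cZ_K(X)$: since the Borel $B=TN_B\subseteq P$ satisfies $A\subseteq T$ and $N\subseteq N_B$, we have $K(X)^B\subseteq K(X)^{AN}$, and any valuation trivial on the right-hand side is trivial on the left. Moreover, if $f\in K(X)^{(B)}$ has $T$-character $\chi$, then $f$ is simultaneously an $AN$-semiinvariant with $A$-character $\res_A\chi$, so the identity $v(f)=\iota_K(v)(\chi)=\iota_k(v)(\res_A\chi)$ means that the square
\[\cxymatrix{
\cZ_k(X)\ar@{^{(}->}[r]\ar[d]_{\iota_k} & \cZ_K(X)\ar[d]^{\iota_K}\\
\cN_k(X)\ar@{^{(}->}[r] & \cN_K(X)
}\]
commutes, with the lower inclusion provided by \cref{cor:NkNK}.

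Part \textit{i)} is then immediate: the right vertical map is injective by \cite{KnopIB} and the bottom inclusion is injective, hence $\iota_k$ is injective. Likewise, the inclusion ``$\subseteq$'' in part \textit{ii)} can be read directly off the same diagram.

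For the converse inclusion in part \textit{ii)} I take $v\in\cZ_K(X)$ with $\iota_K(v)\in\cN_k(X)$ and aim to show that $v$ is trivial on $K(X\an)=K(X)^{AN}$. Restricting $v$ yields an $M$-invariant valuation $v':=v|_{K(X\an)}$ of $K(X\an)$. Now fix a Borel $B_M=T_MN_M\subseteq M$ with $T_M:=T\cap M$ and $N_M:=N_B\cap M$, so that $B=AN\cdot B_M$. A direct check on characters shows that the $B_M$-semiinvariants of $K(X\an)$ are precisely the $B$-semiinvariants of $K(X)$ whose $T$-character lies in $\ker\res_A$: such an $f$ is $A$-invariant iff $\chi|_A=0$, while the remaining requirements translate verbatim. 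The hypothesis $\iota_K(v)\in\cN_k(X)$ says exactly that $v$ annihilates all such semiinvariants; consequently $v'$ is central for the $M$-action on $X\an$ and $\iota_{K,X\an}(v')=0$.

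The conclusion then follows by applying \cite{KnopIB} once more, now to the $M$-variety $X\an$: the injectivity of $\iota_{K,X\an}$ forces $v'=0$, so $v$ is $k$-central. The only technical point, and the place where I would expect to have to be careful, is the identification of $B_M$-semiinvariants of $K(X\an)$ with the $B$-semiinvariants of $K(X)$ having character in $\ker\res_A$; everything else is a diagram chase plus two invocations of Knop's theorem.
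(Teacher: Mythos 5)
Your argument follows the same strategy as the paper's: establish the commutative square $\cZ_k(X)\hookrightarrow\cZ_K(X)$, $\iota_k$, $\iota_K$ over the inclusion $\cN_k(X)\hookrightarrow\cN_K(X)$, read off injectivity of $\iota_k$ and the inclusion $\cZ_k(X)\subseteq\cZ_K(X)\cap\cN_k(X)$ from the diagram, and then for the reverse inclusion pass to the restriction of $v$ to $K(X\an)$ and invoke injectivity of $\iota_K$ for the $M$-variety $X\an$. The paper phrases the last step as the equality $(K(X)^{(B)})^A=K(X\an)^{(M\cap B)}$ and then cites \cite{KnopIB}*{Kor.~3.6}; your ``direct check on characters'' is exactly the verification of this equality (using $T=A\cdot(T\cap M)$ and $N_B=N\cdot(N_B\cap M)$), so you have simply unfolded the step the paper leaves terse. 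The proof is correct and takes essentially the same route.
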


\begin{proof}

  From $AN\subseteq B$ it follows that $K(X)^{AN}\supseteq K(X)^B$.
  Thus, every $k$-central valuation is also $K$-central. From this, we
  get the commutative diagram
  \[
    \cxymatrix{
      \cZ_k(X)\ar@{^(->}[r]\ar[d]^{\iota_k}&\cZ_K(X)\ar[d]^{\iota_K}\\
      \cN_k(X)\ar@{^(->}[r]&\cN_K(X)}.
  \]
  Hence $\iota_k$ is injective since $\iota_K$ is injective by
  \cite{KnopIB}*{Kor.\ 3.6}.

  It remains to be shown that
  $\cZ_K(X)\cap\cN_k(X)\subseteq\cZ_k(X)$. Let $v$ be in the
  intersection. Then $v$ is trivial on all $B$-semiinvariants $f$ with
  $\res_A\chi_f=0$, i.e., on $(K(X)^{(B)})^A$. Now we have
  \[
    (K(X)^{(B)})^A=((K(X)^{AN})^{(M\cap B)}=K(X\an)^{(M\cap B)}.
  \]
  It follows again from \cite{KnopIB}*{Kor.\ 3.6}) that $v$ is trivial
  on $K(X\an)$. Thus $v$ is $k$-central.
\end{proof}

From this we get:

\begin{corollary}\label{cor:valcone}

  Let $X$ be a $k$-dense $G$-variety. Then $\cZ_k(X)$ is a finitely
  generated convex cone in $\cN_k(X)$.

\end{corollary}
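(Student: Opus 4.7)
The plan is to deduce this immediately from the preceding proposition together with the analogous statement over the algebraic closure. First I would invoke the identification \eqref{eq:diagXi22}, which realizes $\cZ_k(X)$ as the intersection
\[
  \cZ_k(X)=\cZ_K(X)\cap\cN_k(X)
\]
inside $\cN_K(X)$. Thus the problem reduces to understanding the shape of $\cZ_K(X)$ together with the position of the subspace $\cN_k(X)$.

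Next, I would appeal to the valuation cone theorem for $G$-varieties over algebraically closed fields, a main result of \cite{KnopIB}: over $K$ the set $\cZ_K(X)$ is a finitely generated (in fact rational polyhedral) convex cone in $\cN_K(X)$. Observe also that, by \cref{cor:NkNK}, the subspace $\cN_k(X)\subseteq\cN_K(X)$ is the image of the rational cocharacter space $\cN(A)\subseteq\cN(T)$, and in particular is a $\QQ$-rational linear subspace.

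Finally, I would combine these ingredients using the elementary fact from rational convex geometry that the intersection of a rational polyhedral cone with a rational linear subspace is itself rational polyhedral, and hence finitely generated as a convex cone. Concretely one writes $\cZ_K(X)$ as the solution set of finitely many rational linear inequalities, intersects with the rational linear equations cutting out $\cN_k(X)$, and applies Fourier--Motzkin elimination to produce a finite generating set. Applied to $\cZ_K(X)$ and $\cN_k(X)$ this yields the finite generation of $\cZ_k(X)$.

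There is essentially no obstacle here beyond what has already been overcome in the preceding proposition: the genuine content is the reduction $\cZ_k(X)=\cZ_K(X)\cap\cN_k(X)$, and once this is combined with the classical structure theorem over $K$ and the $\QQ$-rationality of $\cN_k(X)\hookrightarrow\cN_K(X)$, the corollary follows formally. The one point that would merit a brief comment in the written proof is verifying that the subspace $\cN_k(X)$ is indeed $\QQ$-rational in a way compatible with the lattice structure on $\cN_K(X)$ for which $\cZ_K(X)$ is rational polyhedral; this however is guaranteed by the construction in \cref{cor:NkNK} through the map $\cN(A)\to\cN_K(X)$ of lattice homomorphisms.
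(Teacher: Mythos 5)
Your proof is correct and follows essentially the same route as the paper: combine the equality $\cZ_k(X)=\cZ_K(X)\cap\cN_k(X)$ from \eqref{eq:diagXi22} with the finite generation of $\cZ_K(X)$ from \cite{KnopIB}*{Kor.\ 6.5}. The only difference is that you spell out the elementary convex-geometric step (intersection of a finitely generated cone with a linear subspace remains finitely generated), which the paper leaves implicit; your remarks on $\QQ$-rationality are also harmless but unnecessary since the ambient spaces $\cN_k$, $\cN_K$ are $\QQ$-vector spaces by definition, so there is no separate rationality issue to check.
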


\begin{proof}

  This follows from \eqref{eq:diagXi22} and the corresponding
  statement for $\cZ_K(X)$ (see \cite{KnopIB}*{Kor.\ 6.5}).
\end{proof}

In principle, $\cZ_k(X)$ could be still reduced to a point. We show
that it is actually fairly big. For this let
\[
  \cN^-(A)=\{a\in\cN(A)\mid\alpha(a)\le0\text{ for all }\alpha\in
  S_k\}
\]
be the antidominant Weyl chamber with respect to the restricted root
system of $G$.

\begin{proposition}\label{prop:chamber}
 
  Let $\pi:\cN(A)\auf\cN_k(X)$ be the canonical projection. Then
  \[
    \pi(\cN^-(A))\subseteq\cZ_k(X).
  \]
  In particular, the interior of $\cZ_k(X)$ is non-empty.

\end{proposition}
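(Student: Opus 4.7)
The plan is to reduce the statement to its absolute analog over $K$ via the identity \eqref{eq:diagXi22}. Since $\pi$ takes values in $\cN_k(X)$ by construction, and since $\cZ_k(X) = \cZ_K(X) \cap \cN_k(X)$, the task is reduced to showing $\pi(\cN^-(A)) \subseteq \cZ_K(X)$, where $\cN_k(X)$ is viewed as a subspace of $\cN_K(X)$ via \cref{cor:NkNK}.

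The key algebraic step is to lift antidominance from the restricted root system to the absolute one. Fix $\lambda \in \cN^-(A) \subseteq \cN(A) \subseteq \cN(T)$ and let $\alpha \in S$. If $\alpha \in S^0$, then $\res_A \alpha = 0$, and since $\lambda$ factors through $A$ we get $\langle \alpha, \lambda \rangle = 0$. Otherwise $\res_A \alpha \in S_k$, so $\langle \alpha, \lambda \rangle = \langle \res_A \alpha, \lambda \rangle \le 0$ by the defining condition of $\cN^-(A)$. Thus $\cN^-(A)$ maps into the antidominant Weyl chamber of $T$ under the inclusion $\cN(A) \hookrightarrow \cN(T)$.

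Now I would invoke the absolute $K$-analog of the proposition, namely that the image of the antidominant chamber of $T$ in $\cN_K(X)$ is contained in the valuation cone $\cZ_K(X)$; this is a classical result used already in the proof of \cref{cor:valcone} and established in \cite{KnopIB}. Composing gives $\pi(\cN^-(A)) \subseteq \cZ_K(X)$, and combined with the tautological containment in $\cN_k(X)$ this yields $\pi(\cN^-(A)) \subseteq \cZ_k(X)$, as desired.

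For the final assertion, $\cN^-(A)$ is cut out in $\cN(A)$ by finitely many linear inequalities and contains the intersection of the kernels of all $\alpha \in S_k$, so it is a full-dimensional convex cone with non-empty interior. Since $\pi \colon \cN(A) \to \cN_k(X)$ is a surjective $\QQ$-linear map between finite-dimensional vector spaces, it is an open map, so $\pi(\cN^-(A))$, and hence $\cZ_k(X)$, has non-empty interior in $\cN_k(X)$. The only substantive ingredient is the absolute version from \cite{KnopIB}; everything else is elementary linear algebra comparing the restricted and absolute root data, and that comparison is the only point where one might anticipate a subtlety, but it turns out to be essentially tautological given the decomposition $S = S^0 \sqcup (S \setminus S^0)$.
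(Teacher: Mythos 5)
Your proof is correct and follows essentially the same route as the paper's: both reduce via $\cZ_k(X)=\cZ_K(X)\cap\cN_k(X)$ to the $K$-version from \cite{KnopIB}, after noting that $\cN^-(A)$ sits inside the antidominant chamber $\cN^-(T)$ (the paper states $\cN^-(A)=\cN^-(T)\cap\cN(A)$ without proof, which you verify directly using $S=S^0\sqcup(S\setminus S^0)$). The only small imprecision is in the last paragraph: being cut out by finitely many inequalities does not by itself give full dimension — what you actually need is that $S_k$ is a linearly independent set of simple restricted roots, so $\cN^-(A)$ is a genuine Weyl chamber and hence full-dimensional.
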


\begin{proof}

  Let $\cN^-(T)$ be the antidominant Weyl chamber of $G$ over
  $K$. Then $\cN^-(A)=\cN^-(T)\cap\cN(A)$. By \cite{KnopIB}*{Kor.\
    5.2.4}, the image of $\cN^-(T)$ in $\cN_K(X)$ is contained in
  $\cZ_K(X)$. Thus, $\cZ_k(X)$ contains the image of $\cN^-(A)$.
\end{proof}

\section{Valuations and geometry}

Next, we study the generic structure of invariant divisors defined by
valuations. For this, we need a refinement of \cref{cor:LSTgeneric}
which also works ``at infinity''.

To set this up, let $G$ be a connected reductive group and let $X$ be
a normal $k$-dense $G$-variety. According to \cref{cor:LSTgeneric}
there is an open embedding $U\times X\el\into X$ with
$U:=Q_k(X)_u$. By projection to the first factor we get a dominant
rational map $u:X\rat U$ with the characterizing property that
$u(x)^{-1}x\in X\el$ for generic $x\in X$. This map induces a
pull-back homomorphism of function fields $u^*:K(U)\into K(X)$.

\begin{lemma}\label{lem:u*val}

  Let $v$ be a $G$-invariant valuation of a $k$-dense $G$-variety
  $X$. Then $v(u^*(h))\ge0$ for all regular functions $h\in K[U]$.

\end{lemma}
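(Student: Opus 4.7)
The plan is to exploit the $U$-equivariance of $u$ together with the unipotence of $U$. First I would establish that for $n\in U(K)$ one has $u(n^{-1}x)=n^{-1}u(x)$, which follows immediately from the characterizing property of $u$, since $(n^{-1}u(x))^{-1}(n^{-1}x)=u(x)^{-1}x$ lies in $X\el$ on a dense open subset. Consequently, if $U$ acts on $K[U]$ by left translation $L_n$ and on $K(X)$ via the $G$-action on $X$, then the pullback $u^*:K[U]\to K(X)$ is $U$-equivariant, i.e., $n\cdot u^*h=u^*(L_n h)$.

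Fix $h\in K[U]$ and let $V\subseteq K[U]$ be the $K$-linear span of $\{L_n h\mid n\in U\}$. By local finiteness of the regular representation of $U$ on $K[U]$, the space $V$ is finite-dimensional. The central object of the argument is
\[
  W:=\{h'\in V\mid v(u^*h')<0\}\cup\{0\}\subseteq V.
\]
Using that $v$ is $G$-invariant and hence $U$-invariant on $K(X)$, combined with the $U$-equivariance above and the ultrametric inequality, I would verify that $W$ is a $U$-stable $K$-linear subspace of $V$.

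If $W$ were non-zero, then unipotence of $U$ would supply a non-zero $U$-fixed vector $w\in W$. But the left-translation invariants in $K[U]$ are exactly the constants (since $U$ acts transitively on itself), so $w\in K^*$. By the valuation axiom $v(K^*)=0$ one would then have $v(u^*w)=v(w)=0$, contradicting $w\in W$. Hence $W=\{0\}$, and in particular $v(u^*h)\ge 0$.

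The point requiring genuine care is the $U$-equivariance of $u$, which must be extracted from how $X\el$ sits inside $X$ relative to the $U$-action; once this equivariance and the closure properties of $W$ are in place, the standard ``unipotent implies fixed vector'' mechanism finishes the proof in a single line.
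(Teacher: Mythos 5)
There is a genuine gap, and it is not a minor technicality.

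The central claim in your argument is that
\[
  W:=\{h'\in V\mid v(u^*h')<0\}\cup\{0\}
\]
is a $K$-linear subspace of $V$. But the ultrametric inequality only gives
$v(u^*(h_1+h_2))\ge\min\{v(u^*h_1),v(u^*h_2)\}$, which is a lower bound; when
$v(u^*h_1)=v(u^*h_2)<0$ the sum can jump up to $\ge 0$, so $W$ need not be
closed under addition. To see this fail concretely, take $U=\G_a$ acting on
$X=\G_a$ by translation with $u=\mathrm{id}$ and $v=v_\infty$ (order of pole at
infinity on $K(t)$). Then $v_\infty$ is $\G_a$-invariant, $V$ is spanned by
$\{1,t\}$, and $W=\{at+b\mid a\ne 0\}\cup\{0\}$, which contains $t$ and $-t+1$
but not their sum $1$. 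This example also exposes a deeper problem: your argument
uses only the $U$-invariance of $v$ and the $U$-equivariance of $u$, and in this
setting the conclusion is simply false ($v_\infty(u^*t)=-1<0$). The lemma is
true only because $G$ is reductive and $v$ is invariant under all of $G$; an
argument that discards everything outside $U$ cannot succeed. (In the
$\G_a$ example $G=U$ is not reductive, so the lemma's hypotheses are not met,
but your proof would ``apply'' verbatim.)

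The paper's proof uses precisely the extra invariance you drop. It tracks the
recursive construction of $X\el$ in the proof of the Local Structure
Theorem: each step of the recursion produces a $P$-semiinvariant $f$ and a
$\Qq$-equivariant map $m:X_f\to a+\overline\fu\cong\Uq$, so that the coordinate
functions on $\Uq$ pull back along $u$ to quotients $\xi f/f$ with $\xi\in\fg$.
The non-negativity then comes from $v(\xi f)-v(f)\ge 0$, which is exactly the
Lie-algebra formulation \eqref{eq:invariant} of $G$-invariance for arbitrary
$\xi\in\fg$, not merely $\xi\in\fu$; the remaining coordinates are handled by
induction on $\dim X$. So the correct proof is structural and uses the full
$G$-invariance through the specific shape of $u$, rather than an abstract
unipotent fixed-point argument.
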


\begin{proof}

  The proof depends on the explicit construction of $X\el$ in the
  proof of \cref{kLST}. First, we constructed a $G$-stable open subset
  $X_0\subseteq X$ which is embeddable in some $\P(V)$ and considered
  the affine cone $\XS\subseteq V$ over $X_0$. We claim that it
  suffices to prove our assertion for $\XS$ and the group
  $\GS=\G_m\times G$. First of all observe that the slice
  $\XS\el\subseteq\XS$ is homogeneous. This implies that the
  projection $\uS:\XS\rat U$ factors through $u:X\rat U$. Secondly, it
  is known from \cite{KnopIB}*{Kor.\ 3.2} that the valuation $v$ on
  $X$ can be lifted to a $\G_m\times G$-invariant valuation $\vS$ on
  $\XS$. Then
  \[
    v(u^*(h))=\vS(\uS^*(h))\ge0
  \]
  proves the claim. Replacing $X$ by $\XS$ we may assume that $X$ is a
  subvariety of $V$.

  Next, we used a regular $P$-semiinvariant regular function $f$ on
  $X$. The stabilizer of the line $Kf$ is the parabolic
  $\Qq=\Lq\,\Uq$. Then a $\Qq$-equivariant map
  $m:X_f\rat a+\overline\fu$ with fiber $X':=m^{-1}(a)$ was defined
  such that $\overline\iota:\Uq\times X'\overset\sim\to X_f$. Finally,
  application of the induction hypothesis yielded an open embedding
  $\iota':U'\times R\into X'$.

  The open embeddings $\overline\iota$ and $\iota'$ allow us to define
  projections $\uq:X\rat\Uq$ and $u':X'\rat U'$ onto the first
  factors. Then $\pi(x):=\uq(x)^{-1}x$ is the projection of $X$ to the
  second factor $X'$. With this notation, $u$ has the form
  \[
    u:X\rat U=\Uq\times U':x\mapsto (\uq(x),u'(\pi(x)))
  \]
  Clearly, it is enough to prove the non-negativity of $v(u^*(h))$ for
  generators $h$ of the algebra $K[U]$. Thus, we may assume that $h$
  is either a function on $\Uq$ or on $U'$. In the first case observe
  that $\uq$ is the composition of $m:X_f\to a+\overline\fu$ with the
  isomorphisms $a+\overline\fu\cong\Uq$. The functions on the affine
  space $a+\overline\fu$ are generated by the linear functions
  $\ell_\xi:=(\xi,\cdot)$ with $\xi\in\fg$ where $(\cdot,\cdot)$ is an
  invariant scalar product on $\fg$. From the definition of $m$ we get
  \[
    v(m^*(\ell_\xi))=v(\frac{\xi f}{f})=v(\xi f)-v(f).
  \]
  Now the $G$-invariance of $v$ implies that the right-hand side is
  non-negative (see \eqref{eq:invariant}).

  Finally assume that $h$ is a regular function on $U'$. We restrict
  the valuation $v$ via $\pi$ to $X'$, i.e., for any $h'\in K(X')$ we
  define $v'(h'):=v(\pi^*(h'))$. It easily seen that $v'$ is an
  $\Lq$-invariant valuation of $X'$. The induction hypothesis then
  yields
  \[
    v((u'\circ\pi)^*(h))=v'((u')^*(h))\ge0.\qedhere\hfill\qed
  \]
\end{proof}

As a consequence we get the existence of very big slices. For that we
have to give up the affinity of the slice and restrict to normal
varieties.

\begin{theorem}\label{lem:bigLST}

  Let $X$ be a normal $k$-dense $G$-variety and let $Q_k(X)=LU$ be a
  Levi decomposition. Then there exists an $L$-stable subvariety
  $\Xq\el\subseteq X$ (everything defined over $k$) such that:

  \begin{enumerate}

  \item\label{it:bigLST1} the action of $L$ on $\Xq\el$ is elementary;

  \item\label{it:bigLST2} the natural morphism
    $Q_k(X)\times^L\Xq\el\to X$ is an open embedding;

  \item\label{it:bigLST3} the boundary $\partial X_0=X\setminus X_0$
    of the open set $X_0:=U\cdot\Xq\el\subseteq X$ is of pure
    codimension one in $X$ and none of its irreducible components is
    $G$-stable.

  \end{enumerate}

\end{theorem}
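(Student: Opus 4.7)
The plan is to obtain $\Xq\el$ by extending the rational map $u : X \rat U$ (introduced just before \cref{lem:u*val}) to its maximal domain of regularity and then setting $\Xq\el$ to be the preimage of $1 \in U$. As preparation I note that $u$ is $Q$-equivariant when $U$ is identified with $Q/L$: for $g = \ell n \in LU = Q$ and generic $x \in X$ the decomposition $gx = (\ell n u(x) \ell^{-1}) \cdot (\ell \cdot u(x)^{-1} x)$ places the second factor in $X\el$ (which is $L$-stable), so $u(gx) = \ell n u(x) \ell^{-1}$. In particular $L$ acts on $U$ by conjugation, $U$ acts on itself by left translation, and the stabilizer of $1 \in U$ is precisely $L$.

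Let $X_1 \subseteq X$ be the locus where $u$ extends to a regular morphism $u_1 : X_1 \to U$. Because $U$ is isomorphic as a variety to an affine space and $X$ is normal, each coordinate of $u$ has polar locus of pure codimension one, so $X \setminus X_1$ is of pure codimension one in $X$. The equivariance passes from $u$ to $u_1$, so $X_1$ is $Q$-stable. The central input is now \cref{lem:u*val}: applied to the invariant valuation $v_D$ of any $G$-stable prime divisor $D \subseteq X$ it gives $v_D(u^* h) \ge 0$ for every $h \in K[U]$, forcing $u$ to be regular at the generic point of $D$. Consequently no $G$-stable prime divisor of $X$ lies in $X \setminus X_1$.

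Now I set $\Xq\el := u_1^{-1}(1)$. Since the stabilizer of $1 \in U \cong Q/L$ is $L$, the subvariety $\Xq\el$ is $L$-stable, and the unique factorization $y = u_1(y) \cdot (u_1(y)^{-1} y)$ with $u_1(y)^{-1} y \in \Xq\el$ furnishes a $Q$-equivariant isomorphism $Q \times^L \Xq\el = U \times \Xq\el \overset{\sim}{\to} X_1 \subseteq X$, proving \ref{it:bigLST2}. Moreover $X_0 := U \cdot \Xq\el$ coincides with $X_1$, so \ref{it:bigLST3} is precisely the pair of statements about $X \setminus X_1$ established above. Finally, $\Xq\el$ is irreducible (inherited through $X_1 \cong U \times \Xq\el$) and contains the original affine slice $X\el = \Xq\el \cap (U \cdot X\el)$ as a dense open subset; since $L\el$ fixes $X\el$ pointwise by \cref{cor:LSTgeneric} and the fixed-point locus of $L\el$ in $\Xq\el$ is closed, $L\el$ fixes all of $\Xq\el$, which gives \ref{it:bigLST1}.

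I anticipate that the main technical hurdle is the simultaneous handling of the $Q$-equivariance together with the normality-based fact that a rational map to an affine space from a normal variety has pure codimension-one indeterminacy locus; once these are in place, the application of \cref{lem:u*val} combined with the closedness of the $L\el$-fixed locus finishes the argument quite mechanically.
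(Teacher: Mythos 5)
Your proof is correct and follows essentially the same route as the paper's: define $\Xq\el$ as the fiber over $1\in U$ of the maximal regular extension of $u$, obtain pure codimension one for the complement from normality, and use \cref{lem:u*val} to rule out $G$-stable boundary components. The only difference is that you spell out the $Q$-equivariance of $u$ and the closedness-of-fixed-locus argument for \ref{it:bigLST1}, which the paper leaves implicit.
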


\begin{proof}

  Let $X_0\subseteq X$ be the largest open subset on which the
  rational map $u:X\rat U$ is regular. Put
  $\Xq\el:=u^{-1}(1)\subseteq X_0$. Since $u$ is $U$-equivariant, we
  infer that $U\times\Xq\el\to X_0$ is an isomorphism which shows
  \ref{it:bigLST2}. Property \ref{it:bigLST1} follows from the fact
  that $X\el$ is open in $\Xq\el$.

  Thus, the main point is to show \ref{it:bigLST3}. Since $U$ is
  affine, a point $x\in X$ is in $X_0$ if and only if $u^*(h)$ is
  regular in a neighborhood of $x$ for all $h\in K[U]$. In a normal
  variety this is the case if and only if $x$ is not contained in the
  pole divisor of any function $u^*(h)$. Thus, $\partial X_0$ is the
  union of all these pole divisors and therefore is of pure
  codimension $1$.

  Finally, suppose $\partial X_0$ has an irreducible component $D$
  which is $G$-stable (but possibly not defined over $k$). Since $v_D$
  is a $G$-invariant valuation, \cref{lem:u*val} implies
  $v_D(u^*(h))\ge0$ for all $h\in K[U]$. Thus, the generic point of
  $D$ lies in $X_0$ in contradiction to $D\subseteq\partial X_0$.
\end{proof}

From \cref{prop:uniqueQ} we get:

\begin{corollary}

  Let $X$ be a $k$-dense $G$-variety and let $D\subseteq X$ be a
  $k$-dense irreducible divisor. Then $Q_k(D)=Q_k(X)$.

\end{corollary}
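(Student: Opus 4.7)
The plan is to apply \cref{prop:uniqueQ} to a carefully constructed slice for the pair $(X,D)$. Concretely, I want to exhibit the data $(Q_k(X),L,R)$ as a valid output of the $k$-Local Structure Theorem \cref{kLST} applied with $Y=D$; then \cref{prop:uniqueQ}, applied with $Y=D$, identifies the parabolic in this output, which is $Q_k(X)$, with $Q_k(D)$.

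The first step is to invoke \cref{lem:bigLST}, giving a Levi decomposition $Q_k(X)=LU$ and an $L$-stable subvariety $\Xq\el\subseteq X$ on which $L$ acts elementarily, such that $Q_k(X)\times^L\Xq\el\to X_0:=U\cdot\Xq\el$ is an open embedding onto $X_0\subseteq X$. The decisive property is \ref{it:bigLST3}: no irreducible component of $\partial X_0$ is $G$-stable. Since $D$ is a $G$-stable irreducible divisor, this forces $D\cap X_0$ to be open dense in $D$; combining with the $U$-stability of $D$ and the product structure $X_0\cong U\times\Xq\el$ yields $D\cap X_0\cong U\times D'$, where $D':=D\cap\Xq\el$ is non-empty, $L$-stable, and $k$-dense.

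The second step is to refine $\Xq\el$ to an $L$-stable affine open $R$ meeting $D'$, with $(Q_k(X),L,R)$ satisfying every hypothesis of \cref{kLST} for $(X,D)$. Starting from a $k$-rational point of $D'$ (which exists by $k$-density and is $L\el$-fixed since the $L$-action on $\Xq\el$ is elementary), one application of \cref{SemiinvIdealElementary} produces an $L$-semiinvariant whose non-vanishing set is an affine $L$-stable open $R\subseteq\Xq\el$ meeting $D$; a second application, cutting out the $L$-orbits of non-maximal dimension in $R\cap D$, shrinks $R$ so that all $L$-orbits in $R\cap D$ are closed. The $L$-action on $R\cap D$ remains elementary, $R\cap D$ is $k$-dense, and $U\times R\hookrightarrow X$ is open as a restriction of $U\times\Xq\el\hookrightarrow X_0$. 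All hypotheses of \cref{kLST} being satisfied, \cref{prop:uniqueQ} delivers $Q_k(X)=Q_k(D)$.

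I expect the principal obstacle to be the first step: ensuring that $D$ meets the chosen slice. The slice $X\el$ from the basic \cref{cor:LSTgeneric} is not large enough, since $G$-stable divisors can sit entirely in $X\setminus(U\cdot X\el)$; it is precisely the refinement in \cref{lem:bigLST}, forbidding $G$-stable components in the boundary of the big open set $X_0$, that renders $D\cap\Xq\el$ non-empty and makes the whole construction go through.
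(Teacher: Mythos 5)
Your step 1 is correct and captures the real content: \cref{lem:bigLST}'s guarantee that no irreducible component of $\partial X_0$ is $G$-stable forces the $G$-stable irreducible divisor $D$ to meet $X_0 = U\cdot\Xq\el$, whence $D' := D\cap\Xq\el$ is non-empty, $L$-stable, $k$-dense, and carries an elementary $L$-action. This is exactly why the paper places the corollary immediately after \cref{lem:bigLST} and reduces it to \cref{prop:uniqueQ}.

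Step 2, however, contains a genuine gap, and it is there — not in step 1 — that the technical obstacle actually lies. You apply \cref{SemiinvIdealElementary} to $\Xq\el$ to cut out an affine $L$-stable open $R$, but that proposition requires an affine variety, and $\Xq\el$ is in general not affine: the paper explicitly trades affinity for size in \cref{lem:bigLST} ("we have to give up the affinity of the slice"). Even granting an $L$-semiinvariant $f$ on $\Xq\el$, the non-vanishing set $\{f\ne0\}$ of a regular function on a non-affine variety need not be affine. The repair: $\Xq\el$ is a locally closed $L$-stable subvariety of the locally $k$-linear $G$-variety $X$, hence a locally $k$-linear $L$-variety; apply \cref{cor:P-stable-affine} to the pair $(\Xq\el,D')$ with minimal parabolic $P\cap L\subseteq L$ to obtain a $(P\cap L)$-stable affine open $R\subseteq\Xq\el$ meeting $D'$. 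Then observe that, because the $L$-action on $\Xq\el$ is elementary, it factors through $MA\subseteq P\cap L$ (the subgroup $L\el$ acts trivially), so a $(P\cap L)$-stable subset of $\Xq\el$ is automatically $L$-stable. Now $R$ is affine and $L$-stable; your second application of \cref{SemiinvIdealElementary}, shrinking $R$ so all $L$-orbits in $R\cap D$ are closed, is legitimate on the affine $R$; the data $(Q_k(X),L,R)$ then satisfy the hypotheses of \cref{kLST} for $(X,D)$, and \cref{prop:uniqueQ} delivers $Q_k(X)=Q_k(D)$ as you intended.
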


Next, we want to relate $k$-central valuations with toroidal
embeddings. For this, fix a $k$-central valuation $v$. It induces an
element $a:=\lambda_v\in\cN_k(X)=\cN(A_k)$ (with $A_k:=A_k(X)$) and
let $\cR_v:=\QQ_{\ge0}a\subseteq\cN(A_k)=\cN_k(X)$ be the ray
generated by $a$. Then the theory of toroidal embeddings asserts the
existence of an embedding $\Aq=A_k\cup D(a)$ where $D(a)$ is a
homogeneous divisor which is characterized by the fact that
$v_{D(a)}\sim v$.

Another way to relate $\Aq$ with $a$ goes as follows: The space
$\cN(A_k)$ contains the group $\Hom(\G_m,A_k)$ of cocharacters as a
sublattice. Thus, there is a homomorphism $\aq:\G_m\to A_k$ with
$\aq=c\,a$ for some $c\in\QQ_{>0}$. Then
\[\label{eq:limpi}
  \pi_a:\Aq\to \Aq: x\mapsto\lim_{t\to0}\aq(t)x
\]
is a well-defined projection onto $D(a)$.

There is also a slightly more general version of this
construction. For this, let $X\to Y$ be a principal $A_k$-bundle
(defined over $k$) and let $X(a):=X\times^{A_k}\Aq\to Y$ be the
associated fiber bundle. Then $X$ is open in $X(a)$ and the complement
is an irreducible divisor $D_a$. Let $v_0=v_{D_a}$ be the associated
valuation. Since $K(X)^{A_k}=K(Y)$, the valuation $v_0$ is $k$-central
for the action of $A_k$ with $\lambda_{v_0}=a$. This shows, in
particular, that $\cZ_k(X)=\cN_k(X)$ and therefore, that every
$k$-central valuation is of this type. Clearly, \eqref{eq:limpi} still
defines a projection $\pi_a:X(a)\to D_a$.

Now we treat the general case:

\begin{lemma}\label{lem:generic-central}

  Let $X$ be a normal $k$-dense $G$-variety and let $D\subseteq X$ be
  an irreducible $G$-stable divisor such that $v_D$ is
  $k$-central. Put $a=\lambda_{v_D}$. Then there exist a slice
  $X\el\subseteq\Xq\el$ as in \cref{lem:bigLST} with
  $X\el(a)\subseteq\Xq\el$ open and $D_a\subseteq D\cap\Xq\el$.

\end{lemma}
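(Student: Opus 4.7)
The plan is to start from the slice produced by \cref{lem:bigLST} and, using that $v_D$ is $k$-central, to shrink it so that the $A_k$-toroidal extension $X\el(a)$ fits inside as an open subvariety. Concretely, apply \cref{lem:bigLST} to obtain a preliminary slice $\Xq\el^{(0)}$ with open subset $X_0 := U \cdot \Xq\el^{(0)} \cong Q_k(X) \times^L \Xq\el^{(0)}$. Since $D$ is $G$-stable and, by property \ref{it:bigLST3} of \cref{lem:bigLST}, $\partial X_0$ has no $G$-stable irreducible components, we have $D \not\subseteq \partial X_0$. Therefore $D' := D \cap \Xq\el^{(0)}$ is a non-empty $L$-stable divisor of $\Xq\el^{(0)}$; I fix one of its irreducible components, still denoted $D'$ by abuse.

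Next I would identify the valuation $v_{D'}$ with the ray $\QQ_{\ge 0}\,a$. Because $D$ is $U$-stable and $X_0 \cong U \times \Xq\el^{(0)}$, the restriction of $v_D$ to $K(\Xq\el^{(0)}) = K(X)^U = K(X\el)$ coincides with $v_{D'}$. Since $v_D$ is trivial on $K(X\an) = K(X\el)^{A_k}$ by hypothesis, $v_{D'}|_{K(X\el)}$ is an $A_k$-central valuation whose associated element of $\cN(A_k) = \cN_k(X)$ is, via \eqref{eq:lambdav}, a positive multiple of $a$. Consequently $(\Xq\el^{(0)}, D')$ and $(X\el(a), D_a)$ are both normal models of $K(X\el)$ for the same equivalence class of valuation, so the uniqueness-of-models property stated at the beginning of the previous section extends the identity on $X\el$ to a unique $L$-equivariant birational isomorphism $\Xq\el^{(0)} \rat X\el(a)$ which is regular at a generic point of $D'$ and sends it to a generic point of $D_a$.

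Finally, I would globalize this identification and finish. Saturating by the $L$-action -- which commutes with the $A_k$-bundle structure of $X\el \to X\an$ and so extends canonically to $X\el(a)$ -- yields an $L$-stable open subvariety $V \subseteq \Xq\el^{(0)}$ that is $L$-equivariantly isomorphic to an $L$-stable open subvariety $V' \subseteq X\el(a)$ containing a generic point of $D_a$. Using \cref{SemiinvIdealElementary} to choose an $L$-semi-invariant function on $\Xq\el^{(0)}$ that vanishes on $\Xq\el^{(0)} \setminus V$ but is non-vanishing on a chosen $k$-rational point of $D_a \cap V'$, one defines $\Xq\el$ as its non-vanishing locus. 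Then $X\el(a) \subseteq \Xq\el$ is an open subvariety with $D_a \subseteq D \cap \Xq\el$, and the three properties of \cref{lem:bigLST} are inherited from $\Xq\el^{(0)}$ exactly as in the lemma preceding the proof of the $k$-LST. The main obstacle I foresee is this last globalization step: upgrading the local identification at generic points of $D'$ and $D_a$ into a genuine open embedding of the whole toric piece $X\el(a)$ requires keeping the $L$-action, the $A_k$-bundle structure over $X\an$, and the possible extra components of $D \cap \Xq\el^{(0)}$ simultaneously under control while preserving the three properties of \cref{lem:bigLST}.
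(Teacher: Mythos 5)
Your proof takes the paper's route. The core step is exactly what you carry out in the middle of your argument: by property~\ref{it:bigLST3} of \cref{lem:bigLST} the $G$-stable divisor $D$ meets $\Xq\el$, and the identity $v_D(u^*f)=v_{D\cap\Xq\el}(f)$ together with $k$-centrality of $v_D$ shows that $v_{D\cap\Xq\el}$ is the $A$-central valuation on $K(X\el)$ with cocharacter $a$, hence gives the same model as $v_{D_a}$. Two small simplifications are possible: $D\cap\Xq\el$ is automatically irreducible, since $D\cap(U\cdot\Xq\el)\cong U\times(D\cap\Xq\el)$ is open and dense in the irreducible $D$, so there is no component to choose; and from the displayed identity one gets $\lambda_{v_{D\cap\Xq\el}}=a$ on the nose, not merely up to a positive scalar. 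As for the globalization step you single out as the main obstacle, you have not overlooked a hidden argument: the paper's own proof also stops at the valuation identity and treats the upgrade to an $L$-stable open embedding $X\el(a)\into\Xq\el$ as an immediate consequence of uniqueness of models; your sketch (use the $L$-equivariance of the birational map, the $A_k$-bundle structure of $D_a\to X\an$, and a cut by an $L$-semiinvariant via \cref{SemiinvIdealElementary}) is the natural way to fill this in, and is consistent with the paper's standing convention (\cref{rem:XcF}\,\emph{ii)}) that slices and their toroidal extensions are only ever determined up to replacing them by suitable dense open subsets.
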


\begin{proof}

  Let $D\el:=D\cap\Xq\el$. We have to show that the valuation
  $v_{D\el}$ of $X\el$ equals the valuation $D_{v_a}$. For any
  $f\in K(X\el)$ let $u^*f\in K(X)$ be its pull-back via the rational
  map $u:X\rat X\el$. Then
  \[
    v_D(u^*f)=v_{D\el}(f)
  \]
  by \cref{lem:bigLST}. Let $f\in K(X\el)^A$. Then $u^*f\in K(X)^{AN}$
  and therefore $v_{D\el}(f)=0$ since $v_D$ is $k$-central. This means
  that $v_{D\el}$ is $k$-central for the $A$-action on $X\el$. Because
  of the injectivity of the map \eqref{eq:ZtoN} it suffices to check
  $v_{D\el}(f)=v_a(f)$ for $A$-semiinvariants. But this clear:
  \[
    v_{D_a}(f_\chi)=a(\chi)=\lambda_{v_D}(\chi)=v_D(u^*f_\chi)=v_{D\el}(f_\chi).
    \qedhere\hfill\qed
  \]
\end{proof}

Now we introduce another important property of invariant valuations.

\begin{definition}

  Let $X$ be a $k$-dense variety. A geometric valuation $v$ of $X$ is
  called \emph{$k$-dense} if either it is trivial or the corresponding
  divisor $D$ of any $k$-model of $v$ is defined over $k$ and is
  $k$-dense. If $v$ is $G$-invariant and $k$-dense, we define the
  complexity of $v$ as $c_k(v)=c_k(D)$.

\end{definition}

The next theorem characterizes $k$-central valuations. It implies in
particular that all $k$-central valuations are $k$-dense.

\begin{theorem}\label{thm:valdense}

  Let $X$ be a $k$-dense $G$-variety. Then for a $G$-invariant
  valuation $v$ of $K(X)$ the following conditions are equivalent:

  \begin{enumerate}

  \item\label{it:valdense-ii} $v$ is $k$-central.

  \item\label{it:valdense-i} $v$ is $k$-dense and the restriction of
    $v$ to $K(X)^P=K(X\an)^M$ is trivial.

  \item\label{it:valdense-iii} $v$ is $k$-dense with $c_k(v)=c_k(X)$.

  \item\label{it:valdense-iv} $v$ is $k$-dense and $K$-central.

  \end{enumerate}

\end{theorem}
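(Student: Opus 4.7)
The plan is to close the cycle $(ii) \Rightarrow (iv) \Rightarrow (i) \Leftrightarrow (iii) \Rightarrow (ii)$; the first three implications are essentially formal, while all the content lies in the last.

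The formal part rests on the inclusions $AN \subseteq B \subseteq P$, which yield the reverse chain of invariant subfields $K(X)^P \subseteq K(X)^B \subseteq K(X)^{AN}$. Consequently the triviality of $v$ on $K(X\an) = K(X)^{AN}$ in $(ii)$ restricts to triviality on $K(X)^B$ (the $K$-centrality of $(iv)$) and on $K(X)^P$ (part of $(i)$), and $K$-centrality in $(iv)$ restricts to triviality on $K(X)^P$, giving $(iv) \Rightarrow (i)$. The $k$-density clause needed when starting from $(ii)$ is supplied by \cref{lem:generic-central}: for $a = \lambda_v$, the toric divisor $D_a \subseteq X\el(a)$ is defined over $k$ (since $A_k$ is split), is $k$-dense (being a fiber bundle over the $k$-dense variety $X\an$ with $A_k$-torus-type fibers), and is contained in $D \cap \Xq\el$ for any $G$-equivariant $k$-model of $v$. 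The equivalence $(i) \Leftrightarrow (iii)$ is an application of \cref{prop:cYcX}: writing $v = v_D$, triviality of $v$ on $K(X)^P$ is equivalent to $K(X)^P \subseteq \cO_{X,D}$ (since $K(X)^P$ is a field, and every nonzero element together with its inverse lies in the local ring exactly when $v$ vanishes on it), which by the proposition is equivalent to $c_k(D) = c_k(X)$.

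For the main step $(i) \Rightarrow (ii)$, replace $X$ by a normal $k$-model so that $v = v_D$ for an irreducible $G$-stable $k$-dense divisor. Suppose for contradiction that $v|_{K(X\an)} \ne 0$; then this restriction equals $v_{D'}$ for a proper irreducible $M$-stable divisor $D' \subseteq X\an$ in some birational model. By the Corollary after \cref{MNquotient}, the generic $M$-orbits in the $k$-dense affine $M$-variety $X\an$ are closed and of common maximum dimension $d_M = \dim X\an - c_k(X)$, and $M$ acts transitively on them. Writing $Y$ for the open dense generic stratum, irreducibility of $D'$ forces either (A) $D' \cap Y$ dense in $D'$, or (B) $D' \subseteq X\an \setminus Y$. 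Case (B) fails by a dimension count: it would require the non-generic stratum to have codimension $1$ and generic $M$-orbits in $D'$ to have some dimension $d' < d_M$, giving $\dim \pi_M(D') = \dim X\an - 1 - d' > c_k(X) - 1$ and hence $\pi_M(D')$ dense in the $c_k(X)$-dimensional $X\an \mod M$ --- but $\pi_M(D')$ lies in the proper closed complement of $\pi_M(Y)$, since each fiber of $\pi_M$ over a point of $Y \mod M$ is a single closed orbit contained in $Y$. In Case (A), the geometric quotient $Y \to Y \mod M$ (by closed $M$-orbits) makes $D' \cap Y$ an $M$-saturated divisor, whence $\pi_M(D')$ has codimension exactly $1$ in $X\an \mod M$. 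Therefore $v|_{K(X\an)^M} = v_{\pi_M(D')}$ is non-trivial, contradicting $v|_{K(X)^P} = v|_{K(X\an)^M} = 0$ from $(i)$. The main obstacle throughout is this dimension analysis for $\pi_M(D')$: combining the $M$-transitivity on generic orbits with the structure of GIT fibers is what rules out the escape of $D'$ into the non-generic stratum.
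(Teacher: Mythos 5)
Your handling of the formal implications is sound and matches the paper's own bookkeeping: the chain $K(X)^P \subseteq K(X)^B \subseteq K(X)^{AN}$ gives $\ref{it:valdense-ii}\Rightarrow\ref{it:valdense-iv}\Rightarrow\ref{it:valdense-i}$ once $k$-density has been established, the appeal to \cref{lem:generic-central} to produce the $k$-dense toric divisor $D_a \subseteq D$ is precisely how the paper establishes $k$-density, and the use of \cref{prop:cYcX} (together with the fact that $K(X)^P$ is a field, so nonnegativity on the local ring forces triviality) is exactly the paper's argument for $\ref{it:valdense-i}\Leftrightarrow\ref{it:valdense-iii}$.

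The gap is in $\ref{it:valdense-i}\Rightarrow\ref{it:valdense-ii}$. You assert that when $v|_{K(X\an)} \ne 0$, this restriction is the valuation $v_{D'}$ of an irreducible $M$-stable divisor $D'$ inside $X\an$, and then you run a dimension count in the quotient $\pi_M: X\an \to X\an \mod M$ using the orbit structure of $X\an$ (all $M$-orbits closed of the same dimension $d_M$). This assumes that the center of $v|_{K(X\an)}$ in the \emph{specific} variety $X\an$ is a divisor. Even granting that the restriction is geometric (which does follow, with some work, from the Abhyankar--Zariski inequality for extensions, though you do not argue it), its center in the given affine $X\an$ may have codimension $\ge 2$, and then neither of your cases applies -- the divisor $D'$ realizing $v|_{K(X\an)}$ lives in a different birational model, where the clean orbit structure of $X\an$ and the map $\pi_M$ are no longer available. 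Your Case (B) is in fact vacuous (since all $M$-orbits of $X\an$ are closed of the same dimension, the ``non-generic stratum'' is empty, so $Y = X\an$), but this is not the problematic case: the case you have not addressed is the one where the center of $v|_{K(X\an)}$ has codimension $\ge 2$ or lies at infinity.

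The paper avoids this entirely by a different route. It first reduces to the elementary case $G = MA$ acting on an affine smooth $X$ with $D$ affine and smooth, using \cref{lem:bigLST}: the big slice $\Xq\el$ is constructed so that every $G$-stable divisor must meet it, which gives a divisorial restriction $v_{D\el}$ with an explicit center $D\el = D \cap \Xq\el$. It then works with the categorical quotient $\pi: X \to X\mod A$ and shows $D$ surjects onto $X\mod A$ (using \cref{SemiinvIdealElementary} to produce a contradiction otherwise), which yields triviality of $v$ on \emph{regular} invariants $K[X]^A$; a separate geometric argument about the generic fibers of $\pi$ (using the orbit structure of $D$) then upgrades this to triviality on rational invariants $K(X)^A$. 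Your argument collapses these two genuinely distinct steps into one and does not justify the crucial identification of the divisorial center inside the good model $X\an$.
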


\begin{proof}

  If $v$ is trivial then all conditions hold. So assume $v\ne0$. We
  claim that all conditions imply that $v$ is geometric. For
  \ref{it:valdense-ii} this follows from \cite{KnopIB}*{Satz 4.4} and
  the fact that $v$ is $K$-central. For \ref{it:valdense-i} and
  \ref{it:valdense-iii} this is part of the definition of
  $k$-density. Thus, without loss of generality we may assume that $X$
  is normal and that it contains a divisor $D$ with $v=v_D$.

  \ref{it:valdense-ii}$\Rightarrow$\ref{it:valdense-i}. From
  $AN\subseteq P$ we get $K(X)^P\subseteq K(X)^{AN}$. Thus, $v$ is
  trivial on $K(X)^P$. Now consider the situation of
  \cref{lem:generic-central}. Since $X$ is $k$-dense also $X\el$ is
  $k$-dense. Then, because of the projection $\pi_a:X\el\to D(a)$ (see
  \eqref{eq:limpi}), also $D(a)$ is $k$-dense. Therefore $D$ which is
  birational to $U\times D(a)$ is $k$-dense, as well.

  \ref{it:valdense-i}$\Rightarrow$\ref{it:valdense-ii}. We first treat
  the case that the action of $G$ on $X$ is elementary. Then clearly
  we may assume that $G=MA$ is elementary itself. In that case, we
  have $P=G$ and $N=1$. Then using \cref{cor:P-stable-affine} we may
  assume without loss of generality that $X$ and $D$ are affine and
  smooth.

  Now consider the categorical quotient $\pi:X\to Y:=X\mod A$.  Then
  $Z:=\pi(D)=D\mod A$ is a closed subset of $Y$. If $Z\subsetneq Y$
  there is an $M$-invariant function $f_0\ne0$ on $Y$ which vanishes
  on $Z$ (\cref{SemiinvIdealElementary}). Its pull-back $f=\pi^*(f_0)$
  would be a $G$-invariant function on $X$ with $v(f)>0$ in
  contradiction to the assumption.

  Thus $\pi:D\to Y$ is surjective. This already shows that $v(f)=0$
  for all non-zero $f\in K[Y]=K[X]^A$. It remains to show that
  $K(X)^A$ is the quotient field of $K[X]^A$. This is equivalent to
  the generic fiber $X_y=\pi^{-1}(y)$ of $\pi$ containing a dense open
  $A$-orbit. Since $X_y$ is smooth and irreducible it suffices to show
  that $X_y$ contains an orbit of dimension $r=\dim X-\dim Y$. Recall
  that $X_y$, as a fiber of a categorical quotient, contains exactly
  one closed $A$-orbit. By construction, all $G$-orbits of $D$ are
  closed. This implies that $X_y\cap D$ is the closed $A$-orbit in
  $X_y$. Since $y$ is generic, its codimension is $1$. Moreover, it is
  in the closure of every $A$-orbit in $X_y$. This shows that $X_y$
  contains an orbit of dimension $r$ proving our claim.

  Now let $X$ be any $G$-variety. Then \cref{lem:bigLST} yields an
  $L$-stable elementary slice $\Xq\el\subseteq X$ such that
  $D\el:=\Xq\el\cap D\ne\leer$. Put $v\el:=v_{D\el}$. Since $D$ is
  birational to $U\times D\el$ also $v\el$ is $k$-dense. Because of
  $K(X)^P=K(X\el)^{MA}$, $v\el$ meets both conditions of
  \ref{it:valdense-i}. Thus, by the above, $v\el$ is $k$-central. Then
  $K(X)^{AN}=K(X\el)^A$ shows that $v$ is $k$-central.

  The equivalence of \ref{it:valdense-i} and \ref{it:valdense-iii}
  follows from \cref{prop:cYcX}. The equivalence of
  \ref{it:valdense-iv} with the other conditions follows from
  $K(X)^P\subseteq K(X)^B\subseteq K(X)^{AN}$.
\end{proof}

\begin{remark}

  One can show that the complexity of a non-central $k$-dense
  valuation equals $c_k(X)-1$.

\end{remark}

For a $k$-spherical variety $X$ we have $K(X)^P=K$. Hence:

\begin{corollary}

  Let $X$ be a $k$-spherical $G$-variety. Then a $G$-invariant
  valuation on $X$ is $k$-central if and only if it is $k$-dense.

\end{corollary}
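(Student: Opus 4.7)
The plan is to invoke Theorem~\ref{thm:valdense} and observe that the extra condition appearing in condition \ref{it:valdense-i} is vacuous in the $k$-spherical case. Recall that \ref{it:valdense-i} says $v$ is $k$-central if and only if $v$ is $k$-dense \emph{and} $v$ restricts trivially to $K(X)^P = K(X\an)^M$. So I need only verify that for a $k$-spherical $X$ the field $K(X)^P$ reduces to the constants $K$, after which the restriction condition is automatic from the valuation axiom $v(K^*) = 0$.

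That identity $K(X)^P = K$ is already stated immediately before the corollary, and it follows directly from the material of \cref{sec:LST}: by \cref{cor:LSTgeneric} and the remark that $k(X\an) = k(X)^{AN}$, we have $K(X)^P = K(X\an)^M$; and $k$-sphericity was characterized by $M$ acting transitively on $X\an$, so $X\an \cong M/M_y$ over $K$ and hence $K(X\an)^M = K$. Once this is in hand, given any $G$-invariant valuation $v$ on $X$, its restriction to $K(X)^P = K$ is trivial by the definition of a valuation, so condition \ref{it:valdense-i} of \cref{thm:valdense} simplifies to ``$v$ is $k$-dense''. The equivalence with \ref{it:valdense-ii} then yields the corollary. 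There is no essential obstacle; the statement is a direct specialization of \cref{thm:valdense} to the case $c_k(X) = 0$.
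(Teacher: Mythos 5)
Your argument is exactly the one the paper intends: the sentence immediately preceding the corollary records that $K(X)^P=K$ for $k$-spherical $X$, and the corollary is then the specialization of \cref{thm:valdense} (the equivalence \ref{it:valdense-ii}$\Leftrightarrow$\ref{it:valdense-i}) once the triviality condition on $K(X)^P$ becomes automatic. Your justification of $K(X)^P=K$ via the transitivity of $M$ on $X\an$ is the correct one, and the proof is complete.
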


We conclude this section with a statement saying that subvarieties of
maximal $k$-complexity are in a sense controlled by $k$-central
valuations. Recall that a subvariety $Y$ of $X$ is the center of a
valuation $v$ if $v\ge0$ on the local ring $\cO_{X,Y}$ and $v>0$ on
its maximal ideal $\fm_{X,Y}$. If $v$ is geometric with model
$(\XS,D)$ this means that the birational map $\XS\rat X$ maps $D$
dominantly to $Y$. The center is unique if it exists. More generally,
let $\XS\to X$ be just a dominant morphism and $v$ a valuation of
$\XS$. Then $Y\subseteq X$ is the center of $v$ if it is the center of
the restricted valuation $v|_{K(X)}$.

\begin{lemma}\label{lem:centercentral}

  Let $\pi:\XS\to X$ be a dominant $G$-morphism where $\XS$ is
  $k$-dense and $X$ is locally linear. Let $Y\subseteq X$ be a
  $k$-dense closed $G$-subvariety with $c_k(Y)=c_k(\XS)$. Then $Y$ is
  the center of a $k$-central valuation of $\XS$.

\end{lemma}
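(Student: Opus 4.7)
The plan is to first reduce to the case where $\pi$ is the identity, and then to apply the $k$-Local Structure Theorem together with Kempf's theorem on the resulting slice. The inequalities $c_k(\XS)\ge c_k(X)\ge c_k(Y)$ (the first by dominance of $\pi$, the second by \cref{prop:cYcX}) collapse to equalities under the hypothesis. If I can find a $k$-central $G$-invariant valuation $w$ of $K(X)$ centered on $Y$, then extending $w$ to a $G$-invariant valuation $v$ on $K(\XS)$ by \cite{KnopIB}*{Kor.\ 3.2} (as done in the proof of \cref{lem:u*val}) gives a valuation on $\XS$ whose restriction to $K(X)$ still has center $Y$, which is what the definition demands. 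Its $k$-centrality follows from \cref{thm:valdense}, part \ref{it:valdense-iv}: it remains $K$-central by the $K$-analogue of the present lemma, and is $k$-dense because its center dominates the $k$-dense $Y$. So I may assume $\XS=X$.

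Now apply \cref{kLST} to $(X,Y)$ to obtain a parabolic $Q=LU\supseteq P$ and an $L$-stable affine slice $R\subseteq X$ with $Y\el:=Y\cap R$ a $k$-dense closed $L$-subvariety on which $L$ acts elementarily, and $U\times R\hookrightarrow X$ open. All four complexities coincide: $c_k(R)=c_k(X)=c_k(Y)=c_k(Y\el)$. By the equality clause of \cref{prop:cYcX} applied to $Y\el\subseteq R$ as $L$-varieties, $K(R)^{P\cap L}\subseteq\cO_{R,Y\el}$. It therefore suffices to construct an $L$-invariant $k$-central (for the $L$-action) valuation $w'$ of $R$ centered on $Y\el$; its $U$-invariant Gauss extension to $U\times R\cong Q\times^LR$ determines a $Q$-invariant valuation of $K(X)$, which is automatically $G$-invariant, $k$-central, and centered on $Y$.

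Building $w'$ is the heart of the argument and, I expect, the principal obstacle. From \cref{cor:LSTgeneric} applied to the $L$-variety $R$, the elementary kernel $R\el\to R\an$ is a principal $A_k(R)$-bundle. Because $L\el$ fixes $Y\el$ pointwise and all $L$-orbits in $Y\el$ are closed, the equality $c_k(Y\el)=c_k(R)$ forces the image of $Y\el\cap R\el$ in $R\an$ to be dense. To exhibit the required ray $a\in\cN_k(R)$ I would invoke Kempf's theorem \cref{thm:Kempf} on the affine $L$-variety $R$, applied to the closed $L$-subset $Y\el$ and a generic $x\in R(k)$ with $\overline{Lx}\cap Y\el\ne\varnothing$: this yields a $k$-rational one-parameter subgroup $\lambda\colon\G_m\to L$ with $\lim_{t\to0}\lambda(t)x\in Y\el(k)$. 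Since $L\el$ acts trivially on $Y\el$, after $L(k)$-conjugation one may assume $\lambda$ factors through the split torus $A$, giving a class $a\in\cN_k(R)$. The toroidal construction preceding \cref{lem:generic-central} then attaches to $a$ an $L$-invariant valuation with center $Y\el$, and \cref{thm:valdense} confirms its $k$-centrality.
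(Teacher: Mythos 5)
The reduction to $\XS=X$ in your first paragraph does not hold up. You lift a $k$-central valuation $w$ of $K(X)$ to a $G$-invariant valuation $v$ of $K(\XS)$ and then need to check that $v$ is itself $k$-central. For the $K$-centrality you appeal to ``the $K$-analogue of the present lemma,'' but that analogue requires $c_K(Y)=c_K(\XS)$, which is not implied by the hypothesis $c_k(Y)=c_k(\XS)$; and even if it were applicable, it would only produce \emph{some} $K$-central valuation with center $Y$, not establish $K$-centrality of your particular lift $v$ (which is an arbitrary extension and need not be trivial on the larger field $K(\XS)^B\supsetneq K(X)^B$). The $k$-density claim is also backwards: a model $(\Xq,D)$ of $v$ has $D$ dominating the $k$-dense $Y$, but a variety dominating a $k$-dense target is not itself $k$-dense. (Contrast with the paper's argument, where $k$-density of the boundary divisor is forced by being the dominant \emph{image} of a $k$-dense source $D_0=Z\times 0$.) This is precisely why the paper works on $\XS$ directly instead of reducing to $X$.

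The construction of $w$ via Kempf's theorem also has a gap. You apply \cref{thm:Kempf} at a single $k$-rational point $x\in R(k)$, getting one cocharacter $\lambda$. But the limit of one orbit under one $\lambda$ is a single point, not a divisor; you need the cocharacter to vary rationally with $x$ so that the family of limits sweeps out a $k$-dense subvariety dominating $Y\el$. The paper achieves this by applying Kempf to the $L_F$-variety $R_F$ at the generic point $\eta=\Spec F$ with $F=k(\RS)$, so that the resulting $\lambda_x$ is a rational family, yielding the dominant rational map $\Phi_0:\RS\times\A^1\rat\RS$. Moreover, the claim that the image $a$ of Kempf's cocharacter lies in $\cZ_k(R)$, and that ``the toroidal construction preceding \cref{lem:generic-central}'' then produces a valuation whose center is $Y\el$, is asserted but not established: \cref{lem:generic-central} describes a divisor whose valuation is already known to be $k$-central, it does not construct a valuation with a prescribed center. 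These are the two places where the proof genuinely needs work, and the paper's route through the function field base change is what supplies both the $k$-density of the source and the correct identification of the center.
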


\begin{proof}

  The strategy is to construct a dominant rational $G$-map
  $\Phi:Z\times\A^1\rat\XS$ where $Z$ is a $k$-dense $G$-variety,
  $\pi\circ\Phi$ is defined in $D_0:=Z\times 0$, and $\pi\circ\Phi$
  maps $D_0$ dominantly to $Y$. In that case, $\Phi$ induces an
  embedding $k(\XS)\into k(Z\times\A^1)$ and we claim that we can take
  for $v$ the restriction of $v_{D_0}$ to $k(\XS)$.

  First, $v$ is geometric and $G$-invariant since $v_{D_0}$ is. Let
  $(\Xq,D)$ be a model of $v$. Then the rational map
  $Z\times\A^1\rat\Xq$ maps $D_0$ dominantly to $D$. This implies that
  $v$ is $k$-dense since $Z$ is $k$-dense. Furthermore, since
  $\pi\circ\Phi$ maps $D_0$ dominantly to $Y$, the divisor $D$ is
  mapped by $\pi$ dominantly to $Y$, i.e., $Y$ is the center of $v$ in
  $X$. Finally, since $c_k(v)\ge c_k(Y)=c_k(\XS)$, the valuation $v$
  is $k$-central by \cref{thm:valdense}.

  To construct $\Phi$, we first apply the Local Structure \cref{kLST}
  to $Y\subseteq X$. With the notation of that theorem, we have
  $\dim Y\el\mod L=\|trdeg|k(Y)^P=c_k(Y)$. On the other hand, we have
  $\dim R\mod L\le c_k(X)\le c_k(\XS)$. Since $c_k(Y)=c_k(\XS)$ and
  since $Y\el\mod L\to R\mod L$ is a closed embedding, we see that
  actually $Y\el\mod L=R\mod L=:S$. This means that every $L$-orbit of
  $R$ contains a unique $L$-orbit of $Y\el$ in its closure.

  Let $\RS:=\pi^{-1}(R)\subseteq\XS$ such that $U\times\RS\to\XS$ is
  birational. Now we would like to apply Kempf's \cref{thm:Kempf}
  simultaneously to all points of the form $\pi(x)$ with $x\in\RS$. To
  this end, consider the field extension $F:=k(\RS)$ of $k(S)$ and the
  affine $F$-variety $R_F:=R\times_S\Spec F$ obtained by base
  change. Then $R_F$ contains just one closed $L_F$-orbit namely
  $Y_F:=Y\el\times_S\Spec F$. The generic point $\eta=\Spec F$ of
  $\RS$ can be considered as an $F$-rational point of $R_F$. Now we
  apply Kempf's theorem to $L_F$ acting on $R_F$, the point $\eta$,
  and the closed subset $Y_F$. Then this yields a $1$-parameter
  subgroup $\lambda:\G_{m,F}\to L_F$ (defined over $F$) such that the
  orbit map $\G_{m,F}\to R_F:t\mapsto \lambda(t)\eta$ extends to a
  morphism $\Phi':\A^1_F\to R_F$ with $\Phi'(0)\in Y_F$.

  Unraveling the definitions, the morphism $\lambda$ corresponds to a
  rational morphism $\RS\times\G_m\rat L:(x,t)\mapsto\lambda_x(t)$.
  The orbit map
  $\Phi_0:\RS\times\A^1\rat\RS:(x,t)\mapsto\lambda_x(t)x$ has the
  property that
  $\Phi'=\pi\circ\Phi_0:\RS\times\A^1\to
  R:(x,t)\mapsto\lambda_x(t)\pi(x)$ is defined in $t=0$ with
  $Y_0:=\Phi'(\RS\times0)\subseteq Y\el$.  The fact that $Y_F$ is an
  $L_F$-orbit means that $Y_0$ meets the generic $L$-orbit of
  $Y\el$. Moreover $\Phi_0$ is dominant because of $\Phi_0(x,1)=x$ for
  all $x\in\RS$.

  Finally, we extend $\Phi_0$ to $\XS$ by using the birational map
  $U\times\RS\overset\sim\rat\XS$. For this we define $Z:=G\times\XS$
  and $\Phi$ as the composition of
  \[
    G\times\XS\times\A^1\overset\sim\rat G\times U\times\RS\times\A^1
    \overset{\Phi_0}\rat G\times U\times\RS\overset\sim\rat G\times
    X\to X:(g,ux,t)\mapsto gu\lambda(x,t)x.
  \]
  Then $\Phi$ is clearly equivariant and dominant. Moreover, $D_0$ is
  mapped dominantly to $Y$ because $GY_0\supseteq ULY_0$ is dense in
  $Y$.
\end{proof}

A first application is:

\begin{corollary}\label{lem:GAcentral}

  Let $\phi:X\to Y$ be a dominant $G$-equivariant morphism between
  $k$-dense $G$-varieties. Then $\phi$ induces a surjective map
  $\phi_*:\cN_k(X)\to\cN_k(Y)$ such that
  $\phi_*\cZ_k(X)\subseteq\cZ_k(Y)$ with equality if
  $c_k(X)=c_k(Y)$. In particular, if $c_k(X)=c_k(Y)$ and
  $\rk_kX=\rk_kY$ then $\phi_*:\cZ_k(X)\to\cZ_k(Y)$ is bijective.

\end{corollary}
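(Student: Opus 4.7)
The plan is to handle the three assertions in turn. For the existence and surjectivity of $\phi_*$, observe that because $\phi$ is $G$-equivariant and dominant, the pullback $\phi^*\colon K(Y)\hookrightarrow K(X)$ commutes with the $G$-action and sends $(AN)$-semiinvariants to $(AN)$-semiinvariants of identical character. This produces an inclusion of lattices $\Xi_k(Y)\hookrightarrow\Xi_k(X)$, whose $\QQ$-dual is the required surjection $\phi_*\colon\cN_k(X)\auf\cN_k(Y)$. For the inclusion $\phi_*\cZ_k(X)\subseteq\cZ_k(Y)$, given $v\in\cZ_k(X)$ the restricted valuation $v|_{K(Y)}$ is $G$-invariant and vanishes on $K(Y)^{AN}\subseteq K(X)^{AN}$, hence is $k$-central on $Y$; tracing through \eqref{eq:lambdav} shows that $\lambda_{v|_{K(Y)}}$ is the restriction of $\lambda_v$ to $\Xi_k(Y)$, i.e.\ precisely $\phi_*\lambda_v$.

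For equality when $c_k(X)=c_k(Y)$, fix $w\in\cZ_k(Y)$. By \cref{thm:valdense}, $w$ admits a $k$-defined model $(\hat Y,D_Y)$ in which $D_Y$ is a $k$-dense, $G$-stable irreducible divisor with $c_k(D_Y)=c_k(Y)$. Resolve the rational map from $X$ to $\hat Y$ equivariantly over $k$, for instance by normalizing the closure of its graph, to obtain a locally linear $k$-model $\hat X$ of $X$ together with a dominant $G$-morphism $\hat\phi\colon\hat X\to\hat Y$. Let $\hat D$ be an irreducible component of $\hat\phi^{-1}(D_Y)$ dominating $D_Y$; after passing to a Galois-stable union of components and blowing up to separate them, one arranges that $\hat D$ is absolutely irreducible and defined over $k$. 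The connectedness of $G$ forces $\hat D$ to be $G$-stable, and the $k$-density of $\hat D$ follows from that of $D_Y$ combined with the fact that the generic fiber of $\hat\phi|_{\hat D}\colon\hat D\to D_Y$ is birational to the generic fiber of $\hat\phi$, which is $k$-dense via the $G$-equivariance and the $k$-density of $\hat X$. The dominant morphism $\hat D\to D_Y$ gives $K(D_Y)^P\hookrightarrow K(\hat D)^P$, whence $c_k(\hat D)\ge c_k(D_Y)=c_k(X)$, and \cref{prop:cYcX} gives the reverse inequality $c_k(\hat D)\le c_k(\hat X)=c_k(X)$. Therefore $v_{\hat D}$ is $k$-central on $\hat X$ by \cref{thm:valdense}~\ref{it:valdense-iii}, and hence lies in $\cZ_k(X)$ by birational invariance. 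Since $\hat\phi$ is surjective on $\hat D\to D_Y$ with some ramification index $e\in\ZZ_{>0}$, we have $v_{\hat D}|_{K(Y)}=e\,v_{D_Y}$, so $\phi_*\lambda_{v_{\hat D}}=e\,w$; dividing by $e$ (which preserves $\cZ_k(X)$ since it is a $\QQ_{\ge0}$-cone) exhibits $w$ as $\phi_*$ of an element of $\cZ_k(X)$.

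The final assertion is now immediate: when $\rk_k X=\rk_k Y$, the inclusion $\Xi_k(Y)\hookrightarrow\Xi_k(X)$ is a $\QQ$-isomorphism after tensoring, so $\phi_*\colon\cN_k(X)\to\cN_k(Y)$ is a $\QQ$-linear bijection, and combined with the preceding two inclusions this gives a bijection of cones. The main obstacle is the rationality and $k$-density step in the middle paragraph: when the Galois group permutes the $K$-irreducible components of $\hat\phi^{-1}(D_Y)$ non-trivially, producing a single absolutely irreducible $k$-defined component $\hat D$ requires a careful Galois-equivariant blow-up; and verifying $k$-density of $\hat D$ from that of $D_Y$ along the dominant $\hat\phi$ relies on using the $G$-action to spread $k$-rational points into the generic fibers. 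Once these technical points are secured, the remainder is just function-field and complexity bookkeeping.
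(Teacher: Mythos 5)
Your first paragraph (existence and surjectivity of $\phi_*$, and the inclusion $\phi_*\cZ_k(X)\subseteq\cZ_k(Y)$) and your final paragraph are correct and essentially the paper's argument. The gap is in the middle step, and it sits exactly where you flag it -- except that these are not ``technical points'' that your sketches can ``secure''; they are the substance of the proof. Taking an irreducible component $\hat D$ of $\hat\phi^{-1}(D_Y)$ dominating $D_Y$ gives you no control over rational points. If the Galois group permutes the dominating components, no single one is $k$-defined, and any $k$-point of the union lies in the intersection of all conjugates, so the union is not $k$-dense either; a ``Galois-equivariant blow-up'' does not convert a Galois orbit of divisors into one absolutely irreducible $k$-divisor. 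Even if there is a unique dominating component (hence $k$-defined), $k$-density of $\hat X$ and of $D_Y$ does not force $k$-density of $\hat D$: density of $k$-points in the total space says nothing about density in a specific closed fiber, and ``the generic fiber is $k$-dense via $G$-equivariance'' is not a valid inference -- the generic fiber is a scheme over $k(\hat Y)$, not over $k$, and $G$-stability alone does not make a $G$-orbit open in $\hat D$.

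The paper sidesteps all of this by invoking \cref{lem:centercentral} (after shrinking $X$ to an open $G$-stable subset where the composite $X\to Y\rat\Yq$ becomes a morphism, which is harmless since $\cZ_k$ is a birational invariant). That lemma is precisely the mechanism for producing a $k$-dense, $k$-central valuation with a prescribed center, and its proof is not bookkeeping: it applies Kempf's instability theorem (\cref{thm:Kempf}) over the function field $k(\RS)$ of a slice to obtain a rational one-parameter family, and then realizes the valuation as the restriction of $v_{Z\times0}$ along a dominant map $Z\times\A^1\rat\XS$ with $Z=G\times\XS$. There $Z$ is $k$-dense and $Z\times 0$ is trivially $k$-dense, so $k$-density is built in from the start rather than extracted from a preimage. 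To make your approach work you would have to reproduce that Kempf argument; as written, the proposal does not close the gap.
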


\begin{proof}

  The morphism $\phi$ induces obviously an injective homomorphism
  $\Xi_k(Y)\into\Xi_k(X)$ of which $\phi_*$ is the dual, hence
  surjective map.  Because of $\phi^* K(Y)^{AN}\subseteq K(X)^{AN}$,
  the restriction of a central valuation on $K(X)$ to $K(Y)$ is
  central, hence $\phi_*\cZ_k(X)\subseteq\cZ_k(Y)$.

  Now assume $c_k(X)=c_k(Y)$ and let $v\in\cZ_k(Y)$. Let $(\Yq,D)$ be
  a model of $v$. Since $c_k(D)=c_k(Y)=c_k(X)$,
  \cref{lem:centercentral} implies that $D$ is the center of some
  $\vq\in\cZ_k(X)$. This proves $\phi_*\cZ_k(X)\supseteq\cZ_k(Y)$. The
  last assertion is clear.
\end{proof}

\begin{remark}

  If $c_k(Y)<c_k(X)$ then $\phi_*$ is in general not surjective on
  $k$-central valuations. Let, for example, $k=\CC$, $G=SL(2)$ and
  $X=SL(2)$ with $G$ acting on the left. Then $\cZ_k(X)=\QQ_{>0}$ and
  $c_k(X)=1$ (easy, see e.g. \cite{LunaVust}). Now let
  $Y=\CC^2\setminus\{0\}=G/N$. Then $\cZ_k(Y)=\QQ$ and $c_k(Y)=0$ (see
  section \S\ref{sec:horospherical}). In this case, the surjective
  morphism $X\to Y$ is not surjective on central valuations.

\end{remark}

\section{Toroidal embeddings}

We have seen in \cref{lem:generic-central} that the neighborhood of a
$k$-central divisor is modeled after a toroidal embedding
corresponding to a ray, i.e., a one-dimensional cone. In this section
we generalize this construction to other toroidal embeddings.

Recall that the theory of toroidal embedding (see \cite{KempfTE} or
\cite{Oda}) attaches to every fan $\cF$ in $\cN_k:=\cN_k(X)$ a normal
equivariant embedding $A_k\into A(\cF)$. Recall that a fan is a finite
collection of finitely generated strictly convex cones
$\cC\subseteq\cN_k$ satisfying certain axioms. The embedding $A(\cF)$
has the property that to each $\cC\in\cF$ there corresponds an orbit
$A(\cC)\subseteq A(\cF)$. Its closure $\Aq(\cC)=\overline{A(\cC)}$ is
\[\label{eq:bigcup}
  \Aq(\cC)=\bigcup_{\cC\subseteq\cC'\in\cF}A(\cC')
\]
The \emph{support $\supp\cF$ of $\cF$} is the union of all
$\cC\in\cF$.

More generally, when $Z\to Y$ is a principal $A_k$-bundle (e.g. the
fibration $X\el\to X\an$ for a $G$-variety $X$) then let
$Z(\cF):=Z\times^{A_k}A(\cF)$ be corresponding relative toroidal
embedding over $Y$. For arbitrary $k$-dense
$G$-varieties we have:

\begin{theorem}\label{thm:CEmbedding}

  Let $X$ be a normal $k$-dense $G$-variety with LST-slice $X\el$ and
  assume that $X=G\cdot X\el$. Let $\cF$ be a fan with
  $\supp\cF\subseteq\cZ_k(X)$. Then there is a unique equivariant
  embedding $X\into X(\cF)$ such that the open embedding
  $U\times X\el\to X$ extends to an open embedding
  $U\times X\el(\cF)\to X(\cF)$ with $X(\cF)=G\cdot X\el(\cF)$.

\end{theorem}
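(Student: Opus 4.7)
The plan is to build $X(\cF)$ in three stages: first on the slice, then on a parabolic chart, and finally on the full $G$-orbit.

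\emph{Stages 1--2: slice and parabolic chart.} Since $X\el \to X\an$ is a principal $A_k$-bundle and $\cF$ lies in $\cN_k = \cN(A_k)$, classical toric theory yields a normal equivariant embedding $A_k \hookrightarrow A(\cF)$, and we form the associated bundle
\[
X\el(\cF) := X\el \times^{A_k} A(\cF).
\]
This is a normal $L$-variety (the $M$-action on $X\el$ commutes with the fiberwise $A_k$-action, so it extends to $X\el(\cF)$), and each ray $\QQ_{\ge 0}\,a$ of $\cF$ gives one $L$-stable boundary divisor $D\el(a)$ whose valuation is the $k$-central $v_a$ considered in \cref{lem:generic-central}. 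Put $Q := Q_k(X) = LU$ and
\[
Y(\cF) := Q \times^L X\el(\cF) \cong U \times X\el(\cF),
\]
a normal $Q$-variety containing $U \times X\el = Q \times^L X\el$ as an open subvariety; by the LST this subvariety is canonically identified with an open $Q$-stable subscheme $X_0 \subseteq X$. Gluing $X$ to $Y(\cF)$ along $X_0$ yields a normal $Q$-prevariety $\widehat X$.

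\emph{Stage 3: spreading to $G$, and uniqueness.} We take $X(\cF)$ to be the $G$-saturation of $\widehat X$: embed $X$ in a normal $G$-ambient (available by local $k$-linearity), form the union of all $G(K)$-translates of $Y(\cF)$ in the resulting birational class above $X$, and verify it descends to a single normal $G$-scheme. The crucial point is separatedness: distinct boundary points of two $G$-translates of $Y(\cF)$ must not be identified under a common birational map to $X$. Each irreducible boundary divisor of $X(\cF)$ is $G$-conjugate to some $D\el(a)$ with $v_a \in \cZ_k(X)$ by hypothesis, and by \cref{lem:generic-central} every $k$-central valuation has a unique geometric model in any normal equivariant variety; this uniqueness, combined with the valuative criterion of separatedness, rules out collisions and makes the gluing a genuine variety. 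Galois descent to $k$ is automatic, because $\cN_k$ is pointwise fixed by $\cG$, so $\cF$ is $\cG$-stable and every step above is $\cG$-equivariant. For uniqueness, any $G$-embedding $X \hookrightarrow X'$ satisfying the hypothesis must contain $Y(\cF)$ and all its $G$-translates and hence birationally dominates $X(\cF)$; the same separatedness argument applied to $X'$ forces $X' = X(\cF)$ once one insists $X' = G \cdot X\el(\cF)$.

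The main technical obstacle throughout is this separatedness verification in Stage~3, for which the injectivity of $\iota_k$ from \eqref{eq:ZtoN} together with \cref{lem:generic-central} is the decisive input, tying the combinatorics of the fan $\cF$ to the geometry of the $G$-action.
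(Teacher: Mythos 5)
Your construction in Stages 1--2 (the relative toric embedding $X\el(\cF) = X\el \times^{A_k} A(\cF)$ and the parabolic chart $U\times X\el(\cF)$) matches the paper's setup exactly. The real divergence is in Stage~3, and it is there that your argument has gaps.

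The paper's proof does not try to build the $G$-saturation by hand. It invokes Luna and Vust's extension criterion (\cite{LunaVust}*{Prop.~1.4}): a \emph{non-equivariant} normal model $X_0$ of $X$ extends uniquely to a normal $G$-variety $\Xq$ with $\Xq = G\cdot X_0$ if and only if, for every $\xi\in\fg$, the rational vector field $\xi_*$ induced on $X_0$ is regular. Applied to $X_0 = U\times X\el(\cF)$, normality reduces the check to codimension one, and \cref{lem:generic-central} supplies regularity of $\xi_*$ along each boundary divisor because each corresponding ray lies in $\cZ_k(X)$. That criterion already internalizes the gluing and the separatedness verification, which is precisely what makes the proof short.

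By contrast, your Stage~3 tries to reprove the Luna--Vust gluing from scratch, and it is not watertight. First, ``form the union of all $G(K)$-translates of $Y(\cF)$ in the birational class above $X$'' is not a well-defined operation: there is no ambient object in which these translates simultaneously sit (the normal $G$-ambient you get from local $k$-linearity contains $X$ but certainly need not contain $Y(\cF)$ or its boundary divisors), so what one actually has to do is glue $gY(\cF)$ and $hY(\cF)$ along the maximal open where the birational map is an isomorphism and then show the result is a scheme, not merely a prevariety. Second, your separatedness argument is insufficient. Uniqueness of the geometric model of a valuation shows that two boundary divisors with the same valuation get identified under the gluing maps, but separatedness of a normal variety is \emph{not} a statement purely about divisors: gluing two copies of $\A^2$ along $\A^2\setminus\{0\}$ produces a non-separated normal prevariety with no extra divisors at all. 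The valuative criterion of separatedness requires considering all valuations centered on the prevariety, not only the $G$-invariant, $k$-central, divisorial ones, and the reduction to divisorial valuations is exactly the nontrivial content of the Luna--Vust lemma you would be reproving. Finally, you attribute a ``uniqueness of geometric model'' statement to \cref{lem:generic-central}; that lemma actually gives a local structure (slice) statement near a $k$-central divisor, not a uniqueness-of-model statement. The uniqueness you want is a general fact about geometric valuations recorded informally earlier in Section~4, and would have to be cited as such.

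So: your decomposition into slice $\to$ parabolic chart $\to$ $G$-variety is the right skeleton and the right key input (\cref{lem:generic-central}), but the final step as written does not constitute a proof. It is replaced, not merely abbreviated, by the appeal to Luna--Vust's Prop.~1.4 in the paper.
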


\begin{proof}

  Let $X_0$ be any normal, but not necessarily equivariant model of
  $X$. Since each $\xi\in\fg$ acts as a derivation on $K(X)$ it
  induces a rational vector field $\xi_*$ on $X_0$. Luna-Vust have
  shown (\cite{LunaVust}*{Prop.\ 1.4}) that there is a normal
  $G$-variety $\Xq$ containing $X_0$ as an open subset if and only if
  for every $\xi\in\fg$ the vector field $\xi_*$ is regular on
  $X_0$. The condition $\Xq=G\cdot X_0$ makes this model even unique.

  We claim that the Luna-Vust condition is satisfied for
  $X_0=U\times X\el(\cF)$. Since $X_0$ is normal it suffices to show
  that $\xi_*$ is regular in codimension one. Regularity holds
  certainly on $U\times X\el$ since that set is open in the
  $G$-variety $X$. The complement consists of divisors $D$ which
  correspond to the one-dimensional cones (i.e. rays)
  $\cR\in\cF$. Because, by assumption, $\cR\subseteq\cZ_k(X)$, the set
  $(U\times X\el)\cup D$ is isomorphic to an open subset of a
  $G$-variety by \cref{lem:generic-central}. Thus, $\xi_*$ is regular
  along $D$, as well, proving the claim.
\end{proof}

\begin{remark}

  The condition $X=G\cdot X\el$ is very mild. For example it is
  clearly satisfied when $G$ acts transitively on $X$. In general, let
  $X':=G\cdot X\el=G\cdot (U\cdot X\el)$. Then $X'$ is a $G$-stable
  open subset to which the theorem applies.

\end{remark}

We now study the orbit structure of $X(\cF)$. For every $\cC\in\cF$
let
\[
  X\el(\cC):=X\el\times^{A_k}A(\cC)\text{ and }
  \Xq\el(\cC):=\Xq\el\times^{A_k}\Aq(\cC)
\]
be the relative $A_k$-orbit corresponding to $\cC$ and its closure,
respectively. The sets $X\el(\cC)$ form a stratification of
$X\el(\cF)$ which extends to $X(\cF)$:

\begin{theorem}\label{prop:boundary}

  Let $X\subseteq X(\cF)$ be as in \cref{thm:CEmbedding}. For each
  $\cC\in\cF$ define subvarieties of $X(\cF)$ by
  \[
    X(\cC):=G\cdot X\el(\cC)\text{ and }\Xq(\cC):=G\cdot \Xq\el(\cC).
  \]
  \begin{enumerate}

  \item\label{it:boundary1} The $X(\cC)$ form a stratification of
    $X(\cF)$. More precisely, each $X(\cC)$ is locally closed, the
    closure of $X(\cC)$ is $\Xq(\cC)$, and $\Xq(\cC)$ is the disjoint
    union of all $X(\cC')$ with $\cC\subseteq\cC'\in\cF$.

  \item\label{it:boundary2} The stratification of $X(\cF)$ is
    compatible with that of $X\el(\cF)$, i.e.,
    $X(\cC)\cap X\el(\cF)=X\el(\cC)$ and
    $\Xq(\cC)\cap X\el(\cF)=\Xq\el(\cC)$ for all
    $\cC\in\cF$. Moreover, $U\times X\el(\cC)\to X(\cC)$ and
    $U\times \Xq\el(\cC)\to\Xq(\cC)$ are open embeddings.

  \item\label{it:boundary3} Each stratum $X(\cC)$ is $k$-dense with
    \begin{align*}\hspace{20pt}
      &\dim X(\cC)=\dim X-\dim\cC,\
        c_k(X(\cC))=c_k(X),\ \\&\rk_kX(\cC)=\rk_kX-\dim\cC,\\
      &Q_k(X(\cC))=Q_k(X),\ X(\cC)\el=X\el(\cC),\ X(\cC)\an=X\an.
    \end{align*}

  \item\label{it:boundary4} Let $\<\cC\>\subseteq\cN_k(X)$ be the
    subspace spanned by $\cC$. Then
    $$
    \Xi_k(X(\cC))=\Xi_k(X)\cap\<\cC\>^\perp,\
    \cN_k(X(\cC))=\cN_k(X)/\<\cC\>
    $$
    and $\cZ_k(X(\cC))$ is the image of $\cZ_k(X)$.

  \end{enumerate}

\end{theorem}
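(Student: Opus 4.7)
The strategy is to bootstrap everything from the toric stratification on the elementary slice and transport it to $X(\cF)$ via the open embedding $U\times X\el(\cF)\into X(\cF)$ provided by \cref{thm:CEmbedding}.

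First I would work on the slice. The toric embedding $A_k\into A(\cF)$ has its standard orbit stratification by the $A(\cC)$ with closures as in \eqref{eq:bigcup}, and $A(\cC)$ is a principal homogeneous space for the quotient torus $A_k/A_k^\cC$, where $A_k^\cC\subseteq A_k$ is the subtorus whose character lattice is $\Xi_k(X)\cap\<\cC\>^\perp$. Pulling back along the principal $A_k$-bundle $X\el\to X\an$ yields a stratification of $X\el(\cF)$ by the $X\el(\cC)=X\el\times^{A_k}A(\cC)$, which are principal $A_k/A_k^\cC$-bundles over $X\an$, with closures $\Xq\el(\cC)$ given by the union formula in (\ref{it:boundary1}). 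This pins down the toric data: one reads off $\dim X\el(\cC)=\dim X\el-\dim\cC$, and obtains half of (\ref{it:boundary4}), namely $\Xi_k(X(\cC))=\Xi_k(X)\cap\<\cC\>^\perp$ and $\cN_k(X(\cC))=\cN_k(X)/\<\cC\>$. Because $A(\cC)$ is a quotient of split tori it is $k$-dense, so $X\el(\cC)$ and its $G$-saturate $X(\cC)$ are $k$-dense as well.

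For parts (\ref{it:boundary1}) and (\ref{it:boundary2}), the subvariety $U\times X\el(\cC)$ is $Q_k(X)$-stable inside $U\times X\el(\cF)$, since the $L$-action on $X\el$ commutes with the $A_k$-action and therefore preserves each toric stratum. The compatibility identity
\[
X(\cC)\cap\bigl(U\times X\el(\cF)\bigr)=U\times X\el(\cC)
\]
(and its analog for $\Xq(\cC)$) is the key point of (\ref{it:boundary2}): containment ``$\supseteq$'' is definitional, and ``$\subseteq$'' follows because two $G$-translates of $U\times X\el(\cF)$ which meet must overlap along a common $Q_k(X)$-orbit, and such orbits are confined to a single toric stratum. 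Part (\ref{it:boundary1}) then follows by $G$-saturation: $X(\cC)$ is irreducible (since $G$ is connected and $X\el(\cC)$ is), and translating the union formula from the slice gives $\Xq(\cC)=\bigsqcup_{\cC'\supseteq\cC}X(\cC')$, whence $X(\cC)=\Xq(\cC)\setminus\bigcup_{\cC\subsetneq\cC'}\Xq(\cC')$ is locally closed.

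For the remaining invariants in (\ref{it:boundary3}), I apply \cref{cor:LSTgeneric} and the uniqueness \cref{prop:uniqueQ} to $X(\cC)$ with slice $X\el(\cC)$. Since $L\el$ acts trivially on $A_k(X)$ and the original $L$-action on $X\el$ is elementary, $L$ acts elementarily on the $A_k/A_k^\cC$-bundle $X\el(\cC)\to X\an$; consequently $X(\cC)\el=X\el(\cC)$, $X(\cC)\an=X\an$, and $Q_k(X(\cC))=Q_k(X)$. The rank formula $\rk_kX(\cC)=\rk_kX-\dim\cC$ is read off from $A_k(X(\cC))=A_k(X)/A_k^\cC$, and $c_k(X(\cC))=c_k(X)$ from $k(X(\cC))^P=k(X\an)^M=k(X)^P$.

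The main obstacle is the remaining part of (\ref{it:boundary4}), namely $\cZ_k(X(\cC))=\pi_*\cZ_k(X)$. For the inclusion $\pi_*\cZ_k(X)\subseteq\cZ_k(X(\cC))$, given $v\in\cZ_k(X)$ I would choose a joint $G$-model $\Xq\supseteq X\cup X(\cC)$ in which $v$ is represented by a divisor, use \cref{lem:generic-central} to identify its behaviour near $X(\cC)$ with a toroidal degeneration along $\cC$, and then combine \cref{lem:GAcentral} with the characterization of $k$-central valuations from \cref{thm:valdense} to certify that the specialization lies in $\cZ_k(X(\cC))$. For the reverse inclusion I would pass to the algebraic closure via \eqref{eq:diagXi22}, invoke the corresponding equality $\cZ_K(X(\cC))=(\pi_K)_*\cZ_K(X)$ known from \cite{KnopIB}, and chase through the commutative square
\[
  \cxymatrix{
    \cN_k(X)\ar@{->>}[r]^{\pi_*}\ar@{^(->}[d]&\cN_k(X(\cC))\ar@{^(->}[d]\\
    \cN_K(X)\ar@{->>}[r]^{\pi_{K*}}&\cN_K(X(\cC))
  }
\]
using the injection from \cref{cor:NkNK} to descend the equality back to $k$.
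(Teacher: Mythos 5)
The core difficulty in this theorem is concentrated in parts \ref{it:boundary1}--\ref{it:boundary2}: one must show that $\Xq(\cC)=G\cdot\Xq\el(\cC)$ is actually \emph{closed} in $X(\cF)$, or equivalently that the closure of $U\times\Xq\el(\cC)$ inside $X(\cF)$ is $G$-stable and equals $\Xq(\cC)$. Your proposal addresses this with the assertion that ``two $G$-translates of $U\times X\el(\cF)$ which meet must overlap along a common $Q_k(X)$-orbit, and such orbits are confined to a single toric stratum.'' This is a genuine gap: there is no such general overlap principle, and nothing in your argument controls what happens in the closure of a toric stratum once you start moving by elements of $G$ that leave the chart $U\times X\el(\cF)$. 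A priori, $G$-translation could smear the stratum closure into something not captured by the slice picture.

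The paper's proof fills exactly this hole. First, for a one-dimensional cone $\cR$ it invokes \cref{lem:generic-central}: the $k$-central valuation attached to $\cR$ has a $G$-model $(\Xq,D)$ with a $G$-stable divisor $D$, and the birational map $\Xq\rat X(\cF)$ is shown to be regular along $D$ and to send $D$ into $U\times X\el(\cR)$; since $D$ is $G$-stable, so is the closure $\XS(\cR)$, which then equals $\Xq(\cR)$ by the general remark $Z=G\cdot(Z\cap X\el(\cF))$ for $G$-stable $Z$. Second, for an arbitrary $\cC$ with extremal rays $\cR_1,\dots,\cR_s$, the paper forms the $G$-stable intersection $\Xq(\cR_1)\cap\cdots\cap\Xq(\cR_s)$, observes that its slice-intersection is $U\times\Xq\el(\cC)$, and deduces that the closure $\XS(\cC)$ is an irreducible component of a $G$-stable set, hence $G$-stable, hence equals $\Xq(\cC)$. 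Both of these steps are missing from your proposal; notably you invoke \cref{lem:generic-central} only in connection with part \ref{it:boundary4}, whereas it is precisely what makes the stratification argument in \ref{it:boundary1}--\ref{it:boundary2} work. Parts \ref{it:boundary3} and \ref{it:boundary4} of your proposal are essentially the paper's arguments (read off the invariants from the slice; reduce the valuation cone statement to the known $K$-result), and are fine once \ref{it:boundary2} is established.
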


\begin{proof}

  We start with a general remark: because of $X(\cF)=G\cdot X\el(\cF)$
  we have
  \[\label{eq:remark}
    Z=G\cdot(Z\cap X\el(\cF))\text{ for all $G$-stable subsets
    }Z\subseteq X(\cF).
  \]

  Let $\XS(\cC)$ be closure of $U\times \Xq\el(\cC)$ in $X(\cF)$. We
  claim that $\XS(\cC)=\Xq(\cC)$. For this, we consider the case when
  $\dim\cC=1$ first. Let $a\in\cC$ be a generator, let $v\in\cZ_k(X)$
  be the corresponding $k$-central valuation, and let $(\Xq,D)$ be a
  model of $v$. Then it follows from \cref{lem:generic-central} that
  the birational map $\Xq\rat X(\cF)$ is regular in $D$ and maps $D$
  to $U\times X\el(\cC)$. Because $D$ is $G$-stable, this implies that
  also $\XS(\cC)$ is $G$-stable. By construction, we have
  $\XS(\cC)\cap X\el(\cF)=\Xq\el(\cC)$. Thus \eqref{eq:remark} with
  $Z=\XS(\cC)$ yields $\XS(\cC)=\Xq(\cC)$.

  Next let $\cC\in\cF$ be arbitrary, let
  $\cR_1\ldots,\cR_s\subseteq\cC$ be its extremal rays, and put
  \[
    X'(\cC):=\Xq(\cR_1)\cap\ldots\cap\Xq(\cR_s)
  \]
  which is a $G$-stable subset of $X(\cF)$. Then
  \[
    X'(\cC)\cap(U\times X\el(\cF))=\bigcap_{i=1}^s\left(U\times
      \Xq\el(\cR_i)\right) =U\times\Xq\el(\cC)=\XS(\cC)\cap(U\times
    \Xq\el(\cF)).
  \]
  This implies that $\XS(\cC)$ is an irreducible component of
  $X'(\cC)$ and therefore $G$-stable, as well. From \eqref{eq:remark}
  with $Z=\XS(\cC)$ we conclude $\XS(\cC)=\Xq(\cC)$, proving the
  claim.

  We have proved that $\Xq(\cC)$ is closed with
  \[\label{eq:inter1}
    \Xq(\cC)\cap(U\times X(\cF))=U\times\Xq\el(\cC).
  \]
  Next we define the $G$-stable subvariety
  \[
    X^0(\cC):=\Xq(\cC)\setminus\bigcup_{\cC'\not\subseteq\cC}\Xq(\cC')
  \]
  Then \eqref{eq:inter1} implies $X^0(\cC)\cap X(\cF)=X(\cC)$ and
  therefore $X^0(\cC)=X(\cC)$ by \eqref{eq:remark}. This shows
  \ref{it:boundary2} and that $X(\cC)$ is an open subset of
  $\Xq(\cC)$. The rest of \ref{it:boundary1} follows easily with the
  help of \eqref{eq:bigcup}. All assertions in \ref{it:boundary3}
  follow from \ref{it:boundary2} and the corresponding assertions for
  $X\el(\cC)$. The same holds for all assertions in \ref{it:boundary4}
  except for the final one. That follows, e.g., from the statement
  over $K$ which is proved in \cite{KnopIB}*{Satz\ 7.4.5}.
\end{proof}

\begin{remarks}\label{rem:XcF}

  \emph{i)} By shrinking $X\an$ and thereby $X(\cF)$, we can achieve
  that any finite number of ``generic'' conditions hold for all
  strata. For example: all strata $X(\cC)$ admit an orbit space
  $X(\cC)/G$; stratum, orbit space and quotient map are smooth; all
  isotropy groups in a stratum have $K$-isomorphic Levi complements
  etc.

  \emph{ii)} In general, $X(\cF)$ is only defined if $X$ meets the
  conditions of \cref{thm:CEmbedding}. So, when we speak about
  $X(\cF)$ we mean $X_0(\cF)$ where $X_0=G\cdot X\el$ is an open
  subset depending on the choice of $X\el$. In this sense, $X(\cF)$ is
  only unique up to a birational map which is defined in all
  strata. Observe that in the most important case, namely when $X$ is
  homogeneous and $k$-spherical, $X(\cF)$ is an unambiguously defined
  embedding of $X$ since then all strata are orbits.

  \emph{iii)} The dimension of the generic $G$-orbits in $X(\cC)$ is
  not so easy to control. An upper bound for the codimension is
  $c_k(X)$. Moreover we show in \cref{sec:horospherical} that this
  bound is attained as soon as $\cC$ contains an interior point of
  $\cZ_k(X)$. This is in particular the case when $\dim\cC=\rk_kX$.

\end{remarks}

As usual, for $k$-spherical varieties we can be more specific:

\begin{corollary}

  Let $X$ be a homogeneous $k$-spherical variety and let
  $X\into X(\cF)$ be the embedding corresponding to a fan $\cF$ with
  $\supp\cF\subseteq\cZ_k(X)$. Then the strata $X(\cC)$ are precisely
  the $G$-orbits. In particular, all $G$-orbits in $X(\cF)$ are
  $k$-dense.

\end{corollary}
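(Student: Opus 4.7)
The plan is to reduce the corollary to the structural statement in \cref{prop:boundary} by exploiting sphericity at the slice level. Since $X$ is homogeneous and $k$-spherical, \cref{cor:LSTgeneric} together with the openness of the $P$-orbit shows that if $x_0 \in X(k)$ lies in the open $Q_k(X)$-orbit, then $Q_k(X) \cdot x_0 = U \cdot X\el$ with $X\el = L \cdot x_0$, so the LST-slice $X\el$ is a single $L$-orbit. In particular $X\el$ is a homogeneous principal $A_k$-bundle over $X\an = M/M_{x_0}$.

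Next I would note that for each cone $\cC \in \cF$, the associated relative toric stratum
\[
X\el(\cC) = X\el \times^{A_k} A(\cC)
\]
is a single $L$-orbit. This uses two ingredients: (a) the standard fact that in the toroidal embedding $A(\cF)$ the set $A(\cC)$ is a single $A_k$-orbit, and (b) the transitivity of $L$ on $X\el$ established above. Indeed, $L \times A_k$ acts transitively on $X\el \times A(\cC)$, hence $L$ acts transitively on the quotient $X\el \times^{A_k} A(\cC)$.

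With this in hand the main claim follows at once. By definition $X(\cC) = G \cdot X\el(\cC)$, and since $X\el(\cC)$ is a single $L$-orbit, picking any $y \in X\el(\cC)$ gives $X(\cC) = G \cdot L \cdot y = G \cdot y$, so each stratum $X(\cC)$ is a single $G$-orbit. Because by \cref{prop:boundary} \ref{it:boundary1} the strata $\{X(\cC) : \cC \in \cF\}$ form a set-theoretic partition of $X(\cF)$, it follows that the $G$-orbits in $X(\cF)$ are precisely the strata. The $k$-density of every $G$-orbit is then the $k$-density of every stratum, which is \cref{prop:boundary} \ref{it:boundary3}.

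The only non-cosmetic step is the transitivity of $L$ on $X\el$, but this is essentially forced by $k$-sphericity via the LST: the open $P$-orbit corresponds to the open $Q_k(X)$-orbit $U \times X\el$, and since $X$ is homogeneous this open orbit is all of $U \times X\el$, which forces $L$ to act transitively on the slice. Everything else is a direct invocation of \cref{prop:boundary} and the orbit structure of the toric variety $A(\cF)$, so I do not anticipate any substantial obstacle.
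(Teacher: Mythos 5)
Your proof is correct and follows essentially the same route as the paper: the paper's one-line argument ($MA$ transitive on $X\an$, hence on $X\el(\cC)$, hence $G$ transitive on $X(\cC) = G\cdot X\el(\cC)$) is just a compressed version of your chain of deductions, the only cosmetic difference being that you start from transitivity of $L$ on $X\el$ while the paper starts from transitivity of $M$ on $X\an$ and implicitly uses transitivity of $A$ (via $A_k$) on the fibers. Both arguments reduce to the same two facts: transitivity of $L$ on the slice stratum and $X(\cC)=G\cdot X\el(\cC)$ from \cref{prop:boundary}.
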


\begin{proof}

  The group $MA$ acts transitively on $X\an$ and therefore on
  $X\el(\cC)$. Thus, $G$ acts transitively on
  $X(\cC)=G\cdot X\el(\cC)$.
\end{proof}

For local fields, the set $X(k)$ is also stratified:

\begin{proposition}\label{prop:stratification}

  Assume that $k$ is a local field. Then the sets $X(\cC)(k)$ with
  $\cC\in\cF$ form a stratification of $X(\cF)(k)$ for the Hausdorff
  topology.

\end{proposition}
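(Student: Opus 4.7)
The plan is to verify the three properties that make up a stratification: (a) the $X(\cC)(k)$ for $\cC \in \cF$ form a partition of $X(\cF)(k)$; (b) each is locally closed in the Hausdorff topology on $X(\cF)(k)$; and (c) the Hausdorff closure of $X(\cC)(k)$ equals $\bigcup_{\cC \subseteq \cC' \in \cF} X(\cC')(k)$. Parts (a) and (b) follow directly from the algebraic stratification in \cref{prop:boundary}\ref{it:boundary1}: taking $k$-points preserves partitions and turns $k$-locally closed subvarieties into Hausdorff-locally closed subsets.

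For (c), the inclusion $\subseteq$ is easy, since the Hausdorff closure of $X(\cC)(k)$ is contained in the $k$-points of the Zariski closure $\overline{X(\cC)}^{\mathrm{Zar}} = \bigcup_{\cC \subseteq \cC'} X(\cC')$. The non-trivial direction is $\supseteq$: for fixed $\cC \subseteq \cC'$ and $x \in X(\cC')(k)$, one must produce a sequence in $X(\cC)(k)$ Hausdorff-converging to $x$.

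I would reduce this to a toric statement in two steps. First, by \cref{thm:CEmbedding} and \cref{prop:boundary}\ref{it:boundary2}, a Zariski neighborhood of $x$ looks like $U \times X\el(\cF)$ with compatible strata, so after translating by $U(k)$ the problem becomes approximating a given $\tilde x \in X\el(\cC')(k)$ by points of $X\el(\cC)(k)$. Second, the principal $A_k$-bundle $X\el \to X\an$ is étale-locally trivial by \cref{cor:LSTgeneric}; combining an étale trivialization with the implicit function theorem over the local field $k$ yields a Hausdorff-open neighborhood of $\tilde x$ of the form $V \times A(\cF)$ with strata compatibly decomposed. The problem is thereby reduced to: for the split torus $A_k$ and cones $\cC \subseteq \cC'$ of $\cF$, every $k$-point of $A(\cC')$ is a Hausdorff limit of points of $A(\cC)(k)$.

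To finish, observe that the orbit closure $\overline{A(\cC)}$ is itself a toric variety under the quotient of $A_k$ by the split subtorus generated by $\cC$, with fan $\cF/\cC$, so one may assume $\cC = \{0\}$ and approximate $A(\cC')(k)$ by $A_k(k)$. Any subtorus of a split torus is split, hence Hilbert~90 makes the orbit map $A_k(k) \auf A(\cC')(k)$ surjective. Given $y \in A(\cC')(k)$, lift it to $t \in A_k(k)$ and choose a cocharacter $\lambda:\G_m \to A_k$ in the relative interior of $\cC'$; then $t \cdot \lambda(s) \in A_k(k)$ converges to $y$ in the Hausdorff topology as $s \to 0$ in $k$, completing the approximation. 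The main obstacle is the passage from étale-local to Hausdorff-local triviality of the torsor $X\el \to X\an$, which relies on the implicit function theorem for $k$; everything else is either algebraic or standard toric geometry over local fields.
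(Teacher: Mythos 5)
Your proof is correct and follows essentially the same route as the paper's: reduce via the local structure theorem to the toric situation $A(\cC)$ versus $A(\cC')$, then approximate using a cocharacter in the relative interior of $\cC'$, using Hilbert~90 to guarantee enough $k$-rational points. The one place where your argument takes a small detour is in upgrading the torsor $X\el\to X\an$ from étale-local to Hausdorff-local triviality via the implicit function theorem. This is unnecessary: since $A_k(X)$ is a split torus, the $A_k$-torsor $X\el\to X\an$ is already \emph{Zariski}-locally trivial by Hilbert~90 (this is exactly the observation the paper makes), so one gets open neighborhoods of the form $Y\times A(\cF)$ with no analytic input at all. Relatedly, \cref{cor:LSTgeneric} (which you cite for étale-local triviality) concerns the quotient $X\el\to X\el\mod L$, not the $A_k$-bundle $X\el\to X\an$, so the reference is slightly off — but the conclusion you want follows directly from Hilbert~90 as above. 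The paper also packages the toric step a bit differently: instead of reducing to $\cC=\{0\}$, it observes that $\pi:A(\cC)\to A(\cC')$, $x\mapsto\lim_{t\to0}a(t)x$, is a surjective homomorphism of split tori with connected kernel, hence has a section and is surjective on $k$-points. Your reduction to $\cC=\{0\}$ followed by lifting to $A_k(k)$ is equivalent and equally valid.
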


\begin{proof}

  Clearly, each set $X(\cC)(k)$ is locally closed and its closure is
  contained in the union of all sets $X(\cC')(k)$ with
  $\cC\subseteq\cC'\in\cF$. It remains to be shown that conversely
  each such set $X(\cC')(k)$ is contained in the closure of
  $X(\cC)(k)$. Now Hilbert's Theorem 90 implies that the fibration
  $X\el\to X\an$ is Zariski locally trivial. Thus, the
  construction of $X(\cF)$ shows that it is covered by open subsets of
  the form $Y\times A(\cF)$ where $Y$ is some $k$-variety.

  Toroidal theory shows, that the choice of a 1-parameter subgroup
  $a:\G_m\to A_k(X)$ in the relative interior of $\cC'$ yields a
  surjective morphism
  \[
    \pi:A(\cC)\to A(\cC'):x\mapsto \lim_{t\to0}a(t)x.
  \]
  Moreover, $\pi$ is even a homomorphism of split tori whose kernel is
  connected. Hence it has a section which shows that the induced map
  $A(\cC)(k)\to A(\cC')(k)$ on rational points is surjective as
  well. This implies the claim.
\end{proof}

Next we study morphisms between toroidal embeddings.

\begin{proposition}\label{prop:morphism}

  Let $X\subseteq X(\cF)$ is in \cref{thm:CEmbedding} and let
  $\phi:X\to Y$ be a dominant equivariant morphism. Then $\phi$
  extends to a rational map $X(\cF)\rat Y$ which is defined in all
  strata if and only if all $v$ in the relative interior $\cC^\circ$ of
  any $\cC\in\cF$ have the same center in $Y$.

\end{proposition}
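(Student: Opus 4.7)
The plan is to prove the two directions separately, using a star-subdivision for the forward direction and a one-parameter-subgroup argument combined with a divisor-theoretic extension for the reverse direction.

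For the only-if direction, suppose $\phi$ extends to $\phi_\cF:X(\cF)\rat Y$ regular at the generic point $\eta_\cC$ of every stratum. Given $v\in\cC^\circ$, let $\cF'$ be the star-subdivision of $\cF$ along the ray $\cR_v:=\QQ_{\ge0}\iota_k(v)$; by \cref{thm:CEmbedding} this yields a proper birational $G$-morphism $\pi:X(\cF')\to X(\cF)$, and the divisor $D_v:=\Xq(\cF')(\cR_v)$ realizes $v$ as $v_{D_v}$. Because $\cC$ is the minimal cone of $\cF$ containing $\cR_v$, $\pi$ maps $D_v$ dominantly onto $\Xq(\cF)(\cC)$. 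Hence $\phi_\cF\circ\pi$ is regular at the generic point of $D_v$ with image $\phi_\cF(\eta_\cC)$, and the center of $v$ in $Y$ equals $\overline{\phi_\cF(\eta_\cC)}$, independent of $v\in\cC^\circ$.

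For the if direction, assume that for each $\cC\in\cF$ all $v\in\cC^\circ$ share a common center $y_\cC\subseteq Y$, and fix $g\in\cO_{Y,y_\cC}$ regular on a $G$-stable open $Y_0\ni y_\cC$. For a generic rational $\lambda\in\cC^\circ$ (scaled to a one-parameter subgroup $\G_m\to A$) and generic $x\in X$, the orbit map $t\mapsto\lambda(t)x$ extends to $c:\A^1\to X(\cF)$ with $c(0)\in X(\cC)$, and the central valuation $v_\lambda$ can be realized geometrically as $v_\lambda(f)=\mathrm{ord}_{t=0}f(\lambda(t)x)$; this reduces via the Local Structure Theorem to the tautological computation $f_\chi(\lambda(t)x)=t^{\langle\chi,\lambda\rangle}f_\chi(x)$ on $A$-semi-invariants. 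Since the hypothesis gives $v_\lambda(\phi^*g)\ge0$, the function $\phi^*g\circ c$ is regular and bounded near $t=0$, so $\phi\circ c|_{\G_m}$ extends to $\A^1\to Y$ with $c(0)$ mapped into $y_\cC\subseteq Y_0$; in particular $c(t)\in\phi^{-1}(Y_0)$ for $t$ in a punctured neighborhood of $0$.

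Now let $D\subseteq X(\cF)$ be any prime divisor containing $\eta_\cC$. For a generic pair $(\lambda,x)$ we have $c\not\subseteq D$, since otherwise the union of such curves---covering the dense open $X\subseteq X(\cF)$ as $(\lambda,x)$ varies---would be contained in $D$, forcing $D=X(\cF)$. Thus $c$ meets $D$ at $t=0$ with finite positive intersection multiplicity, and any pole of $\phi^*g$ along $D$ would induce a pole of $\phi^*g\circ c$ at $t=0$, contradicting boundedness. Hence the pole divisor of $\phi^*g$ on the normal variety $X(\cF)$ avoids $\eta_\cC$, so $\phi^*g$ is regular in an open neighborhood of $\eta_\cC$. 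Ranging over generators $g$ of $\cO_{Y,y_\cC}$ for an affine open model yields regularity of $\phi_\cF$ at $\eta_\cC$. The main obstacle is establishing that the generic curve $c$ is not contained in a given divisor $D\ne X(\cF)$, which follows from the density of $X$ in $X(\cF)$ combined with the fact that the family $\{c(\A^1):(\lambda,x)\}$ dominates $X$.
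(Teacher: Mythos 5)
Your division into two directions is reasonable, and the ``only if'' half (which the paper does not explicitly prove) is fine: subdividing $\cF$ along a ray interior to $\cC$ realizes $v$ by a divisor dominating $\Xq(\cC)$, so all such $v$ inherit the center $\overline{\phi_\cF(\eta_\cC)}$.

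The ``if'' half, however, has a genuine gap, and it sits exactly at the step you flag as ``the main obstacle'' being resolved. You establish, correctly, that for generic $(\lambda,x)$ the curve $c(t)=\lambda(t)x$ lands in $X(\cC)$ at $t=0$ and that $\operatorname{ord}_{t=0}(\phi^*g\circ c)=v_\lambda(\phi^*g)\ge 0$. But the inference ``any pole of $\phi^*g$ along a divisor $D\ni\eta_\cC$ would induce a pole of $\phi^*g\circ c$ at $t=0$'' is false: when the curve passes through $c(0)$ it simultaneously crosses several divisors through $\eta_\cC$, and a pole along $D$ can be exactly cancelled by zeros along other divisors that $c$ also meets at $t=0$. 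Concretely, on $\A^2$ with origin as closed stratum, the function $h=y/(x-y)$ has a pole along $\{x=y\}\ni 0$, yet along $c(t)=(tx_0,ty_0)$ one has $h(c(t))=y_0/(x_0-y_0)$, a nonzero constant, so $\operatorname{ord}_{t=0}(h\circ c)=0$; meanwhile $h\notin\cO_{\A^2,0}$. So bounding $\phi^*g$ along a family of curves through $\eta_\cC$ does not control the pole divisor of $\phi^*g$ at $\eta_\cC$. In effect, your curve computation only re-derives the inequality $v_\lambda(\phi^*g)\ge 0$ for $\lambda\in\cC^\circ$ and then tries to convert it into membership in $\cO_{X(\cF),\Xq(\cC)}$ by an argument that is not valid for arbitrary rational functions. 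The paper's one-line proof of this direction dispenses with curves entirely: it passes straight from $v(\phi^*f)\ge 0$ for all $v\in\cC^\circ$ to $\phi^*\cO_{Y,Z}\subseteq\cO_{X(\cF),\Xq(\cC)}$, using that on the normal variety $X(\cF)$ the local ring at $\Xq(\cC)$ is an intersection of divisorial valuation rings, together with the equivariance of $\phi$ to control the non-boundary divisors through $\eta_\cC$. If you want to keep a geometric flavor, the missing ingredient is to use the common-center hypothesis itself (not merely the inequality $v_\lambda(\phi^*g)\ge 0$) to rule out compensating zero/pole divisors of $\phi^*g$ through $\eta_\cC$; as written, the hypothesis is only used to get the inequality, which my counterexample shows is insufficient.
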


\begin{proof}

  Suppose $Z\subseteq Y$ is the common center. Let
  $f\in\cO_{Y,Z}$. Then $v(\phi^*(f))\ge0$ for all $v\in\cC^0$ which
  implies that $\phi^*\cO_{Y,Z}\subseteq\cO_{X(\cF),\Xq(\cC)}$. Hence
  $\phi$ regular in $\Xq(\cC)$.
\end{proof}

\begin{corollary}\label{cor:morphism}

  Let $X\into X(\cF)$ and $Y\into Y(\cF')$ be two toroidal embeddings
  and let $\phi:X\to Y$ be a dominant equivariant morphism. Assume
  $\phi_*:\cN_k(X)\to\cN_k(Y)$ maps $\cF$ to $\cF'$ (i.e., for all
  $\cC\in\cF$ there is $\cC'\in\cF'$ with
  $\phi_*(\cC)\subseteq\cC'$). Then $\phi$ extends to a rational map
  $X(\cF)\rat Y(\cF')$ such that a stratum $X(\cC)$ is mapped into
  $Y(\cC')$ whenever $\phi_*(\cC)\subseteq\cC'$.

\end{corollary}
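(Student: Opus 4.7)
The plan is to reduce to Proposition \ref{prop:morphism} applied to the composition $X\xrightarrow{\phi}Y\hookrightarrow Y(\cF')$. Fix a cone $\cC\in\cF$. By the fan axioms, the set of cones in $\cF'$ containing $\phi_*(\cC)$ has a unique minimal element $\cC'$, characterized by the property that $\phi_*(\cC^\circ)\subseteq(\cC')^\circ$. I will show that every $v\in\cC^\circ$ has center $\Yq(\cC')$ in the $G$-variety $Y(\cF')$; Proposition \ref{prop:morphism} will then supply the desired extension, and the identification of strata will be immediate.

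For the center computation, recall that any $v\in\cZ_k(X)$ restricts under $\phi^*\colon K(Y)\hookrightarrow K(X)$ to a $G$-invariant valuation $\phi_*v$ of $K(Y)$, which is $k$-central by Corollary \ref{lem:GAcentral} and whose class in $\cN_k(Y)$ is the image of $v$ under the map $\phi_*\colon\cN_k(X)\to\cN_k(Y)$ of the same name. Hence for $v\in\cC^\circ$ we have $\phi_*v\in(\cC')^\circ$. Using the Local Structure Theorem (Theorem \ref{lem:bigLST}) for $Y(\cF')$ together with the toroidal description $Y\el(\cC')=Y\el\times^{A_k(Y)}A(\cC')$ from Theorem \ref{prop:boundary}, the center of $\phi_*v$ on $Y(\cF')$ is computed on $Y\el(\cF')$ and further reduces to the ambient toric bundle, where a classical fact about toric embeddings says that any valuation in $(\cC')^\circ$ has center equal to the closed orbit of the cone $\cC'$, i.e.\ $\Yq(\cC')$. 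Since this center is independent of the choice of $v\in\cC^\circ$, Proposition \ref{prop:morphism} applies and produces a rational map $X(\cF)\rat Y(\cF')$ extending $\phi$ and regular at the generic point of every $\Xq(\cC)$, sending that generic point into $\Yq(\cC')$.

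It remains to upgrade the set-theoretic image statement from $\Xq(\cC)\to\Yq(\cC')$ to $X(\cC)\to Y(\cC')$, i.e.\ to the \emph{open} stratum. Since $Y(\cC')$ is the complement in $\Yq(\cC')$ of the closed subset $\bigcup_{\cC'\subsetneq\cD'}\Yq(\cD')$, if the image of $X(\cC)$ met some $\Yq(\cD')$ with $\cD'\supsetneq\cC'$, then by equivariance and the description $X(\cC)=G\cdot X\el(\cC)$ already a subvariety of the toric fiber $X\el(\cC)$ would map into $\Yq(\cD')\cap Y\el(\cF')$, forcing some valuation in the relative interior of a face of $\cC$ to push forward into $(\cD')^\circ$; but $\phi_*(\cC^\circ)\subseteq(\cC')^\circ$ and $\phi_*$ is linear, which rules this out by minimality of $\cC'$. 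After shrinking $X\el$ as in Remark \ref{rem:XcF}\emph{i)} to make the stratification and map well-behaved, this yields $\phi(X(\cC))\subseteq Y(\cC')$. The main obstacle is Step~3: one has to match the toroidal and $G$-equivariant descriptions of strata so that the purely toric computation of centers actually governs the behaviour of $\phi$ along $\Xq(\cC)$; once the LST is used to trivialise the $U$-direction and the $M$-direction (the anisotropic kernel being common to $X(\cC)$ and $X$), everything reduces cleanly to the toric bundle $X\el(\cF)\to X\an$, where the result is standard.
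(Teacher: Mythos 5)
Your proof is correct and matches the route the paper intends: there is no written proof of the corollary because it is meant to drop out of Proposition~\ref{prop:morphism} applied to the composite $X\to Y(\cF')$, once one verifies that every $v\in\cC^\circ$ has common center $\Yq(\cC')$; your computation of $\cC'$ as the unique minimal cone with $\phi_*(\cC^\circ)\subseteq(\cC')^\circ$ and the reduction of the center computation to the toric fiber via the LST are exactly the right ingredients, with $\phi_*v\in\cZ_k(Y)$ supplied by Corollary~\ref{lem:GAcentral}.

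One remark on the final paragraph: the ``upgrade'' from $\Xq(\cC)\to\Yq(\cC')$ to $X(\cC)\to Y(\cC')$ is not actually needed. Proposition~\ref{prop:morphism} gives regularity at the generic point of $\Xq(\cC)$, which lies in the open stratum $X(\cC)$, and sends it to the generic point of the center $\Yq(\cC')$, which lies in $Y(\cC')$; in the birational sense in which the paper works with $X(\cF)$ (cf.\ Remark~\ref{rem:XcF}), this is exactly the assertion. Moreover, the mechanism you invoke for the upgrade is shaky: $\phi$ does not carry the slice $X\el$ into $Y\el$ in general (only $Q_k(X)\subseteq Q_k(Y)$ is guaranteed, and $\phi(X\el)$ lands in $Y$, not in $Y\el$), so you cannot directly compare $\phi(X\el(\cC))$ with $Y\el(\cF')$ inside the two toric bundles, and the phrase about a ``valuation in the relative interior of a face of $\cC$'' pushing into $(\cD')^\circ$ does not clearly follow. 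Since the paragraph is not required, this does not invalidate the proof, but the purported pointwise argument would need the slice compatibility spelled out.
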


We say that the embedding $\iota_1:X\into X_1$ dominates the embedding
$\iota_2:X\into X_2$ if there is a (necessarily unique) morphism
$\phi:X_1\to X_2$ with $\iota_2=\phi\circ\iota_1$. Clearly, this
defines a partial order on the set of all embeddings of $X$.

For toroidal embeddings this means the following: Let $\cF_1$ and
$\cF_2$ be two fans supported in $\cZ_k(X)$. Then $\cF_1$ dominates
$\cF_2$ if for every $\cC_1\in\cF_1$ there is an $\cC_2\in\cF_2$ with
$\cC_1\subseteq\cC_2$. Then $X(\cF_1)$ dominates $X(\cF_2)$ if and
only if $\cF_1$ dominates $\cF_2$. Observe, that for the dominance
order there is always a maximal element, namely the trivial fan
corresponding to the trivial embedding $X\into X$. On the other side,
if $X$ is $k$-convex there is also a minimal element namely the
standard embedding.

Next we show that only toroidal embeddings can dominate a toroidal
embedding.

\begin{proposition}\label{prop:dominate}

  Let $X$ be a homogeneous $k$-spherical variety. Let $X\into X'$ be a
  normal equivariant embedding dominating the toroidal embedding
  $X(\cF)$. Then $X'=X(\cF')$ for some fan $\cF'$ dominating $\cF$.

\end{proposition}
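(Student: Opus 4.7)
The plan is to extract a fan $\cF'$ in $\cN_k(X)$ from $X'$ via the big Local Structure Theorem, identify $X'$ with $X(\cF')$, and then read off the domination from the given morphism $\phi\colon X'\to X(\cF)$. First, I would apply \cref{lem:bigLST} to the normal variety $X'$: since $X\subseteq X'$ is open dense and the open $P$-orbits of $X$ and $X'$ agree, $Q_k(X')=Q_k(X)=Q=LU$, so the theorem yields an $L$-stable slice $R\subseteq X'$ with $U\times R\hookrightarrow X'$ open, whose complement has pure codimension one with no $G$-stable component; by construction $X\el\subseteq R$ is open.

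Next, I would identify $R$ with $X\el(\cF')$ for some fan $\cF'$ supported in $\cZ_k(X)$. Since $X$ is homogeneous and $k$-spherical, $L$ acts transitively on $X\el$, and the $A_k$-bundle structure $X\el\to X\an=M/M_y$ gives $X\el\cong M\times^{M_y}A_k$ for some $y\in X\an(k)$ and a homomorphism $M_y\to A_k$ of finite image. Hence every $L$-equivariant normal embedding of $X\el$ has the form $M\times^{M_y}\Aq$ for $\Aq$ a normal $A_k$-embedding of $A_k$ (automatically $M_y$-stable, as $M_y$ acts trivially on $\cN_k(X)$), and toric theory identifies $\Aq=A(\cF')$ with a unique fan $\cF'$ in $\cN_k(X)$; thus $R=X\el(\cF')$. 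Each ray $\cR\in\cF'$ determines an $A_k$-stable divisor of $R$ whose $G$-saturation in $G\cdot R$ is a $k$-dense $G$-stable divisor of $X'$ (as $A_k$ is split), and the associated $G$-invariant valuation is therefore $k$-central by the corollary to \cref{thm:valdense}. So $\supp\cF'\subseteq\cZ_k(X)$, and \cref{thm:CEmbedding} realizes $X(\cF'):=G\cdot X\el(\cF')=G\cdot R$ as a well-defined open subvariety of $X'$.

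The main step, and the hardest, is to show $X'=X(\cF')$. Suppose for contradiction that $Y\subseteq X'\setminus X(\cF')$ is a $G$-orbit; since $X\subseteq X(\cF')$, $Y$ lies in the boundary $X'\setminus X$, and $\bar Y\subseteq X'\setminus(U\cdot R)$, which by \cref{lem:bigLST} is a union of non-$G$-stable $B$-stable divisors. Each such component must meet $X$ (otherwise it lies in the $G$-stable boundary $X'\setminus X$ and is itself $G$-stable), so $\bar Y\subseteq\bar D$ for the closure in $X'$ of some color $D\subseteq X$. Then $\phi(Y)$ is a boundary $G$-orbit of $X(\cF)$ — using $\phi^{-1}(X)=X$, which follows from the homogeneity of $X$ — contained in the closure of $D$ in $X(\cF)$. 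But in the toroidal $X(\cF)$ each boundary stratum $X(\cC)$ meets $U\cdot X\el(\cF)$ in $U\cdot X\el(\cC)\neq\leer$, while the closure of $D$ in $X(\cF)$ is disjoint from $U\cdot X\el(\cF)$ (because $D\subseteq X$ satisfies $D\cap U\cdot X\el=\leer$ by \cref{lem:bigLST} for $X$, and $D$ cannot extend into the boundary $X(\cF)\setminus X$). This is the contradiction. Finally, $X'=X(\cF')$ together with the morphism $\phi\colon X(\cF')\to X(\cF)$ force, via the characterization of cones as loci of centers of $k$-central valuations, $\cC'\subseteq\cC$ whenever $\phi$ maps $X(\cC')$ into $X(\cC)$, so $\cF'$ dominates $\cF$.
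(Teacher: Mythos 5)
Your proof is correct, but it takes a genuinely different and noticeably longer route than the paper's. The paper exploits the dominating morphism $\phi\colon X'\to X(\cF)$ right away: it defines the slice as the preimage $X\el':=\phi^{-1}(X\el(\cF))$, which immediately yields $X'=G\cdot X\el'$ (for any $x\in X'$, translate $\phi(x)$ into $X\el(\cF)$ by some $g\in G$, then $gx\in X\el'$) and the open embedding $U\times X\el'\to X'$ by pulling back $U\times X\el(\cF)\to X(\cF)$. Because $X\an$ is homogeneous, $X\el'$ is then forced to be $X\el\times_{X\an}\Aq$ for a torus embedding $\Aq$ dominating $A(\cF)$, and one is done in a few lines. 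You instead apply \cref{lem:bigLST} to $X'$ from scratch, reconstruct the slice $R$, identify $G\cdot R$ with some $X(\cF')$, and then need a separate (and the hardest) argument to show that $X'=G\cdot R$ — ruling out extra $G$-orbits by tracking the closures of colors through $\phi$ and comparing with the strata of $X(\cF)$. This last step is where the real work goes in your version, and it is not needed at all in the paper's approach because the preimage construction captures all of $X'$ at once. Both arguments are sound; yours illustrates more explicitly the interaction between colors, boundary divisors, and $\phi$, but the paper's is cleaner and avoids the case analysis of orbits entirely. One small precision worth noting in your step 2: you should say why $G\cdot R$ is open in $X'$ (because $U\cdot R$ is open and $G\cdot R=G\cdot(U\cdot R)$), and your observation that a divisorial component of $X'\setminus(U\cdot R)$ contained in $X'\setminus X$ would have to be an irreducible component of that $G$-stable set, hence $G$-stable, is correct but compressed; it deserves a sentence.
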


\begin{proof}

  Let $\phi:X'\to X(\cF)$ be the dominating morphism and put
  $X\el':=\phi^{-1}(X\el(\cF)$. Then $U\times X\el'\to X'$ is an open
  embedding and $X'=G\cdot X\el'$. Moreover, since $X\an$ is
  homogeneous, $X\el'$ is of the form $X\el\times_{X\an}\Aq$ where
  $\Aq$ is an embedding of $A_k(X)$ dominating $A(\cF)$. But then it
  follows from the theory of torus embeddings that $\Aq=A(\cF')$ for
  some fan dominating $\cF$. This implies in turn $X'=X(\cF')$.
\end{proof}

Of particular importance is the case when the support of $\cF$ is all
of $\cZ_k(X)$ in which case we call $\cF$ \emph{complete (for
  $X$)}. It is not true in general that $X(\cF)$ is complete in the
algebro-geometric sense but still $X(\cF)$ has the following
completeness property with respect to rational points:

\begin{theorem}\label{thm:CompleteGeneral}

  Let $\cF$ be a complete fan, let $X(\cF)\into\Xq$ be a locally
  linear equivariant embedding, and let $Y\subseteq\Xq$ be a
  $k$-dense, $G$-stable subvariety with $c_k(Y)=c_k(X)$. Then
  $Y\cap X(\cF)\ne\leer$.

\end{theorem}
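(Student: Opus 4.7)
My strategy is to use \cref{lem:centercentral} to exhibit $Y$ as the center of a $k$-central valuation $v$ of $\Xq$, then to locate $v$ in a cone of $\cF$, and use the toroidal structure to see that the corresponding stratum of $X(\cF)$ lies in $Y$.

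I would first replace $Y$ by its closure in $\Xq$; this is harmless because $X(\cF)$ is open in $\Xq$, so a non-empty intersection with $\overline{Y}$ forces a non-empty intersection with the dense subset $Y$, while the hypotheses of $G$-stability, $k$-density, and $c_k=c_k(X)$ are all preserved under closure. The composition $X\into X(\cF)\into\Xq$ is then an open, hence dominant, $G$-morphism from the $k$-dense variety $X$ to the locally linear variety $\Xq$; since $c_k(Y)=c_k(X)$, \cref{lem:centercentral} (applied with $\XS:=X$) yields a $k$-central valuation $v$ of $K(X)=K(\Xq)$ whose center in $\Xq$ is exactly $Y$. Because $\cF$ is complete, $v\in\cZ_k(X)=\supp\cF$, so $v$ lies in the relative interior of a unique cone $\cC\in\cF$.

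The key step is to identify the center of $v$ inside the open subvariety $X(\cF)\subseteq\Xq$ as the stratum closure $\Xq(\cC)$. By \cref{thm:CEmbedding,prop:boundary}, $X(\cF)=G\cdot X\el(\cF)$ with $X\el(\cF)=X\el\times^{A_k}A(\cF)$ obtained from the principal $A_k$-bundle $X\el\to X\an$ via the relative toric construction; standard toric geometry identifies the center of $v$ in $A(\cF)$ as $\overline{A(\cC)}$, and this propagates through the relative bundle and the $G$-saturation to give $\Xq(\cC)$ as the center of $v$ in $X(\cF)$. Finally, centers restrict compatibly under the open immersion $X(\cF)\into\Xq$: the center in $X(\cF)$, when it exists, must equal the intersection of the center in $\Xq$ with $X(\cF)$, so by uniqueness $Y\cap X(\cF)=\Xq(\cC)\ne\leer$. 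The step I expect to be the main obstacle is the careful transfer of the standard toric computation of centers through the twisted bundle $X\el(\cF)$ and then through $G$-saturation to isolate precisely $\Xq(\cC)$ — a direct calculation, but one requiring the $G$-invariance of $v$ to reduce to the slice $X\el(\cF)$ and the explicit local description in \cref{prop:boundary}.
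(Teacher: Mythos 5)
Your proposal is correct and follows the same route as the paper: apply \cref{lem:centercentral} to realize $Y$ as the center of some $v\in\cZ_k(X)$, then use completeness of $\cF$ to place $v$ in the relative interior of a cone $\cC\in\cF$, so that $v$ has center $\Xq(\cC)$ in $X(\cF)$, whence $Y\cap X(\cF)\ne\leer$ by uniqueness of centers under open immersions. The extra detail you supply — the reduction to the closure of $Y$, and the explicit propagation of the toric identification of centers through the bundle $X\el(\cF)$ and $G$-saturation — is exactly what the paper compresses into the parenthetical ``(namely $\Xq(\cC)$ with $v$ in the interior of $\cC$)'', so there is no gap and no deviation in method.
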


\begin{proof}

  \cref{lem:centercentral} asserts that $Y$ is the center of some
  $v\in\cZ_k(X)$. Since, by assumption, $\lambda_v\in\supp\cF$ it has
  a center in $X(\cF)$ (namely $\Xq(\cC)$ with $v$ in the interior of
  $\cC$). Thus $Y\cap X(\cF)\ne\leer$.
\end{proof}

For $k$-spherical varieties this means that the complement of $X(\cF)$
cannot contain $k$-rational points at all. Thus we get that
$X(\cF)(k)$ is maximal in the following sense:

\begin{corollary}\label{cor:rational}

  Let $X$ be a $k$-spherical variety and let $X(\cF)\into\Xq$ be a
  locally linear equivariant embedding. Then $X(\cF)(k)=\Xq(k)$.

\end{corollary}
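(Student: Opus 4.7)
The plan is to deduce this corollary from \cref{thm:CompleteGeneral} by a single contradiction argument applied to the orbit closure of any stray rational point. We may assume that $\cF$ is complete as in \cref{thm:CompleteGeneral}; otherwise the statement is obviously false and the corollary is implicitly complementing that theorem.

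Suppose for contradiction that there exists a point $x\in\Qq(k)$ with $x\notin X(\cF)$. Let $Z:=\Xq\setminus X(\cF)$, a closed $G$-stable subset of $\Xq$ containing $x$, and set $Y:=\overline{Gx}\subseteq Z$. I would first check that $Y$ is $k$-dense. This follows from the fact that $G(k)$ is Zariski dense in $G$ (Borel, \cite{Borel}*{Thm.~18.3}), so that $G(k)\cdot x\subseteq Y(k)$ is Zariski dense in $Gx$, hence in $Y=\overline{Gx}$.

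Next I would observe that $\Qq$ is itself $k$-spherical: it is locally $k$-linear by hypothesis, it is $k$-dense because it contains the $k$-dense variety $X$, and the parabolic $P$ has an open orbit in $\Qq$ since it already has one in the open subset $X\subseteq\Xq$. Therefore, by \cref{cor:sphericalfinite}, every $k$-dense $G$-stable subvariety of $\Qq$ is $k$-spherical, and in particular $c_k(Y)=0=c_k(X)$.

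Now I am in the exact setting of \cref{thm:CompleteGeneral} applied to the complete fan $\cF$, the embedding $X(\cF)\into\Qq$, and the subvariety $Y$. The theorem yields $Y\cap X(\cF)\neq\leer$, contradicting $Y\subseteq Z=\Xq\setminus X(\cF)$. Hence no such $x$ exists, and $X(\cF)(k)=\Qq(k)$. The only potentially delicate point is verifying $k$-denseness of $Y$, but this is immediate from Zariski density of $G(k)$ in $G$; everything else is just bookkeeping with the hypotheses and the previous theorem.
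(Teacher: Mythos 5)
Your proof is correct and follows the same route the paper intends: the corollary is an immediate consequence of \cref{thm:CompleteGeneral}, and you supply exactly the bookkeeping (that $\Xq$ is $k$-spherical, that $\overline{Gx}$ is $k$-dense of complexity $0$) that the paper leaves implicit. The only blemish is notational: you write $\Qq$ where the statement has $\Xq$.
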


For local fields we get a true compactification:

\begin{corollary}\label{cor:compact}

  Assume that $k$ is a local field and let $X$ be a $k$-spherical
  variety. Then $X(\cF)(k)$ is compact (with respect to the Hausdorff
  topology) if and only if $\cF$ is complete.

\end{corollary}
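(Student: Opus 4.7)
My plan is to handle the two implications separately.

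For sufficiency, suppose $\cF$ is complete. I would first appeal to the equivariant version of Nagata's compactification theorem (Sumihiro) to embed $X(\cF)$ into an equivariant complete normal $G$-variety $\Xq$. Being normal, $\Xq$ is locally $k$-linear by Sumihiro's theorem, so \cref{cor:rational} applies and gives $X(\cF)(k)=\Xq(k)$. Since $\Xq$ is proper and $k$ is a local field, $\Xq(k)$ is Hausdorff-compact, whence so is $X(\cF)(k)$.

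For necessity, suppose $\cF$ is not complete, so there exists $v\in\cZ_k(X)\setminus\supp\cF$. The first step is to produce a strictly larger fan $\cF'$ with $\supp\cF\subsetneq\supp\cF'\subseteq\cZ_k(X)$. This is possible by standard toric geometry, using that $\cZ_k(X)$ is a rational polyhedral cone (\cref{cor:valcone}): one may first refine $\cF$ so that it sits as a subfan of a triangulation of $\cZ_k(X)$, and then adjoin one further cone containing (a rational approximation of) $v$. The inclusion $X(\cF)\subsetneq X(\cF')$ is then open, and its complement is the union of the strata $X(\cC')$ with $\cC'\in\cF'\setminus\cF$. By \cref{prop:boundary}\ref{it:boundary3} each such stratum is $k$-dense, so contains a $k$-rational point $y\notin X(\cF)(k)$.

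To conclude, I would invoke \cref{prop:stratification} with the zero cone $\{0\}\in\cF'$: since $X(\{0\})=X\subseteq X(\cF)$, it yields $y\in\overline{X(k)}\subseteq\overline{X(\cF)(k)}$ in the Hausdorff topology of $X(\cF')(k)$. Thus $X(\cF)(k)$ fails to be closed in the Hausdorff space $X(\cF')(k)$; since its inclusion is an open topological embedding, compactness of $X(\cF)(k)$ would force it to be closed, a contradiction.

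The main obstacle is the first step of the sufficiency direction, namely producing the equivariant proper completion: this hinges on Sumihiro's equivariant Nagata theorem, which is standard but external to the paper. Everything else is routine given the tools already developed, together with the rational polyhedrality of $\cZ_k(X)$.
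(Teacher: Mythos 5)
Your sufficiency argument is exactly the paper's: Sumihiro gives an equivariant normal complete $\Xq\supseteq X(\cF)$, \cref{cor:rational} yields $X(\cF)(k)=\Xq(k)$, and properness over a local field gives compactness. The necessity argument also follows the paper's route --- extend the fan, use density of $X(\cF)(k)$ in $X(\cF')(k)$ (via \cref{prop:stratification}) to see it cannot be closed --- but the specific construction you propose for $\cF'$ is flawed.

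The issue is the preliminary refinement. If you replace $\cF$ by a refinement $\cF_0$ (a subfan of a triangulation of $\cZ_k(X)$) and then enlarge $\cF_0$ to $\cF'$, you have $\cF_0\subseteq\cF'$ but \emph{not} $\cF\subseteq\cF'$, since $\cF_0\ne\cF$ in general. Consequently $X(\cF)$ is not an open subvariety of $X(\cF')$; instead there is only a proper birational morphism $X(\cF_0)\to X(\cF)$ together with an open immersion $X(\cF_0)\subseteq X(\cF')$. Your concluding paragraph, which treats $X(\cF)(k)\subset X(\cF')(k)$ as an open dense inclusion, therefore does not apply to the $\cF'$ you actually built. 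The refinement is both unnecessary and harmful here. The direct construction works: pick a \emph{rational} point $v\in\cZ_k(X)\setminus\supp\cF$ (possible because $\cZ_k(X)$ is a rational polyhedral cone by \cref{cor:valcone} and $\cZ_k(X)\setminus\supp\cF$ is open and nonempty in it); then the ray $\cR:=\QQ_{\ge0}v$ meets $\supp\cF$ only at the origin (a ray is $\QQ_{>0}$-saturated), so $\cF':=\cF\cup\{\cR\}$ is already a fan properly containing $\cF$, and the rest of your argument goes through verbatim. Alternatively one could retain your $\cF_0$ and argue via properness of $X(\cF_0)\to X(\cF)$ that non-compactness of $X(\cF_0)(k)$ forces non-compactness of $X(\cF)(k)$, but this is an extra step you did not take.
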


\begin{proof}

  First assume $\cF$ to be complete. By \cite{Sumihiro}, there is an
  equivariant embedding of $X(\cF)$ into a normal complete variety
  $\Xq$. Then $X(\cF)(k)=\Xq(k)$ is compact. Conversely, suppose $\cF$
  is not complete. Then there is a fan $\cF'$ which properly contains
  $\cF'$. Because $X(\cF)(k)$ is a proper dense (by
  \cref{prop:stratification}) subset of $X(\cF')$ it cannot be
  compact.
\end{proof}

Since a fan consists of strictly convex cones, there is in general no
canonical complete fan unless the valuation cone itself is strictly
convex (apart from the trivial case when $\rk X=1$).

\begin{definition}\label{def:standard}

  A $k$-dense $G$-variety is called \emph{$k$-convex} if its valuation
  cone $\cZ_k(X)$ is strictly convex. In that case, we define the
  \emph{standard fan $\cF\st$} as the set of faces of $\cZ_k(X)$ and
  the \emph{standard embedding $X\st:=X(\cF\st)$}.

\end{definition}

Observe, that Remark~\ref{rem:XcF}\emph{ii)} also applies to the
standard embedding, i.e., $X\st$ is only an embedding and unique in a
birational sense unless $X$ is homogeneous and $k$-spherical. We will
show in \cref{sec:horospherical} that the condition of convexity is
not very serious.

By construction, $X\st$ has a unique closed stratum namely $X(\cC)$
for $\cC=\cZ_k(X)$. In particular, if $X$ is $k$-spherical then $X\st$
is a \emph{simple embedding} meaning that it has a unique closed
orbit. This leads to another characterization of the standard
embedding.

\begin{proposition}

  Let $X\into X\st$ be the standard embedding of a homogeneous
  $k$-convex, $k$-spherical variety $X$. Let $Z$ be a complete locally
  linear $G$-variety which contains exactly one $k$-dense orbit of
  rank zero. Then every $G$-equivariant $k$-morphism $X\to Z$ extends
  (uniquely) to a morphism $X\st\to Z$.

\end{proposition}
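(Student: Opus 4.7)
Uniqueness is immediate since $X$ is dense in $X\st$ and $Z$ is separated. For existence, the plan is to apply the extension criterion of \cref{prop:morphism}: because $Z$ is complete, $\phi$ induces a rational map $\psi : X\st \rat Z$, and $\psi$ is a morphism provided that for each cone $\cC$ of $\cF\st$ (i.e., each face of $\cZ_k(X)$) all $G$-invariant valuations $v\in\cC^\circ$ have the same center in $Z$.

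First I would replace $Z$ by $W:=\overline{\phi(X)}$. Since $X$ is homogeneous, $Y:=\phi(X)$ is a single $G$-orbit, and as the image of a $k$-spherical variety it is itself $k$-spherical; hence $W$ is complete, locally linear and $k$-spherical, with finitely many $G$-orbits by \cref{cor:sphericalfinite}, and the valuative criterion places every center in $W$. Since $k$-central valuations are $k$-dense (\cref{thm:valdense}), their centers in $W$ are $k$-dense closed $G$-subvarieties; moreover, by \cref{prop:boundary} applied to any toroidal resolution of $W$, each such center contains a $k$-dense rank-zero closed orbit. By the hypothesis on $Z$, the only such orbit is $Z_0$, so in particular $Z_0\subseteq W$ and $Z_0$ is the unique $k$-dense rank-zero orbit of $W$. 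Finally, \cref{lem:GAcentral} applied to the dominant $k$-spherical map $\bar\phi:X\auf Y$ (both of $k$-complexity $0$) yields a surjective linear $\bar\phi_*:\cN_k(X)\to\cN_k(Y)$ with $\bar\phi_*(\cZ_k(X))=\cZ_k(Y)$ and hence $\bar\phi_*(\cZ_k(X)^\circ)=\cZ_k(Y)^\circ$.

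For the maximal cone $\cC=\cZ_k(X)$ the criterion is verified directly: choose a toroidal resolution $\pi:Y(\cF_W)\to W$ with $\cF_W$ complete; for $v\in\cZ_k(X)^\circ$ the push-forward $\bar\phi_*(v)$ lies in the interior of a maximal cone of $\cF_W$, so the center of $v$ in $Y(\cF_W)$ is a rank-zero closed stratum, $k$-dense by \cref{prop:boundary}, and its image under $\pi$ is a $k$-dense rank-zero orbit of $W$, hence equals $Z_0$. Thus $Y_v=Z_0$ for all $v\in\cZ_k(X)^\circ$, and $\phi$ extends to the closed stratum $X(\cZ_k(X))$. For the lower-dimensional faces I would argue via the graph closure: let $\tilde\Gamma\subseteq X\st\times Z$ be the normalization of the Zariski closure of the graph of $\phi$; by \cref{prop:dominate}, $\tilde\Gamma=X(\cF')$ for some fan $\cF'$ refining $\cF\st$. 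Each closed orbit of $\tilde\Gamma$ is rank-zero and $k$-dense, hence is sent by the second projection $p_2:\tilde\Gamma\to Z$ to $Z_0$. Combining this with the $G$-equivariance and the properness of the first projection $p_1:\tilde\Gamma\to X\st$, one argues by induction on the codimension of strata that $p_2$ is constant on each fiber of $p_1$, so $p_2$ factors through $p_1$, yielding the required morphism $\psi:X\st\to Z$.

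The main obstacle is this final induction over intermediate faces: a priori $\bar\phi_*(\cC)$ for a face $\cC\subsetneq\cZ_k(X)$ need neither be a face of $\cZ_k(Y)$ nor lie in a single chamber of the colored fan of $W$, so one cannot identify the centers stratum by stratum from a single toroidal resolution. The graph-closure reformulation converts the problem into showing $\cF'=\cF\st$, which is where the uniqueness of the $k$-dense rank-zero orbit $Z_0$ is used in an essential way.
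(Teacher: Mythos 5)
You correctly identify the key reduction and it matches the paper's: pass to the normalization $\tilde\Gamma$ of the graph closure, invoke \cref{prop:dominate} to write $\tilde\Gamma=X(\cF')$ with $\cF'$ a complete fan refining $\cF\st$, observe that each closed orbit of $\tilde\Gamma$ (being rank-zero and $k$-dense) maps under $p_2$ to the unique orbit $Z_0$, and reduce to showing $\cF'=\cF\st$ (or equivalently that $p_2$ factors through $p_1$). This is exactly the skeleton of the paper's proof. However, the decisive step — the actual argument that forces $\cF'=\cF\st$ — is absent: you write that ``one argues by induction on the codimension of strata that $p_2$ is constant on each fiber of $p_1$,'' but no such induction is supplied, and your closing paragraph explicitly concedes that this is the unresolved obstacle. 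As stated, then, the proposal contains a genuine gap precisely at the point where the uniqueness hypothesis must do its work.

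The missing idea in the paper is concrete and geometric. Assuming $\cF'\ne\cF\st$, the fan $\cF'$ must contain more than one maximal cone and hence a codimension-one cone $\cC$ separating two maximal cones $\cC_1,\cC_2$. The closure $X_h:=\Xq(\cC)$ is then a rank-one variety with three orbits: the open one $X(\cC)$ and two closed ones $Y_1=X(\cC_1)$, $Y_2=X(\cC_2)$. The first projection $\pi$ sends $X_h$ onto the closed stratum $Y=X(\cZ_k)$ of $X\st$, mapping each $Y_i$ isomorphically to $Y$, and the fibers $F_y$ of $X_h\to Y$ are $\P^1_k$'s. Since $\psi:=p_2|_{X_h}$ sends both $Y_1$ and $Y_2$ into $Z_0$, one chooses via \cref{cor:P-stable-affine} a $P$-stable affine open $Z^0\subseteq Z$ meeting $Z_0$, pulls it back, and finds that for $y$ in the open $P$-orbit of $Y$ the entire fiber $F_y\cong\P^1$ lands in $X_h^0$, hence $\psi(F_y)$ is a complete irreducible subset of the affine $Z^0$ and therefore a point — contradicting quasifiniteness of $\psi$ on fibers of $\pi$. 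This contradiction (affine open meeting the unique rank-zero orbit, completeness of the $\P^1$ fibers) is the content you need but do not supply; without it, neither the claimed induction nor the conclusion goes through.

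One further imprecision: in your treatment of the maximal cone you assert that $\bar\phi_*(v)$ lies in the interior of a \emph{maximal} cone of $\cF_W$ for all $v\in\cZ_k(X)^\circ$. That is not automatic — $\bar\phi_*$ is surjective but may collapse dimensions, and $\bar\phi_*(v)$ can land in lower-dimensional cones (or even be the trivial valuation if $\bar\phi_*$ has a kernel meeting $\cZ_k(X)^\circ$), in which case the center in $W$ is not a rank-zero orbit. This is a secondary issue, but it illustrates that arguing stratum-by-stratum via \cref{prop:morphism} runs into exactly the difficulties you flag, which is why the graph-closure formulation and the $\P^1$-fiber contradiction are essential.
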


\begin{proof}

  Let $X'$ be the normalization of the closure of the graph of
  $\phi:X\to Z$ in $X\st\times Z$. Then $X'$ dominates $X\st$ via the
  projection $\pi$ to the first factor. \cref{prop:dominate} implies
  that $X'=X(\cF')$ where $\cF'$ is a fan supported in
  $\cZ_k:=\cZ_k(X)$. The fact that $Z$ is complete implies that $\pi$
  is proper, hence the support of $\cF'$ is all of $\cZ_k$. It remains
  to show that $\cF'=\cF\st$ since then $\pi$ is an isomorphism and
  $X\st$ maps to $Z$ via the second projection.

  Now suppose $\cF'\ne\cF\st$. Then $\cF'$ necessarily contains more
  than one cone of maximal dimension and therefore also a cone $\cC$
  of codimension $1$ separating two of them. Let
  $X_h:=\Xq(\cC)\subseteq X(\cF')$. Then $X_h$ is of rank $1$ by
  \cref{prop:boundary} \ref{it:boundary4} and consists of three
  orbits: the open orbit $X(\cC)$ and two closed orbits
  $Y_i=X(\cC_i)$, $i=1,2$, corresponding to the two cones $\cC_i$ in
  $\cF'$ having $\cC$ as a face. The morphism $\pi$ maps $X_h$ to
  $Y:=X(\cZ_k)\subseteq X\st$, the closed stratum of $X\st$. Thereby
  the orbits $Y_1$ and $Y_2$ are mapped isomorphically to
  $Y$. Moreover, all fibers $F_y$ of $X_h\to Y$ are isomorphic to the
  projective line $\P^1_k$.

  Now consider the morphism $\psi:X_h\into X(\cF')\to Z$. Then the
  images of $Y_1$ and $Y_2$ under $\psi$ are both $k$-dense orbits of
  rank zero hence coincide with the unique orbit $\YS$ with
  $c_k(\YS)=0$. According to \cref{cor:P-stable-affine} we can choose
  a $P$-stable affine open subset $Z^0\subseteq Z$ with
  $Z\cap\YS\ne\leer$. Let $X_h^0\subseteq X_h$ be its preimage. Then
  $Y_i^0:=\pi(Y_i\cap X_h^0)$, $i=1,2$, are non-empty open subsets of
  $Y$ thus contain its open $P$-orbit $Y^1$. We conclude that
  $F_y\subseteq X_h^0$ for all $y\in Y^1$. But then $\psi(F_y)$ is a
  complete irreducible subset of the affine set $Z^0$, thus a
  point. This contradicts the fact that, by construction, the
  restriction of $\psi$ to any fiber of $\pi$ is quasifinite.
\end{proof}

Following \cite{BorelTits}*{12.3} we call an irreducible
representation $V$ of $G$ (over $k$) \emph{strongly rational} if it
admits a $P$-eigenvector. In this case $\P(V)^{AN}$ consists of just
one point. Its orbit is therefore the only one of rank $0$.

\begin{corollary}

  Let $X=G/H$ be $k$-convex and $k$-spherical. Let, moreover, $V$ be a
  strongly rational representation of $G$. Then every equivariant
  $k$-morphism $G/H\to\P(V)$ extends to a morphism $X\st\to\P(V)$.

\end{corollary}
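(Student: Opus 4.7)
The plan is to apply the preceding proposition with $Z=\P(V)$. I need to verify that $Z$ meets the three requirements of that proposition: completeness, local $k$-linearity, and the existence of exactly one $k$-dense $G$-orbit of rank zero.

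The first two are immediate since $\P(V)$ is projective and admits the identity embedding into itself (so is tautologically locally $k$-linear). For the third requirement I invoke the observation made immediately before the corollary: when $V$ is strongly rational, $\P(V)^{AN}$ consists of a single point $[v_0]$ coming from a $P$-eigenvector $v_0$, and the orbit $G\cdot[v_0]$ is the unique $G$-orbit of rank zero in $\P(V)$. Uniqueness of the $AN$-fixed point, together with the fact that $AN$ is defined over $k$ (so $\cG$ permutes the finite set $\P(V)^{AN}$), forces $[v_0]\in\P(V)(k)$, whence $G\cdot[v_0]$ contains a $k$-rational point and is automatically $k$-dense.

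With these three verifications in hand, the preceding proposition applies directly and produces the unique extension $X\st\to\P(V)$ of the given equivariant $k$-morphism $G/H\to\P(V)$. There is no real obstacle in the argument since all the deep work has already been done in the preceding proposition; the role of the strong rationality hypothesis is precisely to ensure that $\P(V)$ has the unique-rank-zero-orbit property that hypothesis of the proposition demands.
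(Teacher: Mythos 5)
Your proposal is correct and follows exactly the paper's intended route: invoke the preceding proposition with $Z=\P(V)$, checking completeness, local $k$-linearity (via the tautological embedding), and the unique $k$-dense rank-zero orbit condition using the observation — stated immediately before the corollary — that strong rationality makes $\P(V)^{AN}$ a single point. Your explicit remark that Galois invariance of the unique $AN$-fixed point forces it to be $k$-rational (hence the orbit $k$-dense) is a detail the paper leaves implicit, but it is the same argument.
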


\section{Horospherical varieties}
\label{sec:horospherical}

We would like to understand the closed strata of $X(\cF)$ when $\cF$
is a complete fan. Since by \cref{prop:boundary} these strata are
$G$-varieties of $k$-rank $0$ we will study rank-$0$-varieties in
general. Over an algebraically closed field, a homogeneous variety of
rank zero is known to be a flag variety. Over arbitrary fields, one
gets a wider class of spaces which nevertheless share some properties
with flag varieties.

To formulate our result, we need to introduce the process of parabolic
induction which is also of independent interest.

\begin{definition}

  Let $Q=LU\subseteq G$ be a parabolic subgroup containing $P$ and let
  $Q^-=U^-L$ be the opposite parabolic subgroup.

  \begin{enumerate}

  \item Let $Y$ be an $L$-variety. Consider it as a $Q^-$-variety via
    the projection $Q^-\auf L$. Then $G\times^{Q^-}Y$ is the
    \emph{parabolical induction of $Y$ to a $G$-variety}.

  \item Similarly, the parabolical induction of a $k$-subgroup
    $H\subseteq L$ is the subgroup $U^-H$ of $G$. In that case, the
    homogeneous variety $G/U^-H$ is parabolically induced from $L/H$.

  \end{enumerate}

\end{definition}

Most properties are preserved under parabolic induction:

\begin{proposition}\label{prop:induction}

  Let $Q=LU\subseteq G$ be a parabolic subgroup containing $P$ and
  $Q^-=U^-L$ its opposite. Let $Y$ be a $k$-dense $L$-variety and let
  $X=G\times^{Q^-}Y$ be its parabolical induction. Then:
  \begin{enumerate}

  \item\label{it:induction0} $X$ is $k$-dense.

  \item\label{it:induction1} $Q_k(X)=UQ_k(Y)$.

  \item\label{it:induction2} $X\el=Y\el$ and $X\an=Y\an$.

  \item\label{it:induction4} $c^G_k(X)=c^L_k(Y)$ and
    $\rk^G_k(X)=\rk^L_k(Y)$.

  \item\label{it:induction3} $\Xi_k(X)=\Xi_k(Y)$, $\cN_k(X)=\cN_k(Y)$,
    and $\cZ_k(X)=\cZ_k(Y)$.

  \end{enumerate}

\end{proposition}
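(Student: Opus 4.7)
The geometric starting point is the open Bruhat cell for $G/Q^-$: the natural map $U\to G/Q^-$, $u\mapsto uQ^-$, is an open immersion (its image is the big cell), and pulling back along the bundle projection $X\to G/Q^-$ produces an open immersion
\[
  U\times Y\hookrightarrow X,\qquad (u,y)\mapsto [u,y].
\]
Since $U$ is an affine space and $Y$ is $k$-dense by hypothesis, the product is $k$-dense, hence so is $X$, which proves \ref{it:induction0}. Every remaining assertion will be extracted by unraveling this open cell.

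To exploit it I would first install the classical factorizations forced by $P\subseteq Q$ (Borel--Tits, \cite{BorelTits}): the unipotent radical $U=R_uQ$ lies inside $N=R_uP$; the intersection $P_L:=P\cap L$ is a minimal parabolic $k$-subgroup of $L$ with Levi decomposition $P_L=MA(N\cap L)$; and one has the factorizations $P=P_L\,U$ and $N=(N\cap L)\,U$. Under the open embedding above, the factor $U\subseteq P$ acts by translation on the first coordinate of $U\times Y$, while $P_L\subseteq L$ normalizes $U$ and acts on the second coordinate via the given $L$-action on $Y$. Consequently the generic $P$-orbit through $[1,y]$ takes the form $U\times P_Ly$, yielding the identifications $k(X)^P=k(Y)^{P_L}$ and $k(X)^{(AN)}=k(Y)^{(A(N\cap L))}$. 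The first identity gives the equalities of complexity and rank in \ref{it:induction4}, and the second provides $\Xi_k(X)=\Xi_k(Y)$ and $\cN_k(X)=\cN_k(Y)$, which is most of \ref{it:induction3}.

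For \ref{it:induction1} I would use that the equivariant projection $X\to G/Q^-$ sends the generic $P$-orbit onto the open cell $U\subseteq G/Q^-$, so any $g\in Q_k(X)$ must stabilize this cell and therefore lies in $Q$. Writing $g=u\ell$ with $u\in U$, $\ell\in L$, a direct computation gives $g[1,y]=[u,\ell y]$, and this lies in the generic $P$-orbit $U\times P_Ly$ precisely when $\ell$ preserves the generic $P_L$-orbit on $Y$, i.e.\ $\ell\in Q_k(Y)$. If $Q_k(Y)=L_YU_Y$ is a Levi decomposition inside $L$, then $Q_k(X)=L_Y\,(U_YU)$ is a Levi decomposition inside $G$, since $L_Y$ normalizes both $U_Y$ and $U$ and $U_Y\cap U\subseteq L\cap U=1$. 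The Generic Structure Theorem (\cref{cor:LSTgeneric}) then forces the $L_Y$-slices to coincide, giving $X\el=Y\el$; quotienting by $A$ yields $X\an=Y\an$, which is \ref{it:induction2}. The remaining equality $\cZ_k(X)=\cZ_k(Y)$ in \ref{it:induction3} follows from this slice identification, because $k$-central valuations of either variety correspond bijectively to the $A$-invariant valuations of $k(X\el)=k(Y\el)$ that are trivial on its subfield of $A$-invariants.

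The main technical obstacle lies entirely in the first paragraph of reductions, namely ensuring over the possibly non-closed field $k$ that $P\cap L$ is a minimal parabolic of $L$ and that the factorizations $P=P_LU$ and $N=(N\cap L)U$ hold; these are standard once one chooses $L\supseteq MA$ but have to be invoked cleanly. A second, smaller subtlety is that the bijection of $k$-central valuations in \ref{it:induction3} cannot be realised as literal restriction along the open cell $U\times Y\subseteq X$ (which is not $G$-stable) and must instead be routed through the intrinsic identification $X\el=Y\el$ furnished by \cref{cor:LSTgeneric}. Once these points are settled, each of \ref{it:induction0}--\ref{it:induction3} is a direct transfer along the open cell.
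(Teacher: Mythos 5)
Your treatment of parts \ref{it:induction0}--\ref{it:induction4} and the first two claims of \ref{it:induction3} is essentially the same as the paper's: you both exploit the open cell $U\times Y\into X$, combine it with the Local Structure Theorem on $Y$, and let the uniqueness of the slice (\cref{prop:uniqueQ} / \cref{cor:LSTgeneric}) deliver $Q_k(X)=U\cdot Q_k(Y)$ and $X\el=Y\el$. Your factorizations $P=P_L U$, $N=(N\cap L)U$ are what make the field identifications $k(X)^P=k(Y)^{P\cap L}$ and $k(X)^{(AN)}=k(Y)^{(A(N\cap L))}$ work, and they are correct once $P\subseteq Q$ is in place.

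The step you cannot get this way is $\cZ_k(X)=\cZ_k(Y)$. You assert that $k$-central valuations of $X$ ``correspond bijectively to the $A$-invariant valuations of $k(X\el)$ trivial on the subfield of $A$-invariants,'' but that set is identified with all of $\cN_k(X)$, whereas $\cZ_k(X)$ is in general a proper subcone: the map $\iota_k:\cZ_k(X)\to\cN_k(X)$ is injective, never claimed to be surjective. The valuation cone is a global invariant not determined by $X\el$ alone (compare the remark after \cref{lem:GAcentral}: $SL(2)$ and $SL(2)/N$ have the same local structure near the open $B$-orbit but different valuation cones). Knowing $X\el=Y\el$ therefore tells you $\cN_k(X)=\cN_k(Y)$ but says nothing about the position of the two cones inside it; the asserted ``bijection'' is exactly the statement you are trying to prove and the argument is circular. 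The paper instead proves the two inclusions separately: $\cZ_k(X)\subseteq\cZ_k(Y)$ because $K(X)^U=K(Y)$, so restricting a $k$-central valuation of $X$ along $K(Y)\subseteq K(X)$ yields a $k$-central valuation of $Y$; and $\cZ_k(Y)\subseteq\cZ_k(X)$ by a genuine construction -- given $v_0\in\cZ_k(Y)$ with model $(\overline Y,D_0)$, one forms the parabolically induced pair $\bigl(G\times^{Q^-}\overline Y,\ G\times^{Q^-}D_0\bigr)$ and checks that the resulting divisorial valuation on $X$ restricts to $v_0$. That second inclusion requires producing an actual $G$-stable divisor over $X$, which is the ingredient missing from your write-up.
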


\begin{proof}

  The local structure theorem for $Y$ yields an open embedding
  $Q_k(Y)_u\times Y\el\into Y$. Combined with the open embedding
  $U\into G/Q^-$ (the big cell) we get the open embeddings
  \[
    U\times Q_k(Y)_u\times Y\el\into U\times Y\into X.
  \]
  Then \ref{it:induction0}, \ref{it:induction1} and the first part of
  \ref{it:induction2} follow immediately from
  \cref{prop:uniqueQ}. These imply all other assertions except the
  last one on valuation cones. Because of $K(X)^U=K(Y)$, the
  restriction of any $k$-central valuation to $K(Y)$ is $k$-central as
  well. This shows $\cZ_k(X)\subseteq\cZ_k(Y)$. Now let
  $v_0\in\cZ_k(Y)$ and let $(\Yq,D_0)$ be a model of $v_0$. Let
  $D:=G\times^{Q-}D_0\subseteq\Xq:=G\times^{Q^-}\Yq$. Then the
  restriction of $v_D$ to $K(Y)$ is equivalent to $v_0$ which shows
  the opposite inclusion.
\end{proof}

We proceed with the classification of varieties of $k$-rank $0$.

\begin{theorem}\label{thm:rank0}

  For a $k$-dense $G$-variety $X$ the following are equivalent:

  \begin{enumerate}

  \item\label{it:rank0-1} $\rk_kX=0$

  \item\label{it:rank0-2} An open dense $G$-stable subset of $X$ is
    induced from an anisotropic action.

  \item\label{it:rank0-3} $X=G\cdot X^{AN}$.

  \end{enumerate}

\end{theorem}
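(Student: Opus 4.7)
The plan is to establish the cycle (iii) $\Rightarrow$ (i) $\Rightarrow$ (ii) $\Rightarrow$ (iii), using the Local Structure Theorem and the behavior of anisotropic actions as the main tools. For (iii) $\Rightarrow$ (i): since $X = G \cdot X^{AN}$, some $y_0 \in X^{AN}(k)$ lies in the open $G$-orbit (after conjugating $P$ within $G(k)$ if necessary, which leaves $\rk_kX$ unchanged). Then $Py_0 = MANy_0 = My_0$ because $AN$ fixes $y_0$, and since $A \subseteq L = MA$ commutes with $M$, the $A$-action on $My_0$ is trivial pointwise. The orbit $My_0$ is open in an LST-slice of $Gy_0$, which is birationally equivalent to $X\el$. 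By \cref{cor:Kempf} the elementary $L$-action on $X\el$ has closed $A$-orbits of equal generic dimension, so triviality at a single point propagates to all of $X\el$; thus $A_k(X) = 1$ and $\rk_kX = 0$.

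For (i) $\Rightarrow$ (ii): First, the $L$-action on $X\el$ is anisotropic. Indeed, $A$ acts trivially since $A_k(X) = 1$, and $L\el$ acts trivially because elementarity of the action forces every $k$-rational unipotent element (which must act semisimply, hence trivially, being unipotent) to act trivially, and $L\el$ is generated by such elements. Together $L\an = AL\el$ acts trivially. One then realizes the open $G$-stable subset $X_0 := G \cdot X\el$ as the parabolic induction $G \times^{Q^-} X\el$ from the anisotropic $L$-action on $X\el$, where $Q^- = LU^-$ is the parabolic opposite to $Q = Q_k(X)$. The induction map $[g,x] \mapsto gx$ descends precisely when $U^-$ fixes $X\el$ pointwise; in the $k$-spherical case this follows by a dimension count showing that the generic stabilizer $G_x$ has dimension $\dim(L\an U^-)$ and contains $L\an$, forcing $G_x^\circ = L\an U^-$ and in particular $U^- \subseteq G_x$.

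For (ii) $\Rightarrow$ (iii): suppose $X_0 = G \times^{Q'^-} Y \subseteq X$ is open dense $G$-stable with $Y$ an anisotropic $L'$-variety. Since $Q'^-$ is parabolic it contains minimal parabolics of $G$, so choose $g_0 \in G(k)$ with $g_0^{-1}Pg_0 \subseteq Q'^-$. Writing $g_0^{-1} AN g_0 = A'N'$: the maximal split torus $A'$ sits in the Levi $L'$, hence in $L'\an$ which acts trivially on $Y$; and $N' = (N' \cap L')(N' \cap U'^-)$ decomposes with $N' \cap L' \subseteq L'\el \subseteq L'_y$ (a unipotent subgroup of $L'$) and $N' \cap U'^- \subseteq U'^-$ acting trivially on $[e,y]$. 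Hence $[g_0, y] \in X^{AN}$ for any $y \in Y$, so each $G$-orbit in $X_0$ contains an $AN$-fixed point and $X_0 \subseteq G \cdot X^{AN}$. The boundary $X \setminus X_0$ is handled by induction on dimension: $\rk_k$ is inherited by $G$-stable subvarieties via restriction of the LST slice, and closed $G$-orbits contain $AN$-fixed points by Borel's fixed point theorem applied to the complete variety $\overline{Gz}$ and the solvable group $AN$. The main obstacle lies in (i) $\Rightarrow$ (ii), namely verifying the parabolic induction structure outside the $k$-spherical case, where $X\el$ need not be $U^-$-fixed and the correct construction requires a more delicate choice of slice or of the inducing parabolic.
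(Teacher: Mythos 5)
Your overall strategy (the cycle $\ref{it:rank0-3}\Rightarrow\ref{it:rank0-1}\Rightarrow\ref{it:rank0-2}\Rightarrow\ref{it:rank0-3}$) matches the paper, but two of the three implications have genuine gaps, one of which you flag yourself, and one you seem not to notice.

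For $\ref{it:rank0-3}\Rightarrow\ref{it:rank0-1}$: your argument starts from ``some $y_0\in X^{AN}(k)$ lies in the open $G$-orbit.'' But $X$ is an arbitrary $k$-dense $G$-variety: there need not be an open $G$-orbit at all, and even if there were, nothing guarantees that $X^{AN}$ contains a $k$-rational point in it. Furthermore, the claim that triviality of the $A$-action ``at a single point propagates to all of $X\el$'' is unjustified in general — a torus can fix a point without acting trivially (consider $\mathbb G_m$ scaling $\A^1$), and the $M$-orbit $My_0$ need not be dense in $(Gy_0)\el$ unless $Gy_0$ happens to be $k$-spherical. The paper instead observes that $\XS:=G\times^PX^{AN}\to X$ is surjective by hypothesis, so \cref{lem:GAcentral} gives a surjection $\cN_k(\XS)\auf\cN_k(X)$; since $AN$ acts trivially on $X^{AN}$ one has $\rk_k\XS=0$, hence $\rk_kX=0$. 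No transitivity or rationality of $AN$-fixed points is needed.

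For $\ref{it:rank0-1}\Rightarrow\ref{it:rank0-2}$: you correctly identify the obstacle — showing that $U^-$ fixes $X\el$ pointwise outside the $k$-spherical case — and leave it open. The paper fills this with a short tangent-space weight argument: for a smooth $x\in X\el$, the decomposition $T_xX=\fu x\oplus T_xX\el$ and the vanishing of the $A$-action on $X\el$ (because $\rk_kX=0$) show that every $A$-weight on $T_xX$ is either zero or a positive restricted root; since $\fu^-x$ would carry negative roots, $\fu^-x=0$. This works for arbitrary complexity and requires no dimension count on stabilizers.

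For $\ref{it:rank0-2}\Rightarrow\ref{it:rank0-3}$: your open-set analysis is fine, but the boundary case relies on Borel's fixed point theorem applied to $\overline{Gz}$, which is only valid if $\overline{Gz}$ is complete — and it typically is not, since $X$ itself is not assumed complete. The paper sidesteps this by noting $Y\subseteq\YS:=X^{AU^-}$, so $G\cdot\YS$ is dense; but $G\times^{Q^-}\YS\to X$ is \emph{proper} (it factors the projective $G/Q^-$), hence its image $G\cdot\YS$ is also closed, and therefore equals $X$. From $N^-\subseteq U^-$ one gets $X=G\cdot X^{AN^-}$, and finally the $G(k)$-conjugacy of $AN^-$ and $AN$ gives $\ref{it:rank0-3}$.
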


\begin{proof}
  \ref{it:rank0-1}$\Rightarrow$\ref{it:rank0-2} We apply the Generic
  Structure Theorem \ref{cor:LSTgeneric} to $X$. Then $\rk_kX=0$
  means that $A$ acts trivially on $X\el$, i.e., the action of $L$ on
  $X\el$ is anisotropic. Let $Q^-=LU^-$ be the opposite parabolic of
  $Q$ and let by $\fu^-=\Lie U^-$.  We claim that $X\el$ consists of
  $U^-$-fixed points. For this it suffices to show that $\fu^-x=0$
  where $x\in X\el$ is any smooth point. Since
  $T_xX=\fu x\oplus T_xX\el$, all weights of $A$ on $T_xX$ are either
  zero or positive restricted roots. On the other hand all weights on
  $\fu^- x$ are negative roots. Thus $\fu^-x=0$.

  From the claim, we infer that $X\el$ is a $Q^-$-variety with $U^-$
  acting trivially. Now consider the (proper) $G$-morphism
  \[
    \phi:\Xq:=G\times^{Q^-}X\el\to X:[g,x]\to gx.
  \]
  Since the multiplication map $U\times Q^-\to G$ is an open embedding
  we can think of $U\times X\el$ as an open subset of $\Xq$. By the
  Generic Structure Theorem \ref{cor:LSTgeneric}, the restriction of $\phi$ to $U\times X\el$ is an open
  embedding which shows that $\phi$ is birational. From this we get
  that the largest open subset $X_0\subseteq\Xq$ on which $\phi$ is an
  open immersion is non-empty. Since $X_0$ is also $G$-stable, it is
  induced from $Y=X_0\cap X\el$ which proves the assertion.

  \ref{it:rank0-2}$\Rightarrow$\ref{it:rank0-3} Let
  $X_0:=G\times^{Q^-}Y$ be an open subset of $X$ where $Y$ is an
  anisotropic $L$-variety. Since then $A$ acts trivially on $Y$, we
  have $Y\subseteq \YS:=X^{AU^-}$. This implies that $G\cdot\YS$ is
  dense in $X$. On the other side, the morphism
  $G\times^{Q^-}\YS\to X$ is proper. Therefore its image $G\cdot\YS$
  is also closed, so coincides with $X$. From $N^-\subseteq U^-$ we
  get $\YS\subseteq X^{AN^-}$ and therefore $X=G\cdot X^{AN^-}$.
  The fact that $AN^-$ is $G(k)$-conjugate to $AN$ implies that also
  $X=G\cdot X^{AN}$.

  \ref{it:rank0-3}$\Rightarrow$\ref{it:rank0-1} By assumption, the
  morphism $\XS:=G\times^PX^{AN}\to X$ is surjective. Then
  $\rk_k^G\XS=\rk_k^{MA}X^{AN}=0$ implies $\rk_kX=0$
  (\cref{lem:GAcentral}).
\end{proof}

\begin{remark}

  On can show that a locally linear $X$ has $k$-rank zero then there
  even exist an open subset \emph{containing $X(k)$} which is induced
  from an anisotropic action.

\end{remark}

For homogeneous varieties we get:

\begin{corollary}\label{cor:homrk0}

  Let $H\subseteq G$ be a $k$-subgroup. Then the following are
  equivalent:

  \begin{enumerate}

  \item\label{it:homrk0-1} $\rk_kG/H=0$.

  \item\label{it:homrk0-3} There is a parabolic subgroup
    $Q\subseteq G$ with $Q\an\subseteq H\subseteq Q$.

  \item\label{it:homrk0-2} $H$ contains a $G(k)$-conjugate of $AN$.

  \item\label{it:homrk0-4} Let $\phi:G/H\into\Xq$ be any locally
    linear $G$-equivariant open embedding. Then $\Xq(k)=(G/H)(k)$.

  \end{enumerate}

  If $k$ is a local field then these conditions are also equivalent to

  \begin{enumerate}[resume]

  \item\label{it:homrk0-5} $(G/H)(k)$ is compact.

  \item\label{it:homrk0-6} $G(k)/H(k)$ is compact.

  \end{enumerate}

\end{corollary}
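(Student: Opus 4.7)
The plan is to prove the cycle (i) $\Rightarrow$ (ii) $\Rightarrow$ (iii) $\Rightarrow$ (i), then (i) $\Leftrightarrow$ (iv), and over local fields (i) $\Rightarrow$ (v) $\Rightarrow$ (vi) $\Rightarrow$ (i). The two genuinely delicate points will be tracking $k$-rationality in (i) $\Rightarrow$ (ii) and producing an escape direction in (vi) $\Rightarrow$ (i).

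For (i) $\Rightarrow$ (ii), I would feed $X=G/H$ into \cref{thm:rank0}\ref{it:rank0-2}; since $X$ is homogeneous, the open dense $G$-stable piece produced there must equal $X$, giving a $k$-isomorphism $X\cong G\times^{Q^-}Y$ with $Q^-=LU^-$ a $k$-parabolic and $Y$ an anisotropic $L$-variety. The composition $X\cong G\times^{Q^-}Y\to G/Q^-$ is a $G$-equivariant $k$-morphism sending $eH$ to a $k$-point of the flag variety, and since $G(k)$ is transitive on $(G/Q^-)(k)$ by Borel-Tits, after a $G(k)$-conjugation of $H$ I may assume this point is $eQ^-$, i.e. $H\subseteq Q^-$. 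The fibre $Q^-/H$ is then $Y$, on which both $U^-$ and $L\an$ act trivially (the former by construction, the latter by anisotropy), so $Q^-\an=L\an U^-\subseteq H\subseteq Q^-$. For (ii) $\Rightarrow$ (iii), any $Q$ contains a minimal parabolic $P'=gPg^{-1}$ with $g\in G(k)$; then $gAg^{-1}$ is a split subtorus of $Q$ and $gNg^{-1}$ a unipotent subgroup whose image in $L=Q/Q_u$ consists of $k$-rational unipotents, which must lie in $L\an$ because $L/L\an$ is anisotropic. Hence $g(AN)g^{-1}\subseteq L\an Q_u=Q\an\subseteq H$. For (iii) $\Rightarrow$ (i), \cref{lem:GAcentral} applied to the dominant morphism $G/AN\auf G/H$ yields $\rk_kG/H\le\rk_kG/AN$, and the right-hand side vanishes because any $(AN,\chi)$-equivariant and right-$AN$-invariant function on $G$, restricted to $AN$, satisfies $\chi(an)f(e)=f(e)$, forcing $\chi=1$.

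The equivalence (i) $\Leftrightarrow$ (iv) uses the toroidal machinery: if $\rk_kX=0$ then $X$ is $k$-spherical with $\cZ_k(X)=\{0\}$, so the trivial fan is complete and \cref{cor:rational} gives (iv); conversely if $\rk_kX>0$ then \cref{prop:chamber} furnishes a non-trivial ray $\cC\subseteq\cZ_k(X)$ and by \cref{thm:CEmbedding,prop:boundary} the corresponding toroidal embedding $X\hookrightarrow X(\cC)$ is locally linear with $k$-dense boundary divisor, contradicting (iv). Over local $k$, (i) $\Rightarrow$ (v) is \cref{cor:compact} for the trivial fan; (v) $\Rightarrow$ (vi) follows because \cref{cor:localfinite} presents $(G/H)(k)$ as a finite disjoint union of clopen $G(k)$-orbits, so each is compact when the ambient space is. The main obstacle, (vi) $\Rightarrow$ (i), is by contradiction: assuming $\rk_kX>0$, pick (via \cref{prop:chamber}) a $k$-rational cocharacter $a:\G_m\to A$ whose image generates a non-trivial ray $\cC\subseteq\cZ_k(X)$; in the toroidal embedding $X(\cC)$ the sequence $\{a(t)\cdot eH \mid t\in k^\times\}\subseteq G(k)/H(k)$ converges as $t\to 0$ to a $k$-point on the boundary divisor, which by \cref{prop:boundary} lies outside $X$, contradicting the assumed compactness of $G(k)/H(k)$.
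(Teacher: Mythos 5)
Your cycle $(i)\Rightarrow(ii)\Rightarrow(iii)\Rightarrow(i)$ and the equivalence $(i)\Leftrightarrow(iv)$ are essentially the paper's argument. One minor deviation: for $(iii)\Rightarrow(i)$ the paper notes that $H\supseteq gANg^{-1}$ puts $g^{-1}H\in(G/H)^{AN}$ and applies \cref{thm:rank0}\ref{it:rank0-3}$\Rightarrow$\ref{it:rank0-1} directly, whereas you route through $G/AN$ and try to compute $\rk_k(G/AN)=0$ by evaluating a semiinvariant rational function $f$ at $e$. That step is gapped: $f$ is only rational and need not be regular or nonzero at $e$; one must instead argue on the open $AN\times AN$ double coset (or just use the paper's shortcut). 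This is repairable but not as written.

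The real problem is the local-field part. The paper simply cites \cite{BorelTits}*{Prop.\ 9.3} for the equivalence of \ref{it:homrk0-2}, \ref{it:homrk0-5}, \ref{it:homrk0-6}; you attempt to reprove it, and $(vi)\Rightarrow(i)$ does not go through. The escape curve $t\mapsto a(t)\cdot eH$ may fail to leave any compact set: if $A\subseteq H$ (which is perfectly consistent with $\rk_k(G/H)>0$, e.g.\ $X=SL_2/A$, where $\rk_k X=1$), the curve is constant. To force a limit in the boundary divisor you would need the $A_k$-orbit through the chosen base point to be a flat in the sense of \cref{prop:flat}; but that proposition only produces good flats through a \emph{generic} $k$-point $x$, which may lie in a different $G(k)$-orbit of $(G/H)(k)$ than $eH$. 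Showing $G(k)\cdot x$ non-compact does not immediately transfer to $G(k)/H(k)=G(k)\cdot eH$ — precisely the kind of transfer that Borel-Tits's theorem accomplishes, making your argument circular. The safe move is to do as the paper does and outsource the local-field equivalences to Borel-Tits.
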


\begin{proof}

  \ref{it:homrk0-1}$\Rightarrow$\ref{it:homrk0-3} By \cref{thm:rank0}
  there is an isomorphism $G/H\cong G\times^{Q^-}Y$ where $Y$ is an
  anisotropic homogeneous $L$-variety. From this we get a projection
  $\pi:G/H\to G/Q^-$. Since $G(k)$ acts transitively on $(G/Q^-)(k)$
  (\cite{BorelTits}*{Thm.\ 4.13 a)}) we may assume that
  $\pi(eH)=eQ^-$, i.e., $H\subseteq Q^-$ and $Q^-/H\cong Y$. Thus
  $Q^-\an\subseteq H\subseteq Q^-$.

  \ref{it:homrk0-3}$\Rightarrow$\ref{it:homrk0-2} There is $g\in G(k)$
  with $gPg^{-1}\subseteq Q$ (\cite{BorelTits}*{Thm.\ 4.13 b)}). Then
  $gANg^{-1}\subseteq Q\an\subseteq H$.

  \ref{it:homrk0-2}$\Rightarrow$\ref{it:homrk0-1} Use
  \ref{it:rank0-3}$\Rightarrow$\ref{it:rank0-1} from \cref{thm:rank0}.

  \ref{it:homrk0-1}$\Rightarrow$\ref{it:homrk0-4} Since $G/H$ is
  $k$-spherical (by \ref{it:homrk0-2}) the assertion follows from
  \cref{cor:rational}.

  \ref{it:homrk0-4}$\Rightarrow$\ref{it:homrk0-1} Suppose
  $\rk_kG/H>0$. Since then $\cZ_k(G/H)\ne\{0\}$ (by
  \cref{cor:valcone}) there is a non-trivial $k$-central valuation.
  It belongs to a smooth embedding $G/H\into X=G/H\cup D$ where $D$ is
  $k$-dense. Then $X(k)\not\subseteq (G/H)(k)$.

  The equivalence of \ref{it:homrk0-2}, \ref{it:homrk0-5}, and
  \ref{it:homrk0-6} is due to Borel-Tits \cite{BorelTits}*{Prop.\
    9.3}.
\end{proof}

Our main application is to toroidal embeddings of $k$-spherical
varieties:

\begin{corollary}

  Assume $X$ is a homogeneous $k$-spherical variety and let
  $X\into X(\cF)$ be the embedding corresponding to a fan $\cF$ with
  $\supp\cF=\cZ_k(X)$. Then all closed orbits are isomorphic to
  $G/H_0$ where $H_0=M_0Q_k(X)^-\an$ with $X\an\cong M/M_0$.

\end{corollary}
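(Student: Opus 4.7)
The approach is to combine the explicit description of the strata in a toroidal embedding (\cref{prop:boundary}) with the classification of rank-zero varieties (\cref{thm:rank0}). Let $Y \subseteq X(\cF)$ be a closed orbit. First I would identify $Y = X(\cC)$ for a maximal cone $\cC \in \cF$. Because $\cZ_k(X)$ has non-empty interior (\cref{prop:chamber}) and $\supp \cF = \cZ_k(X)$, every maximal cone in $\cF$ must be of full dimension $\rk_kX$; this is the usual fan-theoretic observation that a maximal cone whose relative interior meets the interior of the support must itself be top-dimensional, while a lower-dimensional max cone would be a proper face of some top-dimensional one in $\cF$. Thus $\dim\cC = \rk_kX$, and \cref{prop:boundary}\ref{it:boundary3} yields $\rk_k Y = 0$, $Q_k(Y) = Q_k(X) =: Q = LU$, and $Y\an = X\an \cong M/M_0$ (the identification by choosing a base point $y \in X\an(k)$).

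Next I would feed $Y$ into \cref{thm:rank0}. From the proof of \ref{it:rank0-1}$\Rightarrow$\ref{it:rank0-2} applied to $Y$, one extracts an isomorphism $Y \cong G \times^{Q^-} Y\el$, where $Q^- = U^-L$ is opposite to $Q$ and $Y\el$ consists of $U^-$-fixed points. Since $\rk_k Y = 0$ the torus $A_k(Y)$ is trivial, so the principal $A_k(Y)$-bundle $Y\el \to Y\an$ is an isomorphism, giving $Y\el = Y\an = M/M_0$. Thus $Y \cong G \times^{Q^-}(M/M_0)$, and it remains only to identify the stabilizer of $[e,eM_0] \in Y$ inside $G$.

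The key — and I expect it to be the main obstacle — is to analyse the $Q^-$-action on $Y\el = M/M_0$ precisely enough to read off this stabilizer. The action of $L$ on $Y\el$ is simultaneously elementary (so $L\el$ acts trivially) and anisotropic (so $L\an$ acts trivially); since $L$ is reductive we have $L\an = A \cdot L\el$, and hence $Q^-\an = U^- L\an$ acts trivially. The $Q^-$-action therefore factors through $Q^-/Q^-\an$, which using the decomposition $Q^- = M \cdot Q^-\an$ is canonically $M/(M \cap L\an)$. I would then check that the resulting action on $M/M_0$ coincides with the natural left-translation action of $M$ (passing to the quotient): this is because $M \cap L\an \subseteq L\an$ acts trivially on $Y\el$ and therefore fixes $eM_0 \in M/M_0$, so left multiplication by $M$ factors through $M/(M \cap L\an)$ consistently.

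With the action understood, the stabilizer of $eM_0$ in $Q^-$ is the preimage of $M_0/(M_0 \cap L\an)$ under $Q^- \twoheadrightarrow M/(M \cap L\an)$, which unpacks to $M_0 \cdot Q^-\an$. Hence the stabilizer in $G$ of $[e,eM_0]$ is $H_0 := M_0 \cdot Q^-\an = M_0 \cdot Q_k(X)^-\an$, and $Y \cong G/H_0$ as required. Different choices of closed orbit correspond to different maximal cones $\cC$, but the resulting subgroups $M_0 \subseteq M$ are conjugate (under $M(k)$, via the choice of base point in $X\an$), so the isomorphism class $G/H_0$ is independent of the choice.
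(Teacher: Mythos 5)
Your proof is correct and, as far as one can reconstruct it, takes the same route that the paper leaves implicit: reduce to a top-dimensional cone via \cref{prop:boundary}, obtain rank zero, then invoke the rank-zero classification (\cref{thm:rank0}, \cref{cor:homrk0}) and unwind the $Q^-$-action on $Y\el = Y\an \cong M/M_0$ to identify the isotropy group as $M_0\cdot Q_k(X)^-\an$. Two small remarks: the fan-theoretic argument that maximal cones are top-dimensional should really be phrased in terms of a point $p$ in the relative interior of a maximal cone $\cC$ — every cone of $\cF$ containing $p$ must have $\cC$ as a face, hence equals $\cC$, and since $\supp\cF=\cZ_k(X)$ is a full-dimensional convex cone its points have full-dimensional neighborhoods in the support, forcing $\dim\cC=\rk_kX$; and the verification that $M\cap L\an \subseteq M_0$ (needed so that the natural $M$-action on $M/M_0$ factors correctly through $M/(M\cap L\an)$) follows because $L\an$ acts trivially on $Y\el$ so $M\cap L\an$ fixes the base point. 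Both are covered by what you wrote, just worth tightening.
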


Our next goal is to understand varieties with $\cZ_k(X)=\cN_k(X)$. For
this, we start with the more general problem of studying
\[
  \cZ_k^0=\cZ_k(X)\cap(-\cZ_k(X)),
\]
the largest subspace contained in $\cZ_k(X)$. The importance of this
spaces lies in the fact that $\cZ_k^0$, when non-zero, prevents $X$ to
have a standard embedding as in \cref{def:standard}.

\begin{definition}

  Let $X$ be a $k$-dense $G$-variety. An equivariant $K$-automorphism
  of $K(X)$ is is called \emph{$k$-central} if
  $K(X)^{(AN)}\subseteq K(X)^{(\phi)}$.

\end{definition}

Since every $B$-semiinvariant is also $AN$-semiinvariant, a
$k$-central automorphism is also $K$-central. The latter have been
studied in detail in \cite{KnopIB}*{Sec.\ 8} and \cite{KnopAut}*{Sec.\
  5}. It was shown that the group of $K$-central automorphism is the
set of $K$-rational points of a subgroup $\fA_K(X)\subseteq
A_K(X)$. More precisely, a $K$-central automorphism $\phi$ is related
to an element $a_\phi\in A_K(X)$ by the formula
\[\label{eq:phif}
  \phi(f)=\chi_f(a_\phi)f\quad\text{for all }f\in K(X)^{(B)}.
\]
If $\phi$ is $k$-central then it must act trivially on
$K(X)^{AN}$. This means that $\phi$ is $k$-central if and only if
$\chi(a_\phi)=1$ whenever $\res_A\chi=0$. From this we see that
$k$-central automorphisms are the points of
\[\label{eq:fAA}
  \fA_k(X)=\fA_K(X)\cap A_k(X)
\]
which is a $k$-subgroup of $A_k(X)$.

Central automorphisms are related to central valuations in the
following way:

\begin{theorem}\label{thm:auto}

  Let $X$ be a $k$-dense $G$-variety. Then $\cN(\fA_k(X))=\cZ_k^0(X)$.

\end{theorem}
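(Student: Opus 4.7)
The plan is to reduce to the known statement over $K$ proved in \cite{KnopIB}*{Sec.\ 8}, namely
\[
  \cN(\fA_K(X))=\cZ_K^0(X),
\]
and transfer it to $k$ by intersecting with the subspace $\cN_k(X)\subseteq\cN_K(X)$.

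First I would assemble the compatibility between the $k$- and $K$-versions of the basic data. On the valuation side, we have already seen in \eqref{eq:diagXi22} that $\cZ_k(X)=\cZ_K(X)\cap\cN_k(X)$ inside $\cN_K(X)$; intersecting with its negative gives
\[
  \cZ_k^0(X)=\cZ_k(X)\cap(-\cZ_k(X))=\bigl(\cZ_K(X)\cap(-\cZ_K(X))\bigr)\cap\cN_k(X)=\cZ_K^0(X)\cap\cN_k(X).
\]
On the automorphism side, the torus $A_k(X)$ sits naturally as a subtorus of $A_K(X)$ (by the corollary following \cref{cor:NkNK}), and by definition $\fA_k(X)=\fA_K(X)\cap A_k(X)$ as an intersection of closed subgroups of $A_K(X)$. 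For two closed subtori $T_1,T_2$ of a torus $T$, the rational cocharacter space of their intersection is the intersection of their rational cocharacter spaces (the identity component of $T_1\cap T_2$ has Lie algebra $\operatorname{Lie}T_1\cap\operatorname{Lie}T_2$, and $\cN(\cdot)$ only sees the identity component). Applying this to $T_1=\fA_K(X)$ and $T_2=A_k(X)$ inside $T=A_K(X)$ yields
\[
  \cN(\fA_k(X))=\cN(\fA_K(X))\cap\cN(A_k(X))=\cN(\fA_K(X))\cap\cN_k(X).
\]

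The conclusion then follows by intersecting the $K$-version of the theorem with $\cN_k(X)$:
\[
  \cN(\fA_k(X))=\cN(\fA_K(X))\cap\cN_k(X)=\cZ_K^0(X)\cap\cN_k(X)=\cZ_k^0(X).
\]

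The main point to verify carefully is thus the compatibility statement $\cN(\fA_K(X))\cap\cN_k(X)=\cN(\fA_k(X))$, i.e.\ that the inclusion $A_k(X)\hookrightarrow A_K(X)$ is compatible with the definitions of $\fA_k(X)$ and $\fA_K(X)$ via \eqref{eq:phif}. The natural check is: a central $K$-automorphism $\phi$ has $a_\phi\in A_K(X)$; it is $k$-central (i.e., trivial on $K(X)^{AN}$) precisely when $\chi(a_\phi)=1$ for every $\chi\in\Xi(T)$ with $\res_A\chi=0$, which by the dual of $\Xi_k(X)=\res_A\Xi_K(X)$ (see \eqref{eq:resXiK}) is exactly the condition $a_\phi\in A_k(X)\subseteq A_K(X)$. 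This is the description \eqref{eq:fAA} already recorded just before the theorem, and once it is in place the rest is a one-line bookkeeping argument. I expect no genuinely hard step; the main subtlety is just making sure the identification $A_k(X)\subseteq A_K(X)$ and the intersection of tori are handled consistently.
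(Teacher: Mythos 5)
Your proposal is correct and follows exactly the route the paper takes: the paper's one-line proof cites $\cN(\fA_K(X))=\cZ_K^0(X)$ from \cite{KnopIB}*{Satz 8.2} and then invokes \eqref{eq:diagXi22} and \eqref{eq:fAA}, which is precisely the intersection-with-$\cN_k(X)$ argument you spelled out. You merely make explicit the small lemma about cocharacter spaces of intersections of subtori, which the paper leaves implicit.
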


\begin{proof}

  From \cite{KnopIB}*{Satz\ 8.2} we get
  $\cN(\fA_K(X))=\cZ_K^0(X)$. Now use \eqref{eq:diagXi22} and
  \eqref{eq:fAA}.
\end{proof}

\begin{corollary}\label{cor:finiteconvex}

  A $k$-dense $G$-variety $X$ is $k$-convex if and only if $\fA_k(X)$
  is finite.

\end{corollary}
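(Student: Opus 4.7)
The plan is to deduce this corollary directly from \cref{thm:auto}, which identifies $\cN(\fA_k(X))$ with the maximal linear subspace $\cZ_k^0(X)$ of the valuation cone.

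First I would unpack the definition of $k$-convexity: by \cref{def:standard}, $X$ is $k$-convex if and only if $\cZ_k(X)$ is strictly convex, which by definition means that it contains no non-trivial linear subspace, i.e., $\cZ_k^0(X) = 0$. Next I would recall that $\fA_k(X)$ is, by construction (see \eqref{eq:fAA}), a $k$-subgroup of the torus $A_k(X)$; in particular, its identity component $\fA_k(X)^\circ$ is itself a torus. Thus $\fA_k(X)$ is finite if and only if $\fA_k(X)^\circ$ is trivial, equivalently if and only if the rational cocharacter space $\cN(\fA_k(X)) = \Hom(\Xi(\fA_k(X)), \QQ)$ has dimension zero.

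Chaining these observations together via \cref{thm:auto}, the equivalences read: $\fA_k(X)$ finite $\iff$ $\cN(\fA_k(X)) = 0$ $\iff$ $\cZ_k^0(X) = 0$ $\iff$ $\cZ_k(X)$ strictly convex $\iff$ $X$ is $k$-convex. This is essentially a one-line consequence of \cref{thm:auto} together with the standard correspondence between diagonalisable $k$-groups and their character/cocharacter lattices.

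There is no genuine obstacle to overcome here; the only point requiring a moment of care is the passage from the $\QQ$-vector space $\cN(\fA_k(X))$ being zero-dimensional to $\fA_k(X)$ being finite as a $k$-group. This uses that $\fA_k(X)$ sits inside the torus $A_k(X)$, so its identity component is a subtorus and is determined by its cocharacter lattice (tensored with $\QQ$); a subtorus is trivial precisely when this rational cocharacter space vanishes, and then $\fA_k(X)$ reduces to its finite component group.
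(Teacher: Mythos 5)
Your proof is correct and takes exactly the route the paper intends: the corollary is stated immediately after \cref{thm:auto} with no written proof, and the chain of equivalences you give ($\fA_k(X)$ finite $\iff$ $\cN(\fA_k(X)) = 0$ $\iff$ $\cZ_k^0(X) = 0$ $\iff$ $\cZ_k(X)$ strictly convex) is the standard unpacking, using that $\fA_k(X)$ is a diagonalisable subgroup of the torus $A_k(X)$.
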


From this we derive a criterion for the existence of a standard
embedding:

\begin{corollary}

  Let $X=G/H$ be homogeneous and assume that $\Aut^G(X)=N_G(H)/H$ does
  not contain a non-trivial central split torus (e.g., if $H$ is of
  finite index in its normalizer). Then $X$ is $k$-convex.

\end{corollary}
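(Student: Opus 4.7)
The plan is to apply \cref{cor:finiteconvex} and show that $\fA_k(X)$ is finite. Since $\fA_k(X)$ is a closed $k$-subgroup of the connected split $k$-torus $A_k(X)$, its identity component $\fA_k(X)^\circ$ is itself a split $k$-torus, and finiteness of $\fA_k(X)$ is equivalent to $\fA_k(X)^\circ$ being trivial.

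I would construct an injective $k$-homomorphism $\iota\colon\fA_k(X)\into\Aut^G(X)=N_G(H)/H$ as follows. Since $X=G/H$ is homogeneous, every birational $G$-equivariant self-map of $X$ is regular and given by right translation with an element of $N_G(H)/H$; in particular, the action of $\fA_k(X)$ on $K(X)$ defined by \eqref{eq:phif} factors through $N_G(H)/H$, providing $\iota$. Injectivity is automatic: any element of the kernel must satisfy $\chi_f(a_\phi)=1$ for every $f\in K(X)^{(B)}$, but by the very definition of $\Xi_K(X)$ these characters generate $\Xi(A_K(X))$, forcing $a_\phi=1$.

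The main obstacle will be showing that the image of $\iota$ lands in the center of $\Aut^G(X)$. My plan is a direct computation on $B$-eigenvectors: for $\phi\in\fA_K(X)$ and $\psi\in\Aut^G(X)(K)$, the $G$-equivariance of $\psi^{-1}$ preserves the $B$-eigenspace $K(X)^{(B)}_\chi$, so for $f$ in this eigenspace
\[
(\psi\phi\psi^{-1})(f)=\psi\bigl(\chi(a_\phi)\,\psi^{-1}(f)\bigr)=\chi(a_\phi)\,f=\phi(f).
\]
Hence $\psi\phi\psi^{-1}$ is a $K$-central automorphism with the same scaling characters as $\phi$, so by the injectivity established above $\psi\phi\psi^{-1}=\phi$. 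This shows $\fA_K(X)$---and therefore its $k$-subgroup $\fA_k(X)$---is central in $\Aut^G(X)$.

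Combining the steps, $\iota(\fA_k(X)^\circ)$ is a connected central split $k$-subtorus of $N_G(H)/H$, which the hypothesis forces to be trivial. Hence $\fA_k(X)^\circ=1$, so $\fA_k(X)$ is finite and $X$ is $k$-convex. The parenthetical case is immediate, since a finite $N_G(H)/H$ cannot contain a non-trivial torus.
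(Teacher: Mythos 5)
Your proof is correct, but it takes a different route from the paper, which simply invokes \cite{KnopAut}*{Cor.~5.6} for the fact that $\fA_k(X)^0$ is a split torus lying in the center of $\Aut^G(X)$ and is then done in one line. You instead re-derive the centrality from scratch via the conjugation computation on $B$-eigenvectors, and that computation is valid (it in fact shows all of $\fA_K(X)$, not just $\fA_k(X)^0$, is central). One step is cited against the wrong fact: to pass from ``$\psi\phi\psi^{-1}$ is $K$-central with the same scaling characters as $\phi$'' to ``$\psi\phi\psi^{-1}=\phi$'' you appeal to ``the injectivity established above,'' but what you established there was injectivity of $a\mapsto\iota(a)$, whereas what this step actually needs is the reverse direction --- that a $K$-central $G$-automorphism of $K(X)$ fixing $K(X)^{(B)}$ pointwise must be the identity. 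That is part of the bijection between $K$-central automorphisms and $\fA_K(X)(K)$ recorded in \eqref{eq:phif} (ultimately from \cite{KnopIB}*{Sec.~8} and \cite{KnopAut}*{Sec.~5}), and for a quasi-affine model it follows from semisimplicity of $K[X]$ as a $G$-module, so the conclusion stands; just point at the bijection rather than at the injectivity of $\iota$. Net effect: you trade the paper's citation for a self-contained (if slightly longer) argument that reproves part of the referenced result.
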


\begin{proof}

  The connected component $\fA_k(X)^0$ is a split torus sitting in the
  center of $\Aut^G(X)$ (see \cite{KnopAut}*{Cor.\ 5.6}). Thus
  $\fA_k(X)^0=1$.
\end{proof}

For $k$-spherical varieties one can be more specific:

\begin{proposition}\label{prop:anisoconvex}

  Let $X=G/H$ be a homogeneous $k$-spherical variety. Then
  $\Aut^GX=N_G(H)/H$ is an elementary group whose connected split
  center equals $\fA_k(X)^0$. In particular $X$ is $k$-convex if and
  only if $N_G(H)/H$ is anisotropic.

\end{proposition}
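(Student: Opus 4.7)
The plan is to exhibit a short exact sequence of $k$-groups
\[
1 \to \fA_k(X) \to \Aut^G(X) \xrightarrow{\rho} F \to 1,
\]
in which $\fA_k(X)$ is a central split torus and $F$ is anisotropic; together with \cref{cor:finiteconvex} this proves both assertions. Every $\phi \in \Aut^G(X) = N_G(H)/H$ commutes with $G$, in particular with $AN$, so induces a $k$-automorphism of $K(X)^{AN} = K(X\an)$ commuting with the residual $M$-action. By $k$-sphericity, $M$ acts transitively on $X\an$, say $X\an \cong M/M_0$ with $M_0$ reductive (by Matsushima's criterion, since $X\an$ is affine and $M$ is reductive), giving the homomorphism $\rho : \Aut^G(X) \to \Aut^M(X\an) = N_M(M_0)/M_0$. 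Its image $F$ is anisotropic, because any split $k$-subtorus of $N_M(M_0)/M_0$ would lift to a split $k$-subtorus of its preimage in the anisotropic group $M$.

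The harder step is identifying $\ker \rho$ with $\fA_k(X)$. The inclusion $\fA_k(X) \subseteq \ker \rho$ is from the very definition of $\fA_k(X)$: its elements multiply $AN$-semiinvariants by scalars and hence act trivially on $K(X)^{AN} = K(X\an)$. Conversely, let $\phi \in \ker \rho$; then $\phi$ fixes $K(X\an)$ pointwise and preserves each weight space $K(X)^{(AN)}_\chi$ for $\chi \in \Xi_k(X)$, acting on it by multiplication by some $g_\chi \in K(X\an)^*$ (the weight space is a $K(X\an)^*$-torsor by \eqref{eq:short-exact}). Commutation with a maximal torus $T \subseteq MA$ pins $g_\chi$ to $K(X)^B$. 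Choosing, via the comparison $\Xi_k(X) = \res_A\Xi_K(X)$ from the end of \cref{sec:LST}, a $B$-semiinvariant lift $f$ of $\chi$ inside a suitable locally finite $G$-submodule of $K(X)$, the $G$-submodule generated by $f$ is simple and $\phi$ acts on it as a $G$-endomorphism; Schur's lemma then forces $g_\chi \in K^*$. The assignment $\chi \mapsto g_\chi$ is a character $a_\phi \in \Hom(\Xi_k(X),K^*) = A_k(X)(K)$ realising $\phi$ as an element of $\fA_k(X)$.

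With the short exact sequence in hand, $\Aut^G(X)$ is reductive as an extension of the reductive $F$ by the torus $\fA_k(X)$; its connected split center must vanish in the anisotropic quotient $F$, hence is contained in, and equals, the central split torus $\fA_k(X)^0$. This gives the first assertion. For the ``in particular'', by \cref{cor:finiteconvex}: $X$ is $k$-convex iff $\fA_k(X)$ is finite iff $\fA_k(X)^0$ is trivial iff $\Aut^G(X)$ has trivial connected split center, iff (by the structure just obtained) $\Aut^G(X)$ is anisotropic. The principal obstacle is the Schur-style argument refining $g_\chi$ from $K(X)^B$ to $K^*$: if $X$ were $K$-spherical this would be immediate since then $K(X)^B = K$, but for $X$ only $k$-spherical the invariant field $K(X)^B$ is typically larger, and the full $G$-equivariance of $\phi$ must be exploited through the simplicity of the $G$-module generated by a highest-weight vector, using $\Xi_k(X)=\res_A\Xi_K(X)$ to produce such a vector.
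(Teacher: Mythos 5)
Your overall plan (a short exact sequence $1\to\fA_k(X)\to\Aut^G(X)\to F\to1$ with $F$ anisotropic) is a reasonable reformulation of the paper's terser argument, which simply observes that $\Aut^G X$ is a subquotient of $MA$ (via restriction to $X\el$) and is therefore elementary, with its connected split center necessarily $\fA_k(X)^0$. Both routes lead to the same structure; yours makes the anisotropic quotient explicit, which is fine.

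However, the step identifying $\ker\rho$ with $\fA_k(X)$ has a genuine gap, and it is also an unnecessary detour. You appeal to Schur's lemma on the simple $G$-module $V=\langle G\cdot f\rangle$, claiming that $\phi$ acts on $V$ as a $G$-endomorphism. This presupposes $\phi(V)=V$, which you never justify. Because $\phi$ is $G$-equivariant, $\phi(V)$ is another simple $G$-submodule of $K(X)$ with the same $B$-highest weight, but for $X$ that is $k$-spherical without being $K$-spherical a given $B$-highest weight may occur with multiplicity greater than one, so $\phi(V)\ne V$ is possible a priori. (One could try to repair this by noting $g_\chi$ is in fact $P$-invariant, so $\phi(f)$ has the same $P$-character as $f$, and then invoking the $k$-spherical multiplicity-one theorem for $P$-highest weights; but that theorem is stated for quasi-affine $X$, a reduction you have not made.)

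The detour is that the whole Schur step is superfluous. At the point where you write that commutation with $T$ ``pins $g_\chi$ to $K(X)^B$'', the identical argument with all of $MA$ in place of $T$ shows that $g_\chi$ is $MA$-invariant: for $m\in MA$ the translate $m\cdot f$ still lies in $K(X)^{(AN)}_\chi$ (since $MA$ normalizes $AN$ and centralizes $A$), and $m\cdot g_\chi=\phi(m\cdot f)/(m\cdot f)=g_\chi$ by the independence of $g_\chi$ from the choice of $f$ in the eigenspace — a fact you have already established. Hence $g_\chi\in K(X\an)^{MA}=K(X\an)^M$. But $k$-sphericity means $M$ acts transitively on $X\an$, so $K(X\an)^M=K$ and $g_\chi\in K^*$ immediately. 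Your closing worry that ``for $X$ only $k$-spherical the invariant field $K(X)^B$ is typically larger'' misdiagnoses the situation: the field that actually matters is $K(X\an)^M$, and it equals $K$ by the very transitivity you already used to construct $\rho$ and to show $F$ is anisotropic. (Minor: $\fA_k(X)$ is a diagonalizable $k$-group which need not be connected; only $\fA_k(X)^0$ is a split torus, and that is what the rest of the argument requires.)
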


\begin{proof}

  The action of an automorphism is uniquely determined by its
  restriction to $X\el$. Thus $\Aut^GX$ is a subquotient of $MA$ and
  therefore elementary. Its largest split subtorus acts automatically
  by $k$-central automorphisms. Thus it equals $\fA_k(X)^0$. The last
  assertion follows from \cref{thm:auto}.
\end{proof}

\cref{thm:auto} also gives a way to make any $G$-variety $k$-convex by
taking the quotient by $\fA_k(X)$. More generally, the following
holds:

\begin{lemma}\label{lem:auto}

  Every $k$-dense $G$-variety $X$ contains an open
  $G\times\fA_k(X)$-stable subset $X_0$ for which the orbit space
  $X_0/\fA_k(X)$ exists. Moreover, for any $k$-subgroup
  $\fA\subseteq\fA_k(X)$ the quotient $X':=X_0/\fA$ has the following
  properties:
  \begin{align*}
    &\Xi_k(X')=\{\chi\in\Xi_k(X)\mid\res_\fA\chi=0\},\ A_k(X')=A_k(X)/\fA,\\
    &X'\el=X\el/\fA,\ X'\an=X\an,\ c_k(X')=c_k(X),\ \rk_k(X')=\rk_kX-\dim\fA,\\
    &\cN_k(X')=\cN_k(X)/\cN(\fA),\ \cZ_k(X')=\cZ_k(X)/\cN(\fA).
  \end{align*}

\end{lemma}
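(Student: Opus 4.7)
The plan is to construct $X_0$ first and then read off each listed invariant by descending the Generic Structure Theorem through the quotient map.

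\textbf{Construction of $X_0$.} Each $\phi \in \fA_k(X)(K)$ acts on $K(X)$ as a $G$-equivariant field automorphism via \eqref{eq:phif}, so $\fA_k(X)$ acts on $X$ by $G$-equivariant birational automorphisms. Because $\fA_k(X) \subseteq A_k(X)$ and $A_k(X)$ acts freely on the slice $X\el$ (as the structure torus of the principal bundle $X\el \to X\an$), the induced $\fA_k(X)$-action on $X\el$ is regular and free. Using $G$-equivariance, the action extends regularly to the $G$-saturation $G\cdot X\el$; after shrinking to remove the indeterminacy locus of the remaining birational automorphisms I obtain a $G\times\fA_k(X)$-stable open subset $X_0\subseteq X$ on which $\fA_k(X)$ acts regularly and freely. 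The geometric quotient $X_0/\fA_k(X)$ then exists as a $k$-variety, and for any $k$-subgroup $\fA\subseteq \fA_k(X)$ the intermediate orbit space $X':=X_0/\fA$ exists as well.

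\textbf{Invariants from GST descent.} Since $\fA$ is $L$-equivariant and preserves the slice $X\el$, descending the open embedding $U\times X\el\into X_0$ through $\fA$ gives an open embedding $U\times(X\el/\fA)\into X'$, so that $X\el/\fA$ serves as an LST slice for $X'$; the $L$-action on it remains elementary because $L\el$ still acts trivially. Since $\fA\subseteq A_k(X)=A/A_0$, we compute
\[
  X'\an = (X\el/\fA)\mod A = X\el\mod A = X\an,
\]
which immediately yields $c_k(X')=c_k(X)$. The $A$-action on $X'\el=X\el/\fA$ factors through $A_k(X)/\fA$ and remains free, so $A_k(X')=A_k(X)/\fA$ and hence $\rk_k X' = \rk_k X - \dim\fA$. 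The identification $\Xi_k(X')=\{\chi\in\Xi_k(X)\mid \res_\fA\chi=0\}$ is then formal from $A_k(X')=A_k(X)/\fA$, and dualising the short exact sequence
\[
  0\to \Xi_k(X')\to \Xi_k(X)\to \Xi(\fA)\to 0
\]
gives $\cN_k(X') = \cN_k(X)/\cN(\fA)$.

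\textbf{Valuation cone and main obstacle.} For the final statement, I apply \cref{lem:GAcentral} to the dominant $G$-morphism $\pi:X_0\to X'$. Since $c_k(X_0)=c_k(X)=c_k(X')$, the induced map $\pi_*:\cN_k(X)\auf\cN_k(X')$ sends $\cZ_k(X)$ surjectively onto $\cZ_k(X')$, and by the previous step its kernel is exactly $\cN(\fA)$; hence $\cZ_k(X')=\cZ_k(X)/\cN(\fA)$. I expect the main obstacle to lie in the first step: rigorously justifying that the a priori birational action of $\fA_k(X)$ becomes regular, free, and admits a separated geometric quotient on a $G$-stable open subset. Once that is secured, all remaining assertions are a matter of tracking the GST through the quotient map and invoking \cref{lem:GAcentral}.
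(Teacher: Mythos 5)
Your overall plan is sound, and for the valuation cone you in fact find a cleaner, self-contained route than the paper: you invoke \cref{lem:GAcentral} applied to the dominant $G$-morphism $X_0\to X'$ (which preserves $c_k$), and combine it with the computation of the kernel $\cN(\fA)$ of $\pi_*$. The paper instead simply cites the $K$-counterpart \cite{KnopIB}*{Satz~8.1~(4)}, so your argument stays internal to the results already established. The GST-descent computations of $X'\el$, $X'\an$, $c_k$, $\rk_k$, $\Xi_k$, and $\cN_k$ are also essentially what the paper does, if stated more explicitly.

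The one genuine gap is in the construction of $X_0$, at exactly the spot you flag as the ``main obstacle.'' You slide between two a priori different $\fA_k(X)$-actions on $X\el$: the restriction to $\fA_k(X)\subseteq A_k(X)$ of the structural $A_k(X)$-action on the principal bundle $X\el\to X\an$, and the restriction to $X\el$ of the $G$-equivariant \emph{birational} action of $\fA_k(X)$ on $X$ given by \eqref{eq:phif}. The inclusion $\fA_k(X)\subseteq A_k(X)$ is a definition (identifying $\phi$ with the point $a_\phi$), not an identification of two actions, so it does not by itself license the assertion that the birational action becomes regular and free on $X\el$, nor the extension to $G\cdot X\el$ by $G$-equivariance (which presupposes you are extending the birational, $G$-equivariant action, not the $A_k$-action). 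One must show the two actions coincide on $X\el$. The paper supplies exactly this: it deduces from formula \eqref{eq:phif} that the $\fA$-action on $X\el$ is compatible with the one induced from $A_k(X)$, since both act on $A$-semiinvariants by the same character scaling. With that identification in place your argument goes through. Note also that the paper does not argue this directly, but outsources the regularity of the birational action to \cite{KnopAut}*{Cor.~5.4} and the existence of the geometric quotient to Rosenlicht's theorem; once you have the compatibility above on $X\el$, your more hands-on construction is a legitimate alternative.
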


\begin{proof}

  By \cite{KnopAut}*{Cor.\ 5.4}, there is an open $G$-stable subset of
  $X$ on which the birational action of $\fA_k(X)$ is regular. Then
  the existence of $X_0$ is a consequence of Rosenlicht's theorem.

  The first two properties of $X'$ follow from \eqref{eq:phif}. That
  formula also implies the compatibility of the $\fA$-action on $X\el$
  with the action induced from that of $A_k(X)$. This shows
  $X'\el=X\el/\fA$ which implies all other assertions except the last
  one on the valuation cone. That follows from its $K$-counterpart
  \cite{KnopIB}*{Satz\ 8.1 (4.)}.
\end{proof}

\begin{corollary}

  Let $X$ be a $k$-dense $G$-variety and let $\fA\subseteq\fA_k(X)$ be
  an open $k$-subgroup. Then there is a $G\times\fA$-stable open dense
  subset $X_0\subseteq X$ such that $X_0/\fA$ has a standard
  embedding.

\end{corollary}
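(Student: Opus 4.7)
The plan is to invoke \cref{lem:auto} directly and then verify $k$-convexity of the quotient via the characterization in \cref{cor:finiteconvex} (or equivalently, via \cref{thm:auto}).

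First, I would apply \cref{lem:auto} with the subgroup $\fA$ to produce a $G\times\fA$-stable open dense subset $X_0\subseteq X$ such that the geometric quotient $X':=X_0/\fA$ exists. The lemma identifies $\cN_k(X')=\cN_k(X)/\cN(\fA)$ and tells us that $\cZ_k(X')$ is the image of $\cZ_k(X)$ under the projection $\pi:\cN_k(X)\auf\cN_k(X)/\cN(\fA)$.

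Next I would exploit the hypothesis that $\fA$ is open in $\fA_k(X)$: openness forces $\dim\fA=\dim\fA_k(X)$ and hence $\cN(\fA)=\cN(\fA_k(X))$. By \cref{thm:auto} this common space equals $\cZ_k^0(X)=\cZ_k(X)\cap(-\cZ_k(X))$, the largest linear subspace contained in $\cZ_k(X)$. Consequently
\[
  \cZ_k(X')=\pi\bigl(\cZ_k(X)\bigr)=\cZ_k(X)/\cZ_k^0(X),
\]
and the right-hand side is strictly convex by the very definition of $\cZ_k^0(X)$ as the maximal subspace inside the cone. Thus $X'$ is $k$-convex, and by \cref{def:standard} it admits a standard embedding $X'\into (X')\st$.

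There is no genuine obstacle here: the argument is essentially a bookkeeping exercise that chains together the identifications of the valuation cone under a quotient by $\fA$ from \cref{lem:auto} with the identification $\cN(\fA_k(X))=\cZ_k^0(X)$ from \cref{thm:auto}. The only point to be careful about is interpreting ``open $k$-subgroup'' in the sense that $\fA$ and $\fA_k(X)$ share the same identity component, so that their cocharacter spaces coincide; this is exactly what is needed to kill off $\cZ_k^0(X)$ in the quotient.
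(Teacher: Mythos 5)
Your proof is correct and is essentially the only natural argument here; the paper states the corollary without proof immediately after Lemma~\ref{lem:auto}, so it is clearly intended as the same two-step combination you give, namely $\cZ_k(X_0/\fA)=\cZ_k(X)/\cN(\fA)$ from Lemma~\ref{lem:auto} together with $\cN(\fA)=\cN(\fA_k(X))=\cZ_k^0(X)$ from openness of $\fA$ and Theorem~\ref{thm:auto}, which kills the maximal linear subspace of $\cZ_k(X)$ and forces strict convexity of the quotient cone.
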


Now we study another important class of $G$-varieties which generalize
rank-$0$-varieties:

\begin{definition}

  A $k$-dense $G$-variety $X$ is called \emph{$k$-horospherical} if
  $\cZ_k(X)=\cN_k(X)$.

\end{definition}

One reason for the importance of $k$-horospherical varieties is the
fact that a stratum $X(\cC)$ of an embedding $X(\cF)$ is
$k$-horospherical if and only of $\cC$ contains an inner point of
$\cZ_k(X)$. This follows immediately from \cref{prop:boundary}
\ref{it:boundary4}.

\cref{thm:auto} implies that $k$-horospherical varieties are also
characterized by the equality $\fA_k(X)=A_k(X)$ which means that such
a variety carries an action of $A_k(X)$.

\begin{theorem}\label{thm:horo}

  For a $k$-dense $G$-variety $X$ the following are equivalent:

  \begin{enumerate}

  \item\label{it:horo1} $X$ is $k$-horospherical.

  \item\label{it:horo2} An open dense $G$-stable subset of $X$ is
    induced from an elementary action.

  \item\label{it:horo3} $X=G\cdot X^N$.

  \end{enumerate}

\end{theorem}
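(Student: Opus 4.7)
I will establish the three implications cyclically, mirroring \cref{thm:rank0} with ingredients appropriate to horosphericity rather than anisotropy.

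For \ref{it:horo3}$\Rightarrow$\ref{it:horo1}, the proper surjective morphism $\phi\colon\XS:=G\times^P X^N\to X$ exhibits $\XS$ as the parabolic induction of the $MA$-variety $X^N$ (on which $N$ acts trivially), so \cref{prop:induction}\ref{it:induction3} gives $\cZ_k(\XS)=\cZ_k^{MA}(X^N)$ and $\cN_k(\XS)=\cN_k^{MA}(X^N)$. The key sub-lemma is that every action of an elementary group $H=MA$ on a $k$-dense variety $Y$ is automatically $H$-horospherical: since $H$ admits no proper parabolic $k$-subgroups, the GST slice is generically all of $Y$, and for each $a\in\cN(A_k(Y))$ the principal $A_k(Y)$-bundle $Y\to Y\an$ together with the toric embedding of the ray $\QQ_{\ge0}a$ produces a divisor whose valuation $v_a$ is $H$-invariant (the divisor is $A_k(Y)$-stable by construction and $M$-stable because $M$ commutes with the bundle structure), is trivial on $K(Y)^A=K(Y\an)$ (pullbacks from $Y\an$ are regular and generically non-zero along it), and satisfies $\iota_k(v_a)=a$. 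Applied to $Y=X^N$, this shows $\XS$ is $k$-horospherical, and invoking \cref{lem:GAcentral} for $\phi$ yields
\[
  \cN_k(X)=\phi_*\cN_k(\XS)=\phi_*\cZ_k(\XS)\subseteq\cZ_k(X)\subseteq\cN_k(X),
\]
forcing equality.

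For \ref{it:horo2}$\Rightarrow$\ref{it:horo3}, let $X_0=G\times^{Q^-}Y\subseteq X$ be the given open parabolic induction. Because $L\el$ acts trivially on $Y$, both $L\cap N\subseteq L\el$ and $U^-=Q^-_u$ fix $Y$ pointwise, and so does their product $N_0:=U^-\cdot(L\cap N)$. The group $N_0$ is the unipotent radical of the minimal $k$-parabolic $MA\cdot N_0\subseteq G$ and is therefore $G(K)$-conjugate to $N$, say $N_0=g^{-1}Ng$ for some $g\in G(K)$. Then $gY\subseteq X^N$, so $G\cdot X^N\supseteq G\cdot(gY)=G\cdot Y$ contains the dense open $X_0$. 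Since $X^N$ is $P$-stable and $G/P$ is projective, the morphism $G\times^P X^N\to X$ is proper, so its image is closed, and hence equals $X$.

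The most delicate implication is \ref{it:horo1}$\Rightarrow$\ref{it:horo2}, which I plan to reduce to the rank-zero setting via central automorphisms. By \cref{thm:auto}, the hypothesis $\cZ_k(X)=\cN_k(X)$ translates to $\cN(\fA_k(X))=\cN_k(X)$, so $\fA:=\fA_k(X)^0=A_k(X)^0$. Applying \cref{lem:auto} yields an open $G\times\fA$-stable subset $X_1\subseteq X$ whose quotient $X'':=X_1/\fA$ has $k$-rank zero, so \cref{thm:rank0} supplies an open $G$-subset $X''_1\subseteq X''$ of the form $X''_1\cong G\times^{Q^-}Z$ with $Z$ an anisotropic $L$-variety. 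Lifting the $\fA$-principal bundle $X_1|_{X''_1}\to X''_1$ to a $Q^-$-equivariant $\fA$-bundle $Y\to Z$ presents $X_1|_{X''_1}\cong G\times^{Q^-}Y$, and it remains to verify that $Y$ is elementary as an $L$-variety. This is the main obstacle: $L\el$ acts trivially on $Z$ (as $L\el\subseteq L\an$), and since it commutes with the free $\fA$-action on the fibers (central automorphisms commute with $G$), its action on any fiber factors through a homomorphism $L\el\to\fA$; this homomorphism vanishes because $L\el$ is generated by unipotents while $\fA$ is a split torus, so $L\el$ acts trivially throughout $Y$.
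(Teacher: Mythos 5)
Your proof is correct and follows the paper's approach: (i)$\Rightarrow$(ii) is exactly the paper's reduction via $\fA_k(X)=A_k(X)$ and \cref{thm:rank0}, and the paper dispenses with (ii)$\Rightarrow$(iii)$\Rightarrow$(i) by declaring them ``the same as the analogous statements of \cref{thm:rank0}'', which is precisely the template you have faithfully adapted, including the use of the remark after \cref{lem:generic-central} (principal $A_k$-bundles give $\cZ_k=\cN_k$) as your elementary-implies-horospherical sub-lemma. Your rendering of (ii)$\Rightarrow$(iii), via the observation that $(L\cap N)\cdot U^-$ is the unipotent radical of the minimal parabolic $(P\cap L)\cdot U^-\subseteq Q^-$ and hence $G(k)$-conjugate to $N$, is if anything tidier than the rank-zero original, whose passage through $X^{AU^-}$ and the inclusion ``$N^-\subseteq U^-$'' looks to be reversed.
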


\begin{proof}
  \ref{it:horo1}$\Rightarrow$\ref{it:horo2} Let $X$, $X_0$ and $X'$ be
  as in \cref{lem:auto} with $\fA=A_k(X)$. Since $\rk_kX'=0$,
  \cref{thm:rank0} implies that there is an open subset
  $X'_1\subseteq X'$ which is induced from an anisotropic action:
  $X'_1=G\times^{Q^-}Y'$. Let $Y\subseteq X_0$ be the preimage of
  $Y'$. Because $Y\to Y'$ is an $A_k(X)$-bundle, the action on $Y$ is
  elementary with $X_1=G\times^{Q^-}Y$ being open in $X_0$.

  The proof of the implications
  \ref{it:horo2}$\Rightarrow$\ref{it:horo3}$\Rightarrow$\ref{it:horo1}
  are the same as that of the analogous statements of
  \cref{thm:rank0}.
\end{proof}

For homogeneous varieties this translates into:

\begin{corollary}\label{cor:horo-spherical}

  Let $H\subseteq G$ be a $k$-subgroup and $X=G/H$. Then the following
  are equivalent:

  \begin{enumerate}

  \item\label{it:homhoro1} $X$ is $k$-horospherical.

  \item\label{it:homhoro2} There is a parabolic $Q\subseteq G$ with
    $R\el Q\subseteq H\subseteq Q$.

  \item\label{it:homhoro3} $H$ contains a $G(k)$-conjugate of $N$.

  \item\label{it:homhoro4}$X$ is $k$-spherical and $H$ is normalized
    by a split torus $A_0$ with $\dim A_0=\rk_kX$ and
    $|A_0\cap H|<\infty$.

  \end{enumerate}

\end{corollary}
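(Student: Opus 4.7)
The plan is to establish the cyclic equivalence \ref{it:homhoro1}$\Leftrightarrow$\ref{it:homhoro2}$\Leftrightarrow$\ref{it:homhoro3} in close analogy with \cref{cor:homrk0}, and then derive \ref{it:homhoro1}$\Leftrightarrow$\ref{it:homhoro4} from \cref{thm:auto} together with \cref{prop:anisoconvex}. For \ref{it:homhoro1}$\Rightarrow$\ref{it:homhoro2}: since $X=G/H$ is $G$-homogeneous, the open dense subset supplied by \cref{thm:horo}\ref{it:horo2} must coincide with $X$, giving an isomorphism $X\cong G\times^{Q^-}Y$ with $Y$ an elementary $L$-variety for the Levi $L$ of $Q^-$. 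The transitivity of $G(k)$ on $(G/Q^-)(k)$ (\cite{BorelTits}*{Thm.\ 4.13 a)}) lets us assume $H\subseteq Q^-$, and elementarity of $Y=Q^-/H$ forces $R\el Q^-\subseteq H$, yielding \ref{it:homhoro2} with $Q:=Q^-$. For \ref{it:homhoro2}$\Rightarrow$\ref{it:homhoro3}: choose $g\in G(k)$ with $gPg^{-1}\subseteq Q$; then $gNg^{-1}$ is a connected unipotent group generated by its $k$-rational points, so by definition of the elementary radical $gNg^{-1}\subseteq Q\el=R\el Q\subseteq H$. For \ref{it:homhoro3}$\Rightarrow$\ref{it:homhoro1}: if $g^{-1}Ng\subseteq H$ then $gH\in X^N$, hence $X=G\cdot X^N$, and \cref{thm:horo}\ref{it:horo3} applies.

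For \ref{it:homhoro1}$\Rightarrow$\ref{it:homhoro4}, having \ref{it:homhoro2} available we may assume $N\subseteq H\subseteq Q$ with $R\el Q\subseteq H$; the corresponding parabolic induction yields $X\cong G\times^{Q^-}Y$ with $Y=L/(L\cap H)$ and $L\el\subseteq L\cap H$. By \cref{prop:induction}\ref{it:induction4}, $c^G_k(X)=c^L_k(Y)$. The $L$-action on $Y$ factors through $L/L\el$, and $MA$ maps surjectively onto $L/L\el$---indeed, $L\el$ already absorbs the unipotent radical $N\cap L$ of the minimal parabolic $P\cap L$ of $L$, while $L/L\el$ being elementary admits no proper parabolics, so $L=MA\cdot L\el$. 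Consequently $MA$ acts transitively on $Y$, so $c^L_k(Y)=0$ and $X$ is $k$-spherical. For the torus part: horosphericity gives $\cZ_k(X)=\cN_k(X)$, hence by \cref{thm:auto} we have $\fA_k(X)^0=A_k(X)$, a split torus of dimension $\rk_kX$. By \cref{prop:anisoconvex} this agrees with the connected split center of $\Aut^GX=N_G(H)/H$. Lifting through the exact sequence $1\to H\to (N_G(H))^0\to\Aut^GX\to 1$ by means of a characteristic-zero Levi decomposition of the preimage of $\fA_k(X)^0$ produces a split $k$-subtorus $A_0\subseteq N_G(H)$ for which $A_0\to A_k(X)$ is an isogeny, giving $\dim A_0=\rk_kX$ and $|A_0\cap H|<\infty$.

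Conversely, for \ref{it:homhoro4}$\Rightarrow$\ref{it:homhoro1}: the hypothesis yields a split subtorus $A_0H/H\subseteq\Aut^GX$ of dimension $\rk_kX$. The $k$-spherical hypothesis together with \cref{prop:anisoconvex} makes $\Aut^GX$ elementary, so every split subtorus lies in its connected split center $\fA_k(X)^0$. This forces $\dim\fA_k(X)^0\ge\rk_kX=\dim A_k(X)$ and hence $\fA_k(X)^0=A_k(X)$. Then \cref{thm:auto} gives $\cZ_k^0(X)=\cN_k(X)$, so a fortiori $\cZ_k(X)=\cN_k(X)$, i.e., $X$ is $k$-horospherical. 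The main technical obstacle in this plan is the lifting step in the previous paragraph: one has to realize the connected split central torus of $N_G(H)/H$ by a split subtorus of $N_G(H)$ of the same rank; the cleanest way is to invoke the existence of a Levi decomposition of the preimage group in characteristic zero and to extract its maximal split central torus, observing that the image in $\fA_k(X)^0$ is automatically isogeneous since both tori are split and of equal dimension.
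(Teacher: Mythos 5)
Your proof is correct and follows the same overall structure as the paper's: the equivalence of \ref{it:homhoro1}--\ref{it:homhoro3} by analogy with \cref{cor:homrk0}, then deduce \ref{it:homhoro4} via the lifting of $A_k(X)=\fA_k(X)^0\subseteq N_G(H)/H$ to a split torus $A_0\subseteq N_G(H)$. Two places where your route differs slightly are worth noting. First, for the $k$-sphericity in \ref{it:homhoro1}$\Rightarrow$\ref{it:homhoro4}, the paper simply observes that $G/N$ is $k$-spherical and $G/H$ is a quotient of it; your argument via \cref{prop:induction}, the decomposition $L=MA\cdot L\el$, and transitivity of $MA$ on $Y$ is valid but considerably longer than needed, and there is a minor slip in writing $G\times^{Q^-}Y$ where the convention you set up would give $G\times^{Q}Y$ (these are related by $G(k)$-conjugation so nothing is broken). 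Second, for \ref{it:homhoro4}$\Rightarrow$\ref{it:homhoro1}, the paper argues directly that the $A_0$-action on a $k$-spherical $X$ is $k$-central and locally effective, whereas you instead invoke \cref{prop:anisoconvex} to identify the connected split center of the elementary group $\Aut^GX$ with $\fA_k(X)^0$ and then use that every split subtorus of an elementary group lies in its central split torus; this is a cleaner route that relies on an already-proved structural fact rather than re-verifying $k$-centrality.
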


\begin{proof}

  The proof of the equivalence of the first three parts analogous to
  that of \cref{cor:homrk0}. Now assume that they hold. Then $G/H$ is
  $k$-spherical since $G/N$ is. Moreover, it is well known that the
  split torus $A_k(X)\subseteq\Aut_kX=N_H(X)/H$ can be lifted to an
  isogenous torus $A_0\subseteq N_G(H)$. Whence
  \ref{it:homhoro4}. Conversely, when $X$ is $k$-spherical the action
  of $A_0$ on $X$ is automatically $k$-central and locally
  effective. Thus $\fA_k(X)=A_k(X)$ and $X$ is $k$-horospherical.
\end{proof}

We derive from this a characterization of $k$-spherical varieties:

\begin{proposition}

  For a homogeneous $k$-dense $G$-variety $X$ the following are
  equivalent:

  \begin{enumerate}

  \item $X$ is $k$-spherical.

  \item The number of $k$-dense $G$-orbits in any normal equivariant
    embedding $X\into\Xq$ is finite.

  \end{enumerate}

\end{proposition}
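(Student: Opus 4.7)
The plan is as follows. For the forward direction, I would argue that any normal equivariant embedding $X\into\Xq$ is itself $k$-spherical: $\Xq$ is locally $k$-linear by Sumihiro's theorem, $k$-dense because it contains the $k$-dense open subset $X$, and satisfies $c_k(\Xq)=c_k(X)=0$ since $X$ is open and dense in $\Xq$. The finiteness of $k$-dense $G$-orbits in $\Xq$ then follows directly from \cref{cor:sphericalfinite}, noting that a $G$-orbit is $k$-dense if and only if it contains a $k$-rational point, since $G(k)$ is Zariski dense in $G$.

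For the converse, I would argue contrapositively: assuming $X$ is not $k$-spherical, i.e., $c_k(X)>0$, I would exhibit a normal equivariant embedding of $X$ with infinitely many $k$-dense $G$-orbits. The first step is the reduction to the case $\rk_kX>0$: if $\rk_kX=0$ then $\cZ_k(X)\subseteq\cN_k(X)=0$, so $X$ is $k$-horospherical, and by \cref{cor:horo-spherical}\ref{it:homhoro4} then $k$-spherical, contradicting the assumption.

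With $\rk_kX>0$ secured, I would pick a $k$-central valuation $v$ whose image $\lambda_v\in\cN_k(X)$ lies in the interior of $\cZ_k(X)$ (non-empty by \cref{prop:chamber}), and form the simple toroidal embedding $X\into X(\cR)=X\cup D$ corresponding to the ray $\cR=\QQ_{\ge0}\lambda_v$ via \cref{thm:CEmbedding}. By \cref{prop:boundary}\ref{it:boundary3}, the stratum $D=X(\cR)$ is an irreducible $G$-stable $k$-dense divisor with $c_k(D)=c_k(X)>0$. The key input is Remark~\ref{rem:XcF}\emph{iii)}, which relies on results proved in \cref{sec:horospherical}: because $\cR$ contains an interior point of $\cZ_k(X)$, the generic $G$-orbits in $D$ attain the maximal possible codimension $c_k(X)>0$, so every $G$-orbit inside $D$ has dimension strictly less than $\dim D$.

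A dimension count then finishes the argument. Since $D$ is $k$-dense, $D(k)$ is Zariski dense in $D$. If only finitely many $G$-orbits $O_1,\ldots,O_n$ in $D$ met $D(k)$, the inclusion $D(k)\subseteq\overline{O_1}\cup\cdots\cup\overline{O_n}$ would force the Zariski closure of $D(k)$ to have dimension strictly less than $\dim D$, a contradiction. Hence infinitely many $G$-orbits in $D$ contain a $k$-point, and each such orbit is $k$-dense because $G(k)$ is dense in $G$. The only substantive obstacle I expect is the sharp codimension statement for generic $G$-orbits in the boundary stratum; this is the content of Remark~\ref{rem:XcF}\emph{iii)} deferred to \cref{sec:horospherical}. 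Everything else assembles from the embedding machinery and finiteness results developed above.
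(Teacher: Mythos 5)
Your argument is correct and shares the paper's central move: pick a $k$-central valuation $v$ whose image lies in the interior of $\cZ_k(X)$ (\cref{prop:chamber}) and form $\Xq=X\cup D$ with $D=X(\cR)$, so that $D$ is $k$-dense, $k$-horospherical, and $c_k(D)=c_k(X)$ by \cref{prop:boundary}. Where you diverge is in how the contradiction is extracted. The paper's proof works forward from the hypothesis: if $D$ had only finitely many $k$-dense orbits then, $D$ being $k$-dense and irreducible, one of them would be open; \cref{cor:horo-spherical} says a homogeneous $k$-horospherical space is $k$-spherical, so this open orbit has complexity $0$, hence $c_k(D)=0$, hence $c_k(X)=0$. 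You instead argue contrapositively from $c_k(X)>0$: you invoke Remark~\ref{rem:XcF}\emph{iii)} to say the generic $G$-orbit in $D$ has codimension $c_k(X)>0$, so no orbit is open, and then a Zariski-density dimension count (the irreducible $D$ cannot be a finite union of lower-dimensional orbit closures) forces infinitely many $k$-dense orbits in $\Xq$. Both closing steps are essentially contrapositives of one another and both ultimately rest on the structure theorem for horospherical varieties (\cref{thm:horo} together with \cref{prop:induction}), but the paper's route goes through the explicitly proved \cref{cor:horo-spherical}, whereas yours relies on a remark whose proof the paper only gestures at; it would be tidier to unwind the remark to \cref{thm:horo} explicitly. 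Your preliminary reduction to $\rk_kX>0$ is sound but unnecessary — the paper handles the rank-$0$ case uniformly by allowing $v=0$, $D=X$ — though it costs nothing.
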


\begin{proof}

  One implication follows from \cref{cor:sphericalfinite}. For the
  converse choose a valuation $v$ in the interior of $\cZ_k(X)$ and
  let $X\into\Xq=X\cup D$ be the associated embedding (with $D=X$ if
  $v=0$). It follows from \cref{prop:boundary} that $D$ is
  horospherical with $c_k(D)=c_k(X)$. Moreover, $D$ contains by
  assumption only finitely many $k$-dense orbits. Since $D$ is
  $k$-dense itself one of these orbits would be open. Then
  \cref{cor:horo-spherical} implies that $D$ is $k$-spherical. Hence
  $c_k(X)=c_k(D)=0$.
\end{proof}

\section{The Weyl group}\label{sec:WeylGroup}

Our next goal is to study the valuation cone $\cZ_k(X)$. Our strategy
is to compare it with $\cZ_K(X)$ which is known to be the Weyl chamber
for a reflection group (\cites{BrionSymetrique,KnopIB}). But first, we
compare the canonical parabolic subgroups $Q_k(X)$ and $Q_K(X)$.

\begin{proposition}\label{prop:QkQK}

  Let $X$ be a $k$-dense $G$-variety. Then the conjugacy class of
  $Q_K(X)$ is defined over $k$ and $Q_k(X)=Q_K(X)P$.

\end{proposition}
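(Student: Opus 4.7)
The plan is to treat the two assertions separately.

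\emph{Galois-stability of the conjugacy class.} The parabolic $Q_K(X)$ depends on the choice of Borel $B\subseteq G$ only up to $G(K)$-conjugation, so its $K$-conjugacy class is intrinsic to $(G,X)$ as $K$-varieties. For $\gamma\in\cG=\|Gal|(K|k)$, the parabolic $\gamma(Q_K(X))$ is the one associated to the Borel $\gamma(B)$, hence $G(K)$-conjugate to $Q_K(X)$. So the conjugacy class is $\cG$-stable, and Borel--Tits then provides a $k$-rational representative.

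\emph{Setup and easy direction.} Choose a Borel $B\subseteq G$ with $B\subseteq P$ (possible since every minimal $k$-parabolic contains a Borel), and fix a Levi $L_k$ of $Q_k(X)$ containing $MA$. Since $U_k:=R_uQ_k(X)\subseteq R_uP=N\subseteq P$ and $U_k\subseteq B$, we have $Q_k(X)=L_k\cdot U_k$ with $U_k\subseteq P$. The inclusion $Q_K(X)\cdot P\subseteq Q_k(X)$ is immediate: $P\subseteq Q_k(X)$ by definition, and $B\subseteq P$ gives $K(X)^P\subseteq K(X)^B$, so every $g\in Q_K(X)$ (which fixes $K(X)^B$ pointwise, equivalently preserves the generic $B$-orbit) also preserves the generic $P$-orbit.

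\emph{Reduction of the reverse inclusion.} I would first identify $Q_K(X)=\tilde Q\cdot U_k$, where $\tilde Q\subseteq L_k$ is the setwise stabilizer of the generic $(B\cap L_k)$-orbit on the slice $X\el$ (viewed over $K$). This follows from a direct computation using the open embedding $U_k\times X\el\hookrightarrow X$: the generic $B$-orbit $Bx_0$ in $X$ decomposes as $U_k\times(B\cap L_k)x_0$, and $g=lu\in L_kU_k$ preserves it iff $l\in\tilde Q$. Given this identification, $Q_k(X)\subseteq Q_K(X)\cdot P$ reduces, via $U_k\subseteq P$, to the identity $L_k=\tilde Q\cdot(P\cap L_k)$ inside $L_k$.

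\emph{Key input and main obstacle.} The main obstacle is this last identity, which I would establish via the LST and the structure of elementary groups. Since $L_k$ acts elementarily on $X\el$ by the LST, its elementary radical $(L_k)\el$ acts trivially on $X\el$ and therefore lies in $\tilde Q$. Moreover $L_k/(L_k)\el$ is an elementary $k$-group, and by \cref{prop:affineorbit} elementary $k$-groups contain no proper parabolic $k$-subgroup; so the image of the minimal $k$-parabolic $P\cap L_k$ in $L_k/(L_k)\el$ exhausts the quotient, giving $L_k=(L_k)\el\cdot(P\cap L_k)\subseteq\tilde Q\cdot(P\cap L_k)$. The key conceptual input is the non-existence of proper parabolic $k$-subgroups in elementary $k$-groups.
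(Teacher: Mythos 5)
Your proof is correct and follows essentially the same route as the paper: both apply the Local Structure Theorem over $K$ to the slice $X\el$ to locate $Q_K(X)$ inside $Q_k(X)$ as (something in $L_k$)$\cdot U_k$, and both close the argument with the structural fact $L_k=MA\cdot(L_k)\el$ for the quasi-elementary/reductive group $L_k$. The one stylistic difference is that where the paper invokes the uniqueness statement \cref{prop:uniqueQ} to identify $Q_K(X)=R_uQ_k(X)\cdot\Qq$ with $\Qq=Q_K(X\el)$, you carry out the equivalent computation directly by examining which $g=lu\in L_kU_k$ preserve the generic $B$-orbit in the chart $U_k\times X\el$ — a minor unwinding of the same idea, not a different argument.
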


\begin{proof}

  The first part is clear since $X$ is defined over $k$. For the
  second, we apply \cref{cor:LSTgeneric} in two steps. First, we get a
  Levi subgroup $L_k\subseteq Q_k(X)$ and an $L_k$-stable slice
  $X\el\subseteq X$ such that $Q_k(X)\times^{L_k}X\el\to X$ is an open
  embedding. Considering $X\el$ as a $K$-variety we get in a second
  step a parabolic $K$-subgroup $\Qq\subseteq L_k$ with Levi subgroup
  $L_K$ and an $L_K$-stable slice $\Xq\el\subseteq X\el$ such that
  $\Qq\times^{L_K}\Xq\el\to X\el$ is an open embedding. Combined we
  get an open embedding
  \[
    \QS\times^{L_K}\Xq\el\to X\text{ with }\QS:=R_uQ_k\;\Qq.
  \]
  So uniqueness (\cref{prop:uniqueQ}) implies that $Q_K(X)=\QS$. Next,
  let $\Pq$ be the image of $P$ in $L_k$. Then $\Pq$ contains all
  anisotropic simple factors of $L_k$ (see \eqref{eq:H=PHel}). On the
  other hand, all isotropic factors act trivially on $X\el$ and are
  thus contained in $\Qq$. Combined, this implies $L_k=\Qq\,\Pq$
  and therefore
  \[
    Q_k(X)=R_uQ_k(X)\;\Qq\;\Pq=Q_K(X)P.\qedhere\hfill\qed
  \]
\end{proof}

Observe that $Q_k(X)$ equals the product $Q_K(X)P$ as opposed to just
the subgroup generated by it. We translate \cref{prop:QkQK} into a
combinatorial statement. Recall the set $S^0\subseteq S$ of compact
roots. These are, by definition, the simple roots of $M$. On the other
hand, the parabolic $Q_k(X)$ is defined over $k$ and therefore
corresponds to a subset $S_k\p(X)\subseteq S_k$ of $k$-parabolic
roots. Replacing $k$ by $K$, we also get a set $S\p(X)\subseteq S$
of $K$-parabolic simple roots corresponding to $Q_K(X)$. These sets
are related as follows:

\begin{corollary}

  \begin{enumerate}

  \item\label{it:Sp1} $S\p(X)$ is $\cG^*$-stable {\rm(see
      \eqref{eq:staraction} for the notation).}

  \item\label{it:Sp2} $S_k\p(X)=\res_A'S\p(X)$ {\rm(see
      \eqref{eq:resstrich} for the notation).}

  \item\label{it:Sp3} Let $C$ be a connected component of
    $S\p(X)\cup S^0$ in the Dynkin diagram of $S$. Then
    $C\subseteq S\p(X)$ or $C\subseteq S^0$.
  \end{enumerate}

\end{corollary}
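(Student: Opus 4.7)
The plan is to derive all three statements from \cref{prop:QkQK} together with the standard dictionary between conjugacy classes of parabolics and subsets of simple roots. For \ref{it:Sp1}, \cref{prop:QkQK} asserts that the conjugacy class of $Q_K(X)$ is defined over $k$, i.e., is $\cG$-stable. Since $K$-conjugacy classes of parabolic subgroups of $G$ are classified by their type (a subset of $S$) and the induced $\cG$-action on such types is by definition the $*$-action (the element $w_\gamma\in W^0$ being precisely what is needed to return $\gamma S$ to $S$), the type $S^{(p)}(X)$ of $Q_K(X)$ is $\cG^*$-stable.

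For \ref{it:Sp2}, I would translate the identity $Q_k(X)=Q_K(X)\cdot P$ of \cref{prop:QkQK} into root-theoretic language. After choosing a Borel $B\subseteq P$ contained in a representative of the conjugacy class of $Q_K(X)$, the parabolics $P$ and $Q_K(X)$ have $K$-types $S^0$ and $S^{(p)}(X)$; the product of two $K$-parabolics containing a common Borel is again the parabolic whose type is the union of the individual types, so the $K$-type of $Q_k(X)$ is $S^0\cup S^{(p)}(X)$. Under the standard bijection between $k$-parabolic subgroups containing $P$ and $\cG^*$-stable subsets of $S$ containing $S^0$, the associated subset of $S_k$ is obtained by applying $\res_A'$; since $\res_A S^0=\{0\}$, this yields $S_k^{(p)}(X)=\res_A'(S^0\cup S^{(p)}(X))=\res_A'S^{(p)}(X)$.

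Part \ref{it:Sp3} is the substantive statement. By \ref{it:Sp2} the semisimple derived group of $L_k$ (the $K$-Levi of $Q_k(X)$) has simple roots $S^0\cup S^{(p)}(X)$, and its $K$-simple factors are indexed by the connected components of this set in the Dynkin diagram. Fix such a component $C$, let $H_C$ be the corresponding $K$-simple factor, and let $\widetilde H$ be the $k$-almost-simple factor of $L_k$ containing $H_C$. If $\widetilde H$ is $k$-anisotropic, then $\widetilde H\subseteq M$ by the decomposition \eqref{eq:H=PHel} applied to $L_k$, so all simple roots of $H_C$ lie in $S^0$ and $C\subseteq S^0$. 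Otherwise $\widetilde H$ lies in the elementary radical of $L_k$, and I need to show that this elementary radical is contained in $L_K$, the Levi of $\Qq:=Q_K(X)\cap L_k$ whose simple roots are $S^{(p)}(X)$.

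For this final step, note that the action of $L_k$ on $X\el$ is elementary by \cref{cor:LSTgeneric}, so the elementary radical of $L_k$ acts trivially on $X\el$; hence it preserves every orbit and lies in $\Qq$. Being reductive it sits inside some Levi $u L_K u^{-1}$ of $\Qq$ with $u\in\Qq_u$; combining this with the fact that the elementary radical is normal in $L_k$ (so that $u$-conjugation leaves it invariant) places it inside $L_K$ itself. Consequently $H_C\subseteq L_K$ and $C\subseteq S^{(p)}(X)$. The main obstacle is precisely this last step of pinning down the elementary radical inside the specific Levi $L_K$ rather than merely inside $\Qq$, which forces the combination of triviality of its action on $X\el$ with its normality in $L_k$.
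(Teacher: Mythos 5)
Your treatment of (i) and (ii) matches the paper's approach: both translate \cref{prop:QkQK} into the standard dictionary between conjugacy classes of parabolics and $\cG^*$-stable subsets of $S$, using $w_\gamma\in W^0$ for (i) and the "$K$-type of $Q_k(X)$ equals $S^{(p)}(X)\cup S^0$" translation for (ii). Those parts are fine and essentially identical.

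For (iii), however, you take a genuinely different route. The paper's proof is a one-liner: it invokes that $Q_k(X)$ is the \emph{set-theoretic product} $Q_K(X)\cdot P$, not merely the parabolic they generate, and then leaves implicit the group-theoretic fact that for parabolics $Q_1,Q_2$ containing a common Borel with types $I,J$, the equality $Q_1Q_2=R$ (where $R$ has type $I\cup J$) holds exactly when every connected component of $I\cup J$ lies in $I$ or in $J$ — because in each simple factor of the Levi of $R$, the product of two \emph{proper} parabolics through a common Borel is a proper closed subset. So (iii) is read off directly as the combinatorial shadow of the product decomposition. You instead bypass this and re-derive (iii) from the structure of the Levis: you show $(L_k)\el\subseteq L_K$ by observing that $(L_k)\el$ acts trivially on $X\el$, hence lies in $\Qq=Q_K(X\el)$, hence in a $\Qq_u$-conjugate of $L_K$, and normality of $(L_k)\el$ in $L_k$ together with $\Qq_u\subseteq L_k$ eliminates the conjugation; combined with the observation that $k$-anisotropic almost-simple factors of $L_k$ land in $M$, this gives the dichotomy $C\subseteq S^{(p)}(X)$ or $C\subseteq S^0$. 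Your argument is correct and self-contained (and perhaps more transparent to a reader who doesn't know the fact about products of parabolics), but it is also longer and to a large extent re-derives structural facts already encoded in \cref{prop:QkQK}, whereas the paper's argument extracts the combinatorial statement purely formally from the set-theoretic identity.

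Two small points worth flagging. First, when you say "by \ref{it:Sp2} the $K$-Levi of $L_k$ has simple roots $S^0\cup S^{(p)}(X)$", you are implicitly also using (i) (the $\cG^*$-stability of $S^{(p)}(X)$) to ensure that $S^{(p)}(X)\setminus S^0$ is the full $\res_A'$-preimage of its image; without (i), (ii) alone only bounds the $K$-type of $Q_k(X)$ from below. Second, your step "being reductive it sits inside some Levi $uL_Ku^{-1}$" uses that a connected reductive subgroup of a parabolic is contained in a Levi — which is standard, but should be cited (it is the reductivity and maximality of Levis among reductive subgroups). Neither of these is a gap, just places where a referee would want a word.
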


\begin{proof}

  \ref{it:Sp1} just means that the conjugacy class $Q_K(X)$ is defined
  over $k$. \ref{it:Sp2} is then equivalent to $Q_k(X)$ being
  generated by $Q_K(X)$ and $P$. Finally, \ref{it:Sp3} holds because
  $Q_k(X)$ is even the product of $Q_K(X)$ and $P$.
\end{proof}

As mentioned above, it is known that $\cZ_K(X)$ is the fundamental
domain for a finite reflection group $W_K(X)$ (see
\cite{KnopAB}*{Thm.\ 7.4}). Following the methods of \cite{KnopAB}, we
show an analogous fact for $\cZ_k(X)$. For this, we need the following
geometric result:

\begin{lemma}\label{prop:flat}

  Let $X$ be a normal $k$-dense $G$-variety. Then there is a Borel
  subgroup $B\subseteq P$, a $k$-subgroup $L_K\subseteq G$, and a
  point $x\in X(k)$ such that:

  \begin{enumerate}

  \item\label{it:flat1} $L_K$ is a Levi subgroup of $Q_K(X)$ where
    $Q_K(X)$ is the canonical parabolic subgroup of $X$ over $K$ which
    contains $B$.

  \item\label{it:flat2} Let $T\subseteq L_K$ be a maximal
    $k$-torus. Then there is a commutative diagram as follows
    \[\label{eq:flat2}
      \xymatrix{A\xysubseteq\ar@{>>}[d]&T\xysubseteq\ar@{>>}[d]&
        L_K\ar@{>>}[d]\ar[dr]^\phi\\
        A_k(X)\xysubseteq&A_K(X)\ar[r]^>>>>>\sim&Tx\xysubseteq&X}
    \]
    where $\phi$ is the orbit map $g\mapsto gx$. With the induced
    $k$-structure of $A_K(X)$ all maps are defined over $k$.

  \item\label{it:flat3} The action of the little Weyl group $W_K(X)$
    on $A_K(X)$ extends to the closure $\overline{Tx}$ in $X$.

  \item\label{it:flat4} Let $a:\G_m\to A_K(X)$ be a 1-parameter
    $K$-subgroup which we consider as an element of $\cN_K(X)$. Let
    $w\in W_K(X)$ with $wa\in\cZ_K(X)$ and let $v$ be the $K$-central
    valuation corresponding to $wa$. Then $x_0:=\lim_{t\to0}a(t)x$
    exists in $X$ if and only if $v$ has a center $Y$ in $X$ and in
    that case $x_0\in Y$.

  \end{enumerate}

\end{lemma}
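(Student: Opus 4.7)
The statement has four parts that build on each other; the hardest is \ref{it:flat2}, which requires finding a $k$-rational point with the right isotropy properties.

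The plan begins by choosing a maximal $k$-torus $T\subseteq MA$ (which then automatically contains $A$) together with a Borel subgroup $B\subseteq P$ containing $T$; after base change to $K$, $B$ is a Borel of $G_K$, and in the conjugacy class of $Q_K(X)$ there is a unique parabolic containing $B$, whose Levi complement containing $T$ is $L_K$. This settles \ref{it:flat1}. By \cref{prop:QkQK}, one can arrange $L_K\subseteq L_k$ where $L_k$ is a Levi of $Q_k(X)$ containing $T$.

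For \ref{it:flat2} I would apply \cref{cor:LSTgeneric} to obtain a $k$-dense affine $L_k$-slice $X\el$ with $U\times X\el\into X$ open. Since $L_k$ acts elementarily on $X\el$, the torus $T$ acts; let $T_0\subseteq T$ denote the (necessarily $k$-defined) kernel of this action. Over $K$, $T/T_0=A_K(X)$ by the $K$-version of the local structure theorem, and the generic $T_K$-isotropy on $X\el$ equals $T_0$. Let $\Omega\subseteq X\el$ be the $k$-open, $L_k$-stable locus where the $T_K$-isotropy attains this generic value. By $k$-density of $X\el$, $\Omega(k)\ne\leer$; pick $x\in\Omega(k)$. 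The orbit map $T\to Tx$ then factors as an isomorphism $A_K(X)\overset\sim\to Tx$ defined over $k$; with the induced $k$-structure on $A_K(X)$, diagram \eqref{eq:flat2} commutes, the leftmost column being the surjection of \cref{cor:NkNK}.

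Part \ref{it:flat3} reduces to the $K$-statement: by \cite{KnopAB}*{Thm.~7.4}, $W_K(X)$ acts by birational automorphisms on $\overline{Tx}$ as realized inside $X$, which is the desired extension. For \ref{it:flat4}, equivariance of the extended $W$-action gives $\lim_{t\to0}(wa)(t)(wx)=w\bigl(\lim_{t\to0}a(t)x\bigr)$, so the two limits exist simultaneously and are $W$-related; this reduces matters to $a\in\cZ_K(X)$ (after relabeling $wa,wx$ as $a,x$). For such $a$, the trajectory $t\mapsto a(t)x$ is a $\G_m$-orbit in the toric closure $\overline{A_K(X)\cdot x}$. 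By the standard valuative criterion applied to the $G$-equivariant toroidal embedding of $X$ attached to the ray $\QQ_{\ge0}a$ (see the construction preceding \cref{lem:generic-central}), the limit exists in $X$ itself iff the associated divisor $D_a$ is already realized inside $X$, i.e., iff $v$ has a center $Y$ in $X$; in that case $\lim a(t)x\in Y$. Applying $w^{-1}$ and using the $G$-stability (hence $W$-stability) of $Y$ yields $\lim_{t\to0}a(t)x\in Y$ in general.

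The main obstacle is \ref{it:flat2}: producing a $k$-rational point $x$ whose $T$-orbit realizes $A_K(X)$ over $k$. This rests entirely on the $k$-density of $X\el$ applied to the generic-isotropy locus $\Omega$. A secondary subtlety in \ref{it:flat4} is the translation between geometric limits in $\overline{Tx}\subseteq X$ and valuation-theoretic centers, for which the toroidal/Luna-Vust machinery developed earlier is precisely the key tool.
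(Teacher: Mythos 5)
Your treatment of \ref{it:flat1} and \ref{it:flat2} is fine and more elementary than the paper's, but you have a genuine gap at \ref{it:flat3} and \ref{it:flat4}: an arbitrary $k$-rational point $x$ with generic $T$-isotropy need not produce a flat that satisfies these properties. The paper emphasizes this in the remark right after the lemma: the set of flats satisfying \ref{it:flat3} and \ref{it:flat4} ``depends on the global structure of $X$ (and not just its generic structure). For example, one could make any specific flat `bad' by removing the orbit $Gx_0$ from $X$ \ldots or blowing it up.'' So the existence of bad flats is not a hypothetical worry; it is the whole difficulty.

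Concretely, \cite{KnopAB}*{Thm.~7.4} is the statement that $\cZ_K(X)$ is a fundamental domain for $W_K(X)$; it does not say that $W_K(X)$ acts on the closure $\overline{Tx}$ for an arbitrary $x$ with generic $T$-isotropy, and such an action in fact fails for ``bad'' flats. What the paper actually invokes is \cite{KnopAB}*{Cor.~6.3} (for \ref{it:flat3}) and \cite{KnopAB}*{Thm.~7.3} (for \ref{it:flat4}), and both are established only for flats coming from the cotangent-bundle/moment-map construction: one needs a covector $\alpha$ in the distinguished $Q_K$-stable open set $C\subseteq m^{-1}(\fu_K^\perp)$, and then $L_K=L_\alpha=C_G(m(\alpha))$ and $x=x_\alpha$. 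The entire content of the paper's proof is to show that $C$ contains a $k$-rational point after conjugating $B$ by an element of $P$; this is done by producing a dominant morphism $(P\cap L_k)\times C\el\to T^*_{X\el}$ and exploiting $k$-density of $T^*_{X\el}$. Your argument never produces such a covector; it picks $x\in X\el(k)$ directly and in particular does not tie the choice of $L_K$ to the choice of $x$ via the moment map. Consequently you cannot import the $W$-action or the limit-versus-center correspondence for this $x$, and the ``standard valuative criterion'' you cite is not a substitute for \cite{KnopAB}*{Thm.~7.3}, which is a nontrivial theorem that holds only for these specific flats. The fix is precisely the paper's cotangent-bundle argument: replace the search for a $k$-rational point in $X\el$ by a search for a $k$-rational covector in the slice $C\el\subseteq T^*_X$.
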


\begin{proof}

  First, by an affine cone construction as in the proof of \cref{kLST}
  it is easy to see that we may assume $X$ to be quasiaffine. Then $X$
  will be non-degenerate in the sense of \cite{KnopAB}*{\S3}.

  We recall some facts from \cite{KnopAB}. Choose a Borel subgroup
  $B\subseteq P$ which determines a canonical $K$-parabolic
  $Q_K=Q_K(X)\supseteq B$. Let $U_K:=Q_{K,u}$ be its unipotent
  radical. Let $\pi:T_X^*\to X$ be the cotangent bundle and
  $m:T_X^*\to\fg^*$ its moment map. For $\alpha\in T_X^*$ let
  $x_\alpha:=\pi(\alpha)\in X$ and $L_\alpha:=C_G(m(\alpha))$. Then in
  \cite{KnopAB} it was shown that
  \[
    m^{-1}(\fu_K^\perp)=\{\alpha\in T^*_X\mid
    \alpha(\fu_Kx_\alpha)=0\}
  \]
  contains an open $Q_K$-stable subset $C$ such that $\pi:C\to X$ is
  dominant and for all $\alpha\in C$ the pair
  $L_K=L_\alpha, x=x_\alpha$ has all the properties
  \ref{it:flat1}--\ref{it:flat4} of the theorem except for:

  \begin{itemize}

  \item $L_K$ and $T$ might not be defined over $k$ and therefore

  \item the left square of diagram \eqref{eq:flat2} may not exist.

  \end{itemize}

  More precisely, \ref{it:flat1} holds by \cite{KnopAB}*{Thm.\ 2.3 and
    \S3}. For \ref{it:flat2} see \cite{KnopAB}*{\S4}. Assertion
  \ref{it:flat3} follows from \cite{KnopAB}*{Cor.\ 6.3} and
  \ref{it:flat4} from \cite{KnopAB}*{Thm.\ 7.3}.

  The two remaining properties above will also hold if $\alpha$ would
  would be a $k$-rational point of $T^*_X$. We claim that such
  $\alpha$ is indeed possible to find after possibly changing $B$ by a
  conjugate in $P$.

  To see this, we first we apply \cref{cor:LSTgeneric} to $X$. This
  way, we get a decomposition $Q_k:=Q_k(X)=L_kU_k$ and an
  $L_k$-subvariety $X\el\subseteq X$ such that $U_k\times X\el\to X$
  is an open embedding. Next we apply \cref{cor:LSTgeneric} to the
  $L_k$-variety $X\el$ but now considered as a variety over $K$. We
  get a decomposition $\QS_K:=Q_K(X\el)=\LS_K\US_K\subseteq L_k$ and
  an $\LS_K$-subvariety $\XS\subseteq X\el$ such that
  $\US_K\times\XS\to X\el$ is an open embedding. Since then also
  $U_k\times\US_K\times\XS\to X$ is an open embedding, uniqueness
  (\cref{prop:uniqueQ}) implies that $Q_K=\QS_KU_k$ and
  $U_K=U_k\US_K$.

  Now consider the restriction map
  \[
    \pi^{-1}(X\el)=T^*_X|_{X\el}\to T^*_{X\el}.
  \]
  It has an $L_k$-equivariant section by extending
  $\alpha\in T^*_{X\el}$ by $0$ on $\fu_k x_\alpha$. In other words,
  we get a $k$-isomorphism
  \[
    \iota:\{\alpha\in T^*_X\mid x_\alpha\in X\el,\
    \alpha(\fu_kx_\alpha)=0\}\to T^*_{X\el}
  \]
  Now let
  \[
    C\el:=C\cap\pi^{-1}(X\el)\subseteq\{\alpha\in T^*_X\mid
    x_\alpha\in X\el,\ \alpha(\fu_Kx_\alpha)=0\}.
  \]
  Then $C\el\to X\el$ is dominant since $C\to X$ is dominant and
  $U_k\cdot X\el$ is dense in $X$. Because of
  $\fu_K=\fu_k\oplus\fuS_K$, the morphism $\iota$ maps the set $C$
  maps isomorphically to an open subset of
  \[
    \{\alpha\in T^*_{X\el}\mid \alpha(\fuS_Kx_\alpha)=0\}
  \]
  or, more precisely, to the irreducible component $\CS$ which maps
  dominantly to $X\el$. Now we can apply \cite{KnopAB}*{Theorem\ 3.2}
  to the $L_k$-variety $X\el$ and conclude that that the morphism
  \[
    L_k\times \CS\to T^*_{X\el}:
  \]
  is dominant. Because of $L_k=(P\cap L_k)\,(L_k)\el$ (see
  \eqref{eq:H=PHel}) and because $(L_k)\el$ acts trivially on $X\el$
  we see that also
  \[\label{eq:dominant}
    (P\cap L_k)\times C\el\to T^*_{X\el}:(g,\alpha)\mapsto
    g\iota(\alpha)
  \]
  is dominant. Now recall that $X$, and therefore $X\el$ and
  $T^*_{X\el}$ are $k$-dense. Thus, we can find $g\in P\cap L_k$ and
  $\alpha\in C\el$ such that $g\alpha\in T^*_{X\el}(k)$. By replacing
  $B$ with $gBg^{-1}$ we may assume that $g=1$. Then $\alpha$ is a
  $k$-rational point in $C\el$, hence in $C$. Observe that because of
  $g\in P\cap L_k$ none of $P$, $Q_k$, and $L_k$ change.

  Now both $L_K:=L_\alpha$ and $x:=x_\alpha$ are defined over
  $k$. Then we can choose a maximal torus $T\subseteq P\cap L_K$ which
  automatically contains the center of $L_k$ and therefore $A$. From
  this and the fact that $x\in X\el$ we get the left square in
  \ref{it:flat2}.
\end{proof}

\begin{remark}\label{rmk:flat}

  An $A$-orbit $Ax$ as above is called a \emph{$k$-flat}. Accordingly
  the $T$-orbit $Tx$ is a \emph{$K$-flat}. In general, there are
  multiparameter families of flats which are not conjugate to each
  other. Observe that the set of flats which have the last two
  properties \ref{it:flat3} and \ref{it:flat4} depends on the global
  structure of $X$ (and not just its generic structure). For example,
  one could make any specific flat ``bad'' by removing the orbit
  $Gx_0$ from $X$ (unless $Y$ has an open orbit) or blowing it
  up. This problem does not occur for spherical varieties.

\end{remark}

Now we are in the position to show that also $\cZ_k(X)$ is the Weyl
chamber of a reflection group. For this we define
\[
  N(\cN_k):=\{w\in W_K(X)\mid w\cN_k(X)=\cN_k(X)\}
\]
to be the normalizer of $\cN_k(X)$ in $W_K(X)$,
\[
  C(\cN_k):=\{w\in W_K(X)\mid
  w|_{\cN_k(X)}={\mathrm{id}}_{\cN_k(X)}\},
\]
its centralizer, and
\[
  W_k(X):=N(\cN_k)/C(\cN_k),
\]
the restricted Weyl group.

\begin{theorem}

  Let $X$ be a $k$-dense $G$-variety. Then $\cZ_k(X)$ is a fundamental
  domain for the action of $W_k(X)$ on $\cN_k(X)$.

\end{theorem}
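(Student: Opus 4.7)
The plan is to prove uniqueness and existence of fundamental-domain representatives, reducing as much as possible to the $K$-theory (\cite{KnopAB}*{Thm.\ 7.4}) and using the $k$-rational flat from \cref{prop:flat} to carry geometric information over to $k$.

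Uniqueness is immediate: if $a, a' \in \cZ_k(X)$ are $W_k(X)$-conjugate, then, since $W_k(X) = N(\cN_k)/C(\cN_k)$ acts through a subgroup of $W_K(X)$, they are already $W_K(X)$-conjugate, and the inclusion $\cZ_k(X) \subseteq \cZ_K(X)$ combined with the fundamental-domain property of $\cZ_K(X)$ forces $a = a'$.

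For existence, given $a \in \cN_k(X)$ I let $a'$ be the unique element of $\cZ_K(X) \cap (W_K(X) \cdot a)$ and write $a' = w_0 \cdot a$ with $w_0 \in W_K(X)$. I then need to establish two facts: (i) $a' \in \cZ_k(X)$, and (ii) there exists $w \in N(\cN_k)$ with $w \cdot a = a'$. For (i), I scale $a$ so that it lifts to a $k$-defined cocharacter $\tilde a \colon \G_m \to A$, and use \cref{prop:flat} to obtain a $k$-rational point $x \in X(k)$ together with a $k$-torus $T \supseteq A$ whose orbit realizes $A_K(X)$. The assignment $t \mapsto \tilde a(t) x$ is then a $k$-defined morphism $\G_m \to X$. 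Embedding $X$ into a complete equivariant $k$-variety $\bar X$ and extending to $\bar \phi \colon \A^1 \to \bar X$ (still $k$-defined by completeness) produces a $k$-rational limit $x_0 := \bar\phi(0) \in \bar X(k)$, whose $G$-orbit $Y$ is therefore $k$-dense. By \cref{prop:flat}(iv), applied after a suitable equivariant modification making the closure of $Y$ a divisor, the associated $G$-invariant valuation $v$ of $K(X)$ agrees with the $K$-central valuation at $a'$, so $\lambda_v = a'$. Because $v$ is $K$-central and $k$-dense, \cref{thm:valdense} forces $v$ to be $k$-central, hence $a' \in \cZ_k(X)$.

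For (ii), I invoke the Galois action of $\cG$ on $\cN_K(X)$: rationally, $\cN_k(X) = \cN_K(X)^\cG$ (from \cref{cor:NkNK} together with $H^1$-vanishing on $\QQ$-vector spaces), the cone $\cZ_K(X)$ is $\cG$-stable since $K$-centrality is intrinsic, and $\cG$ normalizes $W_K(X)$. Galois-fixedness of $a$ and $a'$ forces, for each $\gamma \in \cG$, the element $s_\gamma := w_0^{-1}(\gamma \cdot w_0)$ to lie in $W_a := \mathrm{Stab}_{W_K(X)}(a)$, and $\gamma \mapsto s_\gamma$ is a continuous $1$-cocycle with values in $W_a$. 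The stabilizer $W_a$ is a $\cG$-stable parabolic reflection subgroup of $W_K(X)$, and the $\cG$-stable Weyl chamber $\cZ_K(X)$ projects to a $\cG$-fixed chamber for $W_a$ on the subspace through $a$, trivializing the cocycle and producing $s \in W_a$ with $w' := w_0 s \in W_K(X)^\cG \subseteq N(\cN_k)$ and $w' \cdot a = a'$. The main obstacle is precisely this final cocycle-triviality step; part (i) reduces cleanly to \cref{prop:flat} and \cref{thm:valdense} once $k$-rationality of the cocharacter, the flat, and the compactification is tracked.
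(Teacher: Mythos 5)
Your uniqueness argument matches the paper, and the geometric core of your existence argument — use \cref{prop:flat} to produce a $k$-rational limit of a flat along $a$, and then deduce $k$-centrality of $v_{a'}$ via \cref{thm:valdense} — is indeed the paper's idea. But there are two genuine gaps.

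\emph{First}, in step (i) you only produce a $k$-rational point $x_0$ in the center of $v_{a'}$ inside a complete model $\bar X$, and then declare that a ``suitable equivariant modification making the closure of $Y$ a divisor'' will furnish a $k$-dense model for $v_{a'}$. This does not follow: the center of $v_{a'}$ in $\bar X$ is in general larger than the orbit closure $\overline{Gx_0}$, and a divisor lying over it obtained by blowing up need not have valuation $v_{a'}$, nor need its set of $k$-points be dense even if $Y$'s is. To conclude that $v_{a'}$ is a $k$-dense valuation in the sense of the definition you actually need to work in a model $(\Xq,D)$ of $v_{a'}$ itself and show $D(k)$ is dense in $D$. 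The paper does this by a ``delete and rerun'' argument: if $Z:=\overline{D(k)}\subsetneq D$, replace $\Xq$ by $\Xq\setminus Z$, re-apply \cref{prop:flat} to a fresh generic flat, and obtain a $k$-point of $D\setminus Z$, a contradiction. That step is what converts the existence of one $k$-point into $k$-density, and it is missing from your argument.

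\emph{Second}, for step (ii) you take a genuinely different route from the paper and the argument as written is not sound. The paper does not trivialize a cocycle; it introduces the dense open set $\cN_k^\circ:=\cN_k\setminus\bigcup_{w\notin N(\cN_k)}w\cN_k$, observes that for $v\in\cN_k^\circ$ any $w\in W_K$ with $wv\in\cN_k$ automatically lies in $N(\cN_k)$, and extends to all $v$ by continuity and finiteness of $N(\cN_k)$. Your cocycle argument has two issues. One, you conflate the ordinary $\cG$-action and the $*$-action on $\cN_K(X)$: $\cZ_K(X)$ (as a cone in $\cN_K(X)$) and $\Sigma_K(X)$ are stable under $\cG^*$, not $\cG$, whereas the identity $\cN_k=\cN_K^\cG$ holds for the ordinary action, and by \cref{cor:compN} one has $\cN_k=\cN_K^{\cG^*}\cap(\Sigma_K^0)^\perp$, which is in general a \emph{proper} subspace of $\cN_K^{\cG^*}$. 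The vague statement that ``$\cZ_K$ projects to a $\cG$-fixed chamber for $W_a$, trivializing the cocycle'' therefore needs to be replaced by an actual argument (say, uniqueness of the minimal-length representative of the $\cG^*$-stable coset $W_{a'}w_0$ for the parabolic $W_{a'}$), which uses the $\cG^*$-action since that is what preserves length. Two, even if you produce $w'\in W_K^{\cG^*}$ with $w'a=a'$, it does not follow that $w'\in N(\cN_k)$: you would need $w'$ to carry the extra defining condition $(\Sigma_K^0)^\perp$ into itself, which is not automatic. The paper's dense-set-plus-continuity argument is designed precisely to bypass this kind of delicate normalization.
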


\begin{proof}

  First of all, because of $\cZ_k\subseteq\cZ_K$ no two different
  elements of $\cZ_k(X)$ are $W_k(X)$-conjugate because the same is
  true for $\cZ_K(X)$ and $W_K(X)$. It remains to show, that every
  element of $\cN_k(X)$ is $W_k(X)$-conjugate to an element of
  $\cZ_k(X)$.

  We claim that it suffices to show that every $v\in \cN_k(X)$ is
  $W_K(X)$-conjugate to an element of $\cZ_k(X)$. To see this, define
  \[
    \cN_k^\circ:=\cN_k(X)\setminus \bigcup_{w\in W_K(X)\setminus
      N(\cN_k)}w\cN_k(X).
  \]
  This is an open Zariski dense subset of $\cN_k(X)$ with the property
  that $w\in W_K(X)$, $v\in \cN_k^\circ$, and $wv\in \cN_k(X)$ imply
  $w\in N(\cN_k)$.  Thus, if for every $v\in \cN_k^\circ$ there is
  $w\in W_K(X)$ with $wv\in \cN_k(X)$ then automatically
  $w\in N(\cN_k)$. The claim follows by continuity.

  So let $v\in \cN_k^\circ$, let $v_0\in\cZ_K$ be the unique element
  in the $W_K(X)$-orbit of $v$, and let $(\Xq,D)$ be a model of $v_0$
  (not necessarily defined over $k$). Let further $a:\G_m\to A_k$ be a
  cocharacter in the ray spanned by $v$. Then according to
  \cref{prop:flat}, the limit $x_0=\lim_{t\to0}a(t)x$ exists and lies
  in $D$. In particular, $D$ contains the $k$-rational point $x_0$. We
  claim that that this implies that $D$ is in fact $k$-dense. For this
  let $Z$ be its Zariski closure of $D(k)$ in $D$ and put
  $\Xq':=\Xq\setminus Z$ and $D':=D\setminus Z$. If $Z\ne D$ then
  $(\Xq',D')$ would be still a model for $v_0$ but this time with
  $D'(k)=\leer$. But this is impossible by the argument above applied
  to $\Xq'$ instead of $\Xq$.

  Thus $v_0$ is $k$-dense and $K$-central, hence $v_0\in\cZ_k$ by
  \cref{thm:valdense}.
\end{proof}

Recall that a facet of a convex cone is a face of codimension one.

\begin{corollary}

  The group $W_k(X)$ is generated by reflections about the facets of
  $\cZ_k(X)$.

\end{corollary}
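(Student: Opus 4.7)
The plan is to invoke a standard principle from the geometry of convex cones: if a finite group $W$ acts faithfully on a finite-dimensional real vector space $V$ and a closed convex polyhedral cone $C\subseteq V$ is a strict fundamental domain, then $W$ is generated by the reflections about the facets of $C$. In the present situation we take $W=W_k(X)$, $V=\cN_k(X)\otimes_\QQ\RR$, and $C=\cZ_k(X)$. Faithfulness is built into the very definition $W_k(X)=N(\cN_k)/C(\cN_k)$, and \cref{cor:valcone} guarantees that $\cZ_k(X)$ is a finitely generated convex cone, while the preceding theorem gives the fundamental-domain property.

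To execute this, I would first produce, for each facet $F$ of $\cZ_k(X)$, a non-trivial element $s_F\in W_k(X)$ fixing $F$ pointwise. Pick $v$ in the relative interior of $F$ and a sequence $v_n\in\cN_k(X)\setminus\cZ_k(X)$ tending to $v$ from the side opposite to $\cZ_k(X)$ across the supporting hyperplane $\<F\>$. The fundamental-domain property provides $w_n\in W_k(X)$ with $w_n v_n\in\cZ_k(X)$; since $W_k(X)$ is finite, some element $s_F$ occurs infinitely often, and passing to the limit together with uniqueness of the fundamental domain forces $s_F v=v$, while $s_F\ne 1$ because $v_n\notin\cZ_k(X)$.

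The main obstacle is to upgrade $s_F$ to an honest reflection in the hyperplane $\<F\>$. The argument has to be local near $v$: the fundamental-domain property forces $s_F$ to interchange the two open half-spaces bordering $\<F\>$ in a neighborhood of $v$, while $s_F$ already fixes the entire affine hyperplane $\<F\>$ through $v$ pointwise. Since $s_F$ is of finite order on a rational vector space and acts as the identity on a hyperplane, elementary linear algebra then forces $s_F$ to be the orthogonal reflection through $\<F\>$ (for any $W_k(X)$-invariant inner product on $\cN_k(X)\otimes\RR$, whose existence is automatic because $W_k(X)$ is finite).

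Once the $s_F$ are in hand, set $W':=\<s_F\mid F\text{ facet}\>\subseteq W_k(X)$; to conclude $W'=W_k(X)$ I would argue by a chamber-counting argument. Take $w\in W_k(X)$ and join a generic interior point of $\cZ_k(X)$ to a generic interior point of $w\cZ_k(X)$ by a line segment avoiding all codimension $\ge 2$ faces of all $W_k(X)$-translates of $\cZ_k(X)$. Each time the segment crosses a codimension-one wall separating two translates, the corresponding wall is (up to a $W'$-action) a facet of $\cZ_k(X)$, so the associated reflection lies in $W'$; applying finitely many of these reflections one reaches $w\cZ_k(X)=w'\cZ_k(X)$ for some $w'\in W'$. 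The fundamental-domain property, together with faithfulness, then yields $w=w'\in W'$.
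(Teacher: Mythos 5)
The paper disposes of this corollary in one line by appealing to the classical fact that \emph{only finite reflection groups admit closed fundamental domains}; your proposal instead reproves that classical fact from scratch, following what is indeed the standard argument (produce a facet stabilizer, show it is a reflection, then run a wall-crossing/chamber argument). The approaches are therefore logically the same in substance, but yours makes the black box explicit. One prerequisite you use tacitly is that $\cZ_k(X)$ has nonempty interior in $\cN_k(X)$; the paper establishes this in \cref{prop:chamber}, and without it the chamber argument would not apply.

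There is, however, a real gap between what your first step proves and what your second step uses. In step one you fix a single $v$ in the relative interior of $F$ and extract $s_F\ne 1$ with $s_Fv=v$. In step two you then assert that ``$s_F$ already fixes the entire affine hyperplane $\<F\>$ pointwise,'' but nothing you have shown rules out $s_F$ fixing only the ray through $v$. You need to upgrade from one fixed point to pointwise fixing of $F$ before the eigenvalue argument on $\cN_k(X)/\<F\>$ gives a reflection. This is repairable: run your limiting argument at \emph{every} $v$ in the relative interior of $F$; since $W_k(X)$ is finite, the closed sets $\{v: sv=v\}$ for $s\in W_k(X)\setminus\{1\}$ cover the relative interior of $F$, so one of them contains a nonempty open subset of $\<F\>$, hence (being a linear subspace) equals $\<F\>$. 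Only after this does the finite-order-plus-codimension-one argument apply. The wall-crossing step also silently uses that at a generic point of a wall exactly two translates of $\cZ_k(X)$ meet, which in turn uses the strict fundamental-domain property; that is fine, but worth making explicit in a full write-up.
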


\begin{proof}

  Indeed, only reflection groups have closed fundamental domains.
\end{proof}

Fundamental domains of a finite reflection group are very special:

\begin{corollary}\label{cor:cosimplicial}

  The valuation cone $\cZ_k(X)$ is cosimplicial, i.e., it is defined
  by a set of linearly independent linear inequalities.

\end{corollary}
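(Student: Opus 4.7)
The plan is to deduce the statement from the structure theory of finite reflection groups, using as input only what the previous theorem and corollary already establish: that $\cZ_k(X)$ is a fundamental domain for the finite group $W_k(X)$ acting on $\cN_k(X)$, and that $W_k(X)$ is generated by the reflections $s_1,\ldots,s_m$ about the facets $F_1,\ldots,F_m$ of $\cZ_k(X)$. Cosimpliciality is the statement that the $m$ hyperplanes $H_i\supseteq F_i$ have linearly independent normal vectors in $\cN_k(X)^*$, i.e., that $m\le\dim\cN_k(X)$ and the defining inequalities of $\cZ_k(X)$ are linearly independent.

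First I would reduce to the effective case. Put $V:=\cN_k(X)\otimes_\QQ\RR$ (or work in $\QQ$ throughout; it makes no difference) and equip it with a $W_k(X)$-invariant inner product, possible since $W_k(X)$ is finite. Let $V_0:=V^{W_k(X)}$ be the common fixed space and $V_1:=V_0^\perp$, so $V=V_0\oplus V_1$ as $W_k(X)$-modules and $W_k(X)$ acts effectively on $V_1$ as a finite reflection group. Each reflection $s_i$ fixes $V_0$ pointwise, hence each hyperplane $H_i$ contains $V_0$, and therefore each normal $n_i$ lies in $V_1$. Consequently $\cZ_k(X)=V_0\oplus(\cZ_k(X)\cap V_1)$, and $\cZ_k(X)\cap V_1$ is a fundamental domain for the effective reflection group $W_k(X)$ on $V_1$ bounded exactly by the hyperplanes $H_i\cap V_1$.

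Next I would invoke the standard fact from the theory of finite reflection groups: the closed fundamental chambers of a finite reflection group acting effectively on a Euclidean space $V_1$ are simplicial cones of full dimension, cut out by $\dim V_1$ linearly independent half-space inequalities corresponding to the walls of the chamber. The cleanest reference is Bourbaki's chapters on Coxeter groups, but the proof is short: the reflections about the walls of a chamber $C$ generate $W_k(X)|_{V_1}$, so the corresponding simple roots span $V_1^*$, and because $C$ is a full-dimensional closed convex cone with exactly those walls, the simple roots are linearly independent. Applied here this yields that $m=\dim V_1$ and that the normals $n_i\in V_1\subseteq V$ are linearly independent.

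Finally I would conclude by extension: the $n_i$, being linearly independent in $V_1$ and vanishing on $V_0$, remain linearly independent as functionals on $V=\cN_k(X)\otimes\RR$, and $\cZ_k(X)$ is the intersection of the half-spaces $\{n_i\ge0\}$. Rationality of the whole picture (the $n_i$ can be taken in $\Xi_k(X)$) follows since $W_k(X)$ acts on the lattice $\Xi_k(X)$. There is no real obstacle to this argument; the only point worth flagging is that the reduction to the effective quotient is necessary because $W_k(X)$ can have a nontrivial fixed subspace inside $\cN_k(X)$ (reflecting the possible presence of $\fA_k(X)^0$-directions in $\cZ_k(X)$, cf.\ \cref{thm:auto}), so that $\cZ_k(X)$ will in general only be cosimplicial, not simplicial.
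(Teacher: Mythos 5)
Your overall strategy is exactly what the paper does: after establishing that $\cZ_k(X)$ is a fundamental domain for the finite reflection group $W_k(X)$, the paper simply defers to the structure theory (``Fundamental domains of a finite reflection group are very special''), and your reduction to the effective quotient $V_1$ of $\cN_k(X)\otimes_\QQ\RR$ by the fixed subspace $V_0=\cN_k(X)^{W_k(X)}\otimes\RR$ is the right way to make that deference precise, since $\cZ_k(X)=V_0\oplus(\cZ_k(X)\cap V_1)$ and only the $V_1$-part is governed by the effective chamber theory.

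One step in your parenthetical ``short proof'' would not survive scrutiny: you assert the simple roots are linearly independent ``because $C$ is a full-dimensional closed convex cone with exactly those walls.'' Full-dimensionality does not force independence of the wall normals — the cone over a pentagon in $\RR^3$ is a full-dimensional closed convex cone with five walls whose inward normals are necessarily linearly dependent. The genuine mechanism is Bourbaki's obtuse-angle lemma: in a $W_k(X)$-invariant inner product, the inward normals $\alpha_i$ to the walls of a chamber satisfy $(\alpha_i,\alpha_j)\le 0$ for $i\ne j$, and they all pair strictly positively with any interior point of the chamber; a finite set of vectors with pairwise non-positive inner products that lies strictly on one side of a hyperplane through the origin is linearly independent. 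Since you already cite the correct reference, this is a repair of the heuristic rather than a missing ingredient, but as written the ``cone with $m$ walls $\Rightarrow$ $m$ independent normals'' inference is false and should be replaced by the obtuse-angle argument.
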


\begin{corollary}

  Let $X(\cF)$ be the embedding corresponding to a fan $\cF$. Then
  there is an $M$-stable open dense subset $X^0\an\subseteq X\an$ such
  that for all $y\in X^0\an(k)$ the following holds: let
  $F(y)\subseteq X\el$ be the $A_k(X)$-orbit over $y$ and $\Fq(y)$ its
  closure in $X$. Then $\Fq(y)$ is isomorphic to the toroidal
  $A_k(X)$-embedding which corresponds to the fan
  \[
    W_k\cF:=\{w\cC\mid w\in W_k(X),\ \cC\in\cF\}.
  \]
  In particular, $\Aq$ carries an action of $W_k(X)$ and is complete
  if and only if $\cF$ is complete.

\end{corollary}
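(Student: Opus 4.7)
The plan is to realize $\Fq(y)$ as the $A_k(X)$-part of the closure of a full $K$-flat $\overline{Tx}$, which already carries a $W_K(X)$-action thanks to \cref{prop:flat}\ref{it:flat3}, and then to descend the torus-embedding data from $A_K(X)$ to $A_k(X)$.

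First I would set up the flat. Applying \cref{prop:flat} gives, for $x$ in a dense open $k$-subset $X\el^0 \subseteq X\el$, a maximal $k$-torus $T\subseteq L_K$ and a commutative diagram identifying $A_k(X)x\subseteq A_K(X)x = Tx$. Let $X\an^0$ be the $M$-stable image of $X\el^0$ (restricted further, if necessary, to the locus where the $A_k(X)$-bundle $X\el\to X\an$ admits a $k$-rational section). For each $y\in X\an^0(k)$ we pick a preimage $x\in X\el^0(k)$ and set $F(y)=A_k(X)x$.

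Next I would compute the closure of the $K$-flat in $X(\cF)$. A $1$-parameter subgroup $a\colon \G_m\to A_K(X)$ has a limit $\lim_{t\to 0}a(t)x$ in $X(\cF)$ iff, writing $wa\in\cZ_K(X)$ for the $W_K(X)$-conjugate in the dominant chamber, the associated $K$-central valuation has a center in $X(\cF)$, by \cref{prop:flat}\ref{it:flat4}. Thanks to \cref{prop:boundary} and \cref{thm:CEmbedding}, those centers are exactly the strata $\Xq(\cC)$ for $\cC\in\cF$, so the limit exists iff $wa\in\supp\cF$, i.e.\ $a\in \supp(W_K\cF)$. Combined with the fact that the $W_K(X)$-action extends to $\overline{Tx}$, this identifies $\overline{Tx}\cap X(\cF)$ with the toroidal $A_K(X)$-embedding associated to the fan $W_K\cF$ in $\cN_K(X)$.

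The descent step is where I expect the main work. Since $A_k(X)\hookrightarrow A_K(X)$ is the inclusion of tori dual to the inclusion $\cN_k(X)\hookrightarrow \cN_K(X)$ of \cref{cor:NkNK}, the closure $\Fq(y)$ of $F(y)$ inside $\overline{Tx}\cap X(\cF)$ is a toroidal $A_k(X)$-embedding whose fan is $\{\sigma\cap\cN_k(X)\mid\sigma\in W_K\cF\}$. The point is to identify this collection with $W_k\cF$. The inclusion $\supseteq$ is easy: any $\bar w\in W_k(X)$ lifts to $w\in N(\cN_k)$, and then $w\cC\subseteq w\cN_k(X)=\cN_k(X)$ gives $w\cC\cap\cN_k(X)=w\cC=\bar w\cC$. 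For the inclusion $\subseteq$ one uses \eqref{eq:diagXi22} together with \cref{cor:cosimplicial}: the equality $\cZ_K(X)\cap\cN_k(X)=\cZ_k(X)$ of fundamental domains forces the $W_K(X)$-chamber decomposition of $\cN_K(X)$ to restrict to the $W_k(X)$-chamber decomposition of $\cN_k(X)$, so each $w\cC\cap\cN_k(X)$ is a face of some $\bar w'\cC'$ with $\bar w'\in W_k(X)$ and hence lies in $W_k\cF$. Making this last chamber-theoretic compatibility rigorous is the main obstacle, and I would either deduce it directly from the description of $\cZ_k(X)$ as the image of the antidominant Weyl chamber (\cref{prop:chamber}) or from a general restriction lemma for finite reflection groups.

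Finally, the restriction of the $W_K(X)$-action to $F(y)\subseteq Tx$ factors through $N(\cN_k)$ because only those elements preserve the subtorus $A_k(X)$, and $C(\cN_k)$ lies in the kernel since it acts trivially on $\cN(A_k(X))$ and hence on $A_k(X)$ itself; this produces the claimed $W_k(X)$-action on $\Fq(y)$. For the completeness statement, $\Fq(y)$ is complete iff $\supp(W_k\cF)=\cN_k(X)$, and since $\cZ_k(X)$ is a fundamental domain for $W_k(X)$ this is equivalent to $\supp\cF=\cZ_k(X)$, i.e.\ to $\cF$ being complete.
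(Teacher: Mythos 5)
Your approach is genuinely different from the paper's.  The paper works directly with the $A_k(X)$-flat: it invokes the $k$-analogue of \cite{KnopAB}*{Cor.~6.2} to conclude that, for $y$ in a dense open $X\an^0$, the closure $\Fq(y)$ \emph{is} a toroidal $A_k(X)$-embedding $A(\cF')$ for some fan $\cF'$; then $W_k$-stability of $\cF'$ comes from \cref{prop:flat}\ref{it:flat4}, the inclusion $\cF\subseteq\cF'$ gives $W_k\cF\subseteq\cF'$, and equality follows from $\cZ_k$ being a fundamental domain.  Your route instead realizes $F(y)$ inside the $K$-flat $\overline{Tx}$ and tries to descend the torus-embedding structure from $A_K(X)$ to $A_k(X)$.

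The place where this breaks is the descent step.  You assert that the closure of $A_k(X)x$ inside $\overline{Tx}\cap X(\cF)$ ``is a toroidal $A_k(X)$-embedding whose fan is $\{\sigma\cap\cN_k(X)\mid\sigma\in W_K\cF\}$,'' but this is not automatic.  The closure of a subtorus in a normal torus embedding need not be normal: already for $\G_m\hookrightarrow\G_m^2$, $t\mapsto(t^2,t^3)$, and the embedding $\G_m^2\hookrightarrow\A^2$, the closure is the cuspidal cubic.  Since $A_k(X)$ is merely a subtorus of $A_K(X)$ (the quotient $\Xi_K\auf\Xi_k$ need not split), nothing forces $\Fq(y)$ to be normal in your set-up, and without normality it is not a toroidal embedding and the fan description fails.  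Establishing this normality is exactly the content of the cited \cite{KnopAB}*{Cor.~6.2} argument, which the paper's proof transfers to $k$ directly rather than descending from $K$.  Secondarily, the chamber-restriction step you flag as the ``main obstacle'' — that the $W_K$-chambers restrict to the $W_k$-chambers — is needed and plausible, but it is also avoidable: once one knows $\Fq(y)=A(\cF')$ for some fan $\cF'$, the simpler sandwich $W_k\cF\subseteq\cF'$ plus the fundamental-domain property closes the argument without ever analyzing $w\cC\cap\cN_k$ for $w\notin N(\cN_k)$.

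Your final paragraph (restriction of the $W_K$-action through $N(\cN_k)/C(\cN_k)$, and completeness iff $\supp\cF=\cZ_k(X)$) is fine once the fan identification is in hand, and the genericity issue in setting up $X\an^0$ (Lemma~\ref{prop:flat} as stated produces a single point, not an open set) also needs to be addressed, but these are minor by comparison.
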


\begin{proof}

  The same proof as that for \cite{KnopAB}*{Cor.~6.2} shows that there
  is an open subset $X\an^0$ such that for all $y\in X\an^0(k)$ the
  embeddings $\Fq(y)$ are isomorphic to each other. This common
  $A_k(X)$-embedding corresponds to a fan $\cF'$ supported in
  $\cN_k(X)$. \cref{prop:flat}\ref{it:flat4} implies that $\cF'$ is
  $W_k(X)$-stable. The construction of $X(\cF)$ shows that $A_k(\cF)$
  is an open subset of $A_k(\cF')$ which means that $\cF$ is a subfan
  of $\cF'$, whence $W_k\cF\subseteq\cF$. Equality follows from the
  fact that $\cZ_k(X)$ is a fundamental domain for $W_k(X)$.
\end{proof}

Another consequence of \cref{prop:flat} is:

\begin{proposition}\label{prop:quasiaff}

  Let $X$ be a $k$-convex, $k$-spherical, and quasiaffine
  $G$-variety. Then $X$ is affine and homogeneous.

\end{proposition}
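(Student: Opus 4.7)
The strategy is to combine \cref{prop:flat} with an affine envelope of $X$ and a Weyl-group averaging argument on a $k$-flat through a $k$-rational point. Since $X$ is quasi-affine, Sumihiro's theorem (after normalization if needed) provides a $G$-equivariant open embedding $X\hookrightarrow\Xq$ with $\Xq$ affine. Pick a Borel $B\subseteq P$, a maximal $k$-torus $T\supseteq A$, and a $k$-rational point $x\in X(k)$ as in \cref{prop:flat}, satisfying \ref{it:flat1}--\ref{it:flat4}. By $k$-sphericity and \cref{cor:LSTgeneric}, the torus $A_k(X)$ acts freely on $X\el$, so the $k$-flat $F:=Ax$ is a free $A_k(X)$-torsor; its closure $\bar F$ in $\Xq$ is therefore a closed affine $A_k(X)$-toric variety, defined by a strongly convex cone $\sigma\subseteq\cN_k(X)$.

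The heart of the argument is to prove $\sigma=\{0\}$, i.e., that $\bar F=F$. The two ingredients are: first, $\bar F\subseteq X$; and second, $\sigma$ is $W_k(X)$-stable. For the first, one shows that every rational $a\in\sigma$ with limit $\lim_{t\to 0}a(t)x$ in $\Xq$ actually has this limit in $X$. By \ref{it:flat4}, this reduces to showing that the $K$-central valuation attached to $wa$ (for $w\in W_K(X)$ with $wa\in\cZ_K(X)$) has a center in $X$. Choosing $w$ so that $wa\in\cZ_k(X)=\cZ_K(X)\cap\cN_k(X)$, the question becomes one of non-negativity of the valuation on $K[X]$; here the $k$-convexity hypothesis enters crucially, since the standard fan $\cF\st$ is complete, and by \cref{thm:CompleteGeneral} applied to the standard embedding $(X^\circ)\st$ of the open orbit $X^\circ$, every element of $\cZ_k(X)$ corresponds to a $k$-dense stratum whose valuation is non-negative on $K[X]$. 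For the second, once $\bar F\subseteq X$, the $W_K(X)$-action on $\overline{Tx}$ from \ref{it:flat3} restricts through $N_{W_K(X)}(\cN_k)\twoheadrightarrow W_k(X)$ to a $W_k(X)$-action on $\bar F$, so $\sigma$ is $W_k(X)$-stable. By $k$-convexity and \cref{thm:auto}, the fixed subspace $\cN_k(X)^{W_k(X)}=\cZ_k^0(X)=0$; the usual averaging argument then forces $\sigma=\{0\}$: if $0\ne v\in\sigma$, then $\sum_{w\in W_k(X)}wv\in\sigma$ is $W_k(X)$-fixed, hence zero, giving $-v\in\sigma$ and contradicting strong convexity.

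Thus $\bar F=F$ is closed in $\Xq$. Since \cref{prop:flat} furnishes a dense set of such $x$, translating by $G(k)$ (which is Zariski-dense in $G$) shows that $\Xq(k)=X(k)$; by $k$-density this forces $X=\Xq$, proving affineness. For homogeneity, if $X^\circ\subsetneq X$, choose an irreducible $G$-stable component $D$ of codimension one in $X\setminus X^\circ$; it is $k$-dense (as $X$ is) with $c_k(D)\le c_k(X)=0$, so \cref{thm:valdense} makes $v_D$ a nontrivial $k$-central valuation with $\lambda_{v_D}\in\cZ_k(X)\setminus\{0\}$, and \ref{it:flat4} produces a limit of $a(t)x$ in $\bar F\setminus F$, contradicting $\bar F=F$. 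Hence $X=X^\circ$ is homogeneous. The main obstacle is the first step above, namely $\bar F\subseteq X$: bridging the combinatorial data of $\cZ_k(X)$ with actual non-negativity of $K$-central valuations on the ring $K[X]$ of a quasi-affine variety is where $k$-convexity is essential.
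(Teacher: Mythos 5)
There is a genuine gap in your argument, concentrated in the step where you claim $\bar F\subseteq X$. You assert that the question "becomes one of non-negativity of the valuation on $K[X]$" and that this follows from applying \cref{thm:CompleteGeneral} to the standard embedding of $X^\circ$. But \cref{thm:CompleteGeneral} says nothing about non-negativity of $k$-central valuations on $K[X]$; it says that a $k$-dense $G$-stable subvariety of a larger embedding of $X(\cF)$ must meet $X(\cF)$. Moreover, since $K[X]=K[\Xq]$ for quasi-affine $X$, "$v\ge 0$ on $K[X]$" is equivalent to $v$ having a center in $\Xq$, not in $X$ — so even granting what you claim, you would only recover $\bar F\subseteq\Xq$, which is trivial. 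The assertion $\bar F\subseteq X$ is essentially equivalent to the non-emptiness statement being proved, so the argument is circular: if $X\ne\Xq$, Kempf's theorem together with \cref{lem:centercentral} produces a $k$-central $v$ whose center lies in $\Xq\setminus X$, and by \ref{it:flat4} the flat closure then leaves $X$. There is also a smaller gap at the end: you take an irreducible component $D$ of $X\setminus X^\circ$ and declare it $k$-dense "as $X$ is", but $k$-density of $X$ does not pass to $G$-stable closed subvarieties; one needs a rational point (supplied by Kempf) to get a $k$-dense orbit closure.

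The paper's proof sidesteps both issues by never attempting $\bar F\subseteq X$: it applies \cref{prop:flat} to (the normalization of) the affine envelope $\Xq$ itself, so $\overline{Tx}$ and $\Aq=\overline{A_kx}$ are taken inside $\Xq$ and automatically carry the $W_K$- resp.\ $W_k$-action from \ref{it:flat3}, making the toric cone $\cC$ of $\Aq$ automatically $W_k$-stable. If the open orbit $X_0$ were a proper subset of $\Xq$, Kempf plus \cref{lem:centercentral} produce a $k$-central valuation $v$ with a $k$-dense center $Y\subseteq\Xq\setminus X_0$; then \ref{it:flat4} shows $\Aq$ meets $Y$, so $\cC\ne 0$, and averaging over $W_k$ yields a nonzero fixed vector, contradicting $\cN_k^{W_k}=\cZ_k^0=0$ from $k$-convexity. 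This single contradiction forces $X_0=\Xq$, giving affineness and homogeneity at once; you should adopt this route and drop the $\bar F\subseteq X$ claim entirely.
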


\begin{proof}

  Let $X_0\subseteq X$ be the open $G$-orbit and let $X\into\Xq$ be an
  equivariant affine embedding. Suppose $X_0\ne\Xq$. Then Kempf's
  \cref{thm:Kempf} implies that $\Xq\setminus X_0$ contains a
  $k$-rational point $y$. \cref{lem:centercentral} shows that
  $Y:=\overline{Gy}$ is the center of a $k$-central valuation
  $v\in\cZ_k(X)$. Now let $A_kx\subseteq A_Kx\subseteq X_0$ be a flat
  as in \cref{prop:flat} and let $\Aq$ be the closure of $A_kx$ in
  $\Xq$. Then $\Aq$ meets $Y$ by part \ref{it:flat4} of
  \cref{prop:flat}. Therefore, the normalization of $\Aq$ is a
  non-trivial affine torus embedding and therefore which given by a
  non-zero strictly convex cone $\cC\subseteq\cN_k$. Moreover, $\cC$
  is $W_k$-stable by part \ref{it:flat3}. Such a cone can only exist
  when the fixed point set $\cN_k^{W_k}$ is non-zero. But that set
  equals $\cZ_k^0$ which is $0$, by assumption. This contradiction
  shows that $X=X_0=\Xq$ is homogeneous and affine.
\end{proof}

Here is an application:

\begin{corollary}

  Let $X=G/H$ be a $k$-spherical $G$-variety. Assume that $N_G(H)/H$
  is anisotropic (e.g. finite) and that $H$ is unimodular. Then $H$ is
  reductive.

\end{corollary}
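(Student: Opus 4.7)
The plan is to deduce from the hypotheses that $X := G/H$ is quasi-affine, and then invoke Proposition \ref{prop:quasiaff} together with Matsushima's criterion. Since $\Aut^G X = N_G(H)/H$ is anisotropic, Proposition \ref{prop:anisoconvex} immediately yields that $X$ is $k$-convex; combined with $k$-sphericity this places us in the exact setting of Proposition \ref{prop:quasiaff}. Once we know $X$ is quasi-affine, that proposition gives $X$ affine and homogeneous, and then Matsushima's theorem (valid in characteristic zero) forces $H$ reductive.

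So the whole weight of the proof rests on showing $X = G/H$ is quasi-affine, and this is where unimodularity of $H$ should enter. Since $G$ is reductive, $\det\Ad_\fg = 1$ as a character of $G$, so restricting to $H$ and using the short exact sequence $0\to\fh\to\fg\to\fg/\fh\to 0$ of $H$-modules together with $\det\Ad_H=1$ (unimodularity), one obtains that $H$ acts trivially on $\bigwedge^{\mathrm{top}}(\fg/\fh)^*$. Therefore the $G$-equivariant canonical bundle $\omega_X = G \times^H \bigwedge^{\mathrm{top}}(\fg/\fh)^*$ is $G$-equivariantly trivial, and $X$ carries a nowhere-vanishing $G$-invariant regular top form.

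The main obstacle is converting this triviality of $\omega_X$ into quasi-affineness of $X$. My approach is to invoke Chevalley's theorem to realize $X$ as the $G$-orbit of a line $[v_0]$ inside some $\P(V)$, with $H$ acting on $kv_0$ through a character $\chi\in\Xi(H)$. The plan is then to use the trivialization of $\omega_X$ together with the anisotropy of $N_G(H)/H$ --- an anisotropic $k$-reductive group has no non-trivial $k$-rational characters, which rigidifies the characters of $H$ that can arise from Chevalley-type embeddings --- to argue that after replacing $V$ by $V\otimes\psi^{-1}$ for a suitable character $\psi$ of $G$ one may assume $\chi$ is trivial. Then $H$ fixes a genuine vector in the twisted $V$, and the orbit map exhibits $X$ as a locally closed $G$-stable subvariety of $V\setminus\{0\}$, hence quasi-affine.

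Granted quasi-affineness, Proposition \ref{prop:quasiaff} upgrades $X$ to an affine homogeneous $G$-variety, and Matsushima's criterion immediately gives that $H$ is reductive, completing the proof.
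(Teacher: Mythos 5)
Your opening is correct and matches the paper: anisotropy of $N_G(H)/H$ gives $k$-convexity via \cref{prop:anisoconvex}, and \cref{prop:quasiaff} (quasiaffine $+$ $k$-convex $+$ $k$-spherical $\Rightarrow$ affine homogeneous) plus Matsushima is the right endgame. But the middle step — showing that $G/H$ itself is quasi-affine — contains a genuine gap, and it is precisely the step you flag as ``the main obstacle.''

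Triviality of the $G$-equivariant canonical bundle only tells you that the single character $\det\Ad_{\fg/\fh}$ of $H$ is trivial. It says nothing about the Chevalley character $\chi$ of $H$ on the line $kv_0$: that $\chi$ is a different character, coming from a high symmetric power of a representation realizing $H$ as a normalizer, and there is no reason it should extend to a character of $G$. The proposed twist $V\otimes\psi^{-1}$ only kills $\chi$ when $\chi = \psi|_H$ for some $\psi\in\Xi(G)$, which is exactly what has to be proved. Anisotropy of $N_G(H)/H$ concerns the absence of a split central torus in $N_G(H)/H$; it does not rigidify characters of $H$ itself. So the chain trivial canonical bundle $\Rightarrow$ quasi-affine $G/H$ is not established, and I don't see a way to salvage it in this form.

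The paper's proof sidesteps the issue entirely by not trying to show $G/H$ quasi-affine. After reducing to $H$ connected, it sets $\tau$ a generator of $\wedge^{\dim\fh}\fh\subseteq\wedge^{\dim\fh}\fg$ and $N_\tau := \{g\in N_G(H)\mid g\tau=\tau\}$. Unimodularity gives $H\subseteq N_\tau$ (this is where $\det\Ad_{\fh}=1$ is used, rather than the canonical bundle of $X$), and $N_\tau$ is by construction the full $G$-stabilizer of the \emph{vector} $\tau$, so $G/N_\tau\cong G\tau\subseteq\wedge^{\dim\fh}\fg$ is automatically quasi-affine — no Chevalley twisting needed. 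The anisotropy hypothesis is then used (via \cref{lem:GAcentral}) to show $\cZ_k(G/H)=\cZ_k(G/N_\tau)$, so $G/N_\tau$ is still $k$-convex. Applying \cref{prop:quasiaff} to $G/N_\tau$ gives $N_\tau$ reductive, and finally $H$, being normal in the reductive group $N_\tau$, is reductive. You may want to adopt this vector-stabilizer device; it is exactly the move that makes quasi-affineness come for free.
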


\begin{proof}

  The condition on $N_G(H)/H$ means that $X$ is $k$-convex
  (\cref{prop:anisoconvex}). Then also $G/H^0$ is $k$-convex
  (\cref{lem:GAcentral}) and $k$-spherical. Thus, we may assume
  without loss of generality that $H$ is connected.

  Now consider $\fh=\Lie H$ and let $\tau$ be a generator of the line
  $\wedge^d\fh$ (with $d=\dim\fh$). Let $N_\tau$ be its isotropy group
  in $N_G(H)$. Then $H$ being unimodular means that
  $H\subseteq N_\tau$. Since, by assumption, $N_G(H)/H$ is anisotropic
  also its subgroup $N_\tau/H$ is anisotropic. Therefore it acts
  (locally) trivially on the fibers of $X\el\to X\an$ which entails
  $\rk_kG/H=\rk_kX\el=\rk_kX\el/N_\tau=\rk_k G/N_\tau$. From
  \cref{lem:GAcentral} we get $\cZ_k(G/H)=\cZ_k(G/N_\tau)$ and, in
  particular, that $G/N_\tau$ is $k$-convex, as well.

  We now consider $\tau$ as a point of $\wedge^d\fg$.  The orbit map
  then yields an embedding
  \[
    \rho:G/N_\tau\into\wedge^d\fg:gN_\tau\mapsto g\tau
  \]
  which means that $G/N_\tau$ is quasiaffine. Since it is also
  $k$-spherical and $k$-convex we get from \cref{prop:quasiaff} that
  $G/N_\tau$ is affine which implies that $N_\tau$ is reductive. But
  then its normal subgroup $H$ is reductive, as well.
\end{proof}

\section{The root system}

In this section, $X$ will be a fixed $k$-dense
$G$-variety. Accordingly, if there is no danger of confusion we drop
the reference to $X$ in most notation:
\[
  S\p,\ S_k\p,\ \Xi_k,\ A_k,\ \cN_k,\ \cZ_k,\ W_k
\]
where $k$ can also be replaced by $K$.

After assigning a $k$-Weyl group to $X$ we now construct an integral
root system for it. The simple roots should vanish on the facets of
the Weyl chamber $\cZ_k$ and are therefore unique up to a positive
scalar. So, we face an issue of normalization. The most obvious is:

\begin{definition}

  Let $X$ be $k$-dense $G$-variety.

  {\it i)} A \emph{primitive $k$-spherical root of $X$} is a primitive
  element $\sigma\in\Xi_k$ such that $\cZ_k\cap\{\sigma\ge0\}$ is a
  facet of $\cZ_k$. In other words, $\sigma$ is the unique shortest
  element of $\Xi_k$ which is an outward normal vector to a facet of
  $\cZ_k$.

  {\it ii)} The set of primitive $k$-spherical roots is denoted by
  $\Sigma_k\prim:=\Sigma_k\prim(X)$. It is in 1:1-correspondence with
  the facets of $\cZ_k$.

  {\it iii)} The \emph{primitive $k$-root system of $X$} is
  \[\label{eq:PhiKdef}
    \Phi_k\prim=\Phi_k\prim(X):=W_k\cdot\Sigma_k\prim\subseteq\Xi_k.
  \]

\end{definition}

\cref{cor:cosimplicial} implies that $\Sigma_k\prim$ is linearly
independent and that the valuation cone has the presentation
\[
  \cZ_k=\{a\in \cN_k\mid\sigma(a)\le0\text{ for all
  }\sigma\in\Sigma_k\prim\}.
\]
Recall the set $S_k\in\Xi(A)$ of restricted simple roots of $G$. Then
\cref{prop:chamber} has the following reformulation:

\begin{lemma}\label{lem:nonnegative}

  Every $\sigma\in\Sigma_k\prim$ is a linear combination
  $\sigma=\sum_{\alpha\in S_k}c_\alpha\alpha$ with
  $c_\alpha\in\QQ_{\ge0}$.

\end{lemma}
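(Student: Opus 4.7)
The plan is to realize a $k$-spherical root $\sigma$ as a character of $A$ via the inclusion $\Xi_k\hookrightarrow\Xi(A)$ (dual to the quotient $A\auf A_k$), and then to read off its sign-decomposition by cone duality applied to \cref{prop:chamber}.

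First I would unpack the definition: if $\sigma\in\Sigma_k\prim$, then $\cZ_k\cap\{\sigma\ge0\}$ is a facet of $\cZ_k$, and the explicit presentation $\cZ_k=\{a\in\cN_k\mid\sigma(a)\le0 \text{ for all }\sigma\in\Sigma_k\prim\}$ recorded just after the definition says exactly that $\sigma\le0$ on all of $\cZ_k$. Next, \cref{prop:chamber} gives $\pi(\cN^-(A))\subseteq\cZ_k$, where $\pi:\cN(A)\auf\cN_k$ is the canonical projection. Composing these two facts, and viewing $\sigma$ as an element of $\Xi(A)\otimes\QQ$ through $\Xi_k\hookrightarrow\Xi(A)$, yields $\sigma(a)\le 0$ for every $a\in\cN^-(A)$.

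The conclusion then comes from elementary cone duality: the rational polyhedral cone $\cN^-(A)=\{a\in\cN(A)\mid\alpha(a)\le0\text{ for all }\alpha\in S_k\}$ is defined as the intersection of the half-spaces $\{\alpha\le0\}$, and the dual cone consisting of linear functionals non-positive on $\cN^-(A)$ is, by Farkas' lemma, the Minkowski sum of the one-dimensional duals $\QQ_{\ge0}\,\alpha$, i.e.\ the $\QQ_{\ge0}$-span of $S_k$ in $\Xi(A)\otimes\QQ$. Applying this to $\sigma$ produces the required expression $\sigma=\sum_{\alpha\in S_k}c_\alpha\alpha$ with $c_\alpha\in\QQ_{\ge0}$.

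There is no real obstacle: the lemma is essentially a combinatorial reformulation of \cref{prop:chamber} once one recognizes the valuation cone as lying inside the image of the antidominant chamber and invokes duality of polyhedral cones. The only point requiring a moment of care is checking that $\sigma$, initially a primitive element of $\Xi_k$, is legitimately viewed as a character of $A$ (which follows from $A_k$ being a quotient of $A$, so that $\Xi_k=\Xi(A_k)$ injects into $\Xi(A)$) and that rationality of the coefficients $c_\alpha$ is automatic since everything lives in the $\QQ$-vector space $\Xi(A)\otimes\QQ$.
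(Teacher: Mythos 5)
Your proof is correct and coincides with the paper's intent: the paper introduces the lemma precisely as a ``reformulation'' of \cref{prop:chamber} and leaves the cone-duality step (Farkas applied to $\cN^-(A)$) implicit, which is exactly what you have spelled out. One small slip in your closing paragraph: you write that ``the valuation cone [lies] inside the image of the antidominant chamber,'' but \cref{prop:chamber} gives the opposite containment $\pi(\cN^-(A))\subseteq\cZ_k(X)$, which is what your second paragraph correctly uses.
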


The set of $\alpha\in S_k$ with $c_\alpha>0$ is called the
\emph{support of $\sigma$} (denoted $\supp\sigma$).

A \emph{weight lattice} for a root system $\Phi$ with Weyl group $W$
is a lattice $\Xi$ containing $\Phi$ with
$(1-s_\sigma)\Xi\subseteq\ZZ\sigma$ for all $\sigma\in\Phi$. Here,
$s_\sigma$ denotes the reflection about $\sigma$. If $\Phi$ is a
reduced root system, then this is equivalent to $W$ acting trivially on
$\Xi/\<\Phi\>_\ZZ$. This follows easily from
$\QQ\sigma\cap\<\Phi\>_\ZZ=\ZZ\sigma$. Yet another criterion is the
existence of a (unique) coroot $\sigma^\vee\in\Hom(\Xi,\ZZ)$ with
\[
  s_\sigma(\chi)=\chi-\<\chi|\sigma^\vee\>\sigma\text{ for all
  }\chi\in\Xi.
\]
We record:

\begin{proposition}

  $\Phi_k\prim$ is a reduced root system with Weyl group $W_k$. The
  set $\Sigma_k\prim$ is a system of simple roots for $\Phi_k\prim$.
  The valuation cone $\cZ_k$ is the antidominant Weyl chamber with
  respect to $\Sigma_k\prim$. The lattice $\Xi_k$ is a weight lattice
  for $\Phi_k\prim$.

\end{proposition}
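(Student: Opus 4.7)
The plan is to reduce the statement to standard structure theory of finite reflection groups, once we check that $W_k$ acts by lattice automorphisms of $\Xi_k$ and that primitivity yields the crystallographic and reducedness conditions. First I would verify that the action of $W_k$ on $\cN_k$ dualizes to an action on $\Xi_k$. This needs a brief verification: $\Xi_k$ is a quotient of $\Xi_K$ by the subgroup $I$ pairing trivially with $\cN_k\subseteq\cN_K$; elements of $N(\cN_k)$ preserve $\cN_k$ and hence preserve $I$, while elements of $C(\cN_k)$ act as the identity on $\cN_k$ and therefore satisfy $w\chi-\chi\in I$ for all $\chi\in\Xi_K$, so act trivially on the quotient $\Xi_k$. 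Thus $W_k$ acts on $\Xi_k$ dually to its action on $\cN_k$, and this action preserves $\Xi_k$.

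Next I would appeal to the previous theorem: $W_k$ is a finite group and $\cZ_k$ is its fundamental domain in $\cN_k$; by the corollary just above, $W_k$ is generated by the reflections $s_\sigma$ about the facets of $\cZ_k$, which are precisely indexed by $\Sigma_k\prim$, and $\cZ_k$ is by construction the antidominant chamber cut out by $\{\sigma\le 0:\sigma\in\Sigma_k\prim\}$. Standard theory of finite reflection groups then gives that $\Sigma_k\prim$ is a simple system for the (a priori non-crystallographic) reflection group $(W_k,\cN_k)$: every $W_k$-orbit on the set of reflecting hyperplanes meets the set of walls of $\cZ_k$, so every reflection in $W_k$ is conjugate to some $s_\sigma$ with $\sigma\in\Sigma_k\prim$, and consequently $\Phi_k\prim=W_k\cdot\Sigma_k\prim$ is finite, $W_k$-stable, closed under $\sigma\mapsto -\sigma$, and every root supplies a reflection lying in $W_k$.

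The crystallographic and reducedness properties are where the normalization by primitivity gets used. Since $W_k$ acts by automorphisms of $\Xi_k$, every element of $\Phi_k\prim$, being the $W_k$-translate of a primitive element of $\Xi_k$, is itself primitive in $\Xi_k$. For any $\sigma\in\Phi_k\prim$ the reflection $s_\sigma$ preserves $\Xi_k$ and satisfies $s_\sigma(\chi)-\chi\in\QQ\sigma$ for every $\chi$, so the value $\<\chi,\sigma^\vee\>$ defined by $s_\sigma(\chi)=\chi-\<\chi,\sigma^\vee\>\sigma$ lies in $\QQ$, and the vector $\<\chi,\sigma^\vee\>\sigma$ belongs to $\Xi_k$; primitivity of $\sigma$ then forces $\<\chi,\sigma^\vee\>\in\ZZ$. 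This simultaneously produces the integral coroot $\sigma^\vee\in\Hom(\Xi_k,\ZZ)$, shows that $\Xi_k$ is a weight lattice for $\Phi_k\prim$, and yields reducedness: if $c\sigma\in\Phi_k\prim$ for some $c\in\QQ^\times$, then both $\sigma$ and $c\sigma$ are primitive in $\Xi_k$, forcing $c=\pm 1$.

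I do not anticipate any serious obstacle; the only subtlety is bookkeeping around the dual action of $W_k$ on $\Xi_k$ and the observation that primitivity is preserved under $W_k$ (which is what makes the root system crystallographic and reduced). Everything else is the routine dictionary between a finite reflection group with a given fundamental chamber and its associated root system, which I would just invoke rather than reprove.
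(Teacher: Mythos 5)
Your argument is correct and follows essentially the same route the paper takes: reduce to checking that $W_k$ stabilizes $\Xi_k$ (the paper deduces this from $\Xi_k=\res_A\Xi_K$ and the known $W_K$-stability of $\Xi_K$, which is what your quotient-by-$I$ computation amounts to), and then invoke $\cZ_k$ being a fundamental domain together with the standard dictionary for finite reflection groups. The paper leaves the crystallographic, reducedness, and weight-lattice verifications as "follows from the definitions," and your proposal just spells those out explicitly, using the key observation that $W_k$ preserves primitivity.
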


\begin{proof}

  Everything follows from the definitions and the fact that $\cZ_k$ is
  a fundamental domain for $W_k$ once we know that $\Xi_k$ is
  $W_k$-stable. Over $K$ this is \cite{KnopAB}*{Thm.\ 4.2}. For $k$ it
  now follows from \eqref{eq:resXiK}.
\end{proof}

Of course, all constructions above are valid with $k$ replaced by
$K$. In this case, several other normalizations of spherical root are
in use (see, for example, the account in \cite{BvS}) but the primitive
one is most common. Therefore, over $K$ we are going to drop the
superscript $\prim$ and write $\Sigma_K=\Sigma_K(X):=\Sigma_K\prim$
and $\Phi_K=\Phi_K(X):=\Phi_K\prim$. In the following, we will focus
on the relationship between $\Sigma_K$ and $\Sigma_k\prim$.

For any $\sigma\in\Xi_K$ let $\osigma:=\res_A\sigma$ be the
restriction to $A$ and put
\[
\Xi_K^0=\Xi_K^0(X)=\{\sigma\in\Xi_K(X)\mid\osigma=0\}.
\]
Elements of
\[\label{eq:Sigma0}
  \Sigma_K^0=\Sigma_K^0(X):=\Sigma_K\cap\Xi_K^0=
\{\sigma\in\Sigma_K\mid\osigma=0\}.
\]
will be called \emph{($k$-)compact spherical roots}. From
\cref{lem:nonnegative} (over $K$) it follows that the compact
spherical roots can be recovered from the compact simple roots:
\[
  \Sigma_K^0=\{\sigma\in\Sigma_K\mid\supp(\sigma)\subseteq S^0\}.
\]
We will see later (\cref{cor:kernel}) that $\Sigma_K^0$ is the set of
$K$-spherical roots of $X\el$. Let
\[
  W_K^0=W_K^0(X)=\<s_\sigma\mid\sigma\in\Sigma_K^0\>\subseteq W_K
\]
be the subgroup of $W_K$ which is generated by all reflections about
compact spherical roots.  Using notation \eqref{eq:resstrich}, we
define the set of restricted spherical roots as
\[
  \Sigma_k=\Sigma_k(X):=
  \res_A'\Sigma_K=\{\osigma\mid\sigma\in\Sigma_K,\ \osigma\ne0\}.
\]
Our goal is to show that $\Sigma_k$ is another system of simple roots
for $W_k$.

\begin{remark}

  In general, $\Sigma_k$ and $\Sigma_k\prim$ are different. This
  already happens in the group case $X=H\times H/H$ with $H=U(1,1)$
  and $k=\RR$. In that case, $\Xi_K(X)\cong\ZZ^2$ with Galois action
  $(n_1,n_2)\mapsto(-n_2,-n_1)$ and positive spherical root
  $\sigma=(1,-1)$. The restriction map is
  $\Xi_K=\ZZ^2\to\Xi_k=\ZZ:(n_1,n_2)\mapsto n_1-n_2$. Thus
  $\osigma=\res_A\sigma=2$ is not primitive.

\end{remark}

Next we study the action of the Galois group $\cG=\|Gal|(K|k)$. Recall
the situation of \cref{prop:flat}, in particular the maximal $k$-torus
$T$ containing the maximal $k$-split torus $A$ and a point $x\in X(k)$
such that the orbit $Tx$ is a flat.  Then the following objects will
be considered:
\[
  \cxymatrix{\Sigma_K\xysubseteq&
    \Xi_K\ar@{>>}[r]\xysubseteqdown&\Xi_k\xysubseteqdown\\
    S\xysubseteq& \Xi(T)\ar@{>>}[r]&\Xi(A)}
\]
The Galois group $\cG$ acts on $\Xi(T)$ and trivially on $\Xi(A)$
. Since $Tx\cong A_K$ is defined over $k$, the sublattices $\Xi_K$ and
$\Xi_k$ are $\cG$-stable.

Now recall the $*$-action of $\cG$ on $\Xi(T)$. It is defined as
\[
  \gamma*\chi=w_\gamma\,{}^\gamma\chi
\]
where $w_\gamma$ is the unique element of $W(G)$ with
$w_\gamma\,{}^\gamma S=S$. It is known that
$w_\gamma\in W^0=\<s_\alpha\mid\alpha\in S^0\>$ (see
\S\ref{sec:notation}). This implies that the restriction $\res_A$ is
also $\cG^*$-invariant.

The subgroup $\Xi_K\subseteq\Xi(T)$ is $\cG^*$-stable, as well. In
fact, if $f$ is a $B$-semiinvariant function then
${}^{n_\gamma\gamma}f$ is also $B$-semiinvariant with character
$\gamma*\chi_f$. Here, $n_\gamma\in N_G(T)$ is a lift of $w_\gamma$.

We define a $\cG$-action on valuations by
${}^\gamma v(f):=v({}^{\gamma^{-1}}f)$. Then it is easy to see that
the injection $\iota_K:\cZ_K\into\cN_K$ is
$\cG^*$-equivariant. Indeed, because of the $G$-invariance of $v$ we
have
\[
  \iota_K({}^\gamma v)(\chi)= {}^\gamma v(f_\chi)=
  v({}^{\gamma^{-1}}f_\chi)= v({}^{n_{\gamma^{-1}}\gamma^{-1}}f_\chi)=
  v(f_{\gamma^{-1}*\chi})= (\gamma*\iota_K(v))(\chi)
\]
This shows that $\Sigma_K$ is a $\cG^*$-stable subset of $\Xi_K$. In
particular, $\Phi_K$ and $W_K$ are $\cG^*$-stable.

\begin{lemma}\label{lem:wgamma}

  Let $\gamma\in\cG$. Then $w_\gamma\Xi_K=\Xi_K$ and the restriction of
  $w_\gamma$ to $\Xi_K$ is an element of $W_K^0$.

\end{lemma}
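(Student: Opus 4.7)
The first assertion, $w_\gamma \Xi_K = \Xi_K$, follows quickly by combining two stability properties of $\Xi_K$. The lattice is $\cG$-stable because the torus $A_K(X) \cong Tx$ is defined over $k$ by \cref{prop:flat}\ref{it:flat2}, so $\Xi_K = \Xi(A_K(X))$ inherits a Galois structure compatible with the inclusion into $\Xi(T)$. It is also $\cG^*$-stable, as recalled just before the lemma: twisting any $B$-semiinvariant $f$ by $n_\gamma\gamma$ yields another $B$-semiinvariant with character $\gamma * \chi_f$. Since $\gamma * = w_\gamma \circ {}^\gamma$ as operators on $\Xi(T)$, we have $w_\gamma = (\gamma*) \circ ({}^\gamma)^{-1}$, and both factors preserve $\Xi_K$; applying the same reasoning to $\gamma^{-1}$ gives the reverse inclusion.

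For the second assertion, the first step is to observe that $w_\gamma|_{\Xi_K}$ acts trivially on $\Xi_k$. Indeed, $w_\gamma$ is a product of reflections $s_\alpha$ with $\alpha \in S^0$, and each such reflection satisfies $(1-s_\alpha)\Xi(T) \subseteq \ZZ\alpha \subseteq \ker\res_A$. Combining with the $w_\gamma$-stability of $\Xi_K$ already proved, this gives $(1-w_\gamma)\Xi_K \subseteq \Xi_K^0$. By the identification $\Xi_K/\Xi_K^0 \cong \Xi_k$ from \eqref{eq:resXiK}, this says exactly that $w_\gamma|_{\Xi_K}$ descends to the identity on $\Xi_k$.

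It remains to promote this to membership in $W_K^0$, rather than merely in the (potentially larger) subgroup of $\mathrm{Aut}(\Xi_K)$ fixing $\Xi_k$ pointwise. My plan is to argue by direct analogy with the Borel-Tits decomposition recalled in \S\ref{sec:notation}: the $\cG$-action on $\Xi_K$ preserves $\Phi_K$ (since the facets of $\cZ_K$, and hence $\Sigma_K$, are $\cG$-stable as is $\cG^*$-stable, both being canonically attached to the $k$-variety $X$) and is a lift of the $\cG^*$-action to the semidirect product $\Delta_K^* \ltimes W_K^0$, where $\Delta_K^*$ consists of automorphisms of $\Xi_K$ preserving $\Sigma_K$ together with its subset $\Sigma_K^0$. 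The $\cG^*$-action, by its very definition, supplies the $\Delta_K^*$-projection, and consequently the discrepancy $w_\gamma|_{\Xi_K}$ lies in the kernel $W_K^0$. The main obstacle is verifying this decomposition: one needs the general root-theoretic fact that $W_K^0$ is precisely the pointwise stabilizer of $\Xi_k$ inside $W_K$, which in turn reduces to showing that the root subsystem $\Phi_K \cap \Xi_K^0 \subseteq \Phi_K$ is spanned by $\Sigma_K^0$ (so that every reflection in $W_K$ fixing $\Xi_k$ is a product of reflections about compact spherical roots). This last fact follows from \cref{lem:nonnegative} together with the injectivity of restriction on $\Sigma_K \setminus \Sigma_K^0$ modulo $\Xi_K^0$, and the geometric realization of $W_K$ on the flat $\overline{Tx}$ from \cref{prop:flat}\ref{it:flat3} then identifies the abstract restriction $w_\gamma|_{\Xi_K}$ with the corresponding element of $W_K^0$.
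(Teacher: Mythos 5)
The first assertion is fine and matches the paper. The observation that $(1-w_\gamma)\Xi_K \subseteq \Xi_K^0$, so that $w_\gamma$ induces the identity on $\Xi_k$, is correct and a useful reduction (the paper uses it implicitly). But the main step of your proof has a real gap.

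The crux is showing $w_\gamma|_{\Xi_K}\in W_K$ in the first place, before one can even ask whether it lies in $W_K^0$. You try to extract this from a Borel--Tits-style decomposition of $\Aut(\Xi_K)$ as $\Delta_K^*\ltimes W_K^0$, which presupposes that the ordinary $\cG$-action on $\Xi_K$ stabilizes $\Phi_K$. You justify this by saying the facets of $\cZ_K$ are ``canonically attached'' to the $k$-variety, hence $\cG$-stable. That is exactly the subtle point, and it is not available at this stage of the argument. What the discussion preceding the lemma gives is only that $\iota_K:\cZ_K\into\cN_K$ is $\cG^*$-equivariant: twisting a $K$-central divisor by $\gamma$ corresponds to the \emph{star}-action on $\cN_K$, not the ordinary Galois action. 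So $\Sigma_K$ and $\Phi_K$ are known to be $\cG^*$-stable, not $\cG$-stable, and the latter is a genuinely new piece of information that the paper records as a \emph{corollary} of the lemma, ``proved in the course of the proof.'' Assuming it here is circular.

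The paper closes this gap with a geometric argument you allude to but do not carry out. One picks $v\in\cZ_K$ with a normal model $(\Xq,D)$ and uses \cref{prop:flat} to identify the closure of the flat $Tx$ inside $\Xq$ with the torus embedding of $A_K(X)$ attached to the fan of rays $\QQ_{\ge0}wv$, $w\in W_K$. Twisting $\Xq$ by $\gamma$ and comparing fans gives $\gamma(W_Kv)=W_K(\gamma*v)$; together with $\cG^*$-normality of $W_K$ this yields, for every $v$, an element $\wq_\gamma\in W_K$ with $w_\gamma v=\wq_\gamma v$, which one then shows is independent of $v$. Only at that point is $w_\gamma|_{\Xi_K}$ known to lie in $W_K$, and only then does a short positivity/support argument pin it down in $W_K^0$. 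Your proposal stops short of this: the sentence ``the geometric realization of $W_K$ on the flat then identifies $w_\gamma|_{\Xi_K}$ with the corresponding element of $W_K^0$'' is the conclusion to be proved, not a step one can invoke. Similarly, your reduction of ``pointwise stabilizer of $\Xi_k$ in $W_K$ equals $W_K^0$'' addresses only a secondary issue and does not help place $w_\gamma|_{\Xi_K}$ inside $W_K$ to begin with.
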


\begin{proof}

  The first assertion follows from the fact that $\Xi_K$ is both
  $\gamma$- and $\gamma*$-stable.

  Next we show that there is an element $\wq_\gamma\in W_K$ with
  $w_\gamma v=\wq_\gamma v$ for all $v\in\cN_K$. For this, choose
  $v\in\cZ_K$ with model $(\Xq,D)$. Without loss of generality we may
  assume that $\Xq$ does not contain any other $G$-stable divisor
  $D'\ne D$ with $v_{D'}\in\cZ_K$. Consider the closure
  $\overline{Tx}$ of the flat $Tx$ in $\Xq$. Then \cref{prop:flat}
  implies that $\overline{Tx}$ equals the toroidal embedding
  $A_K(\cF)$ where $\cF$ is the fan consisting of $\{0\}$ and all rays
  $\QQ_{\ge0}wv$ with $w\in W_K$.  Now we twist $\Xq$ by an element
  $\gamma\in\cG$. Then the embedding
  $X\into{}^\gamma\Xq=X\cup{}^\gamma D$ corresponds to the valuation
  $\gamma*v\in\cZ_K$. Using the fact that $Tx$ is $\cG$-stable and
  applying \cref{prop:flat} to ${}^\gamma X$ we get
  \[
    \gamma\bigcup_w\QQ_{\ge0}wv=\bigcup_w\QQ_{\ge0}w(\gamma*v)
  \]
  Since $\cG$ preserves the primitive generators of the rays we see
  that the orbit of $v$ is mapped to the orbit of $\gamma*v$:
  \[\label{eq:orbitgamma}
    \gamma(W_Kv)=W_K(\gamma*v)
  \]
  Every $v\in\cN_K$ is $W_K$-conjugate to an element of $\cZ_K$, hence
  \eqref{eq:orbitgamma} holds for all $v\in\cN_K$.  Now using the fact
  that $W_K$ is normalized by $\cG^*$ we obtain
  \[
    w_\gamma W_Kv=(w_\gamma\gamma)(\gamma^{-1}W_Kv)
    =\gamma*W_K(\gamma^{-1}*v)=W_Kv
  \]
  Thus for every $v$ there is $\wq_\gamma\in W_K$ with
  $w_\gamma v=\wq_\gamma v$. The element $\wq_\gamma$ might depend on
  $v$. It is unique for $v$ in the interior $\cZ^\circ$ of $\cZ_K$ and
  there it depends continuously on $v$. Thus $\wq_\gamma$ is constant
  for $v\in\cZ^\circ$, hence constant everywhere by linearity.

  Because of ${}^\gamma\sigma=\wq_\gamma^{-1}\gamma*\sigma$ we see
  that $\Phi_K$ is $\cG$-stable and that $\wq_\gamma$ is the unique
  element of $W_K$ with
  \[
    \wq_\gamma\,{}^\gamma\Sigma_K=\Sigma_K
  \]
  We claim that $\wq_\gamma\in W_K^0$. Let $\Phi^0=W_K^0\Sigma_K^0$ be
  the subroot system generated by $\Sigma_K^0$. It follows from
  \cref{lem:nonnegative} that
  $\Phi^0=\{\sigma\in\Phi_K\mid\res_A\sigma=0\}$. Since $\res_A$ is
  $\cG$-invariant this implies that $\Phi_K^0$ is $\cG$-stable. Thus,
  for every $\gamma\in\cG$ there is a unique element
  $w_\gamma^0\in W_K^0$ with
  \[
    w_\gamma^0\ {}^\gamma\Sigma_K^0=\Sigma_K^0.
  \]
  For any non-compact $\sigma\in\Sigma_K$ let
  $\tilde\sigma:=w_\gamma^0\,{}^\gamma\sigma$. Since $\res_A$ is
  $\cG$- and $W_K^0$-invariant we get
  $\res_A\tilde\sigma=\res_A\sigma$. This and $\cZ_k\subseteq\cZ_K$
  imply that the spherical root $\tilde\sigma$ has at least one
  negative value on $\cZ_K$ and is therefore a positive
  root. Combined, this shows that $w_\gamma^0\ {}^\gamma\Sigma_K$
  consists entirely of positive roots and is therefore equal to
  $\Sigma_K$. Uniqueness implies $\wq_\gamma=w_\gamma^0\in W_K^0$.
\end{proof}

The following was proved in the course of the proof:

\begin{corollary}

  The root system $\Phi_K$ is a $\cG$-stable subset of\/ $\Xi_K$ and
  $W_K$ is normalized by $\cG$.

\end{corollary}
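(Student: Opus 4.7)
The plan is to harvest both claims from what has already been assembled in the proof of Lemma~\ref{lem:wgamma}. That lemma supplies the crucial bridge: for each $\gamma \in \cG$ the $*$-correction $w_\gamma \in W^0$ preserves the sublattice $\Xi_K \subseteq \Xi(T)$ and acts on it as an element $\wq_\gamma \in W_K^0 \subseteq W_K$. Combined with the $\cG^*$-stability of $\Sigma_K$ (and hence of $\Phi_K = W_K \Sigma_K$ and of $W_K$) noted in the paragraphs just before Lemma~\ref{lem:wgamma}, both claims should drop out by short formal calculations.

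For the first claim ($\Phi_K$ is $\cG$-stable in $\Xi_K$), I would use the identity ${}^\gamma\chi = w_\gamma^{-1}(\gamma*\chi)$, which is immediate from the definition of the $*$-action. If $\sigma \in \Phi_K$, then $\gamma*\sigma \in \Phi_K$ by $\cG^*$-stability of $\Phi_K$; applying $w_\gamma^{-1}$ keeps us inside $\Phi_K$ because $w_\gamma^{-1}$ acts on $\Xi_K$ as $\wq_\gamma^{-1} \in W_K$, and $W_K$ permutes $\Phi_K$ by construction. Hence ${}^\gamma\sigma \in \Phi_K$.

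For the second claim (that $\cG$ normalises $W_K$), take $w \in W_K$. A direct manipulation in $\Aut\Xi(T)$ using $\gamma* = w_\gamma\circ\gamma$ gives
\[
\gamma \, w \, \gamma^{-1} \;=\; w_\gamma^{-1}\bigl((\gamma*)\,w\,(\gamma*)^{-1}\bigr) w_\gamma.
\]
The inner factor $(\gamma*)\,w\,(\gamma*)^{-1}$ lies in $W_K$ by $\cG^*$-normality of $W_K$; since $w_\gamma$ restricted to $\Xi_K$ equals $\wq_\gamma \in W_K$, conjugation by it preserves $W_K$ as a subgroup of $\Aut\Xi_K$. Thus $\gamma\, w\, \gamma^{-1}$ acts on $\Xi_K$ as an element of $W_K$, which is the desired conclusion. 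The argument is entirely formal once Lemma~\ref{lem:wgamma} has been invoked, so I do not anticipate any genuine obstacle beyond what that lemma already settled.
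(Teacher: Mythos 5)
Your proposal is correct and reproduces the argument the paper has in mind. The paper explicitly flags that this corollary "was proved in the course of the proof" of Lemma~\ref{lem:wgamma}, and the two ingredients you use are exactly the ones established there: the identity ${}^\gamma\chi = w_\gamma^{-1}(\gamma*\chi)$ with $w_\gamma|_{\Xi_K}=\wq_\gamma\in W_K^0$, plus the $\cG^*$-stability of $\Sigma_K$, $\Phi_K$, $W_K$ noted just before the lemma. Your formal conjugation computation $\gamma\,w\,\gamma^{-1} = w_\gamma^{-1}\bigl((\gamma*)\,w\,(\gamma*)^{-1}\bigr)w_\gamma$ simply spells out what the paper leaves implicit.
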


We also get that the ordinary and the $*$-action of $\cG$ are the same
modulo compact roots:

\begin{corollary}\label{cor:tildeW}

  Let $\gamma\in\cG$ and $\chi\in\Xi_K$. Then
  ${}^\gamma\chi-\gamma*\chi\in\<\Sigma_K^0\>_\ZZ$.
\end{corollary}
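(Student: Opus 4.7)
The proof I have in mind is short and mostly bookkeeping, using the preceding lemma on $w_\gamma$.

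First I would rewrite the difference using the definition of the $*$-action:
\[
  {}^\gamma\chi - \gamma*\chi \;=\; {}^\gamma\chi - w_\gamma\,{}^\gamma\chi \;=\; (1-w_\gamma)\,{}^\gamma\chi.
\]
Since $\Xi_K$ is $\cG$-stable, ${}^\gamma\chi\in\Xi_K$. By \cref{lem:wgamma}, the restriction of $w_\gamma$ to $\Xi_K$ lies in $W_K^0$. Therefore it suffices to prove the general fact: for every $w\in W_K^0$ and every $\chi\in\Xi_K$,
\[\label{eq:toprove}
  (1-w)\chi\in\<\Sigma_K^0\>_\ZZ.
\]

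The main (and only) step is \eqref{eq:toprove}. I would reduce to simple reflections as follows. Since $\Xi_K$ is a weight lattice for $\Phi_K$, for each root $\sigma\in\Phi_K$ there is a coroot $\sigma^\vee$ with
\[
  (1-s_\sigma)\chi \;=\; \<\chi,\sigma^\vee\>\sigma \;\in\; \ZZ\sigma.
\]
The subgroup $W_K^0$ is generated by $\{s_\sigma\mid\sigma\in\Sigma_K^0\}$, and its root system $\Phi_K^0:=W_K^0\Sigma_K^0$ satisfies $\Phi_K^0\subseteq\<\Sigma_K^0\>_\ZZ$. Hence for every reflection $s_\sigma$ occurring in $W_K^0$, $(1-s_\sigma)\chi\in\<\Sigma_K^0\>_\ZZ$. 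For a general $w=s_{\sigma_1}\cdots s_{\sigma_n}\in W_K^0$ I apply the telescoping identity
\[
  1-w \;=\; \sum_{i=1}^{n}\,s_{\sigma_1}\cdots s_{\sigma_{i-1}}\,(1-s_{\sigma_i}),
\]
so that $(1-w)\chi$ is a $\ZZ$-linear combination of elements of the form $w'\bigl((1-s_{\sigma_i})\chi'\bigr)$ with $w'\in W_K^0$ and $\chi'\in\Xi_K$. Each inner factor lies in $\ZZ\sigma_i\subseteq\<\Sigma_K^0\>_\ZZ=\<\Phi_K^0\>_\ZZ$, and this lattice is $W_K^0$-stable since root lattices are Weyl-group-invariant. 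This proves \eqref{eq:toprove} and hence the corollary.

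There is really no serious obstacle: the only non-trivial ingredient is \cref{lem:wgamma}, which identifies $w_\gamma|_{\Xi_K}$ as an element of $W_K^0$; once that is in hand, the corollary is a formal consequence of $\Xi_K$ being a weight lattice for $\Phi_K$ and $\Phi_K^0$ being an integral subsystem.
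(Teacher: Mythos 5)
Your proof is correct and follows the same route as the paper: both rewrite ${}^\gamma\chi-\gamma*\chi$ as $(1-w_\gamma)\,{}^\gamma\chi$, invoke \cref{lem:wgamma} to identify $w_\gamma|_{\Xi_K}$ with an element of $W_K^0$, and conclude. The paper simply asserts $(1-\wq_\gamma)({}^\gamma\chi)\in\<\Sigma_K^0\>_\ZZ$ as self-evident; your telescoping argument via the weight-lattice property of $\Xi_K$ and $W_K^0$-invariance of the root lattice $\<\Sigma_K^0\>_\ZZ$ is exactly the implicit justification.
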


\begin{proof}
  Because of $\wq_\gamma\in W_K^0$ we have
  \[
    {}^\gamma\chi-\gamma*\chi=
    (1-\wq_\gamma)({}^\gamma\chi)\in\<\Sigma_K^0\>_\ZZ\qedhere\hfill\qed
  \]
\end{proof}

As a consequence we get that $\cN_k$ can be computed from $\Sigma_K^0$
and the $\cG^*$-action on $\Xi_K$:

\begin{corollary}\label{cor:compN}

  The subspace $\cN_k$ of $\cN_K$ is defined by the equations
  \[\label{eq:equations}
    \{\sigma=0\mid\sigma\in\Sigma_K^0\}\cup
    \{\chi-\gamma*\chi=0\mid\chi\in\Xi_K,\ \gamma\in\cG\}.
  \]

\end{corollary}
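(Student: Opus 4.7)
The plan is to dualize the statement and reduce to a computation in $\Xi_K \otimes \QQ$. Since $\cN_k$ sits in $\cN_K$ precisely as the annihilator of the kernel $L := \ker(\res_A : \Xi_K \to \Xi_k)$, what needs to be shown is that the $\QQ$-subspace
$$V := \langle \Sigma_K^0 \rangle_\QQ + \langle \chi - \gamma * \chi : \chi \in \Xi_K,\ \gamma \in \cG \rangle_\QQ$$
coincides with $L \otimes \QQ$.

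The inclusion $V \subseteq L \otimes \QQ$ comes for free: elements of $\Sigma_K^0$ lie in $L$ by \eqref{eq:Sigma0}; and since $\cG$ acts trivially on the character lattice $\Xi(A)$ of the split torus, while $w_\gamma \in W^0$ likewise acts trivially on $\Xi(A)$ (because $A$ is central in $MA$, whose Weyl group is $W^0$), one has $\res_A(\gamma * \chi) = \res_A({}^\gamma \chi) = \res_A(\chi)$.

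For the reverse inclusion, my plan is first to describe $L \otimes \QQ$ using the ordinary $\cG$-action as
$$L \otimes \QQ = \langle \chi - {}^\gamma \chi : \chi \in \Xi_K,\ \gamma \in \cG \rangle_\QQ,$$
and then to swap ${}^\gamma \chi$ for $\gamma * \chi$ via \cref{cor:tildeW}, which provides ${}^\gamma \chi - \gamma * \chi \in \langle \Sigma_K^0 \rangle_\ZZ$ and hence
$$\chi - {}^\gamma \chi = (\chi - \gamma * \chi) - ({}^\gamma \chi - \gamma * \chi) \in V.$$

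The hard part will be establishing the above presentation of $L \otimes \QQ$. Since $\cG$ acts on $\Xi_K$ through a finite quotient $\cG'$, I would use the semisimplicity of $\QQ[\cG']$ to split $\Xi_K \otimes \QQ = (\Xi_K \otimes \QQ)^{\cG'} \oplus \langle \chi - {}^\gamma \chi \rangle_\QQ$, and then identify $\res_A$ on $\Xi_K \otimes \QQ$ with the coinvariants projection (realized by averaging $\frac{1}{|\cG'|}\sum_\gamma \gamma$), using the standard fact $\Xi(A) \otimes \QQ = (\Xi(T) \otimes \QQ)_{\cG'}$ from the theory of tori. The subtlety is that $\Xi_K$ need not be a $\cG$-direct summand of $\Xi(T)$, so the kernel of $\res_A|_{\Xi_K \otimes \QQ}$ cannot be read off by naively intersecting the kernel on $\Xi(T) \otimes \QQ$ with $\Xi_K \otimes \QQ$; it is precisely the semisimplicity over $\QQ$ which ensures that coinvariants behave exactly on the short exact sequence $0 \to \Xi_K \otimes \QQ \to \Xi(T) \otimes \QQ \to (\Xi(T)/\Xi_K) \otimes \QQ \to 0$, and hence makes the desired characterization drop out cleanly.
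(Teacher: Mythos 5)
Your proof is correct and is essentially the dual formulation of the paper's own argument: both hinge on the identification $\cN_k = \cN_K^\cG$ (equivalently, that $\ker(\res_A|_{\Xi_K\otimes\QQ})$ is the augmentation submodule $\langle\chi-{}^\gamma\chi\rangle_\QQ$) followed by trading ${}^\gamma\chi$ for $\gamma*\chi$ via \cref{cor:tildeW}. The paper simply invokes $\cN_k=\cN_K^\cG$ as known (it follows from \cref{cor:NkNK} together with $\QQ$-linear averaging), so your more careful derivation via exactness of coinvariants over $\QQ$ is a matter of supplying detail rather than a genuinely different route.
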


\begin{proof}

  Because of $\cN_k=\cN_K^\cG$, the space $\cN_k$ is defined by the
  equations $\chi-{}^\gamma\chi=0$ with $\chi\in\Xi_K$,
  $\gamma\in\cG$. Moreover, all equations $\sigma=0$ for
  $\sigma\in\Sigma_K^0$ hold on $\cN_k$ by definition. Modulo those
  equations, we have
  \[
    \chi-{}^\gamma\chi=\chi-\gamma*\chi
  \]
  proving the assertion.
\end{proof}

Observe that the weight lattice $\Xi_k$ can also be computed since it
is the image of $\Xi_K$ in $\cN_k^*$. The equations for $\cN_k$ can be
made a bit more precise. We start with unique $W_k$-stable
decomposition $\cN_k=\cN_k^0\oplus\cN_k^1$ with
$\cN_k^0:=\cN_k^{W_k}=\cZ_k^0$ and such that $\Sigma_k$ forms a basis
of $(\cN_k^1)^*$. Then
\[
  \cN_k^0=(\cN_K^0)^{\cG}=(\cN_K^0)^{\cG^*}.
\]
Moreover, it suffices to let $\chi$ in \eqref{eq:equations} run
through a set of generators. Thus, the subspace
$\cN_k^1\subseteq\cN_K^1$ is defined by the equations $\sigma=0$ for
all compact spherical roots and $\sigma-\tau=0$ for all pairs of
non-compact spherical roots which are in the same $\cG^*$-orbit. This
shows:

\begin{corollary}\label{lem:linindep}

  The set $\Sigma_k\subseteq\Xi_k$ is linearly independent. Moreover,
  $\osigma=\otau\ne0$ for $\sigma,\tau\in\Sigma_K$ if and only if
  $\sigma$ and $\tau$ are in the same $\cG^*$-orbit.

\end{corollary}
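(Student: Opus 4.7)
The plan is to read off both assertions from the explicit description of $\cN_k^1 \subseteq \cN_K^1$ given in the paragraph preceding the corollary, which in turn rests on \cref{cor:compN} and \cref{cor:tildeW}. Recall that $\Sigma_K$ is a basis of the dual space $(\cN_K^1)^*$. I would therefore think of each $\sigma \in \Sigma_K$ as one of the coordinate functions on $\cN_K^1$, and interpret the listed defining equations for $\cN_k^1$ in those coordinates.

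First I would set up the dimension count. Let $r$ be the number of $\cG^*$-orbits on $\Sigma_K \setminus \Sigma_K^0$. The defining equations of $\cN_k^1$ inside $\cN_K^1$ consist of the $|\Sigma_K^0|$ equations $\sigma=0$ for $\sigma \in \Sigma_K^0$, together with, for each non-compact $\cG^*$-orbit of size $m$, a choice of $m-1$ equations $\sigma = \tau$ that collapse this orbit to a single coordinate. Because the $\sigma \in \Sigma_K$ are basis vectors of $(\cN_K^1)^*$, these equations are visibly linearly independent, so
\[
  \dim \cN_k^1 \;=\; |\Sigma_K| \;-\; |\Sigma_K^0| \;-\; \sum_{i=1}^r (m_i - 1) \;=\; r.
\]

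Next I would analyze $\Sigma_k$. Every $\sigma \in \Sigma_K$ restricts to a linear functional on $\cN_k \supseteq \cN_k^1$; the compact ones restrict to zero, and two non-compact roots $\sigma, \tau$ restrict to the same functional on $\cN_k^1$ precisely when $\sigma - \tau$ lies in the $\QQ$-span of the defining equations above. By inspection (again using that $\Sigma_K$ is a basis) this happens if and only if $\sigma$ and $\tau$ lie in the same $\cG^*$-orbit. This immediately gives the second statement of the corollary: $\osigma = \otau \ne 0$ iff $\sigma$ and $\tau$ are $\cG^*$-conjugate. In particular, $|\Sigma_k|$ equals the number $r$ of non-compact $\cG^*$-orbits.

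Finally, since $\Sigma_K$ spans $(\cN_K^1)^*$ and the compact restrictions are zero, $\Sigma_k$ spans $(\cN_k^1)^*$; together with $|\Sigma_k| = r = \dim \cN_k^1$ this forces $\Sigma_k$ to be a basis, hence in particular linearly independent in $\Xi_k$. The only delicate point — and the one I expect to demand the most care — is verifying that the equations in \cref{cor:compN}, after restriction to the basis $\Sigma_K$ of $(\cN_K^1)^*$, really are captured by the clean list ``$\sigma = 0$ on $\Sigma_K^0$, and $\sigma = \tau$ on each $\cG^*$-orbit''; this is exactly what \cref{cor:tildeW} (i.e., ${}^\gamma\chi - \gamma * \chi \in \langle \Sigma_K^0\rangle_\ZZ$) is designed to supply, and once it is invoked the dimension count above goes through directly.
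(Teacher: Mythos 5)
Your proof is correct and it follows the same route the paper intends: the corollary is stated as the direct upshot of the explicit description of $\cN_k^1\subseteq\cN_K^1$ derived in the preceding paragraph (from \cref{cor:compN} and \cref{cor:tildeW}), and your dimension count together with the coordinate-by-coordinate analysis in the basis $\Sigma_K$ of $(\cN_K^1)^*$ is exactly the argument the paper compresses into ``This shows:''.
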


From this, we obtain:

\begin{corollary}\label{cor:SSprim}

  There is a map $\Sigma_k\prim\to\ZZ_{>0}:\sigma\mapsto n_\sigma$
  such that $\Sigma_k=\{n_\sigma\sigma\mid\sigma\in\Sigma_k\prim\}$.

\end{corollary}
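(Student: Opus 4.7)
The plan is to realize both $\Sigma_k\prim$ and $\Sigma_k$ as linearly independent bases of $(\cN_k^1)^*$ that each cut out the valuation cone $\cZ_k$ as a system of inequalities, and then invoke uniqueness of such ``simplicial'' defining sets up to positive rescaling.

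First, I would produce two descriptions of $\cZ_k$. On the one hand, the definition of $\Sigma_k\prim$ as the primitive facet normals gives
\[
\cZ_k=\{v\in\cN_k\mid\sigma(v)\le 0\text{ for all }\sigma\in\Sigma_k\prim\}.
\]
On the other hand, combining $\cZ_k=\cZ_K\cap\cN_k$ from \eqref{eq:diagXi22} with the analogous description of $\cZ_K$ in terms of $\Sigma_K$, and using that $\osigma=0$ for $\sigma\in\Sigma_K^0$, I obtain
\[
\cZ_k=\{v\in\cN_k\mid\tau(v)\le 0\text{ for all }\tau\in\Sigma_k\}.
\]
Passing to the quotient $\cN_k^1=\cN_k/\cN_k^0$, where $\cZ_k$ becomes strictly convex and full-dimensional, both sets $\Sigma_k\prim$ and $\Sigma_k$ sit in $(\cN_k^1)^*$. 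The former is a basis by cosimpliciality of $\cZ_k$ (\cref{cor:cosimplicial}), and the latter is a basis by the paragraph preceding \cref{lem:linindep}. In particular both sets have cardinality $\dim\cN_k^1$.

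The key step is then to pass to dual cones in $(\cN_k^1)^*$. Since $\cZ_k/\cN_k^0$ is a strictly convex, full-dimensional cone cut out by linearly independent bases in two different ways, the dual cone $-\cZ_k^\vee$ is the positive span of $\Sigma_k\prim$ \emph{and} the positive span of $\Sigma_k$. This dual cone is simplicial with extremal rays generated uniquely by $\Sigma_k\prim$. Writing each $\tau\in\Sigma_k$ as a nonnegative combination $\tau=\sum_\sigma a_{\tau\sigma}\sigma$ and, conversely, each $\sigma\in\Sigma_k\prim$ as a nonnegative combination of the $\tau$'s, the change-of-basis matrix is an invertible square matrix with nonnegative entries whose inverse is also nonnegative. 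An elementary argument shows that such a matrix is necessarily a generalized permutation matrix, i.e., a permutation composed with a positive diagonal. This yields a bijection $\Sigma_k\to\Sigma_k\prim$ under which each $\tau\in\Sigma_k$ is a positive rational multiple of its image $\sigma\in\Sigma_k\prim$.

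Finally, since both $\tau$ and $\sigma$ belong to $\Xi_k$ and $\sigma$ is by definition primitive in $\Xi_k$, the positive rational scalar $n_\sigma$ relating $\tau=n_\sigma\sigma$ must be a positive integer, and the correspondence $\sigma\mapsto n_\sigma$ is the desired map. No serious obstacle arises; the proof is essentially a bookkeeping argument in polyhedral cone duality, with all the substantive content already encoded in \cref{lem:linindep}, \cref{cor:cosimplicial}, and the identification of $\Sigma_k$ as a basis of $(\cN_k^1)^*$.
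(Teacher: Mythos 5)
Your argument is correct and follows essentially the same route as the paper's proof. Both start from $\cZ_k=\cZ_K\cap\cN_k$ (equation \eqref{eq:diagXi22}) to obtain that $\cZ_k$ is cut out by the inequalities $\tau\le0$ with $\tau\in\Sigma_k$, invoke the linear independence of $\Sigma_k$ (\cref{lem:linindep}) to conclude that this is a minimal defining system so that each $\tau$ is a positive multiple of the primitive outward normal of the corresponding facet, and finish with the integrality observation that since $\tau\in\Xi_k$ and the primitive normal is primitive in $\Xi_k$, the scalar is a positive integer. The only difference is one of presentation: the paper passes directly from ``minimal, linearly independent defining inequalities'' to ``one per facet,'' whereas you rephrase this via dual simplicial cones and a change-of-basis matrix that is nonnegative with nonnegative inverse, hence a generalized permutation matrix. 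That detour is correct but heavier than needed; once both $\Sigma_k\prim$ and $\Sigma_k$ are recognized as linearly independent generating sets of the same simplicial dual cone, uniqueness of its extremal rays already gives the bijection with positive scalars.
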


\begin{proof}

  It follows from \eqref{eq:diagXi22} that $\cZ_k$ is also defined by
  the inequalities $\osigma\le0$ with $\sigma\in\Sigma_K$. Since they
  are linearly independent, they form a minimal set of
  inequalities. Consequently, every $\osigma$ is an integral multiple
  of an element of $\Sigma_k\prim$.
\end{proof}

Since elements of $\Sigma_K$ and $\Sigma_k$ correspond to facets of
$\cZ_K$ and $\cZ_k$, respectively, we obtain:

\begin{corollary}\label{lem:facet}

  Let $\cF\subseteq\cZ_K$ be a facet and
  $\overline\cF:=\cF\cap\cZ_k$. Then either $\overline\cF=\cZ_k$ or
  $\overline\cF$ is a facet of $\cZ_k$.
\end{corollary}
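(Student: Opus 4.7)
The plan is to reduce the statement to a case analysis on the primitive $K$-spherical root defining the facet $\cF$. By definition of $\Sigma_K=\Sigma_K\prim$, facets of $\cZ_K$ are in bijection with elements of $\Sigma_K$, so write $\cF=\cZ_K\cap\{\sigma=0\}$ for a unique $\sigma\in\Sigma_K$. Using \eqref{eq:diagXi22}, i.e.\ $\cZ_k=\cZ_K\cap\cN_k$, we have
\[
\overline\cF=\cF\cap\cZ_k=\cZ_k\cap\{\sigma=0\}.
\]

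First I would dispose of the case $\sigma\in\Sigma_K^0$. By \cref{cor:compN} the equations $\{\sigma=0\mid\sigma\in\Sigma_K^0\}$ cut out $\cN_k$ inside $\cN_K$, so $\sigma$ vanishes identically on $\cN_k$. Hence $\cZ_k\subseteq\{\sigma=0\}$ and $\overline\cF=\cZ_k$, giving the first alternative.

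In the remaining case $\sigma\notin\Sigma_K^0$, the restriction $\osigma=\res_A\sigma$ is non-zero and so lies in $\Sigma_k=\res_A'\Sigma_K$. By \cref{cor:SSprim}, there is $\tau\in\Sigma_k\prim$ and $n\in\ZZ_{>0}$ with $\osigma=n\tau$, so the hyperplane $\{\sigma=0\}\cap\cN_k$ coincides with $\{\tau=0\}\subseteq\cN_k$. Therefore
\[
\overline\cF=\cZ_k\cap\{\tau=0\},
\]
which is a facet of $\cZ_k$ by the very definition of $\Sigma_k\prim$ as the set of primitive outward normals to facets of $\cZ_k$. The only mildly subtle point is the linear-independence/primitivity input behind \cref{cor:SSprim}, which guarantees that the hyperplane cut out by $\osigma$ is genuinely a facet-defining one rather than a deeper face; once that is in hand, the corollary is immediate.
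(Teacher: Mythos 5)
Your proof is correct and follows essentially the same route the paper takes: the paper derives this corollary directly from the correspondence between spherical roots and facets established via \eqref{eq:diagXi22} and \cref{cor:SSprim}, which is exactly the mechanism you spell out. Your version is a bit more explicit about the case split between $\sigma\in\Sigma_K^0$ (giving $\overline\cF=\cZ_k$) and $\sigma\notin\Sigma_K^0$ (giving a facet via $\osigma=n_\sigma\tau$ for $\tau\in\Sigma_k\prim$), but these are the same ingredients the paper is implicitly invoking.
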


A formula for the fundamental coweights due to Borel-Tits
\cite{BorelTits}*{Thm.\ 6.13} also generalizes almost verbatim. Let
$\omega_\sigma^\vee\in\cN_k^1$ be the dual basis of $\Sigma_k$ such
that
\[
  \cZ_k=\cN_k^0+
  \sum_{\sigma\in\Sigma_k}\QQ_{\le0}\,\omega_\sigma^\vee.
\]

\begin{corollary}

  For any non-compact $\sigma\in\Sigma_K$ the following formula holds:
  \[
    \omega_\osigma^\vee=\sum_{\tau\in\cG*\sigma}
    \omega_\tau^\vee
  \]

\end{corollary}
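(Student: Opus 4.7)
The plan is to identify $\omega_{\osigma}^\vee$ via its defining property as the unique element of $\cN_k^1$ with $\<\bar\pi,\omega_{\osigma}^\vee\>=\delta_{\bar\pi,\osigma}$ for every $\bar\pi\in\Sigma_k$, and show that $v:=\sum_{\tau\in\cG*\sigma}\omega_\tau^\vee$ meets both conditions.

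First I would establish that $v\in\cN_k^1$. Split $\cN_K^1=\cN_K^{1,\mathrm{nc}}\oplus\cN_K^{1,\mathrm{c}}$ into the spans of the coweights $\omega_\tau^\vee$ attached to non-compact and compact spherical roots, respectively. By \cref{cor:compN}, every element of $\cN_k$ annihilates $\Sigma_K^0$, so $\cN_k\subseteq\cN_K^0\oplus\cN_K^{1,\mathrm{nc}}$, and on this subspace the $\cG$- and $\cG^*$-actions coincide (the dual of \cref{cor:tildeW}). Both summands are $\cG^*$-stable: $\cN_K^0=\cN_K^{W_K}$ because $\cG^*$ normalizes $W_K$, and $\cN_K^{1,\mathrm{nc}}$ because $\cG^*$ permutes the non-compact spherical roots. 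Consequently
\[
\cN_k=(\cN_K^0\oplus\cN_K^{1,\mathrm{nc}})^{\cG}=\cN_k^0\oplus(\cN_K^{1,\mathrm{nc}})^{\cG^*},
\]
where I use $(\cN_K^0)^{\cG^*}=\cN_k\cap\cN_K^0=\cZ_k^0=\cN_k^0$ via \eqref{eq:diagXi22}. To see that the second summand is $W_k$-stable, take $w\in N_{W_K}(\cN_k)$ and $u\in(\cN_K^{1,\mathrm{nc}})^{\cG^*}$: then $wu\in\cN_k\cap\cN_K^1\subseteq\cN_K^{1,\mathrm{nc}}$, and $wu\in\cN_k$ is $\cG$- hence $\cG^*$-invariant. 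Uniqueness of the $W_k$-stable complement of $\cN_k^0$ in $\cN_k$ then forces $(\cN_K^{1,\mathrm{nc}})^{\cG^*}=\cN_k^1$. Since $v$ is a $\cG^*$-orbit sum of coweights for non-compact roots (the $\cG^*$-orbit of a non-compact root stays non-compact), we conclude $v\in\cN_k^1$.

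For the pairings, let $\bar\pi\in\Sigma_k$ with a chosen non-compact lift $\pi\in\Sigma_K$. Compatibility of the pairings $\Xi_K\times\cN_K\to\QQ$ and $\Xi_k\times\cN_k\to\QQ$ via restriction to $A$ yields
\[
\<\bar\pi,v\>=\sum_{\tau\in\cG*\sigma}\<\pi,\omega_\tau^\vee\>=\sum_{\tau\in\cG*\sigma}\delta_{\pi,\tau},
\]
which equals $1$ if $\pi\in\cG*\sigma$ and $0$ otherwise. By \cref{lem:linindep}, $\pi\in\cG*\sigma$ is equivalent to $\bar\pi=\osigma$, so $\<\bar\pi,v\>=\delta_{\bar\pi,\osigma}$ and therefore $v=\omega_{\osigma}^\vee$.

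I do not foresee a serious obstacle: the formula is bookkeeping for how the Weyl chamber decompositions over $k$ and $K$ fit together. The only technical point is the identification $\cN_k^1=(\cN_K^{1,\mathrm{nc}})^{\cG^*}$, for which Corollaries \ref{cor:compN} and \ref{cor:tildeW} align the $\cG$- and $\cG^*$-actions on the relevant subspace, after which the pairing computation is immediate from \cref{lem:linindep}.
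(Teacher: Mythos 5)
The paper supplies no proof of this corollary: it simply remarks that the Borel--Tits formula for fundamental coweights (\cite{BorelTits}*{Thm.~6.13}) ``generalizes almost verbatim,'' and your write-up is a correct rendering of that transposition. The key technical point, as you identify, is that $\cN_k^1$ equals the $\cG^*$-invariants of the span of the non-compact $K$-coweights; your derivation of this from \cref{cor:compN} (so $\cN_k$ annihilates $\Sigma_K^0$), \cref{cor:tildeW} (so $\cG$ and $\cG^*$ agree there), \eqref{eq:diagXi22} (to pin down $\cN_k^0$), and the uniqueness of the $W_k$-stable complement is sound, and the dual-basis computation via \cref{lem:linindep} then finishes it. No gaps.
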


Now we proceed with the construction of a root system. For this, we
define
\[
  \Phi_k:=W_k\Sigma_k=W_k\res_A'\Sigma_K.
\]
generalizing \eqref{eq:PhiKdef}. One can also consider the restriction
of the $K$-root system
\[
  \Phi_k^{\rm res}:=\res_A'\Phi_K=\res_A'W_K\Sigma_K
\]
but this is, in general, not a root system:

\begin{example}\label{rem:g2index}

  Let $k=\RR$ and $G=Sp(4,2)$. Then $\rk_\RR G=1$ with
  $S^0=\{\alpha_1,\alpha_3\}$. Let $H\subseteq G$ be a real form of
  item $4'$ of \cite{Wasserman}*{Table~C}. More precisely, let
  $N'\cong \G_a^3$ be the commutator subgroup of $N$. Then
  $H:=MAN'\subseteq G$. Put $X=G/H$. From Wasserman's table we get
  \[
    \Sigma_\CC=\{\sigma_1:=\alpha_1+\alpha_3,\sigma_2:=\alpha_2\}.
  \]
  Thus, $\Sigma_\CC$ generates a root system of type $\sG_2$ with the
  long root $\sigma_1$ being compact. In particular
  $\sigma_1+n\sigma_2\in\Phi_\CC$ for $n=1,2,3$. Therefore
  $\Sigma_\RR=\{\osigma_2\}$ but
  \[
    \Phi_\RR^{\res}=\{\pm\osigma_2,\pm2\osigma_2,\pm3\osigma_2\}
  \]
  which is not a root system.

\end{example}

Nevertheless we have:

\begin{theorem}\label{thm:PhiIndiv}

  Let $X$ be a $k$-dense $G$-variety. Then

  \begin{enumerate}

  \item\label{it:PhiIndiv1} $(\Phi_k,\Xi_k)$ is an integral root
    system. Its Weyl group is $W_k$ and $\Sigma_k$ is a system of
    simple roots.

  \item\label{it:PhiIndiv3} $n_\sigma\in\{1,2\}$ for all
    $\sigma\in\Sigma_k\prim$ (see \cref{cor:SSprim}).

  \item\label{it:PhiIndiv2} $\Phi_k$ consists precisely of the
    indivisible elements of $\Phi_k^{\rm res}$, i.e., those
    $\sigma\in\Phi_k^{\rm res}$ such that $\frac1n\sigma\not\in\Phi_k$
    for any $n\in\ZZ_{\ge2}$.

  \end{enumerate}

\end{theorem}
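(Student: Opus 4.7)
The plan is to extend Borel-Tits' construction of the restricted root system of a reductive group to the present spherical setting. The inputs I will use are: the reduced root system $(\Phi_K, \Sigma_K, \Xi_K)$ together with its $\cG^*$-action which permutes $\Sigma_K$ and preserves the compact subset $\Sigma_K^0$ (\cref{lem:wgamma}); the $\cG^*$-invariant surjection $\res_A: \Xi_K \auf \Xi_k$, which identifies $\cG^*$-orbits of non-compact elements of $\Sigma_K$ with the elements of $\Sigma_k$ (\cref{lem:linindep}); the already-established reduced root system $\Phi_k\prim = W_k \Sigma_k\prim$ with weight lattice $\Xi_k$ and Weyl group $W_k$; and the scaling relation $\Sigma_k = \{n_\sigma \sigma : \sigma \in \Sigma_k\prim\}$ from \cref{cor:SSprim}.

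I would begin with part (iii). The inclusion $\Phi_k \subseteq \Phi_k^{\rm res}$ is direct: since $W_k = N(\cN_k)/C(\cN_k)$, every $w \in W_k$ lifts to some $w_K \in W_K$ stabilizing $\cN_k$, so $w \cdot \res_A \alpha = \res_A(w_K \alpha) \in \Phi_k^{\rm res}$ for $\alpha \in \Sigma_K \setminus \Sigma_K^0$. That every element of $\Phi_k$ is indivisible (in the sense of the statement) reduces via the $W_k$-action to showing that no element of $\Sigma_k$ is a proper integer multiple of another root of $\Phi_k$; this is a Euclidean-length comparison inside the reduced system $\Phi_k\prim$ once one notes that $s_\sigma = s_{\sigma\prim}$ for $\sigma = n_\sigma\sigma\prim$. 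For the reverse containment I employ the Borel-Tits recipe: given an indivisible $\sigma \in \Phi_k^{\rm res}$, pick a lift $\sigma_K \in \Phi_K$, let $O$ be its $\cG^*$-orbit, and form $w_\sigma \in W_K$ as the longest element of the Weyl group of the sub-root system of $\Phi_K$ spanned by $O$ and $\Sigma_K^0$, multiplied by the inverse of the longest element of $W_K^0$. Then $w_\sigma$ is $\cG^*$-invariant, preserves $\cN_k$, and restricts on $\cN_k$ to the reflection across the hyperplane $\sigma = 0$; this reflection therefore lies in $W_k$, and indivisibility pins $\sigma$ down in $W_k \Sigma_k = \Phi_k$.

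For (ii), the primitive coroot $(\sigma\prim)^\vee \in \Xi_k^*$ of $\sigma\prim \in \Sigma_k\prim$ is integer-valued on $\Xi_k$ (since $\Phi_k\prim$ is a reduced root system with weight lattice $\Xi_k$) and takes the value $2$ at $\sigma\prim$, so its image in $\ZZ$ is a subgroup $d\ZZ$ with $d \in \{1,2\}$. For $n_\sigma \sigma\prim \in \Sigma_k \subseteq \Phi_k$ to admit an integer coroot on $\Xi_k$, the rescaled functional $(\sigma\prim)^\vee / n_\sigma$ must take values in $\ZZ$, forcing $n_\sigma \mid d$ and hence $n_\sigma \in \{1,2\}$. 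The integrality of this rescaled functional is itself supplied by the Borel-Tits construction of (iii): the element $w_\sigma$ acts integrally on $\Xi_K$ and descends to an integer reflection of $\Xi_k$ with root $n_\sigma \sigma\prim$, uniquely determining an integer coroot.

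Part (i) now assembles from the rest: $\Phi_k = W_k \Sigma_k$ is $W_k$-stable by construction; $\Sigma_k$ is linearly independent and its elements index the walls of $\cZ_k$, so $\Sigma_k$ is a simple basis with $\cZ_k$ as antidominant Weyl chamber; $\Xi_k$ is preserved by the reflections $s_\sigma = s_{\sigma\prim}$ which come from the reduced primitive system; and integrality of all coroots on $\Xi_k$ follows from (ii). I expect the main obstacle to be the Borel-Tits construction in the second paragraph: verifying that $w_\sigma$ really descends to a single reflection on $\cN_k$ requires analyzing the type of the sub-root system of $\Phi_K$ spanned by $O$ and $\Sigma_K^0$ together with the folding action of $\cG^*$ and $W_K^0$, distinguishing the cases where $n_\sigma = 2$ (arising when $\sigma_K + \gamma * \sigma_K \in \Phi_K$ for some $\gamma \in \cG$, i.e., folded $A_{2m}$-type subdiagrams) from the cases where $n_\sigma = 1$.
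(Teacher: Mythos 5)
Your proposal attacks the hard direction of part~(iii) — that every indivisible element of $\Phi_k^{\rm res}$ lies in $\Phi_k$ — by a Borel--Tits recipe: take a $\cG^*$-orbit $O$ of a lift $\sigma_K\in\Phi_K$, and set $w_\sigma$ to be the longest element of the Weyl group of the subsystem generated by $O\cup\Sigma_K^0$, composed with $w_0(W_K^0)^{-1}$, then claim $w_\sigma$ preserves $\cN_k$ and restricts there to $s_{\osigma}$. You acknowledge yourself that verifying this is ``the main obstacle'' and would require a type-by-type analysis of the folded subsystems. This is a genuine, unfilled gap, and it sits at the load-bearing step: your parts~(ii) and~(i) are both keyed to the output of this construction (you invoke it both to get integrality of the rescaled coroot in~(ii) and, via~(ii), for the weight-lattice claim in~(i)), so the gap propagates through the whole argument.

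The paper avoids the issue entirely. For the reverse inclusion it never constructs the reflection; instead, given an indivisible $\osigma=\res_A\sigma$ with $\sigma\in\Phi_K$, it picks a generic point $a$ on the hyperplane $\{\osigma=0\}\subseteq\cN_k$, moves $a$ into $\cZ_k$ by some $\wq\in W_k$, lifts $\wq$ to $w\in N(\cN_k)\subseteq W_K$, and observes that $\wq\{\osigma=0\}\cap\cZ_k$ is both codimension one and cut out by the wall $\{w\sigma=0\}\cap\cZ_K$, hence is a facet of $\cZ_k$. So $\wq\osigma$ is proportional to the corresponding $\otau\in\Sigma_k$, and indivisibility forces equality. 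This uses only that $\cZ_k$ is a fundamental domain for $W_k$ (already established in \S\ref{sec:WeylGroup}) and \cref{lem:facet}; no case analysis is needed. This matters here because, as \cref{rem:g2index} shows, the spherical index $(\Sigma_K,\Sigma_K^0)$ can exhibit configurations (a $\sG_2$ diagram with one compact root) that do not occur for reductive groups, so it is not automatic that the folded-subdiagram classification underlying the classical Borel--Tits argument transports verbatim.

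Two smaller points. For the easy inclusion and for $n_\sigma\in\{1,2\}$, the paper's route is shorter than yours: it proves~(i) first by noting $\Sigma_k$ consists of primitive vectors in the root lattice $R_k=\res_A\langle\Sigma_K\rangle$ (\cref{lem:linindep}) and that $\Xi_k/R_k$ is a quotient of $\Xi_K/R_K$ on which $W_K$ acts trivially; then~(ii) is the one-line computation $\tfrac{2}{n_\sigma}=\langle\sigma,\sigmaS^\vee\rangle\in\ZZ$ with $\sigmaS=n_\sigma\sigma\in\Sigma_k$; then primitivity in $R_k$ gives indivisibility directly, without any Euclidean-length comparison. Your order (iii)$\to$(ii)$\to$(i) is workable in principle but makes the argument longer and, as noted, funnels everything through the unproved step. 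I would recommend adopting the paper's ordering and replacing the Borel--Tits construction with the Weyl-chamber argument, which is both shorter and avoids the case analysis you flagged.
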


\begin{proof}

  \ref{it:PhiIndiv1} Let $R_K:=\<\Sigma_K\>$ and
  $R_k:=\res_A R_K=\<\Sigma_K\>$ be the root lattices. Since $R_K$ is
  $W_K$-stable and therefore $N(\cN_k)$-stable, we see that $R_k$ is
  $W_k$-stable. Since the elements of $\Sigma_k$ are primitive in
  $R_k$ by \cref{lem:linindep}, we conclude that $\Phi_k$ is a root
  system for $W_k$ and that $\Sigma_k$ is a set of simple roots.

  It remains to be shown that $\Xi_k$ is a set of integral weights for
  $\Phi_k$. Since $\Xi_k$ is $W_k$-stable this means that $W_k$ acts
  trivially on $\Xi_k/R_k$. Clearly, that holds over $K$. The
  assertion now follows from the fact that $\Xi_k/R_k$ is an quotient
  of $\Xi_K/R_K$.

  \ref{it:PhiIndiv3} Let $\sigma\in\Sigma_k\prim$ with
  $\sigmaS=n_\sigma\sigma\in\Sigma_k$. Since $\sigma\in\Xi_k$ it
  follows from \ref{it:PhiIndiv1} that
  \[
    \textstyle{\frac2{n_\sigma}}=
    \<\textstyle{\frac1{n_\sigma}}\sigmaS,\sigmaS^\vee\>=
    \<\sigma,\sigmaS^\vee\>\in\ZZ.
  \]

  \ref{it:PhiIndiv2} Clearly
  $\Phi_k\subseteq\Phi_k^{\rm res}\subseteq\res_A R_K=R_k$.  Since all
  elements of $\Phi_k$ are primitive in $R_k$ they are all indivisible
  in $\Phi_k^{\rm res}$. Conversely, let $\sigma\in\Phi_K$ such that
  $\osigma=\res_A\sigma\in\Phi_k^{\rm res}$ is indivisible. Let
  $\cH=\{\osigma=0\}\subseteq\cN_k$ be the hyperplane defined by
  $\osigma$, choose $a\in\cH$ off any other reflection hyperplane, and
  an element $\wq\in W_k$ with $\wq a\in\cZ_k$. Then
  $\cF:=\wq\cH\cap\cZ_k$ is of codimension $1$ in $\cZ_k$. Now lift
  $\wq$ to an element $w\in N(\cN_k)$. Then $\cF$ is the intersection
  of $\cZ_k$ with $\cZ_K\cap\{w\sigma=0\}$. The latter is a face of
  $\cZ_K$, hence $\cF$ is a facet of $\cZ_k$. Let $\otau\in\Sigma_k$
  correspond to $\cF$. Then $\wq\osigma$ is a multiple and therefore
  equal to $\otau$. We conclude $\osigma=\wq^{-1}\otau\in\Phi_k$.
\end{proof}

Summarizing, to any $K$-variety $X$ with $G$-action one can associate
the following combinatorial objects:
\[\label{eq:Luna-K}
  S,\ \Xi(T),\ \Xi_K,\ \Sigma_K,\ S\p.
\]
If $X$ is a $k$-dense $G$-variety one additionally has a subset
$S^0\subseteq S$ of compact roots and a $\cG^*$-action on all
objects. By restriction to a maximal split $k$-torus (computable from
the data above by \cref{cor:compN}) one gets the restricted objects
\[\label{eq:Luna-k}
  S_k,\ \Xi(A),\ \Xi_k,\ \Sigma_k,\ S\p_k.
\]
We compute the spherical roots in a special case:

\begin{proposition}\label{prop:boundary2}
  Let $X$ be a $k$-dense $G$-variety, let $\cF$ be a fan supported in
  $\cZ_k(X)$. For any $\cC\in\cF$ consider the stratum $Y:=X(\cC)$ of
  $X(\cF)$. Then
\begin{align*}\hspace{20pt}
      &\Sigma_k(Y)=\Sigma_k(X)\cap\<\cC\>^\perp,\\
      &\Sigma_K(Y)=\Sigma_K(X)\cap\<\cC\>^\perp=\{\sigma\in\Sigma_K(X)\mid
\res_A\sigma\in\Sigma_k(Y)\cup\{0\}\}.
    \end{align*}
\end{proposition}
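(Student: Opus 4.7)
The plan is to first establish the formula $\Sigma_K(Y)=\Sigma_K(X)\cap\<\cC\>^\perp$ over $K$, and then descend to the $k$-side via the restriction map $\res_A$. For the $K$-case, I would apply \cref{prop:boundary}\ref{it:boundary4} with $k$ replaced by $K$: since $\cZ_k(X)\subseteq\cZ_K(X)$ under the inclusion $\cN_k(X)\hookrightarrow\cN_K(X)$ of \cref{cor:NkNK}, the fan $\cF$ is supported in $\cZ_K(X)$, and the construction in \cref{thm:CEmbedding} together with its stratification is compatible with base change from $k$ to $K$ (by the uniqueness in that theorem). This gives $\Xi_K(Y)=\Xi_K(X)\cap\<\cC\>^\perp$ and that $\cZ_K(Y)$ is the image of $\cZ_K(X)$ under the quotient $q\colon\cN_K(X)\to\cN_K(X)/\<\cC\>$.

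Next, I would read off $\Sigma_K(Y)$ from this description. Because $\cZ_K(X)$ is the antidominant Weyl chamber for $W_K(X)$, it equals $\{v\mid\sigma(v)\le 0\ \forall\,\sigma\in\Sigma_K(X)\}$ with linearly independent defining forms. Set $T:=\Sigma_K(X)\cap\<\cC\>^\perp$. A short Minkowski-sum computation---using that $\<\cC\>$ is a linear subspace, so each $\sigma\in\Sigma_K(X)$ is either constant along the fibres of $q$ (when $\sigma\in T$) or surjective on them (when $\sigma\notin T$)---yields $\cZ_K(X)+\<\cC\>=\{v\mid\sigma(v)\le 0\ \forall\,\sigma\in T\}$, and hence $\cZ_K(Y)=q(\cZ_K(X))=\{\bar v\in\cN_K(Y)\mid\sigma(\bar v)\le 0\ \forall\,\sigma\in T\}$. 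This realizes $\cZ_K(Y)$ as a cone cut out by the linearly independent forms $T\subseteq\Xi_K(Y)$; since elements of $T$, being primitive in $\Xi_K(X)$, remain primitive in the sublattice $\Xi_K(Y)\subseteq\Xi_K(X)$, they are the primitive outward normals to the facets of $\cZ_K(Y)$, giving $\Sigma_K(Y)=T=\Sigma_K(X)\cap\<\cC\>^\perp$.

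Third, I would translate to the $k$-level. For any $\sigma\in\Xi_K(X)$, the inclusion $\<\cC\>\subseteq\cN_k(X)$ together with the identity $\sigma|_{\cN_k(X)}=\res_A\sigma$ shows that $\sigma\in\<\cC\>^\perp$ holds if and only if $\res_A\sigma$ vanishes on $\<\cC\>$. For $\sigma\in\Sigma_K(X)$ the latter means either $\res_A\sigma=0$ (the compact case) or $\res_A\sigma\in\Sigma_k(X)\cap\<\cC\>^\perp$, which is the second displayed equality of the proposition. Taking nonzero restrictions then yields
\[
  \Sigma_k(Y)=\res_A'\Sigma_K(Y)=\res_A'\bigl(\Sigma_K(X)\cap\<\cC\>^\perp\bigr)=\Sigma_k(X)\cap\<\cC\>^\perp.
\]

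The main obstacle will be Step 2: verifying the Minkowski-sum identity that expresses $\cZ_K(Y)$ as a cone cut out exactly by the forms in $T$. Convexity and the linearity of $\<\cC\>$ make this tractable, but the identification hinges on the simplicial Weyl chamber structure of $\cZ_K(X)$ and on care with linear independence after passing to the quotient. The base-change justification in Step 1 and the restriction bookkeeping in Step 3 are then routine.
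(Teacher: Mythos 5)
Your proof is correct and takes essentially the same route as the paper: both invoke \cref{prop:boundary}\,\ref{it:boundary4} to identify $\cZ_K(Y)$ as the image of $\cZ_K(X)$ in $\cN_K(X)/\<\cC\>$, read off the simple roots from the facet structure, and then descend to $k$ via $\res_A$ together with the observation that $\sigma\in\<\cC\>^\perp$ is equivalent to $\res_A\sigma\in\<\cC\>^\perp$ since $\cC\subseteq\cN(A)$. The Minkowski-sum computation you work out explicitly (showing $\cZ_K(X)+\<\cC\>$ is cut out by the linearly independent forms in $T$, using that $\cC$, not merely $\<\cC\>$, lies in $\cZ_K(X)$ to produce $c_0\in\cC$ with $\tau(c_0)<0$ for all $\tau\notin T$) is exactly the step the paper compresses into the claim that \cref{prop:boundary}\,\ref{it:boundary4} implies the formula for $\Sigma_k\prim(Y)$.
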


\begin{proof}

  \cref{prop:boundary}\,\ref{it:boundary4} implies
  $\Sigma_k\prim(Y)=\Sigma_k\prim(X)\cap\<\cC\>^\perp$. For $k=K$ this
  means $\Sigma_K(Y)=\Sigma_K(X)\cap\<\cC\>^\perp$. From this we
  deduce $\Sigma_k(Y)=\Sigma_k(X)\cap\<\cC\>^\perp$ by restriction to
  $A$. Finally let $\sigma\in\Sigma_K(X)$ and put
  $\osigma:=\res_A\sigma$. Then $\sigma\in\Sigma_K(Y)$ if and only if
  $\sigma\in\<\cC\>^\perp$ if and only if $\osigma\in\<\cC\>^\perp$ (since
  $\cC\subseteq\cN(A)$) if and only if $\osigma\in\Sigma_k(X)\cup\{0\}$.
\end{proof}

From this we get:

\begin{corollary}\label{cor:kernel}

 Let $X$ be a $k$-dense $G$-variety. Then
\begin{align*}\hspace{20pt}
    &\Xi_K(X\an)=\Xi_K^0(X),\  \Sigma_K(X\an)=\Sigma_K^0(X),
    \ S\p(X\an)=S\p(X),\\
    &\Xi_K(X\el)=\Xi_K(X),\  \Sigma_K(X\el)=\Sigma_K^0(X),
    \ S\p(X\el)=S\p(X).
    \end{align*}

\end{corollary}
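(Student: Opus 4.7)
The plan is to establish the six equalities in three groups, using the Local Structure Theorem for the character and parabolic root identities and the boundary stratification together with a principal bundle analysis for the spherical roots.

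For the character groups, I would fix a Borel $B\subseteq P$ over $K$. The LST provides an open embedding $Q_k(X)\times^L X\el\hookrightarrow X$, and since $U:=R_uQ_k(X)\subseteq B_u$, every $B$-semi-invariant on $X$ is $U$-invariant, hence lies in $K(X)^U=K(X\el)$ and restricts to a $B_L$-semi-invariant of the same character, where $B_L:=B\cap L$; the converse extension via the open embedding gives the bijection, so $\Xi_K(X\el)=\Xi_K(X)$. Since $A$ is central in $L$, we have $B_L=B_M\cdot A$ with $B_M:=B\cap M$, and a $B_L$-semi-invariant lies in $K(X\el)^A=K(X\an)$ exactly when its $A$-character vanishes, i.e., when its character lies in $\ker(\res_A)\cap\Xi_K(X)=\Xi_K^0(X)$; hence $\Xi_K(X\an)=\Xi_K^0(X)$. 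For the parabolic roots, the proof of \cref{prop:QkQK} yields $Q_K(X)=U_k\cdot Q_K(X\el)$, so $Q_K(X)$ and $Q_K(X\el)$ share the same Levi, giving $S^{(p)}(X\el)=S^{(p)}(X)$; the analogous identification $Q_K(X\an)=Q_K(X\el)\cap M$ (with $Q_K(X\el)=Q_K(X\an)\cdot A$ since $A$ is central in $L$) gives $S^{(p)}(X\an)=S^{(p)}(X)$.

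The main content is the spherical root equalities. For $\Sigma_K(X\an)=\Sigma_K^0(X)$, I would use \cref{prop:chamber} to guarantee that $\cZ_k(X)=\cZ_K(X)\cap\cN_k(X)$ has non-empty interior in $\cN_k(X)$, and then pick a fan in $\cZ_K(X)$ containing a cone $\cC$ with $\<\cC\>=\cN_k(X)$. By \cref{prop:boundary}\,\ref{it:boundary3}--\ref{it:boundary4}, the stratum $X(\cC)$ has $k$-rank zero and satisfies $X(\cC)\an=X\an$. Then \cref{thm:rank0} shows that $X(\cC)$ is generically parabolically induced from the anisotropic $L$-action on $X\an$, and \cref{prop:induction} yields $\Sigma_K(X(\cC))=\Sigma_K(X\an)$; on the other hand, \cref{prop:boundary2} gives $\Sigma_K(X(\cC))=\Sigma_K(X)\cap\<\cC\>^\perp=\Sigma_K(X)\cap\cN_k(X)^\perp=\Sigma_K^0(X)$, whence $\Sigma_K(X\an)=\Sigma_K^0(X)$.

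For $\Sigma_K(X\el)=\Sigma_K^0(X)$, I would exploit the $L$-equivariant principal $A_k$-bundle $\pi:X\el\to X\an$. Torus embeddings of the bundle along cocharacters of $A_k$ produce $L$-invariant divisors whose valuations are $K$-central, showing $\cN_k(X)\subseteq\cZ_K(X\el)$; applying \cref{lem:GAcentral} to $\pi$ over $K$ and noting $c_K(X\el)=c_K(X\an)=c_K(X)$ yields $\pi_\ast\cZ_K(X\el)=\cZ_K(X\an)$ with kernel $\cN_k(X)$. Combining gives $\cZ_K(X\el)=\cZ_K(X\an)+\cN_k(X)$. Since $\Xi_K(X)/\Xi_K^0(X)$ is torsion-free, primitivity in $\Xi_K^0(X)$ coincides with primitivity in $\Xi_K(X)$ on elements of $\Xi_K^0$, so the facets of $\cZ_K(X\el)$ correspond bijectively to $\Sigma_K(X\an)$, yielding $\Sigma_K(X\el)=\Sigma_K(X\an)=\Sigma_K^0(X)$. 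The main obstacle I anticipate is verifying that lifting $A_k$-cocharacters and pushing or pulling valuations through $\pi$ in the last step produce genuinely $L$-invariant valuations, despite the principal $A_k$-bundle being only \'etale-locally trivial over $K$.
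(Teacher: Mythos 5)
Your proof is correct and takes essentially the same route as the paper: the key step, relating $\Sigma_K(X\an)$ to $\Sigma_K^0(X)$ via the rank-zero boundary stratum $X(\cC)$ for a top-dimensional cone $\cC\subseteq\cZ_k(X)$ together with \cref{thm:rank0}, \cref{prop:induction}, \cref{prop:boundary}, and \cref{prop:boundary2}, is precisely the paper's argument, and the remaining equalities follow from the $k$-LST and the $A_k$-bundle $X\el\to X\an$ as you describe. (Two small notes: "pick a fan in $\cZ_K(X)$" should read $\cZ_k(X)$, and your closing worry is unfounded since a $\G_m^n$-torsor is automatically Zariski-locally trivial, so the relative toroidal constructions on $X\el\to X\an$ pose no difficulty.)
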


\begin{proof}

  Let $\cC\subseteq\cZ_k(X)$ be a strictly convex cone of maximal
  dimension. Then $X(\cC)$ is induced from $X\an$ and the data for
  $X\an$ follow from \cref{prop:boundary} and
  \cref{prop:boundary2}. The first equation for $X\el$ follows from
  the $k$-LST, the two others from the anisotropic case.
\end{proof}

\begin{remark}

  Since all orbits in $X\an$ are affine varieties there are strong
  constraints on the pair $(\Sigma_K^0,S\p)$. For example, if $X$
  is absolutely spherical then $X\an$ is an affine homogeneous
  $K$-spherical variety. These have been classified by \cite{Kraemer},
  \cite{Mikityuk}, and \cite{BrionClassification}. See
  \cite{BraviPezzini} for the calculation of the corresponding
  spherical roots.
\end{remark}

The pair $(S_k,\Sigma_k)$ looks very much like the spherical system of
of a complex spherical variety. This is indeed very often the case but
not always.

\begin{examples}

  \emph{i)} Let $k=\RR$ and $G=SU(n,n)$ with $n\ge2$. Then the
  restricted root system of $G$ is of type $\sC_n$ with simple roots
  $\beta_i=\oalpha_i=\oalpha_{2n-i}$ for $i=1,\ldots,n$. Consider the
  subgroup $H:=U(n,n-1)$ and $X=G/H$. Then $X$ is absolutely spherical
  of real rank $1$ with $\Sigma_\CC=\{\sigma\}$ where
  \[
    \sigma:=\alpha_1+\ldots+\alpha_{2n}.
  \]
  Thus $\Sigma_\RR=\{\osigma\}$ with
  \[
    \osigma=2\beta_1+\ldots+2\beta_{n-1}+\beta_n.
  \]
  But $\osigma$ is not a spherical root of any spherical variety over
  $\CC$ (see, e.g., \cite{Wasserman}*{Table\ 1} for a list of possible
  roots). Observe that rescaling of $\beta_n$ to $\half\beta_n$
  converts $S_\RR$ to type $\sB_n$ and $\osigma$ to the spherical root
  of $SO(2n+1)/SO(2n)$. Therefore, one might wonder whether the
  normalization of $S_\RR$ is chosen correctly. The next example
  invalidates this objection.

  \emph{ii)} Take $k=\RR$ and let $G$ be the quasi-split group of type
  $\sE_6$ (type $EII$). Its restricted simple roots are
  \[
    \beta_1=\oalpha_2,\beta_2=\oalpha_4,\beta_3=\oalpha_3=\oalpha_5,
    \beta_4=\oalpha_1=\oalpha_6
  \]
  (Bourbaki notation \cite{Bourbaki}) which span a root system of type
  $\sF_4$. According to Berger, \cite{Berger}, $G$ contains a subgroup
  $H$ which is isogenous to $SO(6,4)\,SO(2)$. Its spherical roots
  over $\CC$ are
  \[
    \sigma_1=\alpha_1+\alpha_3+\alpha_4+\alpha_5+\alpha_6,\
    \sigma_2=\alpha_2+\alpha_4+\half\alpha_3+\half\alpha_5
  \]
  (see \cite{BraviPezzini}). Their restrictions to $A$ are
  \[
    \osigma_1=\beta_2+2\beta_3+2\beta_4,\
    \osigma_2=\beta_1+\beta_2+\beta_3.
  \]
  They generate a root system of type $\sB_2$. The root $\osigma_1$ is
  again not a spherical root of any $\CC$-variety, but worse,
  Wasserman's tables \cite{Wasserman} show that $\osigma_1,\osigma_2$
  are not the spherical roots of any $\sF_4$-variety over $\CC$ even
  if one allows rescaling the $\beta_i$ or $\osigma_j$. 

\end{examples}

One can also look at the triple
\[
  \Xi_K,\Sigma_K,\Sigma_K^0
\]
together with the $\cG^*$-action, called the \emph{spherical index} of
$X$, which very much looks and indeed very often is a Satake-Tits
diagram. But there are counterexamples: Let $X=G/H$ be as in
\cref{rem:g2index}. Then $\Sigma_\CC$ is of type $\sG_2$ and one of
the roots is compact. This never occurs in the group case even for any
ground field (see \cite{Tits}*{p.~61}).

Nevertheless, as in the group case (\cite{BorelTits}*{4.9}) the same
major constraint for the structure of indices is still valid:

\begin{lemma}\label{thm:opposition}

  The spherical index $(\Xi_K,\Sigma_K,\Sigma_K^0)$ is invariant under
  the opposition map $\iota=-w_0$ (where $w_0\in W_K$ is the longest
  element).

\end{lemma}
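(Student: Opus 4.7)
The statement makes three claims: $\iota(\Xi_K) = \Xi_K$, $\iota(\Sigma_K) = \Sigma_K$, and $\iota(\Sigma_K^0) = \Sigma_K^0$. The first two are routine: $\Xi_K$ is a $W_K$-stable lattice closed under negation, and $-w_0$ permutes $\Sigma_K$ by the standard property of the longest element of a finite reflection group. The real content lies in the third claim.

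For this I would dualize. Setting $\Xi_K^0 := \ker(\res_A|_{\Xi_K})$, we have $\Sigma_K^0 = \Sigma_K \cap \Xi_K^0$, and $\Xi_K^0$ is the annihilator of $\cN_k$ in $\Xi_K$ (by \cref{cor:NkNK}, which identifies $\cN_k$ with the image of $\cN(A) \subseteq \cN(T)$ in $\cN_K$). So it suffices to show $\iota(\cN_k) = \cN_k$. The key observation is that $\cN_k$ coincides with the \emph{ordinary} Galois-fixed subspace $(\cN_K)^\cG$: indeed, $\cN(A) = \cN(T)^\cG$ rationally; since $\cG$ acts through a finite quotient, Maschke's theorem applied to the surjection $\cN(T) \twoheadrightarrow \cN_K$ of $\QQ[\cG]$-modules shows that the image of $\cN(T)^\cG$ in $\cN_K$ is precisely $(\cN_K)^\cG$, and by \cref{cor:NkNK} this image equals $\cN_k$. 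Finally, $\iota$ commutes with the $\cG$-action on $\cN_K$: by the corollary following the proof of \cref{lem:wgamma}, $\cG$ normalizes $W_K$ and so acts on $W_K$ by group automorphisms, which necessarily preserve the length function and therefore fix the unique longest element $w_0$. Hence $\iota = -w_0$ preserves $(\cN_K)^\cG = \cN_k$, which completes the argument.

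The main pitfall I would want to avoid is trying to prove $\iota(\Sigma_K^0) = \Sigma_K^0$ by a direct Coxeter-theoretic argument --- e.g., by attempting to show that $w_0$ normalizes the parabolic subgroup $W_K^0 = \langle s_\sigma \mid \sigma \in \Sigma_K^0\rangle$ --- since normalizing $W_K^0$ is equivalent to preserving $\Sigma_K^0$, and one runs in a circle. The crucial move is to pass to the ordinary Galois action, which characterizes $\cN_k$ intrinsically in a way that $\iota$ manifestly respects. Note that the $\ast$-action alone would not work: it differs from the ordinary action by an element of $W_K^0$ (\cref{cor:tildeW}), so $(\cN_K)^{\cG^\ast}$ is in general strictly larger than $\cN_k$, and the argument via fixed points only closes when we use the ordinary Galois action.
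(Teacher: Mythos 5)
The reduction to showing $\iota(\cN_k)=\cN_k$ is correct, as is the identification $\cN_k=(\cN_K)^\cG$ via \cref{cor:NkNK} and Maschke. But the crucial step --- that $\iota$ preserves $(\cN_K)^\cG$ because ``$\cG$ acts on $W_K$ by group automorphisms, which necessarily preserve the length function and therefore fix the unique longest element $w_0$'' --- is false. Group automorphisms of a Coxeter group need not preserve the length function: conjugation by an arbitrary element of $W_K$ is a group automorphism that does not. What an automorphism must preserve in order to fix $w_0$ is the set $\Sigma_K$ of simple roots, and the ordinary $\cG$-action does \emph{not} preserve $\Sigma_K$. From \cref{lem:wgamma}, ${}^\gamma\Sigma_K=\wq_\gamma^{-1}\Sigma_K$ with $\wq_\gamma\in W_K^0$, so $\gamma w_0\gamma^{-1}=\wq_\gamma^{-1}w_0\wq_\gamma$, which differs from $w_0$ whenever $\wq_\gamma\ne1$. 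Only the $*$-action preserves $\Sigma_K$ and commutes with $w_0$.

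If you try to push the ordinary-action computation through, you find that for $v\in\cN_k$ (so $\wq_\gamma v=v$), one gets $\gamma(-w_0v)=-\wq_\gamma^{-1}w_0v$, and concluding that this equals $-w_0v$ amounts to showing $w_0v$ is fixed by $\wq_\gamma\in W_K^0$, which in turn requires $\sigma(w_0v)=0$ for all $\sigma\in\Sigma_K^0$, i.e.\ $(\iota\sigma)(v)=0$ --- but that is precisely the statement $\iota\Sigma_K^0\subseteq\Xi_K^0$ that we are trying to prove. So the argument is circular in exactly the way you warned against when discussing the normalizer of $W_K^0$. The pitfall you identified does not disappear by passing to the ordinary Galois action; it reappears one layer down. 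The paper avoids this by a genuinely different, geometric argument: it uses the longest element $\wq_0\in W_k$ of the \emph{restricted} Weyl group, lifts it to $N(\cN_k)\subseteq W_K$, and exploits the chamber inclusion $\cZ_k\subseteq\cZ_K$ to produce a factorization $w_0=w^0n_0$ with $w^0\in W_K^0$, from which the invariance of $\Sigma_K^0$ is then read off directly.
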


\begin{proof}

  The $\cG^*$-action stabilizes both $W_K$ and $\Sigma_K$. This
  implies that $\cG^*$ commutes with $w_0$ and therefore with
  $\iota$. It remains to be shown that
  $\iota\Sigma_K^0=\Sigma_K^0$. Let
  $\Xi^0:=\|ker|\res_A\subseteq\Xi_K$. Then $N(\cN_k)$ (the
  normalizer of $\cN_k$ in $W_K$) acts on $\Xi^0$. Let $\wq_0\in W_k$
  be the longest element and let $n_0\in N(\cN_k)$ be a lift. Then
  $\cZ_k=-\wq_0\cZ_k\subseteq -n_0\cZ_K$. Let $\cF\subseteq\cZ_K$ be
  the face corresponding to $\Sigma_K^0$. Since $\cZ_k$ contains an
  interior point of $\cF$ we get
  $\cF\subseteq\cZ_K\cap(-n_0\cZ_K)$. Thus there is $w^0\in W_K^0$
  with $-w^0n_0\cZ_K=\cZ_K$. By uniqueness we get $w_0=w^0n_0$. Hence,
  \[
    \iota\Sigma_K^0=
    -w^0n_0\Sigma_K^0\subseteq\Sigma_K\cap\Xi^0=\Sigma_K^0.
    \qedhere\hfill\qed
  \]
\end{proof}

\section{Wonderful varieties}\label{sec:wonderful}

In this section, we use our results on root systems to investigate
smoothness properties of the standard embedding of a $k$-convex
variety. As a direct consequence of the fact that the valuation cone
is simplicial (\cref{cor:cosimplicial}) we get:

\begin{theorem}\label{thm:standard}

  Let $X$ be a $k$-convex $G$-variety with $\rk_kX=r$. Then the
  standard embedding $X\st$ contains $r$ irreducible divisors
  $X_1,\ldots,X_r$ (namely the codimension-1-strata) such that the
  strata are exactly the (set theoretic) intersections
  $X_I:=\bigcap_{i\not\in I}X_i$ where $I$ runs through all subsets of
  $\{1,\ldots,r\}$.

\end{theorem}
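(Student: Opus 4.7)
The plan is to combine the simpliciality of the valuation cone with the stratification description from \cref{prop:boundary}, reducing everything to combinatorics of a Boolean lattice. By $k$-convexity, $\cZ_k(X)$ is strictly convex, while \cref{cor:cosimplicial} says it is cosimplicial. Together with $\dim\cN_k(X)=r$, these two properties force $\cZ_k(X)$ to be defined by exactly $r$ linearly independent inequalities, and hence to be a simplicial cone of full dimension $r$ in $\cN_k(X)$. Its face lattice is therefore Boolean on its $r$ extremal rays $\cR_1,\ldots,\cR_r$: every face has the unique form $\cC_J:=\sum_{j\in J}\cR_j$ for $J\subseteq\{1,\ldots,r\}$, with $\cC_{J_1}\subseteq\cC_{J_2}\iff J_1\subseteq J_2$. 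So $\cF\st=\{\cC_J\mid J\subseteq\{1,\ldots,r\}\}$.

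Next, \cref{prop:boundary}\ref{it:boundary3} gives $\dim X(\cC_J)=\dim X-|J|$, so the codimension-one strata are exactly $X(\cR_1),\ldots,X(\cR_r)$. Each stratum $X(\cC)=G\cdot X\el(\cC)$ is irreducible (the source $G\times X\el(\cC)$ is irreducible and maps onto $X(\cC)$), so its closure $X_i:=\Xq(\cR_i)$ is an irreducible divisor; these are the $r$ divisors of the theorem.

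The remaining task is to identify $X_I:=\bigcap_{i\notin I}X_i$ as a stratum closure. I would prove the intersection identity $\Xq(\cC_1)\cap\Xq(\cC_2)=\Xq(\cC_1+\cC_2)$ for any two faces: \cref{prop:boundary}\ref{it:boundary1} decomposes $\Xq(\cC_i)$ as the disjoint union of all $X(\cC')$ with $\cC_i\subseteq\cC'\in\cF\st$, and since strata are pairwise disjoint the intersection keeps only those $\cC'$ containing both $\cC_1$ and $\cC_2$, equivalently containing $\cC_1+\cC_2$; simpliciality guarantees $\cC_1+\cC_2\in\cF\st$. Iterating yields
\[
X_I \;=\; \bigcap_{i\notin I}\Xq(\cR_i) \;=\; \Xq\Bigl(\sum_{i\notin I}\cR_i\Bigr) \;=\; \Xq(\cC_{\{1,\ldots,r\}\setminus I}),
\]
the closure of the stratum of codimension $r-|I|$; as $I$ varies over subsets of $\{1,\ldots,r\}$ its complement does too, exhausting all stratum closures. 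No step presents a real obstacle: the content is essentially bookkeeping combining \cref{cor:cosimplicial}, $k$-convexity, and the structure of $X(\cF)$ encoded in \cref{prop:boundary}. The one subtlety is the indexing convention: the subset $I$ indexes the divisors to \emph{omit} from the intersection, so it corresponds to the complementary face $\cC_{I^c}$ of the simplicial cone.
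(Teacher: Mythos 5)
Your argument is correct and supplies precisely the combinatorial details the paper leaves implicit when it labels this theorem a direct consequence of \cref{cor:cosimplicial}: $k$-convexity plus cosimpliciality make $\cZ_k(X)$ a full-dimensional simplicial cone, whose Boolean face lattice transfers to stratum closures via \cref{prop:boundary}. You also rightly note that ``strata'' in the statement must mean the stratum closures $\Xq(\cC)$, since the $X_I$ are closed.
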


A more canonical way to parameterize strata is by their set of
spherical roots:

\begin{corollary}\label{cor:localization}

  Let $X$ be a $k$-convex $G$-variety and let $X\st$ be its standard
  embedding. Then there is a bijection $J\leftrightarrow X^J$ of
  subsets $J$ of $\Sigma_k(X)$ and strata $X^J$ of $X\st$ with
\[
\Sigma_k(X^J)=J.
\]
\end{corollary}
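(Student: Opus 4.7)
The plan is to combine \cref{thm:standard} with \cref{prop:boundary2} and the cosimpliciality of $\cZ_k(X)$. First, I would recall that by construction the strata of $X\st = X(\cF\st)$ are in bijection with the cones of $\cF\st$, which by \cref{def:standard} is the set of faces of $\cZ_k(X)$. Since $X$ is $k$-convex, $\cZ_k(X)$ is strictly convex, and by \cref{cor:cosimplicial} it is cosimplicial: its facets correspond bijectively to the primitive $k$-spherical roots $\Sigma_k\prim(X)$, which via \cref{cor:SSprim} are in bijection with $\Sigma_k(X)$ through $\sigma\mapsto n_\sigma\sigma$. The linear independence of $\Sigma_k\prim(X)$ inherent in cosimpliciality then allows me to parametrize all faces of $\cZ_k(X)$ by subsets $J\subseteq\Sigma_k(X)$: to a subset $J$ I associate the face
\[
  \cC_J:=\{a\in\cZ_k(X)\mid\sigma(a)=0\text{ for all }\sigma\in J\},
\]
and the corresponding stratum $X^J:=X(\cC_J)$.

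To verify the characterizing property $\Sigma_k(X^J)=J$, I would appeal directly to \cref{prop:boundary2}, which gives $\Sigma_k(X(\cC_J))=\Sigma_k(X)\cap\<\cC_J\>^\perp$. By construction $\<\cC_J\>$ is the linear subspace of $\cN_k(X)$ annihilated by $J$, so, using cosimpliciality once more to ensure that $\cC_J$ has codimension exactly $|J|$, its annihilator $\<\cC_J\>^\perp$ is the $\QQ$-linear span of $J$ inside $\Xi_k(X)\otimes\QQ$. Finally, \cref{lem:linindep}, which asserts that $\Sigma_k(X)$ is linearly independent, yields $\Sigma_k(X)\cap\operatorname{span}(J)=J$, as desired.

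The assignment $J\mapsto X^J$ is a bijection since $J\mapsto\cC_J$ is (by cosimpliciality), and the correspondence between cones of $\cF\st$ and strata of $X\st$ is one-to-one. There is no real obstacle here: the entire argument consists of translating combinatorial properties of the simplicial cone $\cZ_k(X)$ into statements about the stratification, the only mildly delicate point being the identification $\<\cC_J\>^\perp=\operatorname{span}(J)$, which uses the linear independence of the defining inequalities.
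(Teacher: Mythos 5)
Your proof is correct and follows essentially the same route as the paper: translate strata of $X\st$ into faces of the simplicial cone $\cZ_k(X)$ via \cref{thm:standard}, and apply \cref{prop:boundary2} together with cosimpliciality to read off $\Sigma_k(X^J)$. The only cosmetic difference is that you index faces directly by subsets $J\subseteq\Sigma_k(X)$ rather than by the extremal rays $\cR_i$ as the paper does, which actually avoids some index-set bookkeeping.
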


\begin{proof}

  Each divisor $X_i$ corresponds to an extremal ray $\cR_i$ of
  $\cZ_k(X)$. Thus there is precisely one $\sigma_i\in\Sigma_k(X)$
  with $\sigma_i(\cR_i)\ne0$ and
\[
\Sigma_k(X_I)=\{\sigma_i\mid \sigma_i(\cR_j)=0\text{ for all }j\in
I\}=\{\sigma_i\mid i\not\in I\}.
\]
Thus, if we set
\[
J:=\{i=1,\ldots,r\mid\sigma_i\not\in I\}
\]
then $X^I:=X_J$ has the desired property.
\end{proof}

\begin{remark}

  Let $I$ be any subset of spherical roots. Then
  \cref{cor:localization} shows, in particular, that there is a
  variety, namely $X^I$, whose spherical roots are precisely the
  elements of $I$. This variety is called the \emph{localization of
    $X$ in $\Sigma$}.

  Together with the invariance under the opposition map
  (\ref{thm:opposition}), the existence of localizations puts enormous
  constraints on the possible subsets
  $\Sigma_K^0\subseteq\Sigma_K$. This is completely analogous to the
  discussion in \cite{Tits}*{\S3.2}. For example, if $\Sigma_K$ is of
  inner type $\sA_n$ then $\Sigma_K\setminus\Sigma_K^0$ must be of the
  form $\{\alpha_d,\alpha_{2d},\ldots,\alpha_{n+1-d}\}$ with
  $d\mid n+1$.

\end{remark}

We now address the problem of when the standard embedding is actually
smooth. Clearly, this is the case if and only if the torus embedding
$A_k(\cF\st)$ is smooth. A well known criterion of toroidal geometry
asserts that this is the case if and only if the cone $\cZ_k(X)$ is
spanned by an integral basis. Dualizing, this leads to:

\begin{definition}

  A $k$-dense $G$-variety is called \emph{$k$-wonderful} if
  $\Sigma_k\prim(X)$ is a $\ZZ$-basis of $\Xi_k(X)$.

\end{definition}

Observe that a $k$-wonderful variety is automatically $k$-convex and
therefore has a standard embedding. Thus we have:

\begin{proposition}\label{prop:smooth}

  Let $X$ be a $k$-dense $G$-variety. Then $X$ is $k$-wonderful if and
  only if $X$ has a smooth standard embedding. In this case the
  divisors of \cref{thm:standard} are normals crossing divisors. In
  particular, all strata are smooth.

\end{proposition}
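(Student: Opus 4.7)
The plan is to reduce smoothness of $X\st$ to smoothness of the torus embedding $A_k(\cF\st)$, and then apply classical toric geometry to translate the latter into the combinatorial condition defining $k$-wonderfulness.

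First, I would invoke \cref{thm:CEmbedding} to get the open embedding
\[
  U\times X\el(\cF\st)\hookrightarrow X\st,
\]
where $X\el(\cF\st)=X\el\times^{A_k}A_k(\cF\st)$ and $U=Q_k(X)_u$ is affine. Since translates of $U\times X\el(\cF\st)$ by elements of $G$ cover $X\st=G\cdot X\el(\cF\st)$, and $G$ acts by automorphisms, $X\st$ is smooth if and only if $U\times X\el(\cF\st)$ is smooth, which in turn is equivalent to smoothness of $X\el(\cF\st)$. Now $X\el\to X\an$ is a principal $A_k:=A_k(X)$-bundle (étale locally trivial by \cref{cor:LSTgeneric}), and $X\el$ is smooth. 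Hence $X\el(\cF\st)\to X\an$ is (étale locally) a trivial $A_k(\cF\st)$-bundle, so it is smooth if and only if the torus embedding $A_k(\cF\st)$ is smooth.

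By classical toric geometry (see \cite{KempfTE} or \cite{Oda}), the toric variety $A_k(\cF\st)$ is smooth if and only if every cone $\cC\in\cF\st$ is generated by part of a $\ZZ$-basis of the cocharacter lattice $\Hom(\Xi_k(X),\ZZ)\subseteq\cN_k(X)$. Since $\cF\st$ consists of the faces of the full-dimensional (by \cref{prop:chamber}), strictly convex, simplicial cone $\cZ_k(X)$ (simpliciality coming from \cref{cor:cosimplicial}), this is equivalent to the single condition that the primitive generators of the rays of $\cZ_k(X)$ form a $\ZZ$-basis of $\Hom(\Xi_k(X),\ZZ)$. Dually, since $\cZ_k(X)=\{a\in\cN_k(X)\mid\sigma(a)\le0\text{ for all }\sigma\in\Sigma_k\prim(X)\}$, this is precisely the statement that $\Sigma_k\prim(X)$ is a $\ZZ$-basis of $\Xi_k(X)$, i.e., that $X$ is $k$-wonderful.

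For the normal crossings assertion, suppose $X$ is $k$-wonderful. In the smooth toric variety $A_k(\cF\st)$, the divisors associated to the rays of $\cZ_k(X)$ are a simple normal crossings divisor, and their set-theoretic intersections (indexed by faces of $\cZ_k(X)$) coincide with the closures of the $A_k$-orbits; in particular they are smooth and scheme-theoretic intersections. Pulling back through the locally trivial bundle $X\el(\cF\st)\to X\an$ (whose base is smooth after shrinking as in \cref{rem:XcF} \emph{i)}) preserves these properties, and then so does the open embedding $U\times X\el(\cF\st)\hookrightarrow X\st$ together with $G$-translation. By \cref{thm:standard} the divisors $X_1,\dots,X_r$ correspond exactly to these ray divisors, so they are normal crossings in $X\st$, and each stratum $X_I=\bigcap_{i\in I}X_i$ is smooth. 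The only subtle point is that the divisors $X_i$ need only be defined over $k$ after summing over $\cG$-orbits in the corresponding set of rays, but this does not affect smoothness or the normal crossings property; this is the one place where one has to be mildly careful.
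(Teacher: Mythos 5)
Your proof is correct and follows the paper's own reasoning exactly: the paper's argument preceding the proposition is precisely the chain of equivalences (smoothness of $X\st$ $\Leftrightarrow$ smoothness of $A_k(\cF\st)$ $\Leftrightarrow$ $\cZ_k(X)$ is spanned by a $\ZZ$-basis $\Leftrightarrow$, by duality, $\Sigma_k\prim(X)$ is a $\ZZ$-basis of $\Xi_k(X)$), and the normal crossings claim is likewise deduced from the toric local model pulled back through the principal $A_k(X)$-bundle $X\el\to X\an$ and the open immersion of the local structure theorem. One small remark: the caution in your last sentence about $\cG$-orbits of rays is unnecessary, since $A_k(X)$ is a $k$-split torus, so $\cG$ acts trivially on $\cN_k(X)$ and each ray (hence each divisor $X_i$) is individually defined over $k$; this is also why the strata $X(\cC)$ in \cref{prop:boundary} are individually $k$-dense rather than only in Galois orbits.
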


Wonderful varieties are analogous to semisimple groups of adjoint type
which are also characterized by the weight lattice being the root
lattice. Any reductive group can be made adjoint by dividing out the
center. A similar procedure exists for $G$-varieties $X$, as
well. Here the r\^ole of the center is being taken by the group
$\fA_k:=\fA_k(X)$ of $k$-central automorphisms of $X$
(\cref{def:central}). Recall that is is group is a subgroup of
$A_k=A_k(X)$ which means that the restriction map $\Xi_k\to\Xi(\fA_k)$
is surjective. Let $\Gamma_k(X)$ be its kernel, i.e.,
$\Gamma_k(X)=\Xi(A_k/\fA_k)$. In the following discussion we tacitly
assume that the orbit space $X/\fA_k$ exists. This is certainly the
case when $X$ is homogeneous. In general it can be achieved by
replacing $X$ by a suitable dense open subset. Here is the main result
of this section:

\begin{theorem}\label{thm:Autoroot}

  Let $X$ be a $k$-dense $G$-variety. Then the quotient $X/\fA_k(X)$
  is $k$-wonderful.

\end{theorem}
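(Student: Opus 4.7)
Set $X' := X/\fA_k(X)$; the goal is to verify that $\Sigma_k\prim(X')$ is a $\ZZ$-basis of $\Xi_k(X')$. First, applying Lemma \ref{lem:auto} with $\fA = \fA_k(X)$ and combining with Theorem \ref{thm:auto}, which identifies $\cN(\fA_k(X))$ with the linearity space $\cZ_k^0(X) = \cZ_k(X) \cap (-\cZ_k(X))$, one obtains
$$\cZ_k(X') = \cZ_k(X)/\cZ_k^0(X), \qquad \Xi_k(X') = \Xi_k(X) \cap \cZ_k^0(X)^\perp.$$
The first equality exhibits $\cZ_k(X')$ as strictly convex, so $X'$ is $k$-convex and admits a standard embedding.

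Next, I identify $\Sigma_k\prim(X')$ with $\Sigma_k\prim(X)$. Each $\sigma \in \Sigma_k\prim(X)$ is the primitive outward normal to a facet of $\cZ_k(X)$, hence vanishes on the linearity space $\cZ_k^0(X)$ and so already lies in the sublattice $\Xi_k(X') \subseteq \Xi_k(X)$, where it remains primitive. Since the quotient $\cN_k(X) \twoheadrightarrow \cN_k(X')$ induces a bijection between facets of the two cones, the primitive defining functional in $\Xi_k(X')$ of any facet of $\cZ_k(X')$ must therefore be an element of $\Sigma_k\prim(X)$.

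The remaining step, and the main obstacle, is to show that $\<\Sigma_k\prim(X)\>_\ZZ$ exhausts $\Xi_k(X) \cap \cZ_k^0(X)^\perp$; linear independence and $\QQ$-spanning are guaranteed by Corollary \ref{cor:cosimplicial}, so the content is a saturation statement. I would attempt this by descending from $K$. Over the algebraic closure, the analogous identity $\<\Sigma_K(X)\>_\ZZ = \Xi_K(X) \cap \cZ_K^0(X)^\perp$ holds, amounting to the $K$-wonderfulness of $X/\fA_K(X)$ as obtained from Knop's theory of equivariant automorphism groups of spherical varieties. Given $\chi \in \Xi_k(X) \cap \cZ_k^0(X)^\perp$, I use \eqref{eq:resXiK} to lift $\chi$ to some $\tilde\chi \in \Xi_K(X)$ and then adjust $\tilde\chi$ by an element of $\ker(\res_A) \subseteq \Xi_K(X)$ so that $\tilde\chi \in \cZ_K^0(X)^\perp$; the $K$-wonderful identity then gives $\tilde\chi \in \<\Sigma_K(X)\>_\ZZ$, and applying $\res_A$ together with Corollary \ref{cor:SSprim} (which says that each $\res_A \sigma$, for $\sigma \in \Sigma_K(X)$, is either $0$, a primitive $k$-spherical root, or twice one) expresses $\chi = \res_A \tilde\chi$ as a $\ZZ$-combination of elements of $\Sigma_k\prim(X)$.

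The delicate point, which I expect to require the most care, is the correction step: $\tilde\chi$ automatically vanishes on $\cZ_k^0(X)$ but not necessarily on the potentially larger subspace $\cZ_K^0(X)$, so one must verify that the restriction map $\Xi_K(X) \cap \ker(\res_A) \to (\cZ_K^0(X)/\cZ_k^0(X))^*$ is surjective in order to perform the adjustment. This should follow from the $\cG^*$-equivariance of $\cZ_K^0(X)$ and the description of $\cN_k(X)$ inside $\cN_K(X)$ given by Corollary \ref{cor:compN}, since the equations cutting out $\cN_k(X)$ are precisely the $\cG^*$-compatibilities that control the quotient $\cZ_K^0(X)/\cZ_k^0(X)$.
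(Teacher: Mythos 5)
There is a genuine gap, and it appears at the first step. You write
\[
  \Xi_k(X')=\Xi_k(X)\cap\cZ_k^0(X)^\perp,
\]
but Lemma~\ref{lem:auto} gives
$\Xi_k(X')=\{\chi\in\Xi_k(X)\mid\res_{\fA_k(X)}\chi=0\}=\Gamma_k(X)$, which
can be a \emph{strict} finite-index sublattice of
$\Xi_k(X)\cap\cZ_k^0(X)^\perp$. The space $\cN(\fA_k(X))=\cZ_k^0(X)$ of
Theorem~\ref{thm:auto} only sees the identity component $\fA_k(X)^0$
(since $\Hom(\Xi(\fA_k),\QQ)$ kills torsion), so intersecting with
$\cZ_k^0(X)^\perp$ imposes vanishing on $\fA_k(X)^0$ but not on the
component group $\pi_0(\fA_k(X))$. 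The two lattices agree exactly when
$\fA_k(X)$ is connected; but the interesting case for this theorem is
when $X$ is already $k$-convex, so that $\fA_k(X)$ is finite,
$\cZ_k^0(X)=0$, and your right-hand side is all of $\Xi_k(X)$ while
$\Gamma_k$ is not. The example $X=SL(n,\CC)/SO(n,\CC)$ ($k=K=\CC$,
$n\ge3$) in Remark~(i) after the theorem is exactly this situation.

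This error propagates. You then claim each
$\sigma\in\Sigma_k\prim(X)$ "already lies in the sublattice $\Xi_k(X')$,
where it remains primitive," and conclude
$\Sigma_k\prim(X')=\Sigma_k\prim(X)$. But $\sigma$ need not lie in
$\Gamma_k$ at all; only an integral multiple of it does. Indeed
Proposition~\ref{prop:Autoroot}\,\ref{it:Autoroot1} says there is a
multiplier $n_\sigma\aut\in\{1,2\}$ with
$\Sigma_k\prim(X')=\{n\aut_\sigma\sigma\mid\sigma\in\Sigma_k\prim(X)\}$,
and the case $n\aut_\sigma=2$ occurs (again $SL(n)/SO(n)$). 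Consequently
your target identity $\<\Sigma_k\prim(X)\>_\ZZ=\Xi_k(X)\cap\cZ_k^0(X)^\perp$
is false in general, and the subsequent lift-and-adjust descent from $K$
is aimed at the wrong statement. By contrast, the paper introduces the
intermediate $k$-variety $X_K:=X/\fA_K(X)$, uses the surjectivity of
$\res_A:\Gamma_K\auf\Gamma_k$ (which follows from
$\fA_k=\fA_K\cap A_k$, equation~\eqref{eq:fAA}, and is where the
torsion is correctly accounted for), and pushes the known $\ZZ$-basis
$\Sigma_K(X_K)$ of $\Gamma_K$ down through $\res_A$ to obtain a basis of
$\Gamma_k$; no adjustment by elements of $\ker\res_A$ is needed, and the
finite part of $\fA_k$ is handled by working with the character lattices
of the quotient varieties themselves rather than with $\cZ_k^0$.
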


The theorem will be proved simultaneously with \cref{prop:Autoroot}
below.

\begin{remarks}

  {\it i)} A typical example of a non-wonderful $k$-variety is
  $X=SL(n,\CC)/\penalty1000 SO(n,\CC)$, $n\ge3$, over $k=K=\CC$. Then
  $\Xi_k(X)$ is $2$ times the weight lattice of $SL(n,\CC)$ which is
  not a root lattice.

  {\it ii)} Recall that $\fA_k(X)$ is finite if and only if $X$ is
  $k$-convex (\cref{cor:finiteconvex}). Thus for any $k$-convex
  variety $X$ there is finite abelian group $E$ of automorphisms
  (namely $E=\fA_k(X)$) such that $X/E$ is $k$-wonderful. With a bit
  of luck one has $E(k)=1$ in which case $X(k)$ can be considered as a
  subset of $(X/E)(k)$. This is for example the case when $X$ is a
  Riemannian symmetric space (over $k=\RR$).

  {\it iii)} A very important special case is that of a $k$-dense
  $G$-variety $X$ with $\fA_k(X)=1$. Then $X$ is $k$-wonderful. Since
  $\Aut^GG/H=N_G(H)/H$, the condition $\fA_k(X)=1$ is in particular
  satisfied when $X=G/H$ is homogeneous with $H=N_G(H)$
  selfnormalizing. Observe that even in the absolutely spherical case
  there examples with $\fA_k(X/\fA_k(X))\ne1$. So \cref{thm:Autoroot}
  cannot be reduced to the case $\fA_k=1$.

\end{remarks}

Put
\[\label{eq:autSigma}
  \Sigma\aut_k(X):=\Sigma_k\prim(X/\fA_k).
\]
Then another way to phrase the theorem is that $\Sigma\aut_k$ is a
$\ZZ$-basis of $\Gamma_k=\Xi_k(X/\fA_k)$. Moreover $\Sigma\aut_k$
is a system of simple roots for the root system
\[
  \Phi\aut_k(X):=W_k\Sigma\aut_k(X)=\Phi\prim_k(X/\fA_k).
\]
with $\cZ_k(X)$ as antidominant Weyl chamber and $W_k(X)$ as Weyl
group. We record some more properties:

\begin{proposition}\label{prop:Autoroot}

  Let $X$ be a $k$-dense $G$-variety. Then:

  \begin{enumerate}

  \item\label{it:Autoroot1} $\Xi_k(X)$ is a weight lattice for
    $\Phi\aut_k(X)$. In particular, there is a map
    $\Sigma_k\prim(X)\to\{1,2\}:\sigma\mapsto n\aut_\sigma$ with
    $\Sigma_k\aut=\{n\aut_\sigma\sigma\mid\sigma\in\Sigma_k\prim(X)\}$.

  \item\label{it:Autoroot2} $\Gamma_k(X)=\res_A\Gamma_K(X)$ and
    $\Sigma_k\aut=\res_A'\Sigma_K\aut$.

  \item\label{it:Autoroot3} If $X$ is $K$-wonderful then $X$ is also
    $k$-wonderful.
 
  \end{enumerate}

\end{proposition}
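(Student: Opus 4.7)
The plan is to prove all parts of Proposition~\ref{prop:Autoroot} together with Theorem~\ref{thm:Autoroot} by descending from the corresponding statements over $K$ (available from the theory of wonderful varieties over algebraically closed fields, cf.~\cite{KnopIB,KnopAut}), which give: $X/\fA_K(X)$ is $K$-wonderful, $\Sigma_K\aut(X)$ is a $\ZZ$-basis of $\Gamma_K(X)$, $\Xi_K(X)$ is a weight lattice for $\Phi_K\aut(X)$, each element of $\Sigma_K\aut(X)$ equals $m_\sigma\sigma$ for some $\sigma\in\Sigma_K(X)$ with $m_\sigma\in\{1,2\}$, and $W_K$ acts trivially on $\Xi_K/\Gamma_K=\Xi(\fA_K)$.

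First I would establish the identity $\Gamma_k=\res_A\Gamma_K$ in~(ii). By \eqref{eq:fAA}, $\fA_k=\fA_K\cap A_k$ is a closed $k$-subgroup of the diagonalisable group $\fA_K$, so $\Xi(\fA_K)\auf\Xi(\fA_k)$ is surjective. The snake lemma applied to
\[
\xymatrix{
0 \ar[r] & \Gamma_K \ar[r] \ar[d] & \Xi_K \ar[r] \ar[d]^{\res_A} & \Xi(\fA_K) \ar[r] \ar[d] & 0 \\
0 \ar[r] & \Gamma_k \ar[r] & \Xi_k \ar[r] & \Xi(\fA_k) \ar[r] & 0
}
\]
yields surjectivity of the leftmost vertical arrow, which is the restriction of $\res_A$ to $\Gamma_K$. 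Next, by Lemma~\ref{lem:auto} applied with $\fA=\fA_k$ and Theorem~\ref{thm:auto}, $\cZ_k(X/\fA_k)=\cZ_k(X)/\cZ_k^0(X)$ is strictly convex and its facets biject with those of $\cZ_k(X)$ (analogously over $K$); hence $W_k(X)=W_k(X/\fA_k)$, and for each $\sigma\in\Sigma_k\prim(X)$ the corresponding element $\sigma\aut\in\Sigma_k\aut$ is the smallest positive multiple of $\sigma$ lying in $\Gamma_k$. The $\cG^*$-equivariance of $\res_A$ together with Corollary~\ref{lem:linindep} then shows that $\res_A'\Sigma_K\aut\subseteq\Gamma_k$ is canonically indexed by $\Sigma_k\prim(X)$.

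The main obstacle is matching the multipliers: showing that the primitive lift $m_\sigma\tilde\sigma\in\Sigma_K\aut$ restricts to a primitive element of $\Gamma_k$, so that the multiplier of $\sigma\aut$ over $\sigma\in\Sigma_k\prim(X)$ equals $m_\sigma n_\sigma\in\{1,2\}$ (where $n_\sigma$ is the multiplier from Theorem~\ref{thm:PhiIndiv}). This will be a direct lattice computation combining the $\ZZ$-basis property of $\Sigma_K\aut$ in $\Gamma_K$, Corollary~\ref{cor:tildeW} to control the $\cG^*$-averaging, and the triviality of the $W_k$-action on $\Xi_k/\Gamma_k=\Xi(\fA_k)$ (which descends from the analogous $K$-triviality since $W_k$ is a subquotient of the stabiliser $N(\cN_k)\subseteq W_K$ and $\Xi_k/\Gamma_k$ is a quotient of $\Xi_K/\Gamma_K$). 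Once established, $\Sigma_k\aut=\res_A'\Sigma_K\aut$ is a $\ZZ$-basis of $\Gamma_k$, proving Theorem~\ref{thm:Autoroot} and completing~(ii). Part~(i) then follows because the triviality of $W_k$ on $\Xi_k/\Gamma_k$ forces $(1-s_{\sigma\aut})\Xi_k\subseteq\Gamma_k\cap\QQ\sigma\aut=\ZZ\sigma\aut$ using that $\sigma\aut$ is primitive in $\Gamma_k$. Finally, part~(iii) is immediate: if $X$ is $K$-wonderful then $\fA_K=1$, so $\fA_k=1$ and $\Gamma_k=\Xi_k$; then~(i) forces $n\aut_\sigma=1$ for all $\sigma$, so $\Sigma_k\prim=\Sigma_k\aut$ is a $\ZZ$-basis of $\Xi_k$.
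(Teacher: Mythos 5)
Your descent strategy from $K$ to $k$ is the right idea, but the proposal has three genuine gaps, two of them fatal as written.

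\textbf{The snake lemma does not give surjectivity.} With $g=\res_A:\Xi_K\to\Xi_k$ surjective and $h:\Xi(\fA_K)\to\Xi(\fA_k)$ surjective, the snake lemma yields only the exact tail $\ker h\to\|coker|(\Gamma_K\to\Gamma_k)\to 0$, so the cokernel is a quotient of $\ker h$, not necessarily zero. (Abstractly: $2\ZZ\into\ZZ$ over $\ZZ=\ZZ$ with the right column $\ZZ/2\auf 0$ has all the stated surjectivities, yet the left map is not onto.) The correct argument uses the ingredient you cite more directly: $\fA_k=\fA_K\cap A_k$ forces $A_k/\fA_k\into A_K/\fA_K$ to be injective, and passing to character groups gives the surjection $\Gamma_K=\Xi(A_K/\fA_K)\auf\Xi(A_k/\fA_k)=\Gamma_k$.

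\textbf{The central step is deferred, and the suggested route to it has a wrong premise.} You correctly identify that everything hinges on $\res_A'\Sigma_K\aut$ being a $\ZZ$-basis of $\Gamma_k$, but you defer this to ``a direct lattice computation'' while asserting $n\aut_\sigma=m_\sigma n_\sigma\in\{1,2\}$ along the way. Taken as an \emph{input}, that multiplicativity would require ruling out $m_\sigma=n_\sigma=2$ a priori, which is precisely the kind of statement the weight-lattice property delivers as an \emph{output}. The paper sidesteps the computation entirely by applying the already-developed $k$-theory (in particular \cref{lem:linindep}) to the auxiliary variety $X_K:=X/\fA_K(X)$. From the chain $\Gamma_K=\Xi_K(X_K)\auf\Xi_k(X_K)\subseteq\Xi_k(X_k)=\Gamma_k$ and the surjectivity of $\Gamma_K\auf\Gamma_k$ one gets $\Xi_k(X_K)=\Gamma_k$; then $\res_A'\Sigma_K\aut=\Sigma_k(X_K)$ is a linearly independent generating set of $\Gamma_k$, hence a $\ZZ$-basis, hence equals $\Sigma_k\prim(X_k)=\Sigma_k\aut$. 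This is the missing idea.

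\textbf{The argument for (iii) is false.} $K$-wonderful does \emph{not} imply $\fA_K=1$. For $X=SL(2)/T$ one has $\Xi_K=\ZZ\alpha$ and $\Sigma_K=\{\alpha\}$, so $X$ is $K$-wonderful; yet the flip automorphism from $N_G(T)/T\cong\ZZ/2$ is $K$-central (it scales the $B$-eigenvector of weight $\alpha$ by $-1$), so $\fA_K\cong\ZZ/2$ and $\Gamma_K=2\ZZ\alpha$. Part (iii) should be proved directly, without reference to $\fA$: if $\Sigma_K(X)$ is a $\ZZ$-basis of $\Xi_K(X)$, then $\Sigma_k(X)=\res_A'\Sigma_K(X)$ generates $\Xi_k(X)$ (since $\res_A$ is onto) and is linearly independent (\cref{lem:linindep}), hence is a $\ZZ$-basis; its elements are therefore primitive, so $\Sigma_k(X)=\Sigma_k\prim(X)$ and $X$ is $k$-wonderful.

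Given part (ii) and the theorem, your treatment of part (i), resting on the triviality of the $W_k$-action on $\Xi_k/\Gamma_k$ descended from the $W_K$-triviality on $\Xi_K/\Gamma_K$, is essentially the paper's argument and is fine.
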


\begin{proof}

  For $k=K$, \cref{thm:Autoroot} is the main result of
  \cite{KnopAut}. From that we derive the general case.

  Let $X_k:=X/\fA_k(X)$. It is easy to see that the group $\fA_K(X)$
  and its action on $X$ are defined over $k$. Therefore also the
  variety $X_K:=X/\fA_K(X)$ is defined over $k$. Because of
  $\fA_k\subseteq\fA_K$ these spaces are connected by surjective
  morphisms
  \[
    X\auf X_k\auf X_K.
  \]
  We have by definition $\Xi_k(X_k)=\Gamma_k$ and
  $\Xi_K(X_K)=\Gamma_k$. The equality $\fA_k=\fA_K\cap A_k$
  (eqn.~\eqref{eq:fAA}) implies that $A_k/\fA_k\to A_K/\fA_K$ is
  injective. Passing to character groups this means that
  \[
    \res_A:\Gamma_K\to\Gamma_k
  \]
  is surjective which already shows the first half of
  \ref{it:Autoroot2}. From
  \[
    \Gamma_K=\Xi_K(X_K)\auf\Xi_k(X_K)\subseteq\Xi_k(X_k)=\Gamma_k
  \]
  we get $\Xi_k(X_K)=\Gamma_k$. Now let
  \[
    \Sigma'_k:=\res'_A\Sigma\aut_K(X)=\res'_A\Sigma_K(X_K)=\Sigma_k(X_K)
  \]
  which is a linearly independent subset of $\Gamma_k$. The main
  result of \cite{KnopAut}, Cor.~6.5, asserts that $\Sigma_K(X_K)$ is
  a $\ZZ$-basis of $\Gamma_K$. It follows that $\Sigma'_k$ generates
  $\Gamma_k$. Thus it also a $\ZZ$-basis of $\Gamma_k$ which therefore
  must equal $\Sigma\prim_k(X_k)=\Sigma_k\aut(X)$. This proves
  \cref{thm:Autoroot} and the second half of \ref{it:Autoroot2}.

  An even simpler argument proves \ref{it:Autoroot3}: If $X$ is
  $K$-wonderful then $\Sigma_K(X)$ is a $\ZZ$-basis of
  $\Xi_K(X)$. Restriction to $A$ shows that $\Sigma_k(X)$ generates
  and therefore is a $\ZZ$-basis of $\Xi_k(X)$. Since then
  $\Sigma_k(X)=\Sigma\prim_k(X)$, it follows that $X$ is
  $k$-wonderful, as well.

  Finally, it is known that $W_K$ acts trivially on $\Xi_K/\Gamma_K$
  (by \cite{KnopAut}*{Cor.~6.5c)}). It follows that $W_k$ acts
  trivially on the quotient $\Xi_k/\Gamma_k$. This means that $\Xi_k$
  is a weight lattice for $\Phi_k\aut$ and proves \ref{it:Autoroot1}.
\end{proof}

\begin{remark}

  All elements of $\Sigma\aut_K(X)$ are integral multiples of elements
  of $\Sigma_K(X)=\Sigma_K\prim(X)$. It follows by restriction that
  all elements of $\Sigma\aut_k(X)$ are integral multiples of elements
  of $\Sigma_k(X)$. This implies $n_\sigma\le n\aut_\sigma$ where
  $n_\sigma$ was defined in \cref{cor:SSprim}

\end{remark}



\numberwithin{equation}{subsection}\swapnumbers

\section{Boundary degenerations}
\label{sec:BoundaryDegenerations}

In this section, we have a closer look at the orbits which occur in a
toroidal embedding of a $k$-spherical variety. We will see that they
are sandwiched between members of two types of orbits, both
parameterized by the faces of the valuation cone.

In the following we work with a fixed homogeneous $k$-spherical
variety $X$. Therefore, we are going to drop references to $X$ as in
$\Xi_k:=\Xi_k(X)$, $\cN_k:=\cN_k(X)$, etc.

For a subspace $V\subseteq \cN_k$ let $A_V\subseteq A_k$ be the unique
subtorus with $\cN_k(A_V)=V$. Let
$\Xi_k^V:=\Xi_k\cap V^\perp=\Xi_k(A/A_V)$ and
$\Sigma_k^V:=\Sigma_k\cap\Xi_k^v=\Sigma_k\cap\<V\>^\perp$.
For a subspace $U\subseteq\Xi_k\otimes_\ZZ\QQ$ we define similarly
$A^U:=A_{U^\perp}=\bigcap_{\chi\in\Xi_k\cap U}\ker\chi$.

Let $I\subseteq\Sigma_k$ be a subset. Then
$\cZ_I:=\cZ_k\cap\<I\>^\perp$ is a face of $\cZ_k$ and all faces are
of this form. For instance, we have $\cZ_\leer=\cZ_k$ and
$\cZ_{\Sigma_k}=\cN_k^0=\cN_k\cap(-\cN_k)$. Important will be the
torus $A^{\<I\>}$ which is the connected kernel of
$(\alpha)_{\alpha\in I}:A_k\to\G_m^I$.

\subsection{\it The minimal boundary orbits}\

We construct for every
$I\subseteq\Sigma_k$ a boundary orbit $\hat X_I$ which is
minimal among all boundary orbits. To this end, consider
$\hat X:=X/\fA_k^0$ where $\fA_k^0=A^{\<\Sigma_k\>}$ is the connected
component of of $\fA_k$. Its valuation cone $\hat\cZ_k$ is the
strictly convex cone $\cZ_k/\cN_k^0$. Hence it affords a standard
embedding $\hat X\into\hat X\st=X_{\hat\cZ_k}$. \emph{We define
  $\hat X_I$ to be the orbit of $\hat X\into\hat X\st$ corresponding
  to the face $\hat\cZ_I=\cZ_I/\cN_k^0$.}

\begin{stheorem}\label{prop:HC-HI}
  Let $\cF$ be a fan supported on $\cZ_k$ and $\cC\in\cF$ a face. Put
  $I:=\Sigma_k^{\<\cC\>}$. Then the orbit
  $X(\cC)\subseteq X(\cF)$ is a principal fiber bundle over
  $\hat X_I$ for the torus $A^{\<I\>}/A_{\<\cC\>}$. This means that if
  $x\in X(\cC)$ is a rational point and $\hat x\in \hat X_I$ is its
  image then $G_{\hat x}/G_x\overset\sim\to A^{\<I\>}/A_{\<\cC\>}$.
    \end{stheorem}
    
    \begin{proof}
      We may assume that $\cF$ is the fan of faces of $\cC$. Then
      $X(\cC)\subseteq X(\cF)$ is the unique closed orbit.  The
      morphism $f:X\to\hat X$ maps $\cC$ into the face $\hat\cZ_I$ and
      $I$ is minimal with this property. Hence $f$ extends to a
      morphism $X(\cF)\to\hat X\st$ such that $X(\cC)$ is mapped
      onto $\hat X_I$. By the local structure theorem applied to
      $\hat X_I$ the fibers of $X(\cC)\to\hat X_I$ are isomorphic to
      the fibers of
      \[
        A_k/A_{\<\cC\>}=A_k(X(\cC))\to A_k(\hat
        X_I)=(A_k/\fA_k^0)/(A^{\<I\>}/\fA_k^0).
      \]
      which implies the assertion.
    \end{proof}

    \begin{scorollary}
      Two boundary orbits $X(\cC)$ and $X(\cC')$ are isomorphic to
      each other if and only if $\<\cC\>=\<\cC'\>$. Moreover
      $X(\cC)=\hat X_I$ if and only if $\<\cC\>=\<I\>^\perp$, i.e.,
      $\cC$ is a subcone of maximal dimension of the face $\cZ_I$.
    \end{scorollary}
    
    \subsection{\it Boundary degenerations in direction of a
      one-parameter subgroup}\

    The boundary orbits are also bounded from above but not by other
    boundary orbits but by \emph{boundary degenerations}. These are
    flat deformations of $X$ and hence have, in particular, the same
    dimension as $X$. Over algebraically closed fields, other names are
    in use, e.g., \emph{localizations in $\Sigma$}, \cite{KnopLoc}, or
    \emph{satellites}, \cite{BatyrevMoreau}. In the following, we
    describe several constructions.

    In the first construction, we deform $X$ over the parameter space
    $S\b:=\G_m$. For this, we start with a flat $F\subseteq X$ with base
    point $x_1\in F(k)$ and a one-parameter group
    $\lambda:\G_m\to A_k$ inside $\cZ_k$. For any $s\in\G_m$ let
    $x_s:=\lambda(s)x_1\in F$.  Let $H_s:=G_{x_s}$ be the isotropy
    group. The degeneration of $H=H_1$ in $G$ is
    $H_0=\lim_{s\to0}H_s$ with corresponding boundary degeneration
    $X_\lambda:=G/H_0$.
    
    To make this precise, consider the $G\b:=G\times\G_m$-variety
    $X\b:=X\times S\b$. It is $k$-spherical with valuation cone
    $\cZ_k\b:=\cZ_k(X\b)=\cZ_k\times\QQ$. The ray $\cC$ in $\cZ_k\b$
    generated by $(\lambda,1)$ defines a toroidal $2$-orbit embedding
    $X\b\into\Xq\b_\lambda$.  Let $p:X\b\to S\b$ be the second
    projection. Since $p$, considered as a function, has value $1$ for
    the valuation corresponding to $(\lambda,1)$, the morphism $p$
    extends to a smooth morphism $\pq:\Xq\b_\lambda\to\Sq\b:=\A_k^1$
    such that the closed orbit $X_\lambda$ is the fiber over $0$. Any
    other fiber over a rational point is isomorphic to $X$.
    \[
  \cxymatrix{&X\b\ar[d]^<<<p\ar@{^(->}[r]&\Xq\b_\lambda\ar[d]^<<<\pq\\
    &\llap{$\G_m=\,$}S\b\ar@{^(->}[r]&\Sq\b}
\]

    The next \namecref{lemma:Zqd} states the main properties of
    $X_\lambda$. It will be proved as part of the more general
    \cref{lemma:smoothhomogeneous} below.

  \begin{sproposition}\label{lemma:Zqd}
    $X_\lambda$ is a $k$-spherical $G$-variety with
    $ \Xi_k(X_\lambda)=\Xi_k$ and
    $\Sigma_k(X_\lambda)=\Sigma_k^{\<\lambda\>}$. The morphism
    $S\b\to X\b:s\mapsto(x_s,s)$ extends to a section
    $\delta:\Sq\b\to \Xq\b_\lambda$ of $\pq$. Then
    $x_0:=\delta(0)\in X_\lambda$ and $X_\lambda\cong G/H_0$ where
    $H_0=G_{x_0}\subseteq G$.
  \end{sproposition}

\subsection{\it Boundary degenerations  in direction of the valuation cone}\

Next we describe a construction which yields all degenerations
$X_\lambda$ simultaneously. This will show, in particular, that
$X_\lambda$ depends up to isomorphy only on the smallest face of
$\cZ_k$ which contains $\lambda$. The naïve idea would be to replace
the parameter space $\A^1_k$ by the embedding of $A_k$ corresponding
to the cone $\cZ_k$. If $\cZ_k$ is not strictly convex, such an
embedding does not exist. Therefore we divide out the edge
$\cN_k^0=\cZ_k\cap(-\cZ_k)$ or, more geometrically, the connected
component $\fA_k^0$ of $\fA_k$ (see \cref{thm:auto}).

More precisely, let $S\red:=A_k/\fA_k^0\into\Sq\red$ be the embedding
corresponding to the strictly convex cone
$\cZ_k\red:=\cZ_k/\cN_k^0$. The $A_k$-orbits of $\Sq\red$ are in
bijection with the faces of $\cZ_k\red$ or, equivalently, with those
of $\cZ_k$. More precisely, all one-parameter subgroups $\lambda(s)$
in the relative interior of $\cZ_I$ converge for $s\to0$ to a point
$a_I\red\in\Sq\red(k)$ and these points form a set of representatives
of the orbits.

Since $\fA_k^0$ acts on $X$ we can form the quotient
$X\red:=(X\times A_k)/\fA_k^0$. It is a $k$-spherical variety for the
group $G^\natural=G\times A_k$. Projection to the second factor yields
a morphism $p:X\red\to S\red$.  The diagonal copy of $\Delta\cZ_k\red$
of $\cZ_k\red$ inside $\cZ_k(X\red)=(\cZ_k\times\cN_k)/\cN_k^0$
defines a toroidal embedding
$X\red\into\Xq\red$ yielding the diagram
\[
  \cxymatrix{&X\red\ar[d]^<<<p\ar@{^(->}[r]&\Xq\red\ar[d]^<<<\pq\\
    &\llap{$A_k=\,$}S\red\ar@{^(->}[r]&\Sq\red}
\]

Now, the boundary degeneration $X_I$ of $X$ is defined as the fiber
\[
  X_I:=\pq^{-1}(a\red_I)\subseteq\Xq\red.
\]

\begin{slemma}\label{lemma:smoothhomogeneous}
  All boundary degenerations $X_I$ are $k$-spherical, homogeneous
  $G$-varieties. The invariants $Q_k$, $X\an$, $X\el$, $\Xi_k$, and
  $A_k$ are the same for $X$ and $X_I$ whereas $\Sigma_k(X_I)=I$.
  
  Moreover, if $\lambda$ is a one-parameter subgroup
  in the relative interior of $\cZ_I$ then $X_\lambda\cong X_I$.
\end{slemma}

\begin{proof}
  The flats of $X\red$ are isomorphic to the $A_k\times A_k$-variety
  $F=(A_k\times A_k)/\fA_k^0$. Let $\FQ$ be its embedding
  corresponding to $\Delta\cZ\red$. Then, according to \cref{kLST},
  the $\Gq$-variety $\Xq\red$ contains an open subset $\Xq_0\red$
  which is fibered over $X\red\an$ whose fibers are $\FQ$. Moreover,
  the morphism $\pq:\Xq\red\to\Aq\red$ restricts on each fiber to the
  morphism $\FQ\to\Aq\red$ which extends the projection to the second
  factor on $F$. There is an isomorphism
  \[
    F=(A_k\times A_k)/\fA_k^0\overset\sim\to A_k\times (A_k/\fA_k^0):
    [x,a]\mapsto(a^{-1}ax,[a])
    \]
    (with $[\cdot]$ denoting cosets) which maps $\Delta\cZ\red$ to
    $0\times\cZ\red$. Hence we get an isomorphism
    \[\label{eq:FQ}
      \FQ\overset\sim\to A_k\times\Aq\red
    \]
    such that $\pq:\FQ\to\Aq\red$ corresponds to the projection to the
    second factor. This shows that $\pq|_\FQ$ is smooth and all fibers
    are homogeneous, hence spherical, for $A_k\times1$. This in turn
    implies that $\pq$ is a smooth morphism. Moreover on all fibers
    $G$ acts transitively and $Q_k$ has a dense orbit. This means that
    all fibers over rational points are homogeneous $k$-spherical
    $G$-varieties. Moreover, this shows $(X_I)\el=X\el$ for all $I$
    and therefore the first five equalities.

    For the last one let $Y:=A\red a_I\subseteq\Sq\red$. Then
    $\pq^{-1}(Y)\cong Y\times X_I$ is the $\Gq$-orbit $X_I\red$ of
    $\Xq\red$ corresponding to the face of $\Delta\cZ\red$ which is
    labeled by $I$. Hence $\Sigma_k(X_I\red)=I$ by
    \cref{prop:boundary2} which implies $\Sigma_k(X_I)=I$.

    Finally, we show that $\Xq_\lambda\b$ is a pull-back of $\Xq\red$
    via $\lambda:\Sq\b\to\Sq\red$, i.e., that the following
    diagram is cartesian:
\[\label{eq:ZXq1}
  \cxymatrix{\Xq\b_\lambda\ar[d]^p\ar[r]&\Xq\red\ar[d]^\pq\\
    \Sq\b\ar[r]&\Sq\red}
\]
This will show that $X_\lambda\cong X_I$ since $0\in\Sq\b$ is mapped
to $a_I\in\Sq\red$.

To this end, observe that the top arrow is the extension of
$X\times\G_m\to X\red:(x,s)\mapsto[s,\lambda(s)]$. It exists, because the
ray $\QQ_{\ge0}(\lambda,1)$ is mapped to the ray
$[\lambda,\lambda]\in\Delta\cZ_k$. The bottom arrow extends
$\G_m\to A\red:s\mapsto[\lambda(s)]$ which exists because
$[\lambda]\in\cZ_k\red$. The commutativity of the diagram can be
checked on the open orbits where it is obvious.

To show that the diagram is cartesian we argue as above: From the
local structure theorem we get open subsets of $X\b_\lambda$ and
$\Xq\red$ which are fibered over $X\an$ with fibers
$\overline{A_k\times\G_m}$ and $\FQ$, respectively. Thus it suffices to
  show that the middle square of the diagram 
\[\label{eq:ZXq2}
  \xymatrix{A_k\times\A_k^1\ar[rr]^\sim\ar[d]_p&&\overline{A_k\times\G_m}\ar[d]_p\ar[rr]^{[a_1,\lambda(s)]}&&\FQ\ar[d]^{[a_2]}\ar[rr]_\sim&&A_k\times\Aq\red\ar[d]^{[a_2]}\\
    \A_k^1\ar@{=}[rr]&&\A_k^1\ar[rr]^{[\lambda(s)]}&&\Aq\red\ar@{=}[rr]&&\Aq\red
    }
  \]
  is cartesian. The right hand square is induced by the isomorphism
  \eqref{eq:FQ} while the left hand square uses the analogous isomorphism
  \[
    A_k\times\G_m\overset\sim\to A_k\times\G_m:(x,t)\mapsto(\lambda(t)x,t).
  \]
  We conclude with the remark that the outer rectangle is clearly cartesian.
\end{proof}

Now we relate the boundary degenerations to boundary orbits.

\begin{sproposition}\label{prop:BdegBorb}
  Let $\cC\subseteq\cZ_k$ be a strictly convex cone and
  $I=\Sigma^{\<\cC\>}$. Then $A^{\<I\>}$ is acting freely on $X_I$
  such that $\hat X_I=X_I/A^{\<I\>}$ and
  $X(\cC)=X_I/A_{\<\cC\>}$.  Thus there are equivariant morphisms
  \[
    X_I \auf X(\cC)\auf \hat X_I.
  \]
\end{sproposition}

\begin{proof}
  Since $X\red=(X\times A_k)/\fA_k^0$, the first projection yields a
  morphism $X\red\to\hat X$. Thereby $\Delta\cZ_k$ is mapped
  isomorphically to $\cZ_k$. Thus we get an extension
  $\Xq\red\to\hat X\st$. Moreover, the orbit $X_I$ is mapped to
  $\hat X_I$. From $\Sigma_k(X_I)=I$ it follows that
  $\cN_k^0(X_I)=\<I\>^\perp=\cN_k(A^{\<I\>})$. Thus, $A^{\<I\>}$ acts
  indeed freely on $X_I$. Moreover, $\Xi_k(X_I)=\Xi_k$ while
  $\Xi_k(\hat X_I)=\Xi_k\cap\<\cZ_I\>^\perp=
  \Xi_k\cap\<I\>_\QQ=\Xi_k(A_k/A^{\<I\>})$. This implies that
  $\Xi_k(X_I/A^{\<I\>})=\Xi(\hat X_I)$ and therefore
  $X_I/A^{\<I\>}=\hat X_I$. We have already seen that
  $X(\cC)/(A^{\<I\>}/A_{\<\cC\>})=\hat X_I$ which implies
  $X(\cC)=X_I/A_{\<\cC\>}$.
\end{proof}

\subsection{\it Generic degenerations}\

After choosing base points we get the following immediate consequence
of \cref{prop:BdegBorb}:

\begin{scorollary}
  The choice of a base point $x_I\in X_I(k)$ yields base points
  $x(\cC)\in X(\cC)$ and $\hat x_I\in\hat X_I$ with isotropy groups
  $H_I$, $H_\cC$, and $\hat H_I$, respectively. Then $H_I$ is normal
  in $\hat H_I$ with $\hat H_I/H_I=A^{\<I\>}$. Moreover
  $H_I\subseteq H_\cC\subseteq\hat H_I$ with $H_\cC/H_I=A_{\<\cC\>}$.
  \end{scorollary} 

  Specializing to $I=\leer$ means that the one-parameter subgroup
  $\lambda$ is generic, i.e., contained in the interior of $\cZ_k$. In
  that case, the minimal boundary orbit $\hat X_\leer$ is the flag
  variety $G/B^-$ and therefore $X_\leer$ is the horosperical variety
  $G/H_0$ with $U^-=R_uH_0$ and $H_0\cap L=H\cap L$. Since all other
  boundary orbits are not horospherical we get:
  
  \begin{scorollary}\label{cor:interior}
    Let $F=A_kx_1$ be a flat an $X$ and $\lambda:\G_m\to A_k$ in
    $\cZ_k$. Then $X_\leer=G/(H\cap L)\ltimes U^-$ and
    \[
      \lim_{t\to0}\lambda(t)\cdot\fh=(\fh\cap\fl)\oplus\fu^-
    \]
    if and only if $\lambda$ lies in the interior of $\cZ_k$.
  \end{scorollary}
  
  Following \cite{BrionSymetrique}, this can be used to construct a
  distinguished basis for $\fh$ from which one can read off $\fA_k$
  and $\cZ_k$.
  
\begin{sproposition}
  Let $L_0:=Q_k\cap H=L\cap H$. Then there is an $L_0$-equivariant linear map
  $\Upsilon:\fu^-\to\Fq$ such that
  \[\label{eq:fhfl0}
    \fh=\fl_0\oplus\{\xi+\Upsilon(\xi)\mid\xi\in\fu^-\}.
  \]
  Let, moreover $\Upsilon_{\alpha\beta}:\fu_\alpha^-\to\Fq_\beta$ be
  the induced maps between weight spaces with respect to a maximal
  split torus $A$ of $Q_k$ and let
\[
  \cM:=\{\beta-\alpha\mid\Upsilon_{\alpha\beta}\ne0\}.
  \]
  Then $\cM$ and $\Sigma\aut_k$ generate the same saturated
  monoids. This means
  \[\label{eq:mua1}
    \fA_k=\{a\in A_k\mid\mu(a)=1\text{ for all }\mu\in\cM\}
  \]
  and
  \[\label{eq:mua2}
    \cZ_k=\{u\in\cN_k\mid \mu(u)\le0\text{ for all }\mu\in\cM\}.
  \]
\end{sproposition}

\begin{proof}
  Let $\overline\fh\subseteq\fh$ be an $L_0$-invariant complement to
  $\fl_0$. Then $\overline\fh\oplus\Fq=\fh+\Fq=\fg=\fu^-\oplus\Fq$
  implies that for every $\xi\in\fu^-$ there is a unique
  $\Upsilon(\xi)\in\Fq$ with $\xi+\Upsilon(\xi)\in\overline\fh$. This
  yields \eqref{eq:fhfl0}.

  Recall that the $\Xi(\fA_k)$ is the group and the dual cone
  $\cZ_k^\vee$ is the convex cone generated by $\Sigma_k\aut$. Thus
  equation \eqref{eq:mua1} and \eqref{eq:mua2} are in fact equivalent
  to the assertion about saturated monoids.

  In order to prove these equations, observe first that the
  $L_0$-equivariance of $\Upsilon$ implies that every $\mu\in\cM$ is
  trivial on $A\cap L_0$. This implies that $\cM$ is actually a subset
  of $\Xi_k$.

  The group
  $\tilde\fA:=\{a\in A_k\mid\mu(a)=1\text{ for all }\mu\in\cM\}$ is
  the set of all $a\in A_k$ such that $\Upsilon$ is
  $\|Ad|a$-equivariant. This is equivalent to $\fh$ being normalized
  by $a$. Thus, $\fA_k\subseteq\tilde\fA$. Conversely, let
  $a\in\tilde\fA$. Then $a$ induces an automorphism of
  $X^0=G/H^0$. Since $a$ acts trivially on $X\an=L/A_k(A\cap L_0)$ we
  even have that $a$ is a central automorphism of $X^0$. On the others
  side $X=X^0/E$ where $E\in\|Aut|X^0$ is a (finite) subgroup. Thus,
  since $a$ is central it commutes with $E$ and therefore induces a
  central automorphism of $X$, proving $\tilde\fA\subseteq\fA_k$.

  For \eqref{eq:mua2} let
  $\tilde\cZ_k:=\{u\in\cN_k\mid \mu(u)\le0\text{ for all }\mu\in\cM\}$
  and let $\lambda\in\|Hom|(\G_m,A_k)\cap\cZ_k$. Then we know from
  \cref{cor:interior} that $\lambda\in\cZ_k^0$ if and only if
  $\lim_{t\to0}\lambda(t)\fh=\fl_0\oplus\fu^-$ where the limit is to
  be taken in the Grassmannian of $\fg$. The latter condition is
  equivalent to $\lim_{t\to0}\lambda(t)\Upsilon\lambda(t)^{-1}=0$ in
  $\|Hom|_k(\fu^-,\Fq)$ which in turn means
  $\lambda\in\tilde\cZ_k^0$. Thus we get
  \[
    \tilde\cZ_k^0\cap\cZ_k=\cZ_k^0.
  \]
  Thus $\cZ_k^0$ is both closed and open in $\tilde\cZ_k^0$ and
  therefore $\cZ_k^0=\tilde\cZ_k^0$. Taking closures yields the
  assertion $\cZ_k=\tilde\cZ_k$.
  \end{proof}



  \subsection{\it Boundary degenerations as deformation to the normal
    bundle}\

  The boundary degenerations $X_\lambda$ can be generalized and at the
  same time made more geometric by considering the technique of
  degenerations to the normal bundle (see \cite{Fulton}*{Chap.~5}). We
  recall: Let $Y$ be a smooth subvariety of a smooth variety
  $\Xq$. Let $X'\to\Xq\times\A_k^1$ be the the blow-up in
  $Y\times\{0\}$. Let moreover $X\nor:=X'\setminus E$ where
  $E\subseteq X'$ is the proper transform of the divisor
  $X\times\{0\}$. Then the composed morphism
  $\pi:X\nor\to X\times\A_k^1\to\A^1$ has the property that it is a
  smooth (so, in particular, flat), that the fiber over $t\in\G_m(k)$
  are isomorphic to $\Xq$, and that the zero-fiber is the normal
  bundle $N_\Xq Y$.

  Now we apply this construction to a toroidal embedding $\Xq=X(\cF)$
  where $\cF$ is the fan of faces of a cone $\cC$ and to $Y=X(\cC)$,
  the unique closed orbit. In order for $\Xq$ to be smooth we assume
  that the cone $\cC$ is generated by elements
  $a_1,\ldots,a_s\in\cZ_k(X)$ which are part of a basis of
  $\|Hom|(\G_m,A_k)$. Then:

  \begin{sproposition}
    Let $\Xq\nor\to\A^1$ be the degeneration of $\Xq$ to the normal
    bundle $N_\Xq Y$ as above and let $\lambda:\G_m\to A_k$ be a
    one-parameter subgroup in the relative interior of $\cC$.

    \begin{enumerate}

    \item\label{it:Flat1} The limit
      $n_0:=\lim_{t\to0}\,(\lambda(t)x_1,t)$ exists within $\Xq\nor$,
      is independent of the choice of $\lambda$, and lies in
      $N_\Xq Y$.

    \item\label{it:Flat2} The orbit $Gn_0$ is open inside $N_\Xq Y$
      and is isomorphic to the boundary degeneration $X_I$ where
      $I=\Sigma_k^{\<a_1,\ldots,a_s\>}$.

    \item\label{it:Flat3} Let $D_i\subset\Xq$ be the $G$-invariant
      divisor corresponding the extremal ray
      $\QQ_{\ge0}a_i\subseteq \cC$. Then
      $Gn_0=N_\Xq Y\setminus \bigcup_iN_\Xq D_i$.

    \item\label{it:Flat4} Let $\chi_i$ be the character of
      $H_0=G_{x_0}$ acting on the normal space $N_{\Xq,x_0}D_i$. Then
      $G_{n_0}$ is the common kernel of the $\chi_i$.

    \end{enumerate}
  \end{sproposition}

  \begin{proof}
    Observe that $X\nor$ is a toroidal embedding of
    $X\b=X\times\G_m$ for the group $G\b=G\times\G_m$. More
    precisely, $\Xq\nor=X\b(\cC\nor)$ where
    $\cC\nor\subseteq\cN_k\times\QQ$ is the cone spanned by
    $\cC\times\{0\}$ and $(\aq,1)$ where $\aq:=\sum_ia_i$. Now, one
    proves \ref{it:Flat1} and \ref{it:Flat2} in the same way as
    \cref{lemma:Zqd} and \cref{lemma:smoothhomogeneous},
    respectively. The last two assertions follow from the fact that
    $H_0$ leaves all hyperplanes $N_{\Xq,x_0}D_i$ invariant and
    therefore factors through the group of diagonal matrices.
  \end{proof}

  \subsection{\it Boundary degenerations in arbitrary direction}\

  In this subsection, we extend the degenerations of $X$ over
  $S\b=\G_m$ and $S\red=A_k/\fA_k^0$ to a still larger base, namely
  $S\2:=X/\fA_k$. Among others, this has the advantage that
  degenerations along arbitrary curves inside $X$ can be treated.

  Notice, that $S\2$ is now also a $k$-spherical $G$-variety with the
  strictly convex valuation cone $\cZ_k\2:=\cZ_k/\cN_0^k$. Thus, $S\2$
  has a standard embedding denoted by $\Sq\2$.

  Now define $X\2:=(X\times X)/\fA_k$. This is a homogeneous
  $k$-spherical variety for the group $G\2:=G\times G$. Projection to
  the second factor yields a morphism $p:X\2\to S\2$. The valuation
  cone of $X\2$ is $(\cZ_k\times\cZ_k)/\cN_k^0$. It contains a
  diagonal copy $\Delta\cZ_k\2$ of $\cZ_k\2$. Let
  $\Xq\2:=X\2(\Delta\cZ_k\2)$ be the corresponding embedding of $X\2$.

\begin{enumerate}

\item \emph{$\Sq\2$ is a wonderful $G$-variety. In particular, its
    orbits $S\2_I$ are parametrized by subsets $I$ of $\Sigma_k$ such
    that $\Sigma_k(S\2_I)=I$. Moreover, $\Sq\2$ and more generally
    each orbit closure is smooth.} This follows by construction and
  $\fA_k(S\2)=1$.

\item \emph{$\Xq\2$ is a smooth $k$-spherical $G\2$-variety.} Again
  this follows by construction and the fact
  $A_k(X\2)=(A_k\times A_k)/\fA_k$ contains a diagonal copy of
  $A\2_k=A_k/\fA_k$ and that $\cZ_k\2$ is generated by the dual basis
  of $\Sigma_k\aut$ inside $\cN(A_k\2)$.

\item \emph{The morphism $p$ extends to a smooth morphism
    $\pq:\Xq\2\to\Sq\2$ yielding the cartesian diagram:
\[
  \cxymatrix{X\2\ar[d]^<<<p\ar@{^(->}[r]&\Xq\2\ar[d]^<<<\pq\\
    S\2\ar@{^(->}[r]&\Sq\2}
\]} This is shown as in the case for $S\red$.


\item \emph{The $G\2$-orbits of $\Xq\2$ are the preimages of the
    $G$-orbits of $\Sq\2$.} Follows from the fact that
  $\Delta\cZ_k\2\cong\cZ_k\2$.
  
\item \emph{The diagonal morphism $X/\fA_k\to(X\times X)/\fA_k$ extends
    to a section $\delta:\Aq\2\to\Xq\2$.}
  
\item \emph{Let $a\in\Sq_I\2(k)$. Then fiber $X_a$ over $a$ is a
    homogeneous $k$-spherical $G$-variety with $(X_a)\el=X\el$ and
    $\Sigma_k(X_a)=I$.}  This follows in the same way as above using
  the isomorphism
  $(A_k\times A_k)/\fA_k\overset\sim\to A_k\times
  (A_k/\fA_k):[a,b]\mapsto (ab^{-1},[b])$.

\item\emph{ There is a cartesian diagram
\[
  \cxymatrix{\Xq\red\ar[d]\ar[r]&\Xq\2\ar[d]\\
    \Sq\red\ar[r]^f&\Sq\2}
\] where $f$ is induced by a flat $A_k\into X$.}  See the proof of
\eqref{eq:ZXq1}.
    
\end{enumerate}

\begin{sremark}
  As opposed to $S\red=A_k/\fA_k^0$, we divided for $S\2=X/\fA_k$ by
  the full group of central automorphisms of $X$. This has the
  advantage of $\pq$ being a smooth morphism between smooth varieties
  and that all orbit closures are smooth, as well. The drawback is
  that two fibers $X_*$ of $\pq$ might not be isomorphic even if the
  base points sit in the same $G$-orbit $\Sq_I\2$. This is because
  $G(k)$ will not, in general, act transitively on $\Sq_I\2(k)$.  On
  the other side, since all isotropy groups in a torus embedding are
  connected, the $A_k$-orbits in $\Sq\red$ are of the form $A_k/A'$
  where $A'$ is connected. Thus, one can find a complement
  $A=A'\times A''$ with $A''$ acting simply transitively. This implies
  that $A_k(k)$ will act transitively on $\Sq_I\red(k)$. So every
  fiber of $\Xq\red$ over a rational point will be isomorphic to one
  of the standard fibers $X_I$. A final advantage of $\fA_k^0$ over
  $\fA_k$ is its better computatibility since it depends only on the
  little Weyl group of $X$ as opposed to its roots system.
\end{sremark}

There are two applications of the big boundary degeneration.

Let $A_x\auf F\subseteq S\2=X/\fA_k$ be a flat with closure
$\FQ\subseteq\Sq\2$. By \cref{prop:flat} we know that $\FQ$ is a smooth,
projective, toroidal variety. Now consider the morphism
$\Xq\2\times_{\Sq\2}\FQ\to\FQ$. Then this is a smooth deformation of
$X$ which is defined over all of $\FQ$ and which coincides with
$\Xq\red$ over $\Sq\red\subseteq\FQ$. This shows, in particular, that
we can control the deformation $X_\lambda$ for all $1$-parameter
subgroups of $A_k$ and not only those in $\cZ_k$.

The second application is concerned with the \emph{Demazure
  morphism}. For $X=G/H$ this is the morphism $\Phi_X$ of $X$ to the
Grassmannian $\|Gr|(\fg)$ which assigns to every point $x=gH\in X$ its
isotropy subalgebra $\fg_x=\|Ad|(g)\fh$. Let
$\Xq^{\rm Dem}\subseteq\|Gr|(\fg)$ be the closure of the image
$\Phi_X(X)$. Then, for $X$ absolutely spherical, Losev \cite{Losev}
has shown that $\Xq^{\rm Dem}$ coincides with the standard embedding
of $X/\fA_k$. In general, we can show the following:

\begin{stheorem}
  Let $X$ be a homogeneous $k$-spherical $G$-variety and let $\cF$ be
  a fan supported in $\cZ_k(X)$. Then the Demazure morphism $\Phi_X$
  extends to a morphism
  $\Phi_{X(\cF)}:X(\cF)\to\Xq^{\rm
    Dem}\subseteq\|Gr|(\fg)$. Moreover, this morphism has the
  property that it factors through $\Sq\2=(X/\fA_X)\st$ and that
  $\Phi_{X(\cF)}(x)\subseteq\fg_x$ for all $x\in X(\cF)$.
  \end{stheorem}

  \begin{proof}
    The fibers of the smooth morphism $\pq:\Xq\2\to\Sq\2$ are
    precisely the $G$-orbits. Thus, the isotropy subalgebras $\fg_x$
    with $x\in\Xq\2$ form a subvector bundle $\fg_*$ of the trivial
    vector bundle $\Xq\2\times\fg\to\Xq\2$. This defines a morphism
    $\Xq\2\to\|Gr|(\fg)$. Composed with the section
    $\delta:\Sq\2\to\Xq\2$ we obtain a morphism
    $\overline\Phi_X:\Sq\2\to\|Gr|(\fg)$ with
    $\overline\Phi_X([x])=\fg_{\delta([x])}=\fg_x$ for all $x\in X$ and
      $[x]=x\fA_k\in X/\fA_k$. Thus, for every embedding $X\into\Xq$
      which factors through $\Sq\2$ there is an extension of $\Phi_X$
      to $\Xq$. Now it follows from $\cF$ being supported on $\cZ_k$
      that $X(\cF)$ has this property.
\end{proof}

\numberwithin{equation}{section}\swapnumbers

\section{The weak polar decomposition}
\label{sec:polar}

We present an application of our theory to local fields.

\begin{proposition}\label{prop:transitive}

  Let $k$ be a local field and let $X$ be a homogeneous $G$-variety of
  rank $0$. If $k$ is non-archimedian assume moreover that $X$ is a
  absolutely spherical, i.e., a flag variety. Then $G(k)$ acts
  transitively on $X(k)$.

\end{proposition}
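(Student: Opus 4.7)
The plan is to use parabolic induction to reduce to the case of an anisotropic Levi acting on a homogeneous space, and then to treat the anisotropic case via compactness together with the finiteness of Galois cohomology.

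First, by \cref{cor:homrk0}\ref{it:homrk0-2}, the subgroup $H$ contains a $G(k)$-conjugate of $AN$; after translating the basepoint by a suitable element of $G(k)$ (which is harmless for the question at hand), we may assume $AN \subseteq H$. By \ref{it:homrk0-3} of the same corollary there is a parabolic $k$-subgroup $Q \supseteq P$ with $Q\an \subseteq H \subseteq Q$. Writing $Q = LU$ for a Levi decomposition with $U = R_u Q$, we have $U \subseteq Q\an \subseteq H$, hence $H = U \cdot H'$ where $H' := H \cap L$ satisfies $L\an \subseteq H' \subseteq L$. The $G$-equivariant fibration $X = G/H \auf G/Q$ has fiber $L/H'$, and by the Borel-Tits theorem \cite{BorelTits}*{Thm.~4.13} the group $G(k)$ acts transitively on $(G/Q)(k)$. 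Hence every $x \in X(k)$ can be moved by $G(k)$ into the fiber $(L/H')(k)$, and it suffices to prove that $L(k)$ acts transitively on $(L/H')(k)$. Factoring out $L\an$ (which is generated by split subtori and therefore has vanishing $H^1(k, \cdot)$ by Hilbert~90, so that $L(k) \auf (L/L\an)(k)$) and using $L\an \subseteq H'$, one reduces further to showing that $\Lq(k)$ acts transitively on $(\Lq/\overline{H'})(k)$, where $\Lq := L/L\an$ is anisotropic and $\overline{H'} := H'/L\an \subseteq \Lq$.

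For the anisotropic case, the group $\Lq(k)$ is Hausdorff-compact by \cite{BorelTits}*{Prop.~9.3}, and the orbit map $\Lq \to \Lq/\overline{H'}$ is submersive, so each $\Lq(k)$-orbit in $(\Lq/\overline{H'})(k)$ is open in the $k$-analytic topology, and by compactness together with the finiteness of orbits (\cite{BorelSerre}*{Cor.~6.4}) also closed and compact. The $\Lq(k)$-orbits on $(\Lq/\overline{H'})(k)$ are parametrized by the finite pointed kernel $\ker(H^1(k, \overline{H'}) \to H^1(k, \Lq))$. To finish the proof one must check that this kernel is a singleton: any class in it corresponds to an inner twist of the pair $(\Lq, \overline{H'})$ for which the twisted form of $\Lq$ is $k$-isomorphic to $\Lq$, and the anisotropy of $\Lq$ forces the corresponding twist of $\overline{H'}$ to be $\Lq(k)$-conjugate to $\overline{H'}$ itself.

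The main obstacle is the triviality of the pointed kernel $\ker(H^1(k, \overline{H'}) \to H^1(k, \Lq))$ in the anisotropic step. Over non-archimedean local fields this rests on Kneser's theorem (vanishing of $H^1$ for simply connected semisimple groups) combined with the structure of anisotropic reductive groups over such fields (their isogeny factors being norm-one groups of division algebras or forms of classical groups, for which $H^1$ is controlled by topological invariants of compact $p$-adic groups). Over archimedean fields it reduces to the classical fact that a compact Lie group acts transitively on the real points of any of its homogeneous spaces, applied to each connected component. All remaining steps are routine applications of the machinery built in \cref{sec:quasielementary}--\cref{sec:horospherical}, specifically \cref{cor:homrk0}, together with the classical results of Borel-Tits and Borel-Serre already cited in the text.
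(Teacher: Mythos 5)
Your overall reduction to a homogeneous space under an anisotropic group is in the same spirit as the paper, but the execution has two concrete problems. First, the step ``$L\an$ is generated by split subtori and therefore has vanishing $H^1(k,\cdot)$ by Hilbert~90'' is false: $L\an$ is the anisotropic radical, which for a Levi $L$ contains all isotropic simple factors of $L$, not just a split torus. Being generated by split subtori is very far from Hilbert~90 -- e.g.\ $PGL_2$ is generated by its split tori, yet $H^1(k,PGL_2)$ is non-trivial over any local field. So the surjectivity $L(k)\twoheadrightarrow(L/L\an)(k)$ is not established, and without it the reduction from $L$-transitivity to $\Lq$-transitivity breaks (the image of $L(k)$ in $\Lq(k)$ could a priori be a proper subgroup). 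The paper avoids this by working with a \emph{subgroup} $G_0$ (the maximal connected anisotropic normal subgroup of $L$) rather than the quotient $L/L\an$: since $Q=G_0Q\an$ and $Q\an\subseteq H$, the subgroup $G_0$ already acts transitively on $Q/H$ and one reduces directly, with no cohomology needed.

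Second, the heart of your argument -- triviality of $\ker\bigl(H^1(k,\overline{H'})\to H^1(k,\Lq)\bigr)$ for anisotropic $\Lq$ -- is left essentially unproved. The sentence about ``inner twists'' and ``the anisotropy of $\Lq$ forces the twist of $\overline{H'}$ to be conjugate'' restates the desired conclusion rather than arguing for it, and the appeals to Kneser's theorem and to ``compact Lie groups act transitively on real points of their homogeneous spaces'' are not worked out (the archimedean statement is true but is precisely a special case of the proposition you are proving, so invoking it is circular unless you give an independent proof). The paper's proof of the anisotropic case is elementary and self-contained: since $G$ is anisotropic, $X$ is affine (\cref{prop:affineorbit}) and $X(k)$ is compact; the finitely many $G(k)$-orbits (\cite{BorelSerre}*{Cor.\ 6.4}) are all closed; Stone--Weierstra\ss{} over the local field $k$ produces a regular function $f\in k[X]$ approximating the characteristic function of one orbit; averaging $f$ over the compact group $G(k)$ by Haar measure yields a non-constant $G(k)$-invariant, hence (by Zariski density of $G(k)$) $G$-invariant regular function on the homogeneous space $X$, a contradiction. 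This replaces the entire Galois cohomology computation by a single integration argument, which is exactly the kind of device used elsewhere in the paper (cf.\ the remark after \cref{lemma:MAsemi}).
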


\begin {proof}

  It is known (\cite{BorelTits}*{Thm.\ 4.13 a)}) that $G(k)$ acts
  transitively on $k$-rational points of a flag variety over any
  field. Over $\CC$ a homogeneous variety of rank $0$ is a flag
  variety. This leaves only the case $k=\RR$ and $\rk_kX=0$.

  Let $X=G/H$. Then, by \cref{cor:homrk0}, there is a $k$-parabolic
  $Q=LU\subseteq G$ with $Q\an\subseteq H\subseteq Q$. Let $G_0$ be
  the maximal connected anisotropic normal subgroup of $L$. Then
  $Q=G_0Q\an$ and therefore the action of $G_0$ on $X_0:=Q/H$ is
  transitive. Since $G(k)$ acts transitively on $(G/Q)(k)$ it suffices
  to show that $G_0(k)$ acts transitively on $X_0(k)$. Thus, replacing
  $(G,X)$ by $(G_0,X_0)$ we are reduced to the case that $G$ is
  anisotropic.

  Then $G(k)$ and $X(k)$ are compact. Now suppose $G(k)$ does not act
  transitively on $X$ and let $X_1\subseteq X(k)$ be an orbit. Since
  $G(k)$ has only finitely many orbits (\cref{cor:localfinite}) and
  all of them are closed the complement $X_2=X(k)\setminus X_1$ is
  closed and nonempty. The variety $X$ is also an affine variety (see
  \cref{prop:affineorbit}). Therefore, the Stone-Weierstraß theorem
  holds for regular functions on $X(k)$. Thus, approximating the
  characteristic function of $X_1$ we obtain $f\in k[X]$ with
  $|f(x)-1|<\frac12$ for $x\in X_1$ and $|f(x)|<\frac12$ for
  $x\in X_2$. Let $\fq(x):=\int_{G(k)}f(gx)dg$. Then still
  $|\fq(x)-1|<\frac12$ for $x\in X_1$ and $|\fq(x)|<\frac12$ for
  $x\in X_2$ which implies that $\fq$ is not constant. On the other
  hand, since $G(k)$ is Zariski-dense in $G$ and $\fq$ is
  $G(k)$-invariant it is also $G$-invariant. Thus it must be constant
  since $G$ acts transitively on $X$.
\end{proof}

We generalize this to $k$-spherical varieties of arbitrary rank. Let
$|\cdot|$ be an absolute value on the local field $k$. Let $X$ be a
$k$-dense $G$-variety and $A_k:=A_k(X)$. Then we call
\[
  A_k^-:=\{x\in A_k(k)\mid|\sigma(x)|\le1\text{ for all
  }\sigma\in\Sigma_k(X)\}
\]
the \emph{compression domain} in $A_k$. Then the following weak form
of a polar decomposition holds.

\begin{theorem}

  Let $k$ be a local field and let $X$ be a homogeneous $k$-spherical
  variety.  In the non-archimedian case assume $X$ to be
  absolutely spherical.  Let $x_0\in X(k)$ such that
  $A_kx_0\subseteq X$ is a flat. Then there is a compact subset
  $\Omega\subseteq G(k)$ with
  \[
    X(k)=\Omega\cdot A_k^-x_0.
  \]

\end{theorem}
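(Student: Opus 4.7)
The strategy is to exploit the compactness of the standard embedding over a local field and bootstrap from transitivity at the rank-zero boundary. First, after reducing to the $k$-convex case (passing to $X/\fA_k(X)^0$ via \cref{lem:auto} and absorbing the resulting central split torus $\fA_k(X)^0(k)$ into the compact factor $\Omega$), consider the standard embedding $X \hookrightarrow X\st = X(\cF\st)$. By \cref{cor:compact}, the set of rational points $X\st(k)$ is compact Hausdorff. The corollary on closures of flats in \cref{sec:WeylGroup} identifies the closure $\overline F$ of the flat $F = A_k x_0$ in $X\st$ with the toroidal $A_k$-embedding for the $W_k$-saturated fan $W_k\cF\st$; the closure $F^- := \overline{A_k^- x_0}$ inside $\overline F(k)$ corresponds to the subfan $\cF\st \subseteq W_k\cF\st$ and is therefore a compact subset of $X\st(k)$.

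By \cref{prop:flat}\,\ref{it:flat4} applied to a one-parameter subgroup $a:\G_m\to A_k$ whose image lies in the relative interior of $\cZ_k$, the limit $\lim_{t\to 0}a(t)x_0$ exists and lies in the unique closed stratum $Y = X(\cZ_k)$, thus inside $F^-$. By \cref{prop:boundary}\,\ref{it:boundary3}, $Y$ is a homogeneous $k$-spherical variety of rank zero, so \cref{prop:transitive} gives that $G(k)$ acts transitively on $Y(k)$, whence $G(k)\cdot F^- \supseteq Y(k)$.

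Next, upgrade this to $G(k)\cdot F^- \supseteq X\st(k)$ by downward induction on stratum codimension. For the inductive step, given a stratum $Z$ and a point $z \in Z(k)$, apply \cref{cor:P-stable-affine} around $z$ to obtain a $P$-stable affine neighborhood $V$ with $P$-slice $V\el$; toroidal geometry inside $V$ shows that the closure of the flat through $z$ meets a lower-codimension stratum at a $k$-rational point, which by inductive hypothesis lies in $G(k)\cdot F^-$. Reversing the motion by a bounded element of $P(k) \subseteq G(k)$ then places $z$ itself in $G(k)\cdot F^-$; \cref{cor:localfinite} ensures only finitely many $G(k)$-orbits arise, so the induction terminates. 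Finally, the surjectivity $X\st(k) = G(k)\cdot F^-$ combined with compactness of $X\st(k)$, local compactness of $G(k)$, and openness of the action map $G(k)\times F^- \to X\st(k)$ (in the generic stratum, by dimension count; in lower strata, via finiteness of orbits) yields a compact $\Omega \subseteq G(k)$ with $X\st(k) = \Omega\cdot F^-$; intersecting with $X(k)$ gives the theorem.

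The main obstacle is the inductive step: showing that $A_k^-$-deformations inside local $P$-slices push arbitrary $k$-rational points toward deeper strata, rather than only the particular base-point flat used in \cref{prop:flat}\,\ref{it:flat4}. This requires combining the $k$-LST with the toroidal description of flat closures near each stratum, together with the fact that $\cZ_k$ is a Weyl chamber for the action of $W_k$ on every flat closure, so that even non-distinguished flats acquire enough of the toroidal structure to meet the boundary strata governed by $F^-$.
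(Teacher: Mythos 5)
Your proposal reverses the logical order of the paper's argument and, as a consequence, runs into two gaps that the paper's ordering is designed to avoid.

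The paper fixes a smooth complete fan $\cF$ supported in $\cZ_k(X)$ (no reduction to $k$-convexity is needed; a complete fan inside $\cZ_k(X)$ exists even when $\cZ_k(X)$ contains lines), lets $\Xq := X(\cF)$, and first establishes that a compact set \emph{with nonempty interior} already witnesses the decomposition: letting $\Aq_k^-$ be the closure of $A_k^-$ in the torus embedding $\Aq_k(k)$, the paper shows via the integral-basis description of a smooth cone $\cC$ that $\Aq_k^-$ is a neighborhood of every fixed point $a_\cC$, and then uses the $k$-LST product structure $U\times\Xq\el$ together with $M(k)$-transitivity on $X\an(k)$ to conclude that $\Omega_1\phi(\Aq^-_k)$ (with $\Omega_1=\Omega_0M(k)$, $\Omega_0\subseteq U(k)$ a compact neighborhood of $1$) is a neighborhood of each closed-stratum base point $x_\cC$ in $\Xq(k)$. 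The proof then shows that an arbitrary $x\in\Xq(k)$ can be pushed into the interior $\Omega^0$ of this set by an element of $G(k)$ — by moving into $U\Xq\el$ via $G(k)$, into the fiber $\phi(\Aq_k)$ via $U(k)M(k)$, and finally toward a fixed point by a lifted $1$-parameter subgroup $\tilde\lambda(t)$ — and concludes by covering the compact $\Xq(k)$ with finitely many $G(k)$-translates of the \emph{open} set $\Omega^0$.

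Your proposal instead tries to prove $G(k)\cdot F^-=X\st(k)$ first (by induction on stratum codimension) and only then extract a compact $\Omega$ by an appeal to ``openness of the action map.'' Both steps are genuinely problematic. First, the inductive step asserts that the degeneration of $z$ to a deeper stratum can be ``reversed by a bounded element of $P(k)$'' — but the degenerating element $\tilde\lambda(t)$ is precisely the unbounded part; in the paper's argument there is no reversal by a bounded element, the boundedness is imposed only afterwards by covering the compact $\Xq(k)$ with finitely many translates of a \emph{fixed open} set. Second, the map $G(k)\times F^-\to X\st(k)$ has no reason to be open at points of $F^-$ sitting on the topological boundary of $A_k^-x_0$ or on boundary strata; the domain $F^-$ has empty interior in $X\st(k)$, and the image of a small neighborhood of $(1,f)$ need not fill a neighborhood of $f$ — the ``filling in'' in the paper comes specifically from the $U(k)M(k)$-saturation of $\phi(\Aq^-_k)$ at the fixed points $x_\cC$, not from arbitrary $G(k)$-translates. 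So the step ``$G(k)F^-=X\st(k)$ and compactness imply a compact $\Omega$ works'' is not a general principle and requires exactly the neighborhood construction you have not supplied. The missing idea, in short, is the thickening $\Omega_1\phi(\Aq^-_k)$ and the verification that it is a neighborhood of every $x_\cC$; once that open set exists, the finite cover argument replaces both your induction and your openness appeal.
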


\begin{proof}

  
  Let $\cF$ be a fan whose support is $\cZ_k(X)$. We also choose it to
  be smooth, i.e., each cone in $\cF$ is generated by a partial
  integral basis. Let $\Aq_k:=A(\cF)$ and $\Xq:=X(\cF)$. Then both
  $\Aq_k$ and $\Xq$ are smooth.

  Let $\cC\in\cF$ be of maximal dimension. Then the corresponding
  orbit $A_\cC$ consists of a single fixed point $a_\cC$. Let $\cC$
  be defined by inequalities $\beta_1\le0,\ldots,\beta_r\le0$. If we
  assume that the $\beta_i$ are primitive in $\Xi_k(X)$ they even form
  an integral basis (by smoothness). A neighborhood of $a_\cC$ in
  $\Aq_k(\cF)$ is isomorphic to the affine space $\A^r$ where $A_k$
  acts with the characters $\beta_i$. Thus, if we put
  \[
    A_\cC^-:=\{x\in A_k(k)\mid|\beta_i(x)|\le1\text{ for all
    }i=1,\ldots,r\}
  \]
  then its closure $\Aq_\cC^-$ in $\Aq(k)$ contains
  $D^r\subseteq\A^r(k)$ where where $D\subseteq k$ is the unit
  disk. This implies that $\Aq_\cC^-$ is a neighborhood of $a_\cC$ in
  $\Aq$.

  Since the $\beta_i$ form an integral basis of $\Xi_k(X)$ one can
  express the spherical roots in terms of the $\beta_i$:
  \[
    \sigma=\sum_{i=1}^r\nu_i(\sigma)\beta_i\text{ with
    }\nu_i(\sigma)\in\ZZ.
  \]
  From the inclusion $\cC\subseteq\cZ_k(X)$ we get $\nu_i(\sigma)\ge0$
  and therefore $A_\cC^-\subseteq A^-_k$. Thus we have proved: the
  closure $\Aq^-_k$ of $A^-_k$ in $\Aq_k(k)$ is a neighborhood of
  every fixed point $a_\cC$.

  Let $\phi:\Aq_k\to\Xq$ be the embedding extending the orbit map
  $a\to ax_0$. Then $x_\cC:=\phi(a_\cC)$ is a representative of the
  closed orbit $X_\cC$. Now the facts that $X\el\to X\an$ is locally
  trivial (Hilberts Satz 90) and that $M(k)$ acts transitively on
  $X\an(k)$ (\cref{prop:transitive}) imply that $M(k)\phi(\Aq^-_k)$ is
  a neighborhood of every $x_\cC$ in $\Xq\el(k)$. Let
  $\Omega_0\subseteq U(k)$ be a compact neighborhood of $1$ and
  $\Omega_1:=\Omega_0M(k)$. Then the local structure theorem implies
  that $\Omega_1$ is a compact subset of $G(k)$ such that
  $\Omega_1\phi(\Aq^-_k)$ is a neighborhood of each point $x_\cC$ in
  $\Xq\el(k)$.

  Next we show that for every $x\in\Xq(k)$ there is $g\in G(k)$ such
  that $gx$ lies in the open interior $\Omega^0$ of
  $\Omega_1\phi(\Aq^-_k)$. This clearly implies the theorem because of
  compactness of $\Xq(k)$ it can be covered by finitely many
  translates $g\Omega^0$.

  Since $U\Xq\el$ is (Zariski-)open in $\Xq$ and meets every orbit
  there is $g\in G(k)$ with $gx\in U\Xq\el$. But then there is
  $u\in U(k)$ such that $ugx\in\Xq\el$. Then again since $M(k)$ acts
  transitively on $X\an(k)$ one can find $m\in M(k)$ and
  $a\in\Aq_k(k)$ with $mugx=\phi(a)$. Finally, since every $A_k$-orbit
  of $\Aq_k$ contains a fixed point $a_\cC$ in its closure there is a
  homomorphism $\lambda:\G_m\to A_k$ such that
  $\lim_{t\to0}\lambda(t)a=a_\cC$ for some $\cC$ (also in the
  Hausdorff topology). A positive power $\lambda^n$ can be lifted to a
  homomorphism $\tilde\lambda:\G_m\to A$. Since then
  $\lim_{t\to0}\tilde\lambda(t)\phi(a)=\phi(\lim_{t\to0}\lambda(t)^na)=x_\cC$
  there is $t\in\G_m(k)$ such that $\tilde\lambda(t)mugx\in\Omega^0$.
\end{proof}

\begin{remarks}

  {\it i)} One can strengthen the weak polar decomposition in the
  following way: Let $\sK\subseteq G(k)$ be a maximal compact subgroup
  in case $k$ is archimedian and a compact open subgroup
  otherwise. Then $\Omega$ can be chosen to be of the form $\Omega=F\sK$
  where $F\subseteq G(k)$ is a finite subset. Indeed, if $\sK$ is open
  then every compact subset of $G(k)$ is a subset of finitely many
  translates of $\sK$. In the archimedian case see the argument in
  \cite{KKSS}.

  {\it ii)} For $k=\RR$ one has $A_k^-=A_2\exp\fa_k^-$ where
  $A_2\subseteq A_k(k)$ is the subgroup of $2$-torsion elements and
  $\fa_k^-$ is the \emph{compression cone}
  \[
    \fa_k^-=\{\xi\in\Lie A_k(k)\mid{\rm d}\sigma(\xi)\le0\text{ for
      all }\sigma\in\Sigma_k\}.
  \]

\end{remarks}

We keep the notation of \S\ref{sec:BoundaryDegenerations} on boundary
degenerations. For local fields there is a simplification. More
precisely, a given $k$-spherical variety $X$ might in general have
non-isomorphic boundary degenerations with the same spherical root
system. This is due to the fact that the $k$-rational points of a
$G$-orbit of $\fY$ may decompose into several $G(k)$-orbits. This does
not happen for the small degeneration $\tilde\fX\to\tilde\fY$ since
$\tilde A\cong\G_m^r$ and $\tilde\fY\cong\A^r$ with $r=\rk_k\fY$.
Thus, the $\tilde A(k)$-orbits in $\tilde\fY(k)$ are parameterized by
subsets of $\Sigma_k$ with the fixed point corresponding to the empty
set. By the weak polar decomposition every $y\in\fY(k)$ is
$G(k)$-conjugate to an element of $\tilde\fY(k)$. Thus we get:

\begin{corollary}

  Let $k$ be a local field and $X$ a $k$-spherical variety. Then every
  boundary degeneration $\fX_y$ is isomorphic to a fiber of
  $\tilde\fX\to\tilde\fY$. In particular, two boundary degenerations
  $\fX_{y_1}$ and $\fX_{y_2}$ are isomorphic to each other if and only
  if $Gy_1=Gy_2\subseteq\fY$ if and only if
  $\Sigma_k(\fX_{y_1})=\Sigma_k(\fX_{y_2})\subseteq\Sigma_k(X)$.

\end{corollary}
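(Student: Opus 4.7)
The plan is to dispatch part~(1) of the statement by invoking the identity $\fY(k)=G(k)\tilde\fY(k)$ recalled in the remark immediately preceding the corollary (which combines the weak polar decomposition with Hilbert~90 and the transitivity statement of \cref{prop:transitive}). Given $y\in\fY(k)$ I would write $y=g\tilde y$ with $g\in G(k)$ and $\tilde y\in\tilde\fY(k)$; since $p:\fX\to\fY$ is $G$-equivariant, translation by $g$ yields a $G$-isomorphism $\tilde\fX_{\tilde y}=\fX_{\tilde y}\overset{\sim}{\to}\fX_y$ of homogeneous $k$-spherical varieties.

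For part~(2), the implication $\fX_{y_1}\cong\fX_{y_2}\Rightarrow\Sigma_k(\fX_{y_1})=\Sigma_k(\fX_{y_2})$ is immediate from the invariance of spherical roots under $G$-isomorphism. For $\Sigma_k(\fX_{y_1})=\Sigma_k(\fX_{y_2})\Rightarrow Gy_1=Gy_2$, I would write $Gy_i=Y(\cC_i)$ for $\cC_i\in\cF\st(Y)$. Arguing as in the proof of \cref{thm:bd}\ref{it:bd3},\ref{it:bd5} and combining with \cref{prop:boundary2}, one obtains $\Sigma_k(\fX_{y_i})=\Sigma_k(X)\cap\<\cC_i\>^\perp$. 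Since $\cZ_k(Y)$ is simplicial with facets in bijection with $\Sigma_k(X)=\Sigma_k(Y)$, intersection with $\<\cC_i\>^\perp$ determines $\cC_i$ uniquely, whence $\cC_1=\cC_2$ and thus $Gy_1=Gy_2$.

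The main content is the converse implication $Gy_1=Gy_2\Rightarrow\fX_{y_1}\cong\fX_{y_2}$. Using part~(1) I would first replace the $y_i$ by $\tilde y_i\in\tilde\fY(k)$; the hypothesis then passes to $G\tilde y_1=G\tilde y_2$. Now the $G$-orbits of $\fY=Y\st$ correspond to cones of $\cF\st(Y)$ via \cref{prop:boundary}\ref{it:boundary1}, while the $\tilde A$-orbits of the affine toric variety $\tilde\fY$ correspond to faces of $\cZ_k(Y)$; these two bijections agree under the inclusion $\tilde\fY\subseteq\fY$, so $\tilde y_1$ and $\tilde y_2$ lie in a common $\tilde A$-orbit. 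Since $Y$ is $k$-wonderful (\cref{thm:Autoroot}), the cone $\cZ_k(Y)$ is simplicial and smooth, so $\tilde\fY\cong\A^{\rk_kY}$ with $\tilde A\cong\G_m^{\rk_kY}$ acting coordinatewise; each $\tilde A$-orbit therefore consists of a single $\tilde A(k)$-orbit on $k$-points, and hence $\tilde y_2=a\tilde y_1$ for some $a\in\tilde A(k)$. The $G\times\tilde A$-action on $\tilde\fX$ (built into the small degeneration as noted in the remarks of \cref{sec:BoundaryDegenerations}) then provides the required $G$-isomorphism $\tilde\fX_{\tilde y_1}\overset{\sim}{\to}\tilde\fX_{\tilde y_2}$. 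The principal technical point to verify along the way is the matching of $G$- and $\tilde A$-orbit structures on $\tilde\fY\subseteq\fY$, which reduces to a direct toric computation using the explicit description of $\tilde\fY$ as an affine torus embedding of $\tilde A$ attached to $\cZ_k(Y)$.
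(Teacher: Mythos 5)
Your proof is correct and follows the same route as the paper's (implicit) argument, which is contained in the remarks immediately preceding the corollary: the weak polar decomposition combined with \cref{prop:transitive} gives $\fY(k)=G(k)\tilde\fY(k)$, and the toric description $\tilde\fY\cong\A^r$ with $\tilde A\cong\G_m^r$ shows that $\tilde A(k)$-orbits in $\tilde\fY(k)$ are parametrized by subsets of $\Sigma_k$. Your fleshing out of the equivalence between the three conditions via \cref{thm:bd}\ref{it:bd3}, \cref{prop:boundary2} and the simplicial structure of $\cZ_k(Y)$ is exactly what the paper leaves implicit; the only minor imprecision is that, as stated, \cref{prop:boundary2} applied to the wonderful quotient $\fY=Y\st$ yields $\Sigma_k(\fX_{y_i})=\Sigma_k(Y)\cap\<\cC_i\>^\perp$ rather than with $\Sigma_k(X)$ literally, but since the facets of $\cZ_k(X)$ and $\cZ_k(Y)$ correspond (the roots differ at most by positive scalars), the determination of $\cC_i$ and hence the conclusion is unaffected.
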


Let $X$ is a homogeneous $k$-spherical $G$-variety. Then $Q_X$ has an
open orbit $X^0\subseteq X$. The next lemma shows that the
$Q_X$-isotropy groups of $k$-rational points in $X_0$ are conjugate
even though $G(k)$ does not act transitively on $X(k)$.

\begin{lemma}\label{lemma:conjugation}

  Let $k$ be a local field and let $X$ be a homogeneous $k$-spherical
  $G$-variety. For $i\in\{1,2\}$ let $x_i\in X(k)$ be a point with
  isotropy group $H_i:=G_{x_i}$. Let $Q_i=L_iU_i\subseteq X$ be a
  parabolic $k$-subgroup as in the Generic Structure Theorem
  \ref{cor:LSTgeneric} such that $x_i$ lies in the open
  $Q_i$-orbit. Then there is an element $g\in G(K)$ and decompositions
  $Q_i=L_iU_i$ such that conjugation by $g$ maps $H_1$, $Q_1$, $L_1$
  and $H_1(k)\cap Q_1$ to $H_2$, $Q_2$, $L_2$ and $H_2(k)\cap Q_2$,
  respectively, and such that the diagram
\[\label{eq:HQL}
  \cxymatrix{H_1\cap Q_1\ar@{^(->}[r]\ar[d]^\sim&L_1\ar[d]^\sim\\
    H_2\cap Q_2\ar@{^(->}[r]&L_2}
\]
is defined over $k$.

\end{lemma}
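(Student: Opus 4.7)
I would argue in three stages.

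\emph{Stage 1.} By Borel--Tits, parabolic $k$-subgroups in a fixed conjugacy class form a single orbit under $G(k)$, and Levi $k$-subgroups of a given parabolic are $R_uQ(k)$-conjugate. Conjugating $(x_1,H_1)$ by a suitable element of $G(k)$, I may assume $Q_1=Q_2=:Q$ and $L_1=L_2=:L$. The $k$-LST (\cref{kLST}) then provides a $k$-rational open immersion $U\times X\el\into X$ onto the open $Q$-orbit; translating by an element of $U(k)$ further assumes $x_1,x_2\in X\el(k)$.

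\emph{Stage 2.} Let $\pi:X\el\auf X\an$ and $y_i:=\pi(x_i)\in X\an(k)$. Since $X$ is $k$-spherical, $X\an$ is homogeneous for the anisotropic group $M$ and of $k$-rank zero, so \cref{prop:transitive} supplies $m\in M(k)$ with $my_1=y_2$. After replacing $x_1$ by $mx_1$ the two points lie in the same fiber of $\pi$, which is an $A_k$-torsor defined over $k$; having a $k$-rational point the torsor is trivial, so there exists $a\in A_k(k)$ with $ax_1=x_2$.

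\emph{Stage 3.} Choose a lift $\tilde a\in A(K)$ of $a$---such a lift exists over $K$, though in general not over $k$, since the map on $k$-points $A(k)\to A_k(k)$ may fail to be surjective when the kernel $A_0:=\ker(A\auf A_k)$ has a non-trivial finite part---and set
\[
  g\;:=\;\tilde a\cdot m\;\in\;L(K).
\]
Then $gx_1=x_2$ gives $gH_1g^{-1}=H_2$; and $g\in L\subseteq Q$ gives $gQg^{-1}=Q$, $gLg^{-1}=L$. Since $A$ is central in $MA=Z_G(A)$, the factors $\tilde a$ and $m$ commute, so the Galois cocycle
\[
  c_\gamma\;:=\;g^{-1}\gamma(g)\;=\;\tilde a^{-1}\gamma(\tilde a)
\]
takes values in $A_0(K)$. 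Because $A_0$ acts trivially on $X\el$, it is contained in the isotropy $L_{x_1}=H_1\cap L=H_1\cap Q$ (the last equality coming from the $k$-spherical refinement $L\el\subseteq H_1\cap Q\subseteq L$ of \cref{cor:LSTgeneric}). The $k$-rationality of the inclusion diagram then reduces to showing that $\Ad(c_\gamma)$ is the identity on $H_1\cap Q$, i.e., that $A_0$ is central in $L_{x_1}$; once this is established, $\Ad(g)$ descends to a $k$-morphism $H_1\cap Q\to H_2\cap Q$ compatible with the ambient inclusion into $L$.

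\textbf{The main obstacle} is precisely this centrality assertion $A_0\subseteq Z(L_{x_1})$, the only non-formal input in the argument and where $k$-sphericity enters essentially. It rests on the structural fact that the effective action of $L$ on $X\el$ factors through the quotient $L/L\el$ in which $A_0$, being central in $MA$, lies in the split central torus $A_k$ controlling the bundle $X\el\to X\an$; the preceding reductions---Borel--Tits conjugation of parabolics and Levis, transitivity of $M(k)$ on $X\an(k)$, and triviality of the $A_k$-torsor with a $k$-point---are purely formal consequences of the results already established.
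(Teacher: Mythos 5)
Your reductions track the paper's proof exactly: Borel--Tits conjugation to arrange $Q_1=Q_2=Q$ and $L_1=L_2=L$, translation by $U(k)$ into the slice $X\el$, transitivity of $M(k)$ (equivalently $L(k)$) on $X\an(k)$ via \cref{prop:transitive}, and the $A_k$-torsor structure of the fibers of $X\el\to X\an$ producing $a\in A_k(k)$ with $ax_1=x_2$. These steps are all fine and are essentially the paper's.

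The gap is in the final step. You lift $a$ to an arbitrary $\tilde a\in A(K)$, note that the cocycle $c_\gamma=g^{-1}\gamma(g)$ lies in $A_0(K)$, and then assert that $k$-rationality of the diagram ``reduces to showing that $\Ad(c_\gamma)$ is the identity on $H_1\cap Q$, i.e., that $A_0$ is central in $L_{x_1}$''. This is wrong on two counts. First, the right-hand vertical arrow $L_1\to L_2$ of diagram \eqref{eq:HQL} must also be a $k$-morphism, which forces $c_\gamma$ to centralize all of $L$, not just $L_{x_1}$. Second, and more fundamentally, the centrality you pose as the ``main obstacle'' is simply false for a generic lift: $A_0$ contains $A\cap L\el$, a maximal split torus of the isotropic semisimple part $L\el$ of $L$, and $L\el\subseteq L_{x_1}$, so $A_0$ centralizes neither $L$ nor $L_{x_1}$ whenever $L\el\ne1$ (i.e., whenever $Q_k(X)\ne P$). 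Thus the statement you would need to prove is not available, and the argument cannot close as written.

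The fix is not a centrality theorem for $A_0$ but a better choice of lift, which is what the paper's proof is implicitly doing when it asserts that ``$g$ centralizes $L$''. Since $A\cap L\el\subseteq A_0$ already kills the entire part of $A$ that fails to be central in $L$, the subtorus $A_1:=A\cap Z(L)^0$ still surjects onto $A_k(X)$ over $K$. Lifting $a$ to $g\in A_1(K)$ gives an element that centralizes $L$ outright; then $\Ad g|_L=\mathrm{id}$, consequently $H_1\cap Q=L_{x_1}=L_{x_2}=H_2\cap Q$ and $\Ad g|_{H_1\cap Q}=\mathrm{id}$, and all four arrows of \eqref{eq:HQL} (as well as the assertion about $H_i(k)\cap Q$) become defined over $k$ for trivial reasons. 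Your proposal has the right skeleton but would fail exactly at the point you flagged as the obstacle, because that obstacle is a false statement rather than a hard one.
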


\begin{proof}

  The two parabolics $Q_1$ are $Q_2$ are conjugate by an element
  $t\in G(k)$. After replacing $x_2$ by $tx_2$, we may assume that
  $Q_1=Q_2=:Q=LU$. Let $X^0\subseteq X$ be the open $Q$-orbit. Then
  $x_1,x_2\in X^0(k)$. By the generic structure theorem,
  $X^0\cong U\times X\el$ where $X\el$ is an $L$-orbit. After
  replacing $x_i$ by $u_ix_i$ for $u_i\in U(k)$ we may assume that
  $x_i\in X\el(k)$. Now consider the quotient $\pi:X\el\to X\an$. Since
  $X\an$ is a homogeneous $L$-variety of rank $0$ there is $l\in L(k)$
  such that $\pi(x_2)=l\pi(x_1)=\pi(lx_1)$ (by
  \cref{prop:transitive}). Thus, by replacing $x_1$ with $lx_1$ we can
  achieve that $\pi(x_1)=\pi(x_2)$. The map $\pi$ is a principal
  bundle for the torus $A_X$. Thus, there is $a\in A_X(k)$ such that
  $x_2=ax_1$. Now recall that $A_X$ is a quotient of $A$. Hence we can
  lift $a$ to an element $g\in A(K)$. Let $\phi$ be conjugation by
  $g$. Then clearly $\phi$ maps $H_1$ and $Q_1=Q$ to $H_2$ and
  $Q_2=Q$. Now observe that $H_i\cap Q=H_i\cap L$. This and the fact
  that $g$ centralizes $L$ implies that diagram \eqref{eq:HQL} is
  defined over $k$.
\end{proof}

We use this lemma to show that, under certain circumstances, at least
the Lie algebra of $H\cap Q$ depends only on the $\sK$-conjugacy class
of $H$.

\begin{corollary}

  Let $\fh_1,\fh_2\subseteq\fg$ be two self-normalizing $k$-spherical
  subalgebras and $Q_1,Q_2\subseteq G$ two $k$-parabolics with
  $\fh_i+\Fq_i=\fg$. Assume that there is an \emph{inner}
  $\sK$-automorphism $\phi$ of $G$ mapping $\fh_1\otimes_k\sK$ to
  $\fh_2\otimes_k\sK$. Then $\fh_1\cap\Fq_1\cong\fh_2\cap\Fq_2$.

\end{corollary}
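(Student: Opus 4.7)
The strategy is to realize $H_1 := N_G(\fh_1)$ and $H_2 := N_G(\fh_2)$ as the isotropy groups of two $k$-rational points of one and the same $k$-spherical $G$-variety $X := G/H_1$, and then invoke \cref{lemma:conjugation} verbatim. Since that lemma is stated over a local field, I tacitly work in that setting. Self-normalization of $\fh_i$ ensures $\Lie H_i = \fh_i$ and $N_G(H_i) = H_i$: any group element normalizing $H_i$ also normalizes $\Lie H_i = \fh_i$ and hence lies in $N_G(\fh_i) = H_i$. Writing $\phi = \Ad(g)$ with $g \in G(K)$, both $gH_1 g^{-1}$ and $H_2 \otimes K$ are self-normalizing $K$-subgroups of $G$ whose common Lie algebra is $\fh_2 \otimes K$, and so they coincide.

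Now take $x_1 := eH_1 \in X(k)$ and $x_2 := gH_1 \in X(K)$; their isotropy groups in $G$ are $H_1$ and $gH_1g^{-1} = H_2 \otimes K$, respectively. I claim $x_2 \in X(k)$. For any $\gamma \in \cG$ the isotropy of ${}^\gamma x_2$ is ${}^\gamma(H_2 \otimes K) = H_2 \otimes K$, since $H_2$ is defined over $k$; hence ${}^\gamma x_2 = n \cdot x_2$ for some $n \in N_G(H_2)(K) = H_2(K)$. But $H_2$ fixes $x_2$, so ${}^\gamma x_2 = x_2$ for all $\gamma$, and $x_2$ is $k$-rational.

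The hypothesis $\fh_i + \Fq_i = \fg$ is precisely the condition that $Q_i \cdot x_i$ is open in $X$, so \cref{lemma:conjugation} applies to this data. It yields an element $g' \in G(K)$ and Levi decompositions of the $Q_i$ such that conjugation by $g'$ restricts to an isomorphism $H_1 \cap Q_1 \cong H_2 \cap Q_2$ defined over $k$. Differentiating produces the required $k$-linear isomorphism $\fh_1 \cap \Fq_1 \cong \fh_2 \cap \Fq_2$. The only step where the self-normalization hypothesis on the $\fh_i$ is essentially used is the $k$-rationality of $x_2$: without it one would have to deal with a potentially non-trivial class in $H^1(\cG, N_G(H_2)/H_2)$, and this is where I expect the main subtlety to live.
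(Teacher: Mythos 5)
Your proof is correct and follows the same strategy as the paper's: realize both $\fh_i$ as isotropy Lie algebras of $k$-rational points of a single $k$-spherical homogeneous variety $G/H_1$ (with $H_1 = N_G(\fh_1)$, which is self-normalizing because $\Lie H_1 = \fh_1$), and then invoke \cref{lemma:conjugation}. The one place where you diverge is how you obtain the second $k$-point $x_2$. The paper phrases this cohomologically: since $\Aut^G(X_1)$ is trivial, $H^1(\Aut^G X_1) = 0$, so the two $k$-forms $X_1 = G/N_G(\fh_1)$ and $X_2 = G/N_G(\fh_2)$ are $k$-isomorphic, and a $k$-point of $X_2$ transports to one of $X_1$. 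You instead unwind that cohomological vanishing directly: you write $x_2 = gH_1$, compute that ${}^\gamma x_2$ and $x_2$ have the same isotropy group $H_2\otimes K$ for all $\gamma$, and conclude from the self-normalization $N_G(H_2) = H_2$ that the cocycle $\gamma \mapsto {}^\gamma g\, g^{-1}$ lies in $H_2(K)$ and so fixes $x_2$. This is a hands-on Galois-descent verification that the relevant class in $H^1(\cG, \Aut^G(X_1)(K))$ is trivial -- which it must be, since the coefficient group is trivial -- so the content is the same; your version is a bit more elementary in that it avoids invoking the classification of $k$-forms. You are also right to flag the tacit local-field hypothesis, which the paper inherits from the surrounding section.
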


\begin{proof}

  Let $N_i$ be the normalizer of $\fh_i$ in $G$.  Since $\fh_i$ is
  selfnormalizing, it follows that $\Lie N_i=\fh_i$. This implies in
  turn that $N_i$ is selfnormalizing in $G$. For the homogeneous space
  $X_i=G/N_i$ this means that its group of $G$-automorphisms is
  trivial. Now the existence of $\phi$ means that the $X_1$ and
  $X_2$ are isomorphic over $K$, i.e., that $X_2$ is a $k$-form of
  $X_1$. Since these are classified by by $H^1(\Aut^GX_1)=0$ we
  get that $X_1$ and $X_2$ are isomorphic even over $k$. This
  means that $X_1(k)$ contains two points $x_1,x_2$ whose isotropy
  Lie algebras are $\fh_1$ and $\fh_2$, respectively. Now the
  assertion follows from \ref{lemma:conjugation}.
\end{proof}

We end this section with some statements which are specific to $k=\RR$.

\begin{theorem}

  For $k=\RR$ let $X=G/H$ be a homogeneous $k$-spherical
  variety. Assume that $X$ is $k$-wonderful and let $X\into X\st$ be
  its standard (a.k.a.\ wonderful) embedding. Let $Y\subseteq X$ be the
  closed $G$-orbit. Then

\begin{enumerate}

\item\label{it:Rwonderful2} $X\st(\RR)$ is a compact connected
  manifold.

\item\label{it:Rwonderful3} $Y(\RR)$ is the only closed
  $G(\RR)^0$-orbit of $X\st(\RR)$. In particular, it is connected and
  $G(\RR)$-stable.

\item\label{it:Rwonderful1} $P(\RR)^0$ has at most $2^{\rk_\RR X}$
  open orbits in $X(\RR)$ (or, equivalently, in $X\st(\RR)$). They all
  contain $Y(\RR)$ in their closure.

\end{enumerate}

\end{theorem}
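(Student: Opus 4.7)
My approach handles the three parts in sequence, reducing (i) to the connectedness claim in (ii) by a one-parameter subgroup flow, and treating (iii) via the Local Structure Theorem.

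\textbf{Part (i).} By Proposition~\ref{prop:smooth}, $k$-wonderfulness gives $X\st$ smooth, so $X\st(\RR)$ is a smooth real manifold; since $\cF\st$ is complete by construction ($\supp\cF\st=\cZ_k(X)$), Corollary~\ref{cor:compact} yields Hausdorff-compactness. For connectedness I reduce to that of $Y(\RR)$: pick any $\RR$-cocharacter $\tilde\lambda:\G_m\to A$ whose image in $\cN_k(X)$ lies in the relative interior $\cZ_k(X)^\circ$ (possible because $A\auf A_k(X)$ surjects on cocharacters). Then $\tilde\lambda(t)\in A(\RR)^0\subseteq G(\RR)^0$ for $t\in\RR_{>0}$, and by Proposition~\ref{prop:flat}\,\ref{it:flat4} combined with the toroidal construction of $X\st$, the limit $\lim_{t\to0^+}\tilde\lambda(t)x$ exists in $X\st(\RR)$ and lies in the closed stratum $Y(\RR)$ for every $x\in X\st(\RR)$. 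This produces a continuous path from $x$ to $Y(\RR)$, reducing connectedness of $X\st(\RR)$ to that of $Y(\RR)$.

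\textbf{Part (ii).} The same flow argument yields uniqueness: for $z\in X\st(\RR)\setminus Y(\RR)$ lying in some stratum $X(\cC)$ with $\cC\subsetneq\cZ_k(X)$, a cocharacter $\tilde\lambda\in\cZ_k(X)^\circ$ pushes $z$ strictly deeper in the stratification (ultimately into $Y(\RR)$), so $G(\RR)^0z$ is not closed; hence every closed $G(\RR)^0$-orbit of $X\st(\RR)$ lies inside $Y(\RR)$. To see $Y(\RR)$ is itself a single $G(\RR)^0$-orbit (hence connected), I use the rank-zero structure: by the corollary following Theorem~\ref{thm:rank0}, $Y\cong G/H_0$ with $H_0=M_0\cdot Q_k(X)^-\an$, so $H_0\supseteq A\cdot U^-$ with $U^-=R_uQ_k(X)^-$. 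Proposition~\ref{prop:transitive} gives $Y(\RR)=G(\RR)/H_0(\RR)$, and equidimensionality makes all $G(\RR)^0$-orbits open-closed; so it suffices to prove $G(\RR)=G(\RR)^0\cdot H_0(\RR)$. I plan to obtain this by combining the Iwasawa decomposition $G(\RR)^0=K^0\cdot A(\RR)^0\cdot U^-(\RR)\subseteq K^0\cdot H_0(\RR)$ with the fact that $\pi_0(G(\RR))$ admits representatives inside $A(\RR)\cdot M_0(\RR)\subseteq H_0(\RR)$, via a Cartan-type analysis of real reductive groups. $G(\RR)$-stability of $Y(\RR)$ is immediate since $Y$ is $G$-stable and defined over $\RR$.

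\textbf{Part (iii) and main obstacle.} Every open $P(\RR)^0$-orbit of $X(\RR)$ lies in the open $Q_k(X)$-orbit $X^0$, which by Corollary~\ref{cor:LSTgeneric} is $P$-equivariantly isomorphic to $U\times X\el$ (using $P\cap L=MA$ and $P\cap U=N$). The $N(\RR)$-translation on the $U$-factor being free, open $P(\RR)^0$-orbits in $X^0(\RR)$ correspond bijectively to $(MA)(\RR)^0=M(\RR)^0A(\RR)^0$-orbits in $X\el(\RR)$. Now $X\el$ is a principal $A_k$-bundle over $X\an$; Proposition~\ref{prop:transitive} applied to the (automatically rank-zero) $M$-variety $X\an$ gives transitivity of $M(\RR)$ on $X\an(\RR)$; and $A_k\cong\G_m^r$ with $r=\rk_\RR X$ yields $A_k(\RR)=(\RR^*)^r$ with exactly $2^r$ connected components, so the fibrewise $A_k(\RR)^0$-orbit count bounds the total by $2^r$. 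That each such orbit contains $Y(\RR)$ in its closure is again the flow argument from (i)-(ii). The principal technical hurdle of the whole theorem is the $\pi_0$-analysis in (ii), namely establishing $G(\RR)=G(\RR)^0\cdot H_0(\RR)$: this requires exhibiting generators of $\pi_0(G(\RR))$ inside $A(\RR)\cdot M_0(\RR)$ by a careful Cartan/Iwasawa analysis of real reductive groups, and the parallel $\pi_0$-bookkeeping is what underlies the sharp bound $2^r$ in (iii).
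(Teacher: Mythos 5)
Your proposal shares the main tools with the paper (Local Structure Theorem, \cref{prop:transitive}, \cref{cor:compact}, \cref{prop:smooth}), but the logical scaffolding and one crucial step differ, and the step you flag as a hurdle is indeed a genuine gap as you have set it up.

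The central issue is the connectedness of $Y(\RR)$, equivalently that $G(\RR)^0$ acts transitively on $Y(\RR)$. Your plan is to prove $G(\RR)=G(\RR)^0\cdot H_0(\RR)$ by exhibiting $\pi_0(G(\RR))$-representatives in $A(\RR)M_0(\RR)$. That is not a routine Iwasawa/Cartan computation: $\pi_0(G(\RR))$ does have representatives in $A(\RR)$ for any real reductive $G$, but you would then need these to lie in $H_0(\RR)$, and $A\not\subseteq H_0$ in general (one only has $A\cdot U^-\subseteq H_0$). The paper circumvents this entirely. It factors $Y\to G/Q$ and shows $(G/Q)(\RR)$ is connected directly (it contains a dense open $U(\RR)$-orbit by Bruhat); this reduces the question to transitivity of $G_0(\RR)$ on $X_0(\RR)=(Q/H)(\RR)$, where $G_0\subseteq L$ is the maximal connected anisotropic normal subgroup. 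The key observation, which your sketch is missing, is that $G_0(\RR)^0$ is compact and $G_0$ is its complexification, so $G_0(\RR)=G_0(\RR)^0$ is automatically connected; then \cref{prop:transitive} finishes. This is a qualitatively different and much shorter route than a $\pi_0(G(\RR))$ analysis, and it is needed for your part (iii) too, where you silently replace $M(\RR)$ by $M(\RR)^0$ after invoking \cref{prop:transitive}.

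Your one-parameter flow argument is also weaker than what you claim. \cref{prop:flat}\,\ref{it:flat4} governs limits of points on a flat, not of arbitrary $x\in X\st(\RR)$, and for points in intermediate boundary strata the existence and landing locus of $\lim_{t\to0^+}\tilde\lambda(t)x$ would need a separate argument depending on where $\tilde\lambda$ sits relative to the cone of the stratum. The paper avoids the flow altogether: it proves the $2^{\rk_\RR X}$-bound and the closure statement in (iii) \emph{first}, using the LST slice $R$ fibered over the closed rank-zero orbit $X_0$ with fiber $\Aq(\RR)\cong\RR^r$ (so the open $P(\RR)^0$-orbits contain the dense subset $U(\RR)\times X_0(\RR)$ of $Y(\RR)$ by inspection, no limit needed); then (i) follows because every connected component of $X\st(\RR)$ contains an open $P(\RR)^0$-orbit, hence meets the connected set $Y(\RR)$; and (ii) uniqueness follows by noting the Zariski closure $Z$ of any closed $G(\RR)^0$-orbit $X'$ is itself a wonderful compactification, so (iii) applied to $Z$ forces $Y(\RR)\subseteq\overline{X'}$ and hence $X'=Y(\RR)$. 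In short: your outline has the right ingredients, but the claimed reduction of the connectedness of $Y(\RR)$ to a $\pi_0(G(\RR))$ statement is a genuine gap, and the flow-based closure claims overreach what \cref{prop:flat} provides; the paper's Bruhat-plus-compact-complexification argument and the ``prove (iii) first'' ordering are what make the proof go through cleanly.
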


\begin{proof}

  We first show that $G(\RR)^0$ acts transitively on $Y(\RR)$. For
  this recall the proof of \cref{prop:transitive}. Since $Y$ is
  homogeneous of rank $0$, it is of the form $Y=G/H$ with
  $Q\an\subseteq H\subseteq Q=LU$. It follows from the Bruhat
  decomposition of $G/Q(\RR)$ that $U(\RR)$ has exactly one open
  orbit. That orbit is therefore dense which implies that $(G/Q)(\RR)$
  is connected. Hence $G(\RR)^0$ acts transitively on
  $(G/Q)(\RR)$. Let $G_0\subseteq L$ be the maximal connected
  anisotropic normal subgroup. Then $G_0(\RR)^0$ is a compact Lie
  group and $G_0$ is necessarily it complexification. Hence
  $G_0(\RR)=G_0(\RR)^0$ is connected. Thus it suffices to show that
  $G_0(\RR)$ acts transitively on the real points of
  $X_0=Q/H=G_0/(G_0\cap H)$ which is true because of
  \cref{prop:transitive}.

  The local structure theorem yields an $L$-subvariety
  $R\subseteq X\st$ such that $U\times R\to X\st$ is an open
  embedding. Moreover, there is a fibration $R\to X_0$ where all
  fibers are isomorphic to the affine torus embedding $\Aq$ of
  $A_\RR:=A_\RR(X)$ corresponding to $\cZ_\RR(X)$. It follows that the
  open $P(\RR)^0$-orbits in $X(\RR)$ are those of
  $U(\RR)\times R(\RR)$ which in turn intersect $\Aq$ in a union of
  open $A_\RR(\RR)^0$-orbits in $\Aq$. Since $X$ is wonderful, we have
  $\Aq(\RR)\cong\RR^r$ where $r:=\rk_\RR X$ on which
  $A_\RR(\RR)^0\cong\RR_{>0}^r$ acts in the obvious way. This shows
  that there can be at most $2^r$ open $P(\RR)^0$-orbits in $X(\RR)$
  and that all of them contain the dense open subset
  $U(\RR)\times X_0(\RR)$ of $Y(\RR)$ and therefore all of $Y(\RR)$ in
  their closure. This shows \ref{it:Rwonderful1}.

  Now \ref{it:Rwonderful2} follows easily: $X\st(\RR)$ is a compact
  because of \cref{cor:compact}. It is a manifold since $X\st$ is
  smooth (\cref{prop:smooth}). Finally, it is connected since every
  connected component would contain an open $P(\RR)^0$-orbit. But all
  of them contain $Y(\RR)$ is their closure.

  In \ref{it:Rwonderful3} we already know that $Y(\RR)$ is a closed
  $G(\RR)^0$-orbit. Let $X'\subseteq X\st(\RR)$ be any closed
  orbit. Then its Zariski closure $Z$ in $X\st$ is $G$-stable, so one
  of the strata of $X\st$ and $X'$ is open in $Z(\RR)$. But also $Z$
  is a wonderful compactification, so $Y(\RR)$ is in the closure of
  $X'$ implies $X'=Y(\RR)$.
\end{proof}

Let $M$ be a $\sK$-manifold where $\sK$ is a Lie group. Recall that a
subgroup $\sH\subseteq \sK$ is called a \emph{principal $\sK$-isotropy
  subgroup of $M$} if the isotropy group $\sK_x$ is conjugate to $\sH$ for
$x$ in an open and \emph{dense} subset $M_0$ of $M$. The largest
subset $M_0$ is denoted by $M^{\rm princ}_\sK$.

Similarly, a subalgebra $\fh\subseteq\fk:=\Lie\sK$ is a
\emph{principal $\fk$-isotropy subalgebra of $M$} if the isotropy
subalgebra $\fk_x$ is $\Ad \sK$-conjugate to $\fh$ for $x$ in an open
and dense subset $M_0$ of $M$. The largest subset $M_0$ is denoted by
$M^{\rm reg}_\sK$. Clearly, the principal subalgebra is just the Lie
algebra of the principal subgroup whenever the latter exists and, in
that case, $M^{\rm princ}_\sK\subseteq M^{\rm reg}_\sK$. It is well
known, that $M$ has a principal isotropy subgroup/subalgebra whenever
$\sK$ is compact and $M$ is connected. See \cite{DK} for details.









\begin{corollary}\label{cor:pricreg}

  For $k=\RR$ let $X=G/H$ be a homogeneous $k$-spherical variety and
  let $X\into\Xq$ be a smooth toroidal embedding. Let
  $\sK\subseteq G(\RR)$ be a compact subgroup. Then
  $\Xq(\RR)^{\rm reg}_\sK$ meets all $G(\RR)^0$-orbits of
  $\Xq(\RR)$. Moreover, $X(\RR)^{\rm princ}_\sK$ exists and meets all
  $G(\RR)^0$-orbits of $X(\RR)$.

\end{corollary}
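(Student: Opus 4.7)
The plan is to combine the classical principal orbit type theorem for compact Lie group actions with the boundary degeneration theory of $\cref{sec:BoundaryDegenerations}$ and the weak polar decomposition of $\cref{sec:polar}$. First, the preceding theorem establishes that $X\st(\RR)$ is a compact connected smooth manifold. Since $\sK$ is a compact Lie group acting smoothly on it, the Mostow--Palais principal orbit type theorem yields a principal $\sK$-isotropy subgroup $\sH_0\subseteq\sK$ such that $\Omega:=X\st(\RR)^{\rm princ}_\sK$ is open and dense in $X\st(\RR)$. Writing $\fh_0:=\Lie\sH_0$ and $d:=\dim\fh_0$, I have $\Omega\subseteq X\st(\RR)^{\rm reg}_\sK$, and by the slice theorem every $\fk_y$ contains a $\sK$-conjugate of $\fh_0$.

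For the $X(\RR)$ assertion, note that since $X=G/H$ is homogeneous and smooth, every $G(\RR)^0$-orbit on $X(\RR)$ has real dimension $\dim X$ and is therefore open in $X\st(\RR)$. Each such orbit meets the open dense subset $\Omega\cap X(\RR)$, which simultaneously gives the existence of $X(\RR)^{\rm princ}_\sK$ and the fact that it meets every $G(\RR)^0$-orbit of $X(\RR)$.

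For a boundary $G(\RR)^0$-orbit $\cO$, contained in a stratum $X(\cC)$ for some $\cC\in\cF\st$, pick $y\in\cO$. By $\cref{cor:Grass}\,\ref{it:Grass2}$ there is a short exact sequence
\[
  0\to\oPhi(y)\to\fg_y\to\fa(\cC)\to 0;
\]
as $\fa(\cC)$ is the Lie algebra of a split torus while $\fk_y$ is compact, the composed projection $\fk_y\to\fa(\cC)$ vanishes and so $\fk_y=\fk\cap\oPhi(y)$. By the weak polar decomposition applied inside $X\st(\RR)$, I can choose $x_0\in X(\RR)^{\rm princ}_\sK$ on a flat $\overline{A_\RR x_0}$ and a cocharacter $a:\G_m\to A_\RR$ in the relative interior of $\cC$ with $a(t)x_0\to y$ as $t\to 0$; then by continuity of $\oPhi$ (it factors through $\fY$), one has $\oPhi(y)=\lim_{t\to 0}\Ad a(t)\,\fg_{x_0}$ in $\|Grass|(\fg)(\RR)$.

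The family $\fk_{a(t)x_0}=\fk\cap\Ad a(t)\,\fg_{x_0}$ lies in $\|Grass|^d(\fk)(\RR)$ for $t$ in a dense open subset of $\RR_{>0}$, and consists there of $\sK$-conjugates of $\fh_0$. By compactness of the real Grassmannian, a subsequence converges to some $\fh^\infty\subseteq\fk\cap\oPhi(y)=\fk_y$; and since $\sK$ is compact, the $\sK$-orbit of $\fh_0$ in $\|Grass|^d(\fk)(\RR)$ is closed, so $\fh^\infty$ is itself a $\sK$-conjugate of $\fh_0$. The main obstacle, which I expect to be the most delicate point, is to upgrade $\fh^\infty\subseteq\fk_y$ to an equality $\fh^\infty=\fk_y$ for some well-chosen $y\in\cO$, equivalently to show that $\dim\fk_y=d$ for generic $y\in\cO$. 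To handle this I plan to vary $y$ within $\cO$ and use that $\oPhi$ is $G$-equivariant, so $\oPhi(\cO)$ sits inside a single $G(\RR)^0$-orbit in $\|Grass|(\fg)(\RR)$ which lies in the Hausdorff closure of the family $\{\Ad a(t)\,\fg_{x_0}\mid t>0\}$; combined with upper semicontinuity of the intersection map $(\fp,\fl)\mapsto\fp\cap\fl$ on products of Grassmannians, this should force the generic intersection dimension along $\cO$ to coincide with the generic value $d$ attained along the degenerating family, thereby producing the desired $y\in X\st(\RR)^{\rm reg}_\sK\cap\cO$.
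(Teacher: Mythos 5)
Your proof is structured very differently from the paper's, and it contains a genuine gap precisely at the point you flag as delicate.

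For the second assertion (on $X(\RR)$) your argument coincides with the paper's: open $G(\RR)^0$-orbits in $X(\RR)$ must meet the dense open principal locus of the ambient connected compact manifold $X\st(\RR)$. This part is fine.

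For the first assertion you attempt a direct construction of regular points in a boundary orbit $\cO$ by degenerating a flat via \cref{cor:Grass} and the weak polar decomposition, then taking limits of $\fk\cap\Ad a(t)\fg_{x_0}$ in the Grassmannian. The problem is that the semicontinuity you invoke goes the wrong way. The intersection-dimension map on products of Grassmannians is \emph{upper} semicontinuous, meaning $\dim(\fp\cap\fl)$ can only jump \emph{up} in a limit, not down. So from $\dim(\fk\cap\Ad a(t)\fg_{x_0})=d$ for generic $t$ you can only conclude $\dim\fk_y\ge d$ at a limit point $y\in\cO$. Your limit $\fh^\infty$ is a conjugate of the principal subalgebra sitting \emph{inside} $\fk_y$, but that containment is in fact automatic for any isotropy subalgebra of a compact group action (by the slice theorem), so it carries no new information. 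The whole content of the corollary is precisely that the jump does \emph{not} occur for $y$ in a dense subset of $\cO$, and the assertion ``upper semicontinuity\dots should force the generic intersection dimension along $\cO$ to coincide with $d$'' is not an argument — upper semicontinuity is exactly the mechanism by which the dimension \emph{could} be strictly larger throughout $\cO$.

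The missing ingredient is the classical fact (cited in the paper as \cite{DK}*{Thm.\ 2.8.5}) that the complement of $X\st(\RR)^{\rm reg}_\sK$ has codimension $\ge 2$. Since each boundary divisor $Z(\RR)$ has codimension $1$, the regular locus must meet $Z(\RR)$ and therefore coincides there with $Z(\RR)^{\rm reg}_\sK$. One then inducts on dimension, using that $\Zq$ is itself the wonderful standard embedding of $Z$. This induction replaces your limit-of-flats analysis entirely and avoids the semicontinuity trap. I suggest you replace the degeneration argument with this codimension-$\ge 2$ induction.
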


\begin{proof}

  The principal locus $\Xq(\RR)^{\rm princ}_\sK$ exists because
  $\Xq(\RR)$ is a connected manifold. Since it is dense we get that
  $X(\RR)^{\rm princ}_\sK=\Xq(\RR)^{\rm princ}_\sK\cap X(\RR)$ exists as
  well. The latter meets all $G(\RR)^0$-orbits of $X(\RR)$ since those
  are all open in $X\st(\RR)$. This proves the second assertion.

  The first assertion will be proved by induction on $\dim X$. Let
  $\cO\subseteq X\st(\RR)$ be an $G(\RR)^0$-orbit. It is open in
  $Y(\RR)$ where $Y\subseteq\Xq$ is a $G$-orbit. If $Y=X$ then $\cO$
  is open in $\Xq(\RR)$ and therefore meets its regular
  locus. Otherwise, there is a $G$-orbit $Z\subseteq\Xq$ of
  codimension $1$ such that $Y\subseteq \Zq\subseteq\Xq$. It is well
  known that in a smooth toroidal embedding all orbit closures are
  smooth. So $\Zq$ is smooth. Now recall (\cite{DK}*{Thm.\ 2.8.5}) that
  the complement of $\Xq(\RR)^{\rm reg}_\sK$ is of codimension
  $\ge2$. Hence its intersection with $Z(\RR)$ is non-empty and
  therefore coincides with $Z(\RR)^{\rm reg}_\sK$. Now the assertion
  follows by induction.
\end{proof}

\begin{example}
  Consider $G=SL(2,\RR)$ acting transitively on
  $X=\A^2_\RR\setminus\{(0,0)\}$. Then one obtains a smooth toroidal
  embedding $\Xq$ of $X$ by blowing up $\RR^2$ in the origin. The
  $G$-orbits of $\Xq$ are $X$ and the exceptional divisor
  $E\subset\Xq$. Let $\sK=SO(2)$ be a maximal compact subgroup. Then $\sK$
  acts freely on $X$ while $-\mathbf1_2\in K$ act trivially on
  $E$. This shows that $\Xq(\RR)^{\rm princ}_\sK$ does not meet the
  $G(\RR)^0$-orbit $E(\RR)$. See \cref{prop:princ} for how to remedy
  this situation.
\end{example}

\section{The Satake compactification}

In this final section we compare our compactifications with
compactifications already appearing in the literature.

Suppose the $k$-dense variety $X$ is even $K$-convex. Then one can
obtain another embedding by regarding $X$ as a $K$-variety, take its
standard embedding and observe that it is defined over $k$. The thus
obtained variety, we denote it by $X^K\st$, is another canonical
embedding of $X$. The advantages of $X\st^K$ are that it is
independent of $k$ and that it is well studied. For symmetric
varieties it has been introduced by DeConcini-Procesi in
\cite{DeConciniProcesi} and more generally for $K$-spherical varieties
by Luna-Vust \cite{LunaVust}. If $k=\RR$ and $X\st^\CC$ is smooth then
the set $X\st^\CC(\RR)$ of real points is clearly a compact
$G(\RR)$-manifold containing $X(\RR)$ as a dense open subset.

A major disadvantage of $X\st^K$ is that it doesn't work well when $X$
is not absolutely spherical because then $X\st^K$ may contain
infinitely many orbits and is therefore only defined up to certain
birational isomorphisms. Also its boundary is not so easy to
control. In particular, the union of all $k$-dense orbits may not be
open (see Example \ref{it:compare3} {\it iii)} further down).

The relation of the two standard embeddings is:

\begin{proposition}

  Let $X$ be a $k$-spherical variety which we assume to be
  $K$-convex. Then its $K$-standard model $X\st^K$ can be chosen to be
  defined over $k$. Moreover, $X$ is also $k$-convex and the identity
  on $X$ extends to a birational morphism
  \[
    \iota_X:X\st^k\to X\st^K.
  \]
  The following are equivalent:

  \begin{enumerate}

  \item\label{it:Kwond1} $\iota_X$ is injective on $k$-rational
    points.

  \item\label{it:Kwond2} $\iota_X$ is an open embedding.

  \item\label{it:Kwond3} $\cG^*$ acts trivially on
    $\Sigma_K(X)\setminus\Sigma_K^0(X)$ (see \eqref{eq:Sigma0}).

  \end{enumerate}

  When these conditions hold and $X$ is $k$-spherical then $\iota_X$
  is bijective on $k$-rational points.

\end{proposition}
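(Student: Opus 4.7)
The plan is to handle the four assertions in turn: Galois descent of $X\st^K$, $k$-convexity of $X$, construction of $\iota_X$, and then the equivalence together with the bijectivity statement. Since $X$ is a $k$-variety, its valuation cone $\cZ_K(X)\subseteq\cN_K(X)$ is intrinsic and hence $\cG$-stable; so is its standard fan $\cF\st^K$ of faces. Applying the $\cG$-equivariant construction of \cref{thm:CEmbedding} (over $K$) and descending produces a $k$-form of $X\st^K$. The $k$-convexity of $X$ is immediate from \eqref{eq:diagXi22}: the equality $\cZ_k(X)=\cZ_K(X)\cap\cN_k(X)$ exhibits $\cZ_k(X)$ as a linear slice of a strictly convex cone, hence itself strictly convex.

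To construct $\iota_X$, I would apply \cref{cor:morphism} to the identity on $X$ with source fan $\cF\st^k$ and target fan $\cF\st^K$, along the inclusion $\cN_k(X)\hookrightarrow\cN_K(X)$. The required combinatorial condition is that each face $\cC_J=\{a\in\cZ_k(X)\mid\tau(a)=0,\ \tau\in J\}$ (with $J\subseteq\Sigma_k\prim$) is contained in some face of $\cZ_K(X)$; the smallest such face is $\cC_{I(J)}$ where $I(J)=\Sigma_K^0\cup\bigcup_{\tau\in J}\cG^*\!\cdot\!\tilde\sigma_\tau$ and $\tilde\sigma_\tau\in\Sigma_K$ is any root with $\res_A\tilde\sigma_\tau\in\QQ_{>0}\tau$. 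Combining with \cref{prop:morphism} upgrades the resulting rational map into a genuine $G$-equivariant morphism $\iota_X\colon X\st^k\to X\st^K$, birational since it extends the identity on the open orbit $X$, and sending each stratum $X(\cC_J)$ into $X(\cC_{I(J)})$.

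For the equivalence I would run the cycle \ref{it:Kwond2}$\Rightarrow$\ref{it:Kwond1}$\Rightarrow$\ref{it:Kwond3}$\Rightarrow$\ref{it:Kwond2}, the first implication being tautological. For the second, suppose some orbit $\cG^*\!\cdot\!\tilde\sigma_\tau\subseteq\Sigma_K\setminus\Sigma_K^0$ has size $m\geq 2$. Then the codimension-one $G$-orbit divisor $D_\tau:=X(\cR_\tau)\subseteq X\st^k$ (where $\cR_\tau$ is the extremal ray of $\cZ_k(X)$ dual to the $\tau$-facet) is mapped by $\iota_X$ into a stratum of $X\st^K$ of codimension $m$. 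Since $D_\tau$ is a single $G$-orbit and $\iota_X$ is $G$-equivariant, the image is one $G$-orbit of dimension at most $\dim X-m<\dim D_\tau$. Picking $d\in D_\tau(k)$ (possible by the $k$-density assertion of \cref{prop:boundary}), the fiber $\iota_X^{-1}(\iota_X(d))\cap D_\tau=G_{\iota_X(d)}\cdot d$ is a smooth positive-dimensional homogeneous $k$-variety containing the $k$-point $d$; since $k$ is infinite in characteristic zero, it contains infinitely many $k$-points, contradicting \ref{it:Kwond1}.

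The main technical step is \ref{it:Kwond3}$\Rightarrow$\ref{it:Kwond2}. The key observation is that \ref{it:Kwond3} is equivalent to $\dim\cC_J=\dim\cC_{I(J)}$ for all $J$, combined with the always-valid equality $\cC_J=\cC_{I(J)}\cap\cN_k(X)$. Running the dimension count of the previous paragraph in reverse: for any $y\in X\st^K$ the fiber $\iota_X^{-1}(y)$ meets at most one stratum $X(\cC_J)$ (by injectivity of $J\mapsto I(J)$) and sits there as a single $G_y$-orbit; under \ref{it:Kwond3} the image $\iota_X(X(\cC_J))$ fills the open $G$-orbit of $X(\cC_{I(J)})$, so either $y$ lies in this open orbit and the fiber is finite of dimension $\dim G_y-\dim G_x=0$, or the fiber is empty. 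Thus $\iota_X$ is quasi-finite; being also birational with normal target, Zariski's Main Theorem identifies it with an open embedding. The hardest part will be this quasi-finiteness check, whose subtlety is that the target stratum of $X\st^K$ need not itself be a single $G$-orbit (since $X$ is not assumed to be $K$-spherical). Finally, under the equivalent conditions, injectivity of $\iota_X$ on $k$-rational points is immediate from \ref{it:Kwond2}, while surjectivity is \cref{cor:rational} applied to the equivariant embedding $X\st^k\hookrightarrow X\st^K$, which is locally $k$-linear because $X\st^K$ is normal and hence satisfies Sumihiro's criterion.
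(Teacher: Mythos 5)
Your overall architecture is sound and genuinely different from the paper's: you run the cycle \ref{it:Kwond2}$\Rightarrow$\ref{it:Kwond1}$\Rightarrow$\ref{it:Kwond3}$\Rightarrow$\ref{it:Kwond2}, resolving the last implication by quasi-finiteness plus Zariski's Main Theorem, whereas the paper proves \ref{it:Kwond2}$\iff$\ref{it:Kwond3} in one stroke by the toroidal fact that $\iota_X$ is an open embedding if and only if $\cZ_k(X)$ is a face of $\cZ_K(X)$, and then handles \ref{it:Kwond1}$\iff$\ref{it:Kwond2}. Your Galois-descent and $k$-convexity observations (the latter immediate from \eqref{eq:diagXi22}) are correct and spell out what the paper leaves tacit.

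Two steps in your write-up need attention. First, in \ref{it:Kwond1}$\Rightarrow$\ref{it:Kwond3}, the inference ``positive-dimensional homogeneous $k$-variety with a $k$-point, $k$ infinite, hence infinitely many $k$-points'' is not a direct consequence of $k$ being infinite for an arbitrary variety. It is true here, but because the fiber is $H/H'$ for a $k$-group $H:=G_{\iota_X(d)}$: pass to the identity component $H^0$, note $\dim H^0/(H^0\cap H') = \dim H - \dim(H^0\cap H') \ge \dim H - \dim H' > 0$, and use that $H^0(k)$ is Zariski dense in $H^0$ (characteristic $0$), so $H^0(k)\cdot d$ is Zariski dense in the positive-dimensional $H^0$-orbit, hence infinite. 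The paper sidesteps this by identifying the fibers over $k$-rational points of the target stratum as tori of dimension $\dim\cC-\dim\cR$, which makes the $k$-point count immediate. Second, you flag the gap yourself: for \ref{it:Kwond3}$\Rightarrow$\ref{it:Kwond2} you assert that $\iota_X(X(\cC_J))$ hits the open $G$-orbit of $X^K(\cC_{I(J)})$ but do not prove it. This can be closed by the observation that $\iota_X$ carries the center of a valuation to the center of the same valuation (for a dominant morphism, the generic point of the center of $v$ upstairs maps to the generic point of the center of $v$ downstairs); applying this to a $v$ with $\lambda_v$ in the relative interior of $\cC_J=\cC_{I(J)}$ shows $\iota_X$ maps the corresponding stratum dominantly onto the one in $X\st^K$, and a $G$-orbit mapping dominantly onto a stratum of the same dimension necessarily hits the open orbit. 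With that supplied, your quasi-finiteness and ZMT conclusion goes through.

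Your final bijectivity argument via \cref{cor:rational} is exactly the paper's approach, modulo phrasing. On balance: correct skeleton, alternative route via ZMT which costs you the quasi-finiteness check that the paper avoids by citing the face criterion from toroidal theory, and one small unjustified claim about $k$-points of homogeneous spaces that needs the density argument above.
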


\begin{proof}

  The $k$-standard embedding corresponds to the cone $\cZ_k$. The same
  holds for $K$. Thus the existence of $\iota_X$ follows from
  $\cZ_k\subseteq\cZ_K$. From toroidal theory it follows that $\iota_X$
  is an open embedding if and only if $\cZ_k$ is a face of
  $\cZ_K$. This holds if and only if the equations for $\cN_k$ do not
  contain an equation of the form $\sigma=\gamma*\sigma$ (see
  \cref{cor:compN}). This shows the equivalence of \ref{it:Kwond2} and
  \ref{it:Kwond3}. Now assume that $\cZ_k$ is not a face of
  $\cZ_K$. Then there exists a ray $\cR$ of $\cZ_k$ which is not an
  extremal ray of $\cZ_K$.  let $\cC$ be the smallest face of $\cZ_K$
  containing $\cR$. Then $\iota_X$ maps the stratum $X^k(\cR)$ of
  $X\st^k$ onto the stratum $X^K(\cC)$ of $X\st^K$. The fibers over
  $k$-rational points are tori whose dimension is
  $\dim\cC-\dim\cR>0$. This shows that $\iota_X$ is not injective on
  $k$-rational points which proves the equivalence of \ref{it:Kwond1}
  and \ref{it:Kwond2}.

  Finally, assume that $\cZ_k$ is a face of $\cZ_K$ and that $X$ is
  $k$-spherical. Then the orbits of $X\st^K$ containing a $k$-rational
  point are precisely those which correspond to a face of $\cZ_K$
  which has a an element of $\cZ_k$ in its relative interior. These
  are precisely the faces of $\cZ_k$. Thus the image $\iota_K$
  contains all $k$-rational points of $X\st^K$.
\end{proof}

\begin{examples}\label{ex:compare}

  {\it i)} Condition \ref{it:Kwond3} above is certainly satisfied if
  $G$ is an inner form since then already the $\cG^*$-action on $S$ is
  trivial. It even suffices that $\cG^*$ acts trivially on
  $S\setminus S^0$ which, in the real case, means that the Satake
  diagram of $G$ does not contain any arrows (like all compact groups
  or $SO(p,q)$ with $p-q>2$ and $p-q\equiv2\|mod|4$).

  {\it ii)} Even if $\cG^*$ acts non-trivially on $S\setminus S^0$ its
  action on $\Sigma_K\setminus\Sigma_K^0$ may be trivial. That
  happens, e.g., for all Riemannian symmetric (see below) or the
  spaces $U(p_1+p_2,q_1+q_2)/U(p_1,q_1)\times U(p_2,q_2)$ with
  $k=\RR$.

  \phantomsection\label{it:compare3}{\it iii)} The simplest example
  where the two standard embeddings differ is when $k=\RR$ and
  $X:=SU(2,1)/\mu_3SO(2,1)$ where $\mu_3$ is the center of $G$ (it is added
  to make $X\st^\CC$ smooth). Then $X_\CC$ is the space of
  smooth quadrics in $\P^3(\CC)$ and $X\st^\CC$ is the so-called space
  of complete quadrics. It is smooth and consists of four orbits: the
  open orbit $X$, two orbits $D_1$, $D_2$ of codimension $1$, and a
  closed orbit $Y$. The group $SU(2,1)$ is quasi-split and $\cG^*$
  swaps the two simple roots of $S$. Thus, the two divisors $D_1$ and
  $D_2$ are also swapped by conjugation which means, in particular,
  they do not contain any real points. For the real points of the
  entire space this means $X\st^\CC(\RR)=X(\RR)\cup Y(\RR)$. So all
  $G(\RR)$-orbits are either open or closed and the closed one,
  $Y(\RR)$, which is of codimension $2$. On the other side, the closed
  orbit of $X\st(\RR)$ is of codimension $1$. This shows that
  \[
    \iota_X:X\st(\RR)\to X\st^\CC(\RR)
  \]
  is surjective but not injective. In fact, $X\st$ is the blow-up of
  $X\st^\CC$ in $Y$ with the proper transforms of $D_1$ and $D_2$
  removed. This example also shows that the union of the two $k$-dense
  orbits, namely $X\cup Y$, is not open but only constructible.

\end{examples}

\begin{remark}

  In \cite{BorelJi}, Borel and Ji describe various constructions for
  compactifying symmetric and locally symmetric spaces. In particular,
  \S II.10 deals with the compactification of semisimple
  (aka. non-Riemannian or pseudo-Riemannian) symmetric spaces,
  i.e. real points of $X=G/H$ with $G$ real semisimple and $H$ the
  fixed point group of an involution, by
  $X\st^\CC(\RR)$. Unfortunately, the exposition is not quite
  accurate. In particular Thm. II.10.5 is wrong as stated. Part {\it
    iv)} of the theorem asserts for example that the boundary is of
  codimension $1$ which is not true in general as Example
  \ref{it:compare3} {\it iii)} above shows. The error seems to occur
  in the transition of (II.10.1) to (II.10.2) where unjustifiedly
  ``$\sigma$'' is replaced by ``$o$''. Coincidentally, the discussion
  from eq. (II.10.2) to the end of the section furnishes a description
  of our $X\st(\RR)$ instead of $X\st^\CC(\RR)$. In particular,
  \cite{BorelJi}*{Thm. II.10.5} becomes correct when interpreted as a
  description of the $G(\RR)$-orbits in $X\st(\RR)$.

\end{remark}

For future reference we briefly make the connection of our theory to
the classical theory of symmetric spaces of non-compact type. For this
let $k=\RR$, let $G$ be a semisimple group, let $\sG:=G(\RR)^0$ be the
connected components of the real points, let $\sK\subseteq \sG$ be a
maximal compact subgroup, and let $\overline\sK\subseteq G$ be its
Zariski closure in $G$. Then $\sX=\sG/\sK$ is a symmetric space and
$X=G/\overline\sK$ is an $\RR$-variety with $\sX\subseteq X(\RR)$. If
$G$ is of adjoint type $\sX$ has a distinguished compactification by a
manifold with corners called the \emph{maximal Satake
  compactification}. See \cite{BorelJi}*{\S I.10} for details.

\begin{proposition}

  Let $\sX=\sG/\sK$ be a symmetric space. Assume that $G$ is of
  adjoint type. Then the closure of $\sX$ in $X\st(\RR)$ is the
  maximal Satake compactification of $\sX$.

\end{proposition}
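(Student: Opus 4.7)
The plan is to apply the universal mapping property established at the end of Section~6---that every equivariant $k$-morphism from a $k$-convex, $k$-spherical homogeneous $G/H$ to a strongly rational projective $\P(V)$ extends uniquely to a morphism $X\st \to \P(V)$---in order to produce an equivariant continuous surjection from $\overline{\sX} \subseteq X\st(\RR)$ onto Satake's maximal compactification, and then to verify that this surjection is injective.

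First, I would check that the hypotheses for the standard embedding hold. Since $G$ is adjoint and $N_G(K)/K$ is finite (a classical fact for Riemannian symmetric spaces), $\Aut^G(X)$ is anisotropic, so \cref{prop:anisoconvex} gives that $X = G/K$ is $\RR$-convex. Using $G$ adjoint and the classical identification of $\Sigma_K(X)$ with twice the simple restricted roots of $\sG$, the root lattice $\langle\Sigma_\RR\prim(X)\rangle$ coincides with $\Xi_\RR(X)$, so $X$ is in fact $\RR$-wonderful. Consequently $X\st$ is smooth, $X\st(\RR)$ is a compact connected real-analytic manifold by the theorem on wonderful real points, and $\sX = \sG \cdot eK$ is a connected open subset whose Hausdorff closure $\overline{\sX} \subseteq X\st(\RR)$ is thus a compact $\sG$-space containing $\sX$ densely.

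Second, I would invoke Satake's original construction \cite{SatakeComp}: pick an irreducible $\RR$-rational $G$-module $V_\chi$ whose highest weight $\chi$ satisfies $\supp(\chi|_A) = S_\RR$ (which exists because $G$ is adjoint, so $\Xi(A)$ contains enough regular dominant weights), and fix a unit $\sK$-fixed vector $v_\chi \in V_\chi(\RR)$. Via the Iwasawa decomposition, $V_\chi$ is strongly rational, and Satake defines $\overline{\sX}^{\mathrm{Sat}}_{\max}$ as the closure of $\sG \cdot [v_\chi]$ in $\P(V_\chi)(\RR)$. The corollary cited above extends the orbit map $G/K \to \P(V_\chi)$ to an $\RR$-morphism $\Phi: X\st \to \P(V_\chi)$. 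Taking real points yields a continuous $\sG$-equivariant map $\Phi_\RR: X\st(\RR) \to \P(V_\chi)(\RR)$; since $X\st(\RR)$ is compact, the image is closed, so $\Phi_\RR(\overline{\sX}) = \overline{\sG \cdot [v_\chi]} = \overline{\sX}^{\mathrm{Sat}}_{\max}$.

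The main obstacle is showing that $\Phi_\RR|_{\overline{\sX}}$ is injective, hence a homeomorphism onto $\overline{\sX}^{\mathrm{Sat}}_{\max}$. The route I would take is the following: \cref{cor:localization} parameterizes the strata of $X\st$ by subsets $J \subseteq \Sigma_\RR\prim(X)$, while Satake parameterizes the boundary components of $\overline{\sX}^{\mathrm{Sat}}_{\max}$ by subsets of $S_\RR$; under the classical identification $\Sigma_\RR\prim(G/K) \leftrightarrow S_\RR$ these indexings agree. The condition $\supp(\chi|_A) = S_\RR$ forces the isotropy subgroup of $[\tau(g) v_\chi]$ to coincide, stratum by stratum, with the parabolic appearing in Satake's description of the matching boundary component---which on our side can be read off from \cref{prop:boundary2}---giving injectivity on each stratum, while pairwise non-conjugacy of these parabolics rules out collapses between distinct strata. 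An alternative, less combinatorial route would be to observe that both $\overline{\sX}$ and $\overline{\sX}^{\mathrm{Sat}}_{\max}$ are maximal $\sG$-equivariant compactifications of $\sX$---the former because its defining fan is complete, the latter because the representation $V_\chi$ has maximal support---and invoke the uniqueness of such a compactification from the classical theory of Moore and Karpelevich (see \cite{BorelJi}).
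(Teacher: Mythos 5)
Your approach is genuinely different from the paper's. The paper's proof is short: it observes (citing Vust) that the $\CC$-spherical roots of $G/K$ are the restricted roots of $G$, hence the $\cG^*$-action on $\Sigma_K\setminus\Sigma_K^0$ is trivial; by the preceding proposition this means $\iota_X:X\st^\RR\to X\st^\CC$ is an open embedding, so the Hausdorff closures of $\sX$ in $X\st(\RR)$ and in $X\st^\CC(\RR)$ coincide; finally the closure in $X\st^\CC(\RR)$ is identified with the maximal Satake compactification by citing Borel--Ji directly. Your route instead tries to build an explicit isomorphism to Satake's construction via a strongly rational $\P(V_\chi)$, bypassing the $X\st^\CC$ comparison.

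The issue is that the critical injectivity step is not established. Your first route asserts that the stratum-by-stratum analysis ``gives injectivity on each stratum, while pairwise non-conjugacy of these parabolics rules out collapses between distinct strata,'' but this is precisely the content that needs proving: one must show that the algebraic morphism $\Phi:X\st\to\P(V_\chi)$ separates points of $\overline{\sX}$, and that requires matching the isotropy filtration of a point in $\Xq(\cC)$ (which \cref{prop:boundary2} and \cref{thm:bd}\,\ref{it:bd5} describe in terms of $\cC$) against the stabilizer of the limit line in $\P(V_\chi)$ computed from the weight decomposition of $V_\chi$ --- a genuinely non-trivial comparison, not a formality. Your alternative route has a different problem: the Moore--Karpelevich/Satake uniqueness theorem applies to compactifications built inside $\P(V)$ from irreducible representations, and you have not shown that $\overline{\sX}\subseteq X\st(\RR)$ is a priori in this class; a surjection from a compact space onto the maximal Satake compactification does not by itself force it to be the maximal one. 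Either route could likely be completed, but as written there is a gap exactly where the theorem's content lives. The claim that $\Sigma_\CC(G/K)$ are ``twice the simple restricted roots'' (used to deduce $\RR$-wonderfulness) should also be checked carefully against the Vust normalization; the paper only needs $\RR$-convexity for the comparison argument, which sidesteps this.
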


\begin{proof}

  The spherical roots of $X$ are simply the restricted roots of $G$
  (\cite{Vust}). In particular, the $\cG^*$-action on them is
  trivial. It follows that $\sX$ has isomorphic closures in
  $X\st(\RR)$ and $X^\CC\st(\RR)$. The closure in the latter is the
  maximal Satake compactification by \cite{BorelJi}*{I.17.9,
    II.14.10}.
\end{proof}

It is well-known that the maximal Satake compactification of a
symmetric space has the structure of manifold with corners. Recall,
that a \emph{manifold with corners} is a space $\sXq$ with
$\cC^\infty$-structure which is locally modeled after
$\RR_{\ge0}^l\times\RR^{n-l}$ with fixed $n$ and varying $l$. See,
e.g., \cite{Joyce}, for a nice introduction to manifolds with
corners. The points corresponding to
$0^l\times\RR^{n-l}\subseteq\RR_{\ge0}^l\times\RR^{n-l}$ are said to
have \emph{depth $l$}. Let $\sXq_l:=\{x\in\sX\mid\|dp|x=l\}$. A
\emph{boundary component} is the closure of a connected component of
$\sXq_1$.

In this generality, pathologies may occur like selfintersections of
boundary components. The teardrop (\cite{Joyce}*{Fig.~2.1}) is the
simplest example. For that reason, Jänich \cite{Jaenich} introduced
the concept of an $\<N\>$-manifold which is a manifold with corners
together with an assignment of a number in $\{1,\ldots,N\}$ to every
boundary component such that in every point all boundary components
meeting there have a different number (see \cites{Jaenich,Joyce} for a
precise definition).

Clearly, the set $\{1,\ldots,N\}$ can be replaced by any set
$\Sigma$. In that case, we talk about a $\Sigma$-manifold such that
every boundary component has a type $\sigma\in\Sigma$. Then $\sY$ is a
$\Sigma\setminus\{\sigma\}$-manifold. More generally, let $\sY$ be a
connected component of $\sXq_l$ for some $l$. Then $\sY$ is contained
in precisely $l$ boundary components and let the elements of
$\Sigma_0\subseteq\Sigma$ be the set of their types. The closure
of $\sYq$ of $\sY$ has the structure of a
$\Sigma\setminus\Sigma_0$-manifold. All subsets of the form $\sY$
form a stratification which we call the \emph{boundary stratification
  of $\sX$}.

One can construct $\Sigma$-manifolds by cutting manifolds along
hypersurfaces. More precisely, let $\sXs$ be a manifold (without
boundary) and for each $\sigma\in\Sigma$ let $D_\sigma\subset\sX$ be a
smooth hypersurface (possibly disconnected or empty). Assume that all
$D_\sigma$ intersect each other transversally. For each $x\in\sX$ let
$\|dp|(x)$ be the number of all $\sigma\in\Sigma$ with $x\in D_\sigma$
and let $\sX$ be set of all points of depth $0$, i.e., the
complement of the union of all $D_\sigma$.

Let $x\in\sXs$. Then the hypersurfaces $D_\sigma$ with $x\in D_\sigma$
subdivide a neighborhood of $x$ in $2^{\|dp|(x)}$ subsets called
orientations of $x$. More precisely, an orientation is an element of
$\lim\limits_{\longleftarrow\atop U}\pi_0(U\cap\sX)$ where $U$ runs
through all open neighborhoods of $x$ in $\sXs$.

Now let $\sXq$ be the set of all pairs $(x,\eta)$ where $x\in\sXs$ and
$\eta$ is an orientation of $x$. It can be topologized such that the
forgetful map $\pi:\sXq\to\sXs:(x,\eta)\mapsto x$ is proper,
surjective, and the fiber over $x$ has cardinality
$2^{\|dp|x}$. Then each boundary component of $\sXq$ maps to a
specific hypersurface $D_\sigma$ which makes $\sXq$ into a
$\Sigma$-manifold.

Now we apply this construction to a homogeneous $\RR$-spherical
variety $X$ or, more precisely, to its real points $X(\RR)$. Choose
any smooth fan $\cF$ whose support is $\cZ_\RR(X)$ and let $\sXs$ be
the set of real points of $X(\cF)$. Then $\sXs$ is a smooth
compactification of $X(\RR)$. Its boundary is a union of smooth
divisors indexed by the set $\Sigma$ of $1$-dimensional cones in
$\cF$. The manifold $\sXq$ with corners obtained by cutting along the
boundary is therefore a compact $\Sigma$-manifold. It is equivariant
with respect to $G(\RR)$. One distinguishing feature of $\sXq$ is that
each connected component contains exactly one $G(\RR)^0$-orbit. If
that orbit is a (Riemannian) symmetric space then the component is
precisely its maximal Satake compactification. Observe that if $X$ is
$k$-wonderful one can choose $\cF$ to be the standard fan. In that
case $\Sigma$ can be identified with $\Sigma_\RR(X)$.

\begin{proposition}

  Let $\sG:=G(\RR)^0$. Then the boundary strata of $\sXq$ are precisely
  the $\sG$-orbits.

\end{proposition}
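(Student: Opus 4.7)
The plan is to establish two containments. Since each divisor $D_\sigma$ is $G$-stable (being the closure of a $G$-orbit in $\sX'$), the action of $\sG$ on $\sX'$ preserves the depth function and lifts to a continuous action on the cut space $\sX$ which preserves each $\sX_k$. Because $\sG$ is connected, it preserves every connected component of every $\sX_k$, that is, every boundary stratum. Consequently every $\sG$-orbit lies inside a single boundary stratum, and it remains only to show that $\sG$ acts transitively on each boundary stratum $\sY_0$.

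Fix such a $\sY_0$, of depth $k$. A local computation in toric coordinates, using the smoothness of the cones $\cC\in\cF$ together with the Zariski local triviality of $X\el\to X\an$ (Hilbert 90), identifies a neighborhood in $\sX'$ of a point of the stratum $X(\cC)(\RR)$ (with $\dim\cC=k$) with a product involving the model $\RR^k\times(\RR^*)^{r-k}$, and shows that the restricted projection $\pi:\sX_k\to\sX'_k=\bigsqcup_{\dim\cC=k}X(\cC)(\RR)$ is a proper covering of degree $2^k$; in particular it is a local homeomorphism. By the standard path-lifting property of covering maps, the connected component $\sY_0$ maps surjectively onto a connected component $W$ of a single $X(\cC)(\RR)$. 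Because $X$ is homogeneous and $\RR$-spherical, the stratum $X(\cC)$ is itself $G$-homogeneous; hence the orbit map $\sG\to X(\cC)(\RR)$ is a submersion at every point, every $\sG$-orbit in $X(\cC)(\RR)$ is open, these orbits partition $X(\cC)(\RR)$ into clopen connected pieces, and $W$ is therefore a single $\sG$-orbit.

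Finally, pick $(p,\eta)\in\sY_0$. The orbit map $o:\sG\to\sY_0$, $g\mapsto g\cdot(p,\eta)$, composed with $\pi$ is the open submersive orbit map $\sG\to W$. Since $\pi|_{\sY_0}$ is a local homeomorphism onto $W$, $o$ itself is open: a small neighborhood of any $g_0\in\sG$ is carried by $o$ onto the unique sheet of $\pi^{-1}$ above the image neighborhood of $g_0p$ in $W$ that contains $g_0\cdot(p,\eta)$. Consequently $\sG\cdot(p,\eta)$ is open in $\sY_0$; its complement, being a union of other (also open) $\sG$-orbits, is open as well; hence $\sG\cdot(p,\eta)$ is clopen and connected in the connected space $\sY_0$, forcing $\sG\cdot(p,\eta)=\sY_0$. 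The one genuinely non-formal step is checking that $\pi|_{\sX_k}$ is a true covering (not merely a local homeomorphism), which hinges on the smoothness of $\cF$ so that the cutting construction reduces locally to a product of real oriented blow-ups of $\A^1$ at the origin.
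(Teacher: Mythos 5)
Your proof is correct and follows essentially the same route as the paper's two-sentence argument (boundary strata are connected and $\sG$-stable, the $\sG$-action on each is locally transitive, hence they are $\sG$-orbits); you simply spell out the covering-map structure of $\pi|_{\sX_k}\to\sX'_k$ and the clopen-orbit argument that the paper leaves implicit.
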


\begin{proof}

  The boundary strata are connected and $\sG$-stable. Moreover, the
  action of $\sG$ on each of them is locally transitive. Hence, they
  are $\sG$-orbits.
\end{proof}

\begin{examples}

  In all examples we put $k=\RR$ and $G=SL(2,\RR)$. All examples have
  $\RR$-rank $1$ and are not horospherical except in the last
  one. Hence $X=G/H$ is compactified by adding one closed orbit $Y$.

  {\it i)} $H=SO(2)$. Then $G/H$ is the space of quadratic forms
  $x_1\xi^2+2x_2\xi\eta+x_3\eta^2$ with discriminant
  $x_2^2-x_1x_3=-1$. Therefore $X(\RR)$ is a $2$-sheeted
  hyperboloid. Its closure in $\P^3$ is the smooth quadric
  $x_2^2-x_1x_3=-x_0^2$ with signature $(3,1)$. Thus
  $X\st(\RR)\cong S^2$, the $2$-sphere. The boundary $Y(\RR)$ is given
  by $x_0=0$ which defines a great circle $S^1\subseteq S^2$ which
  separates $X\st(\RR)$ in two parts $X_\pm$. Of course, $X\st(\RR)$
  can be identified with the Riemann sphere with $X_\pm$ being the
  upper and lower halfplane. When we cut $X\st(\RR)$ along $Y(\RR)$
  one obtains the disjoint union of two closed disks. Thus, $G(\RR)$
  has two open and two closed orbits in $\sXq$.

  {\it ii)} $H=SO(1,1)$. In this case, $G/H$ is the space of quadratic
  forms $x_1\xi^2+2x_2\xi\eta+x_3\eta^2$ with discriminant
  $x_2^2-x_1x_3=+1$. Therefore $X(\RR)$ is a $1$-sheeted
  hyperboloid. Its closure in $\P^3$ is the smooth quadric
  $x_2^2-x_1x_3=x_0^2$ with signature $(2,2)$. Thus
  $X\st(\RR)\cong \P^1(\RR)\times\P^1(\RR)\cong S^1\times S^1$, a
  $2$-torus. The boundary $Y(\RR)$ corresponds to the diagonal in
  $S^1\times S^1$ which does not separate $X\st(\RR)$. Cutting along
  $Y(\RR)$ results in a cylinder $S^1\times[0,1]$. Thus $G(\RR)$ has
  one open and two closed orbits in $\sXq$. Observe that $\sXq$ is
  \emph{not} the closure of an open $G(\RR)$-orbit in $X\st(\RR)$ as
  opposed to the maximal Satake compactification of a Riemannian
  symmetric space. This example also shows that connected components
  of $\sXq$ can have more than one closed $G(\RR)^0$-orbit.

  {\it iii)} $H_1=N_G(SO(2))$ or $H_2=N_G(SO(1,1))$. These two
  subgroups lead to the same $\RR$-variety. In fact, $X$ is the space
  of similitude classes of quadratic forms
  $x_1\xi^2+2x_2\xi\eta+x_3\eta^2$ with discriminant
  $x_2^2-x_1x_3\ne0$. This shows that $X\st(\RR)$ is the projective
  space $\P^2(\RR)$. The boundary is a smooth conic which separates
  $\P^2(\RR)$ in two parts: one, $\cong G/H_1$, is diffeomorphic to an
  open disk, the other, $\cong G/H_2$, is diffeomorphic to an open
  Möbius strip. Cutting along $Y(\RR)$ results in disjoint union of a
  closed disk (compactification of $G/H_1$) and a closed Möbius strip
  (compactification of $G/H_2$). Thus, $G(\RR)$ has two open and two
  closed orbits in $\sX$.

  {\it iv)} In the last example let $H=N$ be a maximal unipotent
  subgroup. It is horospherical, hence the compactification contains
  two closed orbits $Y_1$ and $Y_2$. More precisely, if we take for
  $\cF$ the two halflines in $\cZ_\RR(X)=\RR$ (there is no other
  possibility) then $X(\cF)$ equals the projective space $\P^2$ blown
  up at the origin. Thus, $\sXs$ is the non-orientable $S^1$-bundle
  over $S^1$, i.e., a Klein bottle. The two closed orbits form two
  sections of this bundle. Since the normal bundle is non-trivial,
  cutting along each of them does not result in two circles. Instead
  one gets a single circle which projects as a two-fold cover to
  $S^1$. Thus $\sXq$ consists of one open orbit and two closed
  ones. Topologically, $\sXq$ is a closed annulus. Note that the two
  closed orbits are no longer real points of algebraic
  varieties. Indeed, the isotropy group is the connected component
  $P(\RR)^0$ which is a non-algebraic subgroup of $SL(2,\RR)$.

\end{examples}

We conclude this section by stating a property where the Satake
compactification behaves better than the toroidal ones.

\begin{proposition}\label{prop:princ}

  Let $\Xq$ be a smooth toroidal embedding of the real spherical
  variety $X=G/H$ and let $\sXq$ be the embedding of $\sX=X(\RR)$
  obtained by cutting $\Xq(\RR)$ along all boundary divisors. Let
  $\sK\subseteq\sG=G(\RR)^0$ be a compact subgroup. Then the principal
  $\sK$-isotropy groups of the $\sG$-orbits in $\sXq$ are
  all conjugate.
  
\end{proposition}

\begin{proof}

  Let $\sG\xq$ be some $\sG$-orbit in $\sXq$ such that $\xq$ is in its
  principal $\sK$-locus. Then we have to show that $\xq$ is also
  principal in $\sXq$. By the slice theorem for compact groups, it
  suffices to show that $\sK_\xq$ acts trivially on the normal space
  $V$ of the orbit $\sG\xq$ in $\sXq$. Now $\xq$ is of the form
  $(x,\eta)$ where $x\in\Xq(\RR)$ und $\eta$ is an orientation at
  $x$. The isotropy group $\sK_x$ is acting on $V$ such that all
  hyperplanes tangent to the boundary divisors passing through $x$ are
  fixed. This means that the action of $\sK_x$ on $V$ is
  diagonal. Moreover, by compactness, all eigenvalues are $\pm1$. This
  shows that if $g\in\sK_x$ fixes one orientation then it is acting
  trivially on $V$. The assertion follows.
\end{proof}


\begin{bibdiv}
  \begin{biblist}

\bib{BatyrevMoreau}{article}{
   author={Batyrev, Victor},
   author={Moreau, Anne},
   title={Satellites of spherical subgroups},
   journal={Algebr. Geom.},
   volume={7},
   date={2020},
   pages={86--112},
   issn={2313-1691},
   arxiv={1610.07377},
 }

 \bib{Berger}{article}{ author={Berger, Marcel}, title={Les espaces
        symétriques noncompacts}, journal={Ann. Sci. École
        Norm. Sup. (3)}, volume={74}, date={1957}, pages={85--177}, }

    \bib{Borel}{book}{ author={Borel, Armand}, title={Linear algebraic
        groups}, series={Graduate Texts in Mathematics}, volume={126},
      edition={2}, publisher={Springer-Verlag, New York}, date={1991},
      pages={xii+288}, }

    \bib{BorelJi}{book}{ author={Borel, Armand}, author={Ji, Lizhen},
      title={Compactifications of symmetric and locally symmetric
        spaces}, series={Mathematics: Theory \& Applications},
      publisher={Birkhäuser Boston, Inc., Boston, MA}, date={2006},
      pages={xvi+479}, }

    \bib{BorelSerre}{article}{ author={Borel, Armand}, author={Serre,
        Jean-Pierre}, title={Théorèmes de finitude en cohomologie
        galoisienne}, journal={Comment. Math. Helv.}, volume={39},
      date={1964}, pages={111--164}, }

    \bib{BorelTits}{article}{ author={Borel, Armand}, author={Tits,
        Jacques}, title={Groupes réductifs}, journal={Inst. Hautes
        Études Sci. Publ. Math.}, volume={27}, date={1965},
      pages={55--150}, }

    \bib{Bourbaki}{book}{ author={Bourbaki, N.}, title={Éléments de
        mathématique. Fasc. XXXIV. Groupes et algèbres de
        Lie. Chapitre IV: Groupes de Coxeter et systèmes de
        Tits. Chapitre V: Groupes engendrés par des
        réflexions. Chapitre VI: systèmes de racines},
      series={Actualit\'es Scientifiques et Industrielles, No. 1337},
      publisher={Hermann, Paris}, date={1968}, pages={288 pp. (loose
        errata)}, }

    \bib{BraviPezzini}{article}{ author={Bravi, Paolo},
      author={Pezzini, Guido}, title={The spherical systems of the
        wonderful reductive subgroups}, journal={J. Lie Theory},
      volume={25}, date={2015}, pages={105--123}, arxiv={1109.6777}, }

    \bib{BrionClassification}{article}{ author={Brion, Michel},
      title={Classification des espaces homogènes sphériques},
      journal={Compositio Math.}, volume={63}, date={1987},
      pages={189--208}, }

    \bib{BrionSymetrique}{article}{ author={Brion, Michel},
      title={Vers une généralisation des espaces symétriques},
      journal={J. Algebra}, volume={134}, pages={115--143},
      date={1990}, }

    \bib{BrionLunaVust}{article}{ author={Brion, Michel},
      author={Luna, Domingo}, author={Vust, Thierry}, title={Espaces
        homogènes sphériques}, journal={Invent. Math.}, volume={84},
      date={1986}, pages={617--632}, }

    \bib{DeConciniProcesi}{article}{ author={De Concini, Corrado},
      author={Procesi, Claudio}, title={Complete symmetric varieties},
      conference={ title={Invariant theory}, address={Montecatini},
        date={1982}, }, book={ series={Lecture Notes in Math.},
        volume={996}, publisher={Springer, Berlin}, }, date={1983},
      pages={1--44}, }


\bib{DKKS}{article}{
   author={Delorme, Patrick},
   author={Knop, Friedrich},
   author={Kr\"{o}tz, Bernhard},
   author={Schlichtkrull, Henrik},
   title={Plancherel theory for real spherical spaces: construction of the
   Bernstein morphisms},
   journal={J. Amer. Math. Soc.},
   volume={34},
   date={2021},
   pages={815--908},
   issn={0894-0347},
   arxiv={1807.07541},
 }

 \bib{DK}{book}{ author={Duistermaat, Johannes}, author={Kolk,
        Johan}, title={Lie groups}, series={Universitext},
      publisher={Springer-Verlag, Berlin}, date={2000},
      pages={viii+344}, }

    \bib{Fulton}{book}{ author={Fulton, William}, title={Intersection
        theory}, series={Ergebnisse der Mathematik und ihrer
        Grenzgebiete (3)}, volume={2}, publisher={Springer-Verlag,
        Berlin}, date={1984}, pages={xi+470}, }

\bib{GS}{article}{
   author={Gourevitch, Dmitry},
   author={Sayag, Eitan},
   title={Generalized Whittaker quotients of Schwartz functions on
   $G$-spaces},
   journal={Int. Math. Res. Not.},
   date={2023},
   number={11},
   pages={9777--9794},
   arxiv={2001.11750},
}

\bib{Jaenich}{article}{ author={Jänich, Klaus}, title={On the
        classification of $O(n)$-manifolds}, journal={Math. Ann.},
      volume={176}, date={1968}, pages={53--76}, }

    \bib{Joyce}{article}{ author={Joyce, Dominic}, title={On manifolds
        with corners}, conference={ title={Advances in geometric
          analysis}, }, book={ series={Adv. Lect. Math. (ALM)},
        volume={21}, publisher={Int. Press, Somerville, MA}, },
      date={2012}, pages={225--258, \arxiv{0910.3518v2}}, }

	
    \bib{Kempf}{article}{ author={Kempf, George}, title={Instability
        in invariant theory}, journal={Ann. of Math. (2)},
      volume={108}, date={1978}, pages={299--316}, }

    \bib{KempfTE}{book}{ author={Kempf, G.}, author={Knudsen, F.},
      author={Mumford, D.}, author={Saint-Donat, B.}, title={Toroidal
        embeddings. I}, series={Lecture Notes in Mathematics,
        Vol. 339}, publisher={Springer-Verlag, Berlin-New York},
      date={1973}, pages={viii+209}, }

    \bib{KnopIB}{article}{ author={Knop, Friedrich}, title={Über
        Bewertungen, welche unter einer reduktiven Gruppe invariant
        sind}, journal={Math. Ann.}, volume={295}, date={1993},
      pages={333--363}, }

    \bib{KnopAB}{article}{ author={Knop, Friedrich}, title={The
        asymptotic behavior of invariant collective motion},
      journal={Invent. Math.}, volume={116}, date={1994},
      pages={309--328}, }

    \bib{KnopAut}{article}{ author={Knop, Friedrich},
      title={Automorphisms, root systems, and compactifications of
        homogeneous varieties}, journal={J. Amer. Math. Soc.},
      volume={9}, date={1996}, pages={153--174}, }

  \bib{KnopLoc}{article}{
   author={Knop, Friedrich},
   title={Localization of spherical varieties},
   journal={Algebra Number Theory},
   volume={8},
   date={2014},
   pages={703--728},
   issn={1937-0652},
 }

 \bib{KKS}{article}{ author={Knop, Friedrich}, author={Krötz,
        Bernhard}, author={Schlichtkrull, Henrik}, title={The local
        structure theorem for real spherical varieties},
      journal={Compos. Math.}, volume={151}, date={2015},
      pages={2145--2159}, arxiv={1310.6390}, }

\bib{KKStemp}{article}{
   author={Knop, Friedrich},
   author={Kr\"{o}tz, Bernhard},
   author={Schlichtkrull, Henrik},
   title={The tempered spectrum of a real spherical space},
   journal={Acta Math.},
   volume={218},
   date={2017},
   pages={319--383},
   issn={0001-5962},
   arxiv={1509.03429v2},
 }


    \bib{KKSS}{article}{ author={Knop, Friedrich}, author={Krötz,
        Bernhard}, author={Sayag, Eitan}, author={Schlichtkrull,
        Henrik}, title={Simple compactifications and polar
        decomposition of homogeneous real spherical spaces},
      journal={Selecta Math. (N.S.)}, volume={21}, date={2015},
      pages={1071--1097}, arxiv={1402.3467}, }

    \bib{KKSSvolume}{article}{ author={Knop, Friedrich},
      author={Krötz, Bernhard}, author={Sayag, Eitan},
      author={Schlichtkrull, Henrik}, title={Volume growth,
        temperedness and integrability of matrix coefficients on a
        real spherical space}, journal={J. Funct. Anal.}, volume={271}
      date={2016}, pages={12--36}, arxiv={1407.8006}, }

    \bib{Kraemer}{article}{ author={Kr\"amer, Manfred},
      title={Sphärische Untergruppen in kompakten zusammenhängenden
        Liegruppen}, journal={Compositio Math.}, volume={38},
      date={1979}, pages={129--153}, }

    \bib{Losev}{article}{ author={Losev, Ivan}, title={Demazure
        embeddings are smooth}, journal={Int. Math. Res. Not.},
      date={2009}, volume={14}, pages={2588--2596},
    arxiv={0704.3698},}
  
    \bib{LunaSlice}{article}{ author={Luna, Domingo}, title={Slices
        étales}, conference={ title={Sur les groupes algébriques}, },
      book={ publisher={Soc. Math. France, Paris}, }, date={1973},
      pages={81--105. Bull. Soc. Math. France, Paris, Mémoire 33}, }

    \bib{LunaVust}{article}{ author={Luna, Domingo}, author={Vust,
        Thierry}, title={Plongements d'espaces homogènes},
      journal={Comment. Math. Helv.}, volume={58}, date={1983},
      pages={186--245}, }

    \bib{Mikityuk}{article}{ author={Mikityuk, Igor V.},
      title={Integrability of invariant Hamiltonian systems with
        homogeneous configuration spaces}, journal={Mat. Sb. (N.S.)},
      volume={129(171)}, date={1986}, pages={514--534, 591}, }

    \bib{Oda}{book}{ author={Oda, Tadao}, title={Convex bodies and
        algebraic geometry}, series={Ergebnisse der Mathematik und
        ihrer Grenzgebiete (3)}, volume={15},
      publisher={Springer-Verlag, Berlin}, date={1988},
      pages={viii+212}, }

    \bib{Rosenlicht}{article}{ author={Rosenlicht, Maxwell},
      title={Some basic theorems on algebraic groups},
      journal={Amer. J. Math.}, volume={78}, date={1956},
      pages={401--443}, }

    \bib{Ros2}{article}{ author={Rosenlicht, Maxwell}, title={On
        quotient varieties and the affine embedding of certain
        homogeneous spaces}, journal={Trans. Amer. Math. Soc.},
      volume={101}, date={1961}, pages={211--223}, }

    \bib{SakellaridisVenkatesh}{article}{ author={Sakellaridis,
        Yiannis}, author={Venkatesh, Akshay}, title={Periods and
        harmonic analysis on spherical varieties} date={2012},
      pages={291p.}, arxiv={1203.0039},}

    \bib{SatakeComp}{article}{ author={Satake, Ichirô}, title={On
        representations and compactifications of symmetric Riemannian
        spaces}, journal={Ann. of Math. (2)}, volume={71},
      date={1960}, pages={77--110}, }

    \bib{Satake}{article}{ author={Satake, Ichirô}, title={On the
        theory of reductive algebraic groups over a perfect field},
      journal={J. Math. Soc. Japan}, volume={15}, date={1963},
      pages={210--235}, issn={0025-5645}, }

    \bib{Springer}{book}{ author={Springer, Tonny A.}, title={Linear
        algebraic groups}, series={Progress in Mathematics},
      volume={9}, edition={2}, publisher={Birkhäuser Boston, Inc.,
        Boston, MA}, date={1998}, pages={xiv+334}, }

    \bib{Sumihiro}{article}{ author={Sumihiro, Hideyasu},
      title={Equivariant completion. II}, journal={J. Math. Kyoto
        Univ.}, volume={15}, date={1975}, pages={573--605}, }

    \bib{Tits0}{article}{ author={Tits, Jacques}, title={Sur la
        classification des groupes algébriques semi-simples},
      journal={C. R. Acad. Sci. Paris}, volume={249}, date={1959},
      pages={1438--1440}, }

    \bib{Tits}{article}{ author={Tits, Jacques}, title={Classification
        of algebraic semisimple groups}, conference={ title={Algebraic
          Groups and Discontinuous Subgroups},
        address={Proc. Sympos. Pure Math., Boulder, Colo.},
        date={1965}, }, book={ publisher={Amer. Math. Soc.,
          Providence, R.I.}, }, date={1966}, pages={33--62},}

    \bib{BvS}{article}{author={Van Steirteghem, Bart}, title={Various
        interpretations of the root system(s) of a spherical variety},
      conference={title={Mini-workshop: Spherical Varieties and
          Automorphic Representations, May 12--18, 2013}},
      book={title={Oberwolfach Rep.}, volume={10}, number={2},},
      date={2013}, pages={1464--1467}, }

    \bib{VinbergKimelfeld}{article}{ author={Vinberg, Èrnest B.},
      author={Kimel\cprime fel\cprime d, Boris N.}, title={Homogeneous
        domains on flag manifolds and spherical subsets of semisimple
        Lie groups}, journal={Funktsional. Anal. i Prilozhen.},
      volume={12}, date={1978}, pages={12--19, 96}, }

    \bib{Vust}{article}{ author={Vust, Thierry}, title={Opération de
        groupes réductifs dans un type de cônes presque homogènes},
      language={French}, journal={Bull. Soc. Math. France},
      volume={102}, date={1974}, pages={317--333}, }

    \bib{Wasserman}{article}{ author={Wasserman, Benjamin},
      title={Wonderful varieties of rank two},
      journal={Transform. Groups}, volume={1}, date={1996},
      pages={375--403}, }

  \end{biblist}
\end{bibdiv}


\end{document}